 \def\notorth{\:\ensuremath{\reflectbox{\rotatebox[origin=c]{90}{$\nvdash$}}}}
\newcommand{\showcomments}{yes}
\newsavebox{\commentbox}
\newcounter{ax}
\newtheorem{thm}{Theorem}[section]
\newtheorem{lem}[thm]{Lemma}
\newtheorem{lemma}[thm]{Lemma}
\newtheorem{cor}[thm]{Corollary}
\newtheorem{prop}[thm]{Proposition}
\newtheorem{thmi}{Theorem}
\newtheorem{cori}[thmi]{Corollary}
\newtheorem{probi}[thmi]{Problem}
\newtheorem{assumption}[ax]{Assumption}
\newtheorem{claim}[thm]{Claim}
\newtheorem{axiom}[thm]{Axiom}
\theoremstyle{definition}
\newtheorem{defn}[thm]{Definition}
\newtheorem{rem}[thm]{Remark}
\newtheorem*{remi}{Remark}
\newtheorem{exmp}[thm]{Example}
\newtheorem{notation}[thm]{Notation}
\newtheorem{claim*}{Claim}
\DeclareMathOperator{\dimension}{dim}
\DeclareMathOperator{\image}{im}
\DeclareMathOperator{\Aut}{Aut}
\DeclareMathOperator{\diam}{\textup{\textsf{diam}}}
\DeclareMathOperator{\hull}{hull}
\newcommand{\neb}{\mathcal N}
\newcommand{\field}[1]{\mathbb{#1}}
\newcommand{\integers}{\ensuremath{\field{Z}}}
\newcommand{\rationals}{\ensuremath{\field{Q}}}
\newcommand{\naturals}{\ensuremath{\field{N}}}
\newcommand{\reals}{\ensuremath{\field{R}}}
 \newcommand{\FU}[1]{F_{#1}}
 \newcommand{\EU}[1]{E_{#1}}
\newcommand{\boundary}{{\ensuremath \partial}}
\newcommand{\interior} [1] {{\ensuremath \text{\rm Int}(#1) }}
\newcommand{\Rmnum}[1]{\mathbf{{\expandafter\@slowromancap\romannumeral #1@}}}
\newcommand{\contact}[1]{\ensuremath{\mathcal C#1}}
\newcommand{\tup}[1]{\vec{#1}}
\let\oldmarginpar\marginpar
\renewcommand\marginpar[1]{\-\oldmarginpar[\raggedleft\footnotesize #1]{\raggedright\footnotesize #1}}
\newcommand{\tsh}[1]{\left\{\kern-.7ex\left\{#1\right\}\kern-.7ex\right\}}
\newcommand{\Tsh}[2]{\tsh{#2}_{#1}}
\newcommand{\ignore}[2]{\Tsh{#2}{#1}}
\newcommand{\co}{\colon}
\newcounter{enumitemp}
\newcommand{\dist}{\textup{\textsf{d}}}
\newcommand{\OL}{\overleftarrow}
\newcommand{\OR}{\overrightarrow}
\newcommand{\cuco}[1]{{\mathcal #1}}
\newcommand{\fontact}{{\mathcal C}}
\newcommand{\calF}{\mathcal F}
\newcommand{\gate}{\mathfrak g}
\newcommand{\seq}[1]{\mbox{\boldmath$#1$}}
\newcommand{\subseq}[1]{\mbox{\boldmath$\scriptstyle#1$}}
\newcommand{\nest}{\sqsubseteq}
\newcommand{\propnest}{\sqsubsetneq}
\newcommand{\orth}{\bot}
\newcommand{\transverse}{\pitchfork}
\newcommand{\relevant}{\mathbf{Rel}}
\newcommand{\hinges}{\mathbf{Hinge}}
\begin{document}

\title{quasiflats in hierarchically hyperbolic spaces}
\author[J. Behrstock]{Jason Behrstock}
\address{Lehman College and The Graduate Center, CUNY, New York, New York, USA}
\email{jason.behrstock@lehman.cuny.edu}
\author[M.F. Hagen]{Mark F Hagen}
\address{Dept. of Pure Maths and Math. Stat., University of Cambridge, Cambridge, UK}
\curraddr{School of Mathematics, University of Bristol, Bristol, UK}
\email{markfhagen@gmail.com}
\author[A. Sisto]{Alessandro Sisto}
	\address{Department of Mathematics, Heriot-Watt University, Edinburgh, UK}
	\email{a.sisto@hw.ac.uk}
\date{\today}

%\subject{primary}{msc2000}{20F65, 30F60, 53C23, 20F67} 
%\subject{secondary}{msc2000}{20F69} 

\maketitle

\begin{abstract}
The rank of a hierarchically hyperbolic space is the maximal number of
unbounded factors in a standard product region.  For hierarchically hyperbolic groups, this coincides with the
maximal dimension of a quasiflat. Several noteworthy examples for which the rank coincides with familiar
quantities include: the dimension of maximal Dehn twist flats for
mapping class groups, the maximal rank of a free abelian subgroup for
right-angled Coxeter groups and right-angled Artin groups (in the
latter this can also be observed as the clique number of the defining
graph), and, for the Weil--Petersson metric, the rank is 
the integer part of half the complex dimension of
Teichm\"{u}ller space.

 We prove that, in a hierarchically hyperbolic space, any quasiflat of
 dimension equal to the rank lies within finite distance of a union of
 standard orthants (under a very mild condition on the HHS satisfied
 by all natural examples).  This resolves outstanding conjectures when
 applied to a number of different groups and spaces.  
 
 In the case of the mapping
 class group, we verify a conjecture of Farb; for Teichm\"{u}ller
 space we answer a question of Brock; in the context of certain CAT(0)
 cubical groups, our result handles novel special cases, including right-angled Coxeter groups.  
 
 An important ingredient in
 the proof, which we expect will have other applications, is that the \emph{hull} of any finite set in an HHS is 
quasi-isometric  to a CAT(0) cube complex of dimension bounded by the rank (if the HHS is a
 CAT(0) cube complex, the rank can be lower than the dimension of the space).
 
 We deduce a number of applications of these results.  For instance, we 
 show that any quasi-isometry between HHSs induces a quasi-isometry between certain
 \emph{factored spaces}, which are simpler HHSs. This allows one, for
 example, to distinguish quasi-isometry classes of right-angled Artin/Coxeter groups.
 
 Another application of our results is to quasi-isometric rigidity.
 Our tools in many cases allow one to reduce the problem of
 quasi-isometric rigidity for a given hierarchically hyperbolic group
 to a combinatorial problem.  As a template, we give a new proof of
 quasi-isometric rigidity of mapping class groups, which, once we've 
 established our general quasiflats theorem, uses simpler combinatorial arguments than in previous proofs.
\end{abstract}

\tableofcontents

\section*{Introduction}

A classical result of Morse shows that, in a hyperbolic space,
quasigeodesics lie close to geodesics \cite{Morse:fundamentalclass}.  This raises the question of what constraints exist on 
the geometry of quasiflats in more general coarsely non-positively curved spaces.  

A key step in proving Mostow Rigidity is proving that an equivariant
quasi-isometry of a symmetric space sends each flat to within a
bounded neighborhood of a flat \cite{Mostow:rigidity}.  Unlike a quasigeodesic in a hyperbolic space, a quasiflat
need not lie close to any one flat.  However, generalizing Mostow's result, 
Eskin--Farb and Kleiner--Leeb each proved that, in
a higher-rank symmetric space, an arbitrary quasiflat must lie close to
a finite number of flats \cite{EskinFarb,KleinerLeeb:buildings}.  This
result can be used to prove quasi-isometric rigidity for uniform
lattices in higher-rank symmetric spaces \cite{KleinerLeeb:buildings};
see also \cite{EskinFarb}.

In this paper, we explain the structure of quasiflats in a broad class
of spaces and groups with a property called \emph{hierarchical
hyperbolicity}~\cite{hhs1, hhs2,hhs3}.  Hierarchical hyperbolicity
captures the coarse nonpositive curvature visible in many important
groups and spaces, including mapping class groups, right-angled Artin
groups, many CAT(0) cube complexes, most
$3$--manifold groups, Teichm\"{u}ller space (in any of the standard
metrics), etc.  

Hierarchical hyperbolicity generalizes, and was inspired by, theorems about the mapping
class group established by Masur--Minsky \cite{MasurMinsky:complex2}, Behrstock
\cite{Behrstock:asymptotic}, and others.  Motivation also comes from Kim--Koberda's work 
towards obtaining an analogue of some of those mapping class group results in the setting of right angled Artin 
groups \cite{KimKoberda:curve_graph}. To approach other problems, some  
features of the mapping class group were axiomatized by 
Bestvina--Bromberg--Fujiwara to great effect \cite{BBF:quasi_tree, BBF:actions}.

The class of hierarchically hyperbolic 
spaces is preserved by quasi-isometries, and also includes many examples not on the preceding list: one can readily produce 
new hierarchically hyperbolic spaces from old.  In particular, trees of hierarchically hyperbolic spaces 
satisfying natural constraints (and thus many graphs of hierarchically 
hyperbolic groups) are again hierarchically hyperbolic~\cite{hhs2,BerlaiRobbio}.  Groups 
that are hyperbolic relative to hierarchically hyperbolic groups are again 
hierarchically hyperbolic~\cite{hhs2}.  It is shown in~\cite{hhs3} that 
suitable small-cancellation quotients of hierarchically hyperbolic groups are 
again hierarchically hyperbolic.  

This article establishes a relationship between some of these examples: in particular, we 
show that these spaces all admit a very strong local approximation by CAT(0) cube complexes 
(Theorem~\ref{thm:cubulated_hulls}). This allows us to use 
cubical techniques in new settings.  For example, it enables application of cubical geometry to mapping class groups. 

Even for CAT(0) cube complexes our 
approximation provides new information.  The reason is that Theorem~\ref{thm:cubulated_hulls} allows one to approximate 
convex hulls of finite sets in an HHS by finite CAT(0) cube complexes, and if the ambient HHS is a CAT(0) cube complex, the dimension of 
the approximating complex --- which is bounded by the rank --- can be much lower than the dimension of the ambient complex.  
This is essential for our applications to quasiflats.

Our techniques are intrinsic to the category of hierarchically hyperbolic 
spaces, in the sense that the arguments in this paper couldn't be 
carried out strictly in the context of any of the motivating examples 
alone, for example CAT(0) cube complexes or mapping class groups. 

Formal definitions and relevant properties of hierarchically
hyperbolic spaces (HHSs) will be given below in
Section~\ref{sec:background}. For now, we recall that a \emph{hierarchically hyperbolic space} 
consists of: a quasigeodesic metric space, $\cuco X$; an \emph{index set}, $\mathfrak
S$; a hyperbolic
space~$\fontact U$ for each $U\in\mathfrak S$; some relations between elements of 
the index set and maps between the associated hyperbolic spaces.  There are also projections $\cuco X\to\fontact 
U,U\in\mathfrak S$, and various axioms governing all of this data.  

Before stating the main theorem, we informally recall a few geometric features of HHSs:
\begin{itemize}
     \item Any HHS $\cuco X$ contains certain
\emph{standard product regions}, in which each of the (boundedly many)
factors is itself an HHS. 

In mapping class groups, these are products
of mapping class groups of pairwise disjoint subsurfaces (for example 
the subgroup generated by Dehn twists along disjoint annuli).  In CAT(0) cube
complexes, these are certain convex subcomplexes that split as
products (for example in the Salvetti complex of a right-angled Artin 
group, they are subcomplexes associated to subgraphs of the defining 
graph that decompose as joins). 

\item Pairs of points in $\cuco X$ can be joined by particularly
well-behaved quasigeodesics called \emph{hierarchy paths}, and
similarly we have well-behaved quasigeodesic rays called
\emph{hierarchy rays}.  Given a standard product region $P$, and a
hierarchy ray in each of the $k$ factors of $P$, the product of the
$k$ hierarchy rays $[0,\infty)\to\cuco X$ is a quasi-isometric
embedding $[0,\infty)^k\to\cuco X$ which we call a \emph{standard
orthant}.

\item The \emph{rank} $\nu$ of an HHS is the largest possible number of factors in a standard product region, each of whose 
factors is unbounded.  (Equivalently, it is the maximal integer so that there exist pairwise \emph{orthogonal}
$U_1,\dots,U_\nu\in\mathfrak S$ for which each $\fontact U_{i}$ is unbounded.)

\end{itemize}

We will impose a mild technical assumption on our spaces, which we call being
\emph{asymphoric}; this condition is satisfied by the motivating
examples of HHSs, including all hierarchically hyperbolic groups.  Under this condition, Theorem~\ref{thm:rank_inv} implies that the rank is a quasi-isometry invariant.

\subsection*{Quasiflats}
Understanding the structure of quasiflats in a given 
metric space or group is often critical in understanding the geometry 
of that space. 

An early version of a ``quasiflats theorem'' is Mostow's 
result that in a rank-one symmetric space, any quasi-geodesic lies within a 
uniformly bounded distance of a geodesic \cite{Mostow:rigidity}. 

A well known 
generalization of this is due to 
Schwartz, who proved that the image of any quasi-isometric embedding of 
$\reals^{n},n\geq 2$ into a non-uniform lattice in a rank-one 
symmetric space lies within a uniformly bounded distance of a horosphere 
\cite{Schwartz:RankOne}.  (Actually, Schwartz proved a more general result, namely that the image of any 
quasi-isometric embedding from a space whose asymptotic cone doesn't 
have cut-points into a ``neutered space'' lies uniformly close to a 
horosphere; he credits unpublished work of Gersten for the case of Euclidean space.) 

Schwartz's result was 
generalized by Dru\c{t}u-Sapir, who replaced the target space by an arbitrary relatively hyperbolic space and showed  that 
the image of the quasi-isometric embedding lies uniformly close 
to a peripheral subspace \cite{DrutuSapir:TreeGraded}.  

This result was in turn generalized by 
Behrstock--Dru\c{t}u--Mosher, who weakened the hypothesis on the domain to allow any space 
which is itself not relatively hyperbolic \cite{BehrstockDrutuMosher:thick}. 

As noted above, 
Eskin--Farb and Kleiner--Leeb proved that, in
a higher-rank symmetric space, an arbitrary quasiflat must lie within finite distance of
a finite number of flats \cite{EskinFarb,KleinerLeeb:buildings}. 

As discussed further below, there has been much work toward
quasiflats theorems in other contexts. We now state our main 
result in this direction, explain some consequences, and describe    
interactions with related work. At the end of the introduction we sketch the proof.

\begin{thmi}[Quasiflats Theorem for HHSs]\label{thmi:main}
 Let $\cuco X$ be an HHG of rank $\nu$.  Let $f\co\reals^\nu\to\cuco
 X$ be a quasi-isometric embedding.  Then there exist standard
 orthants $Q_i\subseteq \cuco X$, $i=1,\dots,k$, so that
 $\dist_{haus}(f(\reals^\nu),\cup_{i=1}^kQ_i)<\infty$.  More
 generally, the same result holds for any space $\cuco X$ which is an
 asymphoric HHS of rank $\nu$.
\end{thmi}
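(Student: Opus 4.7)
The plan is to apply the hull theorem advertised in the abstract to large finite pieces of the quasiflat so that on each piece $\cuco X$ is modelled by a CAT(0) cube complex of dimension $\nu$, invoke the structure theorem for top-dimensional quasiflats in finite-dimensional CAT(0) cube complexes to approximate $f(B_R(0))$ by combinatorial cubical orthants, lift those cubical orthants to honest standard orthants in $\cuco X$, and finally use the asymphoric hypothesis to conclude that only finitely many standard orthants are needed.

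\textbf{Cubical models and lifting.} For each $R>0$, choose a net in $f(B_R(0))$, let $H_R$ be its hull in $\cuco X$, and apply the hull theorem to produce a CAT(0) cube complex $Y_R$ of dimension at most $\nu$ together with a quasi-isometry $\phi_R\co H_R\to Y_R$ with uniform constants. Composing with $f$ gives a quasi-isometric embedding $g_R\co B_R(0)\to Y_R$ whose image is top-dimensional, so $\dim Y_R=\nu$. The structure of top-dimensional quasiflats in finite-dimensional CAT(0) cube complexes (in the spirit of Huang and of the coarse-median literature) forces $g_R(B_R(0))$ to lie within uniform Hausdorff distance of a union of combinatorial $\nu$-orthants in $Y_R$. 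Since the hyperplanes of $Y_R$ arise, by construction in the hull theorem, from projections to the hyperbolic spaces $\fontact U$ attached to domains $U\in\mathfrak S$, each such cubical $\nu$-orthant carries a canonical label consisting of a pairwise orthogonal tuple $(U_1,\dots,U_\nu)$ in $\mathfrak S$ together with a hierarchy ray in each $\fontact{U_i}$. This is precisely the data of a standard orthant $Q\subset\cuco X$, and pulling back along $\phi_R\inv$ places $Q$ uniformly close to the piece of $f$ it approximates. At each scale $R$ this produces a finite family of standard orthants of $\cuco X$ whose union is Hausdorff-close to $f(B_R(0))$ with constants independent of $R$.

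\textbf{Coherence, finiteness, and main obstacle.} The final task is to show that across all scales only finitely many standard orthants arise. The asymphoric hypothesis is designed precisely to bound the combinatorial complexity of orthogonal $\nu$-tuples of unbounded domains, so that only finitely many tuples $(U_1,\dots,U_\nu)$ can actually support $f(\reals^\nu)$. For each such tuple, hyperbolicity of $\fontact{U_i}$ together with the top-dimensionality of $f$ forces each projection $\pi_{U_i}(f(\reals^\nu))$ to be coarsely one-dimensional, so only finitely many hierarchy rays per coordinate are needed. Combining these two finiteness statements with the uniform per-scale approximation of the previous step yields the finite list $Q_1,\dots,Q_k$ demanded by the theorem. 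The hardest ingredient is the global coherence: a priori the cubical models at scales $R$ and $R'$ could produce unrelated standard orthants, and matching them into a single finite master list is where the asymphoric condition and the HHS projection data must be used carefully, and where the bulk of the technical work lies.
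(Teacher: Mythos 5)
Your outline matches the paper's broad strategy (hulls of finite sets approximate balls in the quasiflat; hulls are cubulated; a cubical quasiflat theorem supplies cubical orthants; these are promoted to standard orthants), but there are two genuine gaps at exactly the points where the real work happens. First, the cubical quasiflat theorem you invoke (Huang's) applies to quasi-isometric embeddings of all of $\reals^\nu$ into a CAT(0) cube complex, not to maps of balls $B_R(0)\to Y_R$; there is no ball version giving orthant approximation with constants independent of $R$, and indeed even for genuine quasiflats the Hausdorff distance to the approximating orthants cannot be bounded in terms of the quasi-isometry constants (the paper notes this explicitly in the ``Controlled distance'' subsection). So your per-scale step already asserts more than is available. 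The paper sidesteps this entirely by passing to a \emph{non-rescaled ultralimit}: the ultralimit $\widehat{\cuco Y}$ of the cube complexes $\cuco Y_i$ is again a CAT(0) cube complex of dimension $\le\nu$ containing (up to a quasimedian quasi-isometry) the image of the globally defined ultralimit quasiflat $\hat f\co\reals^\nu\to\widehat{\cuco X}$, and Huang's theorem is applied once, there. This is also what resolves the ``global coherence'' problem you correctly identify as the crux but do not solve: there is no matching of orthants across scales to be done, because the cubical orthants are produced in a single limiting cube complex and then pulled back to subsets $O_i'\subseteq\reals^\nu$.

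Second, your finiteness/labelling argument is not right. Asymphoricity does not bound the number of complete support sets $(U_1,\dots,U_\nu)$ in $\mathfrak S$ (there are typically infinitely many); it only forbids $\nu+1$ pairwise orthogonal domains with large associated hyperbolic spaces. The finiteness of $k$ in the paper comes for free from Huang's theorem in the ultralimit (and the \emph{uniform} bound on $k$ is a separate volume-growth argument, Proposition~\ref{prop:growth}). Likewise, a cubical $\nu$--orthant at a finite scale is a bounded box, so it does not carry ``a hierarchy ray in each $\fontact{U_i}$''; the actual promotion of a cubical orthant to a standard orthant is Lemma~\ref{lem:quasi_median_top_dimensional}, which takes a quasimedian quasi-isometric embedding of an infinite cubical orthant and uses the rank, asymphoricity, and the hyperbolicity criterion (Corollary~\ref{cor:hyperbolic}) to extract the support set and the rays. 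As written, your argument would need all of these missing ingredients supplied before it closes.
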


We now give a few immediate applications of this theorem and discuss 
related results.

Mapping class groups have been studied for the past two decades using
tools introduced by Masur--Minsky \cite{MasurMinsky:complex2}; these
tools were then further developed in \cite{Behrstock:asymptotic, BKMM}
and elsewhere.  The results of \cite{Behrstock:asymptotic, BKMM,
MasurMinsky:complex2} together show that mapping class groups are
hierarchically hyperbolic; see \cite[Theorem~11.1]{hhs2} for details.
Theorem~\ref{thmi:main}, applied to mapping class groups, resolves a conjecture of
Farb.  Outside of the hyperbolic cases, this question was completely
open.
    
\begin{cori}[Farb Conjecture: Quasiflats theorem for mapping class 
    groups] Any
    top-dimensional quasiflat in the mapping class group lies within a uniformly bounded 
	distance from a finite union of standard flats.
\end{cori}

Although the structure of quasiflats in the mapping class group was 
unsettled, numerous prior results were obtained in pursuit 
of the resolution of this conjecture. One partial 
result in this direction was Behrstock--Minsky's theorem that $\reals^{n}$ can 
only be quasi-isometrically embedded in a given mapping class group 
if $n$ is at most the complexity of the surface 
\cite{BehrstockMinsky:rankconj}.  This established the dimension of 
the top-dimensional quasiflats in the mapping class group. 

Also 
significant are a number of results which give some local control of 
top-dimensional quasiflats in the mapping class group. In particular, see results of 
Behrstock--Kleiner--Minsky--Mosher \cite{BKMM}, Bowditch 
\cite{Bowditch:rigid}, and Eskin--Masur--Rafi \cite{EskinMasurRafi:large_scale_rank}. Although those 
prior results yield some control over quasiflats, Theorem~\ref{thmi:main} is 
the first to completely describe the structure of quasiflats in 
the mapping class group. As we will describe in more detail later, we 
use some of the tools developed by Bowditch in \cite{Bowditch:rigid} in our 
proof of Theorem~\ref{thmi:main}.

Outside of the setting of groups, we apply Theorem~\ref{thmi:main} to the 
Weil-Petersson metric on Teichm\"{u}ller space, which is an asymphoric HHS by virtue of Brock's theorem that the pants 
graph is quasi-isometric to the Weil-Petersson metric \cite{Brock:wp} 
and results of \cite{Behrstock:asymptotic, BKMM,
MasurMinsky:complex2}; for details see \cite[Theorem~G]{hhs1}. 

Brock asked whether every top-dimensional quasiflat in the
Weil-Petersson metric is a bounded distance
from a finite number of top-dimensional flats
\cite[Question~5.3]{Brock:pantsWP}. From Theorem~\ref{thmi:main} we obtain the
following, answering his question in the affirmative.
    
\begin{cori}[Affirmation of Brock's Question: Quasiflats theorem for 
    Weil-Petersson metric] Any top-dimensional quasiflat in the 
    Weil-Petersson metric on Teichm\"{u}ller space lies within a uniformly 
	bounded distance from a finite union of standard flats.
\end{cori}

The previously answered cases of Brock's question were: in the rank
one cases, a positive answer comes from Brock--Farb's result that 
the space is hyperbolic \cite{BrockFarb:curvature}; in the three rank-two cases, Brock--Masur proved that 
the space is relatively hyperbolic and thus that each quasiflat is contained in a single 
peripheral subset \cite[Theorem~3]{BrockMasur:WPrelhyp}.  In the 
general case, there were partial results providing coarse local control; in particular, there are theorems about flats being 
locally contained in linear size neighborhoods of standard flats, e.g., \cite[Theorem~8.5]{BKMM} and 
\cite[Theorem~A]{EskinMasurRafi:large_scale_rank}.

Fundamental groups of non-geometric $3$--manifolds are HHSs 
of rank 2 \cite[Theorem 10.1]{hhs2}.  For these groups, Theorem~\ref{thmi:main}
allows us to recover the following quasiflats theorem, which was first 
established by  Kapovich--Leeb:

\begin{cori}[Quasiflats theorem for non-geometric $3$--manifolds; 
    \cite{KapovichLeeb:haken}] Any top-dimensional quasiflat in a 
    non-geometric $3$--manifold is a uniformly bounded distance from a 
    finite union of standard flats.
\end{cori}

Some quasiflat theorems have previously been obtained 
for CAT(0) spaces satisfying particular conditions. 

One such result is due to Bestvina--Kleiner--Sageev, who proved that, for two-dimensional, proper, 
piecewise Euclidean CAT(0) complexes admitting cocompact group actions, every two-dimensional quasiflat lies within  
finite distance from a subset which is locally flat outside a compact 
set \cite{BKS:quasiflatsCAT0}. 

Generalizing that result, Huang proved 
that in an $N$--dimensional CAT(0) cube complex, every 
$N$--dimensional quasiflat lies within finite distance from a 
finite union of standard orthants \cite{Huang:quasiflats}. 
The following corollary of Theorem~\ref{thmi:main}  
generalizes the above noted theorems of \cite{BKS:quasiflatsCAT0} and \cite{Huang:quasiflats} in certain cocompact
cases:

\begin{cori}[Quasiflats theorem for cubulated groups with factor 
	systems] 
    \label{cori:cubulated}
 Let $\cuco X$ be a CAT(0) cube complex admitting a \emph{factor system} in the sense of~\cite{hhs1}.  Let $\nu$
 be the maximum
 dimension of an $\ell_1$--isometrically embedded cubical orthant in
 $\cuco X$.  Let $f\co \reals^\nu\to\cuco X$ be a quasi-isometric
 embedding.  Then
 $\dist_{haus}(f(\reals^\nu),\cup_{i=1}^kQ_i)<\infty$, where each
 $Q_i$ can be chosen to be: 
 \begin{itemize}
  \item an $\ell^1$--isometrically embedded copy of the standard cubical tiling of $[0,\infty)^\nu$, or
  \item if $\cuco X$ admits a cocompact group action,  
a $CAT(0)$--isometrically embedded copy of $[0,\infty)^\nu$ with the Euclidean metric.
 \end{itemize}
\end{cori}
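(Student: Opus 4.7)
The plan is to deduce the corollary from Theorem~\ref{thmi:main} by translating the HHS vocabulary into cubical terms. Any CAT(0) cube complex $\cuco X$ with a proper cocompact group action inherits a natural hierarchically hyperbolic structure from its factor system, and since this is the HHS structure of a hierarchically hyperbolic group, the asymphoric hypothesis holds automatically. It then suffices to identify the HHS rank of $\cuco X$ with the integer $\nu$ in the statement, and to identify the standard orthants of $\cuco X$ (in the HHS sense) with $\ell^1$-isometrically embedded cubical orthants; once this is in place, Theorem~\ref{thmi:main} immediately yields the first bullet.

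The heart of the proof is this rank/orthant correspondence. A pairwise orthogonal family $U_1,\dots,U_k\in\mathfrak S$ with each $\fontact{U_i}$ unbounded corresponds to $k$ mutually orthogonal unbounded convex subcomplexes sharing a common product subcomplex in $\cuco X$; picking a combinatorial geodesic ray in each factor produces an $\ell^1$-isometrically embedded cubulation of $[0,\infty)^k$. Conversely, any $\ell^1$-isometric cubical orthant in $\cuco X$ of dimension $k$ gives rise via the hyperplane structure to such a configuration of orthogonal unbounded factors, so the maximal $k$ agrees with $\nu$. The products of hierarchy rays parametrising HHS standard orthants are, under this dictionary, precisely the $\ell^1$-cubical orthants. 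Applying Theorem~\ref{thmi:main} then produces the desired finite Hausdorff bound between $f(\reals^\nu)$ and a finite union of $\ell^1$-cubical orthants.

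For the second bullet, observe that each $\ell^1$-isometric cubical orthant $Q_i$ has the same underlying point set as a CAT(0)-isometric copy of Euclidean $[0,\infty)^\nu$: one simply replaces the $\ell^1$ metric on the cubulation by the Euclidean metric on each cube. On any $\nu$-dimensional cubical subcomplex the $\ell^1$ and CAT(0) metrics are bilipschitz equivalent (with constants depending only on $\nu$), and, under proper cocompact action, the two metrics on $\cuco X$ itself are quasi-isometric, so a finite Hausdorff distance statement in one metric transfers to the other. The main obstacle is the rank/orthant correspondence: the cubical notion of orthant is expressed in terms of hyperplanes and convex subcomplexes, while the HHS notion lives in the factor-system data, and some care is needed to match them precisely. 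The needed technology is already available in the HHS and cubical literature, so once the dictionary is assembled the corollary reduces cleanly to Theorem~\ref{thmi:main}.
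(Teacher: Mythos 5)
Your treatment of the first bullet follows the paper's route (invoke the HHS structure on a cocompact cube complex, note that hierarchy rays are combinatorial geodesics so standard orthants are $\ell^1$-embedded cubical orthants, apply Theorem~\ref{thmi:main}), and is essentially fine. The gap is in the second bullet. You assert that an $\ell^1$-isometrically embedded cubical orthant "has the same underlying point set as a CAT(0)-isometric copy of Euclidean $[0,\infty)^\nu$: one simply replaces the $\ell^1$ metric on the cubulation by the Euclidean metric on each cube." This is false. A product $\prod_{i=1}^\nu\gamma_i$ of combinatorial geodesic rays is indeed abstractly isomorphic to the standard cubical tiling of $[0,\infty)^\nu$, and its \emph{intrinsic} CAT(0) metric is Euclidean; but the corollary demands a CAT(0)-\emph{isometrically embedded} copy, i.e.\ one whose subspace metric in $(\cuco X,\dist_{CAT(0)})$ is Euclidean. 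A combinatorial geodesic ray is typically a staircase that is not a CAT(0) geodesic and whose image is not CAT(0)-convex (already in the standard cubulation of $\reals^2$, a staircase is $\ell^1$-isometrically embedded but only bi-Lipschitz, with constant $\sqrt2$, for the Euclidean metric). Your bi-Lipschitz/Hausdorff-distance transfer argument therefore produces bi-Lipschitz flats, not the isometrically embedded Euclidean orthants the statement requires.

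The paper closes this gap with an argument you are missing, and it is the only place where cocompactness is used beyond setting up the HHS structure. One replaces each ray $\gamma_i$ by its combinatorial convex hull $\cuco Y_i$, so that the hull of the cubical orthant is the genuinely convex product $\prod_i\cuco Y_i$; each $\cuco Y_i$ contains a CAT(0) geodesic ray crossing all of its hyperplanes, and the product of these rays is an honest isometrically embedded Euclidean orthant. One must then show each $\cuco Y_i$ lies at uniformly bounded Hausdorff distance from $\gamma_i$: if not, one finds $\ell^1$-isometric squares $[0,m]^2\to\cuco Y_i$ for all $m$, hence $\ell^1$-isometric copies of $[0,m]^2\times[0,\infty)^{\nu-1}$ in $\cuco X$, and cocompactness then yields a $(\nu+1)$-dimensional cubical orthant, contradicting the maximality of $\nu$. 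Without some version of this step your proof of the second bullet does not go through.
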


It was established in \cite{hhs1} that all CAT(0) cube complexes with
proper, cocompact, cospecial (in the sense of Haglund-Wise~\cite{HaglundWise}) group actions admit factor systems.  More
generally, it is shown in~\cite{HagenSusse} that a CAT(0) cube complex
$\cuco X$ has a factor system whenever it admits a proper cocompact
action by a group $G$ satisfying any one of a number of natural
algebraic conditions, e.g., finite height for hyperplane stabilizers
or other weak versions of virtual cospecialness of the $G$--action.
In fact, that paper contains a characterization of actions that give
rise to a factor system.  We are not aware of any proper CAT(0) cube
complex that admits a proper cocompact group action but does not
contain a factor system (indeed, we have conjectured 
that all cubical groups admit factor systems, see \cite[Conjecture 
A]{hhs2}).

\begin{proof}[Proof of Corollary~\ref{cori:cubulated}]
As shown in~\cite{hhs1}, $\cuco X^{(1)}$ with the combinatorial
metric admits an HHS structure based on the construction
in~\cite[Section 8]{hhs1}.  In particular, the hierarchy paths/rays in
$\cuco X^{(1)}$ are combinatorial geodesics, so standard
$\nu$--orthants can be taken to
be $\ell_1$--embedded copies of the standard cubical tiling of
$[0,\infty)^\nu$. By Theorem~\ref{thmi:main} we are done, if we 
choose all our $Q_{i}$ to be of the first type listed above.

To conclude, it suffices to produce $N$ so that for any
$\ell_1$--isometric embedding $o\co \prod_{i=1}^\nu\gamma_i\to\cuco X$
with $\gamma_i$ a combinatorial geodesic ray, there is a CAT(0)
orthant $o'$ with $\dist_{haus}(\image (o),o')\leq N$.  For each $i$,
let $\cuco Y_i$ be the convex hull of $\gamma_i$, i.e., the
intersection of all combinatorial half-spaces containing $\gamma_i$.
Then the hull of $\image (o)$ decomposes as $\prod_{i=1}^\nu\cuco Y_i$.
Since $\cuco Y_i$ contains a CAT(0)--geodesic ray crossing all
hyperplanes, it suffices to show that $\cuco Y_i$ lies uniformly close
to $\gamma_i$.  But if there is no such bound, then for any $m$, we
can choose $o$ so that for some $i$, we have an $\ell_1$--isometric
embedding $[0,m]^2\to\cuco Y_i$, and thus an $\ell_1$--isometric
embedding $[0,m]^2\times[0,\infty)^{\nu-1}\to\cuco X$.  Cocompactness
would then allow us to produce a $(\nu+1)$--dimensional cubical
orthant in $\cuco X$, which is impossible by our choice of $\nu$.
\end{proof}

The quasiflats in Corollary~\ref{cori:cubulated} may have
dimension strictly less than the dimension of $\cuco X$, since a cube
complex may contain cubes of high dimension that are not contained in
cubical orthants. For instance, there exist hyperbolic (and hence
rank one) cubulated groups, whose associated CAT(0) cube complexes have
arbitrarily large dimension.  

In this sense, this corollary is
stronger than the main result in~\cite{Huang:quasiflats}; our
result applies even if the dimension is larger than the rank. On the
other hand, in practice, the construction of a factor system relies on
a geometric group action, a hypothesis not needed in the
context of \cite{Huang:quasiflats}. 

Although our results, applied in the cubical case, generalize some of those of \cite{Huang:quasiflats}, our proof is 
obtained by passing from the hierarchically hyperbolic space setting to a CAT(0) cube complex where the 
dimension equals the rank and then using Huang's theorem.  Specifically, we use the \emph{cubical approximations}, discussed 
immediately below, 
to construct a CAT(0) cube complex to which we can apply Huang's result, \cite[Theorem 1.1]{Huang:quasiflats}. So, Huang's result 
is a crucial ingredient in our work.

\subsection*{Approximating with cube complexes}
A key insight in the geometry of hyperbolic spaces is 
that in certain respects, they ``coarsely look like trees''; Gromov, in his famous treatise on 
the subject, introduced a number of ways in which 
this idea can be made precise \cite{Gromov}. One such statement is: in a hyperbolic space, the coarse convex hull of any 
finite set of points can be uniformly approximated by a geodesic tree \cite[\S 6.2 Geodesic trees]{Gromov}. 

It is now well understood that CAT(0) cube complexes are a natural generalization of trees.  Two important aspects of this 
idea are:

\begin{itemize}
     \item In a simplicial tree, the midpoint of any edge separates the tree into two complementary components.  In a CAT(0) 
cube complex, the midpoint of each edge is contained in a \emph{hyperplane}, a codimension--$1$ subspace with exactly two 
complementary components.  The revolutionary work of Sageev~\cite{Sageev}, elaborated later 
in~\cite{ChatterjiNiblo,Nica,HruskaWise}, shows that very general set-theoretic data --- a \emph{wallspace}, i.e. a set 
equipped with a suitable collection of bipartitions --- determines a CAT(0) cube complex in a canonical way.  We need this 
in Section~\ref{sec:hull_level}.

\item In a simplicial tree, any three vertices determine a unique geodesic tripod consisting of three geodesics, each of 
which joins two of the given points.  The intersection of the three geodesics is a single vertex, the \emph{median} of the 
three points.  Generalizing this, one obtains the class of \emph{median graphs}, i.e. graphs where each triple of vertices 
spans at least one metric tripod, all of which have the same center.  Chepoi showed~\cite{Chepoi} that there 
is a bijective correspondence between  one-skeleta of CAT(0) cube complexes and median graphs.  The median viewpoint on 
CAT(0) cube complexes is very useful, and we adopt it in various ways in this paper.  
\end{itemize}

In Section~\ref{sec:hull_level}, we generalize  
Gromov's theorem about hyperbolic groups to the setting of hierarchically hyperbolic 
spaces. Roughly, the theorem we prove establishes that the ``convex hull'' of a finite set $A$, denoted 
$H_\theta(A)$, is approximated by a finite CAT(0) cube complex.

This result provides a new tool for studying hierarchically hyperbolic 
spaces. Indeed, it is one of the key innovations which allows us to 
apply Huang's theorem about quasi-flats in CAT(0) cube complexes 
 \cite{Huang:quasiflats} to prove Theorem~\ref{thmi:main} about quasiflats in HHSs. Further, we expect that Theorem~\ref{thm:cubulated_hulls} will 
have a number of applications beyond those of this paper. A sketch of the 
proof of this result is provided later in the introduction.

\begin{thmi}[Approximation of convex hulls in HHSs by CAT(0) cube complexes]\label{thm:cubulated_hulls}
  Let $\cuco X$ be an asymphoric HHS of rank $\nu$.  Then for any $N$
  there exists $C$ so that the following holds.  Let $A\subseteq \cuco
  X$ have cardinality at most $N$.  Then there exists a CAT(0) cube
  complex $\cuco Y$ of dimension at most $\nu$ and a $C$--quasimedian
  $(C,C)$--quasi-isometry $\mathfrak p_A\co \cuco Y\to H_\theta(A)$.
\end{thmi}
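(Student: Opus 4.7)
The plan is to approximate each relevant coordinate hyperbolic space by a finite tree and then dualize the resulting wall structure via Sageev's construction. Fix $A\subseteq \cuco X$ with $|A|\leq N$, and let $\relevant_\theta(A)\subseteq\mathfrak S$ denote the finite set of \emph{relevant} domains, i.e.\ those $U$ for which $\pi_U(A)\subseteq\fontact U$ has diameter exceeding a threshold determined by $\theta$ and the HHS constants. For each $U\in\relevant_\theta(A)$, use hyperbolicity of $\fontact U$ together with $|A|\leq N$ to approximate $\hull(\pi_U(A))\subseteq\fontact U$ by a finite simplicial tree $T_U$, with approximation error depending only on $N$ and on the hyperbolicity constant of $\fontact U$.

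Each edge of $T_U$ separates $T_U$ into two pieces, and hence, via $\pi_U$, produces a coarse bipartition of $H_\theta(A)$; extending the bipartition to $H_\theta(A)$ using gate maps yields a wall. Let $\cuco Y$ be the CAT(0) cube complex dual to this finite wall space. Define $\mathfrak p_A\co\cuco Y\to H_\theta(A)$ by sending a $0$-cube of $\cuco Y$ (a consistent orientation of the walls) to any point whose projection to each $\fontact U$ realizes the prescribed side of every edge of $T_U$; existence and coarse uniqueness of such a point follow from the HHS realization theorem applied to the consistent tuple read off from the orientation.

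The main obstacle is the dimension bound $\dimension\cuco Y\leq\nu$. A cube of $\cuco Y$ is indexed by a family of pairwise crossing walls, and the central technical step is to show that two walls coming from domains $U,V\in\relevant_\theta(A)$ can cross within $H_\theta(A)$ only if $U\orth V$: if instead $U\nest V$ or $U,V$ are transverse, the consistency and bounded-geodesic-image axioms of an HHS force the associated halfspaces to nest, up to bounded error depending only on $N$ and $\mathfrak S$. Consequently, every cube of $\cuco Y$ corresponds to a pairwise orthogonal family of domains, whose cardinality is controlled by the rank $\nu$ precisely because $\cuco X$ is asymphoric.

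Given the dimension bound, the quasi-isometry property follows from the HHS distance formula: the $\ell^1$-metric on $\cuco Y$ records, up to bounded error and the usual threshold cutoff, the sum of $\fontact U$-distances between coordinates of $\mathfrak p_A$-images, which is coarsely $\dist_{\cuco X}$ on $H_\theta(A)$. The quasimedian property then reduces to three observations: the coarse median in $\cuco X$ is computed coordinate-wise on each $\fontact U$, the tree median on each $T_U$ approximates the median on $\hull(\pi_U(A))$, and the cube-complex median on $\cuco Y$ is the product of the tree medians on the $T_U$. The subtle points throughout are tracking all errors uniformly in $A$ (so that they depend only on $N$ and the HHS data) and verifying the implication ``walls crossing $\Rightarrow$ domains orthogonal'' with constants independent of $|A|$, which is where the full strength of the asymphoric hypothesis is used.
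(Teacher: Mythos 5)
Your overall strategy is the same as the paper's (trees $T_U$ approximating $\hull_{\fontact U}(\pi_U(A))$, walls pulled back from the trees, the dual cube complex, realization of consistent tuples, ``cross $\Rightarrow$ orthogonal'' for the dimension bound, the distance formula for the quasi-isometry), but there is a genuine gap in the step you yourself flag as central. It is \emph{not} true that for arbitrary wall positions on $T_U$ and $T_V$ with $U\propnest V$ the halfspaces must nest. Concretely, suppose $U\propnest V$ are both relevant and you place a wall point of $T_V$ at (or within $E$ of) $\rho^U_V$. The consistency axiom for nested domains only constrains $\pi_U(z)$ when $\pi_V(z)$ is \emph{far} from $\rho^U_V$, and bounded geodesic image only controls $\pi_U$ along geodesics in $\fontact V$ that \emph{avoid} a neighborhood of $\rho^U_V$. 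So points of $H_\theta(A)$ projecting just on either side of such a wall in $T_V$ can have arbitrary projections to $\hull_{\fontact U}(\pi_U(A))$, and that wall genuinely crosses every wall of $T_U$; the dimension bound then fails. The paper's construction avoids this by choosing, on each tree $T_V$, an $M$-separated net of wall points that is required to stay $M$-far from the leaves of $T_V$ \emph{and} from the points $r^W_V$ closest to $\rho^W_V$ for each relevant $W\propnest V$ (the transverse case is then handled by the observation that $\rho^W_V$ is forced to lie near a leaf). This placement constraint is also what makes the tuple read off from a coherent orientation consistent and uniformly bounded in each coordinate, which you assert but which is where the realization step actually requires work.

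Two further points. First, taking one wall per simplicial edge of $T_U$ gives too few walls: the natural edges of a spanning tree of $\pi_U(A)$ can be arbitrarily long, so the wall-counting metric on $\cuco Y$ would not be comparable to $\dist_{\cuco X}$; you need a uniformly separated and uniformly dense net of wall points (this is easily repaired, but it is not what you wrote). Second, the lower bound $\dist_{\cuco Y}(p,q)\lesssim\dist_{\cuco X}(\mathfrak p_A(p),\mathfrak p_A(q))$ does not follow directly from the distance formula: one must rule out that many walls separate the orientations $p,q$ while the realization points have nearly equal coordinates. The paper controls these ``separators'' with a Ramsey argument plus the passing-up lemma (Lemmas~\ref{lem:Ramsey} and~\ref{lem:same_walls}); some argument of this kind is needed and is missing from your outline.
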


A new proof of the preceding theorem, in a slightly more general
context, was given by Bowditch~\cite{Bowditch:hulls}, motivated by an 
early version of this paper.

Any HHS is coarse median in the sense
of Bowditch~\cite{Bowditch:coarse_median}, as shown in \cite[Section 7]{hhs2}. 
In the coarse median setting, there are several interesting 
precursors to our theorem. One which was particularly inspirational 
to us was Bowditch's result that in the asymptotic cone of a finite 
rank coarse median metric space, any top-dimensional closed Euclidean 
flat is cubulated, see \cite[Proposition 1.2]{Bowditch:rigid}. 
We will use Bowditch's result about cubulating 
top-rank Euclidean subsets in a complete median space in 
order to apply our result about cubulating arbitrary finite sets in an HHS.

In the coarse median setting, one has, by definition, 
``cubical approximations of finite sets,'' and there is also a 
nice metric
approximation result given by Zeidler \cite[Theorem 6.2]{Zeidler}. The approximation given by 
Theorem \ref{thm:cubulated_hulls} has stronger properties, as it provides an
approximation of the entire convex hull, whereas 
the \emph{quasimedian map} from a finite median algebra provided by the
coarse median property can be very far from having uniformly
(hierarchically) quasiconvex image.  (To see the distinction, consider
the case where $\cuco X=\integers^2$ and $A=\{(0,0),(n,n)\}$ for some
$n\geq0$.  Then the $\cuco Y$ provided by
Theorem~\ref{thm:cubulated_hulls} is a $n$--by--$n$ square, while the
$2$--point median algebra $\{(0,0),(n,n)\}$ satisfies the requirements of the
definition of a coarse median space, and is a ``metric approximation'' in
the sense of \cite{Zeidler} when endowed with the natural metric.)

Theorem~\ref{thm:cubulated_hulls} allows us to control the rank of
$\cuco X$ as a coarse median space more precisely than we did
in~\cite{hhs2}; see Corollary~\ref{cor:rank_coarse_median}.  This also
leads to a characterization of hierarchically hyperbolic spaces which
are hyperbolic, which we establish as Corollary~\ref{cor:hyperbolic}.

\subsection*{Induced quasi-isometries on factored spaces and
quasi-isometric classification} In \cite[\S 2]{hhs3}, we introduced
the notion of \emph{factored spaces} of an HHS. These are obtained
from a given HHS by ``coning off'' a collection of product regions. Factored spaces are HHSs themselves, with respect to a 
substructure of the original HHS structure. Factored spaces are central in the proof of finite
asymptotic dimension~\cite{hhs3}. 

A notable naturally-occurring example is that the Weil-Petersson metric on 
Teichm\"uller space is quasi-isometric to a factored space of the corresponding mapping
class group. In any HHS, we proved in \cite[Corollary~2.9]{hhs3} 
that there exists a factor space which is quasi-isometric to 
$\fontact S$ for the $\nest$--maximal element $S$ (e.g., for the 
mapping class group of a surface $S$ then 
$\fontact S$ is the curve graph of $S$).

In Theorem \ref{thm:cone_bound} we use the Quasiflats Theorem as a 
starting point to show that the image of any quasiflat in a certain
factored space is bounded. For now, we just state a new result about
mapping class groups which is a special case of Theorem
\ref{thm:cone_bound}:

\begin{thmi}[Quasiflats have finite diameter $\fontact S$ projection]\label{thmi:MCG}
 Let $(\cuco X,\mathfrak S)$ be the mapping class group of a
 non-sporadic surface $S$.  Then for every $K$ there exists $L$ so
 that any $(K,K)$--quasi-isometric embedding $f\co\reals^\nu\to
 \cuco X$ satisfies $\diam_{\fontact S}(\pi_S(f(\reals^\nu)))\leq L$.
\end{thmi}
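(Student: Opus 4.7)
The plan is to reduce the statement to the fact that each individual standard orthant has uniformly bounded projection to $\fontact S$, then invoke the Quasiflats Theorem to cover $f(\reals^\nu)$ by finitely many such orthants. Applying Theorem~\ref{thmi:main} to $f$, using that $\MCG(S)$ with the HHS structure of~\cite[Theorem~11.1]{hhs2} is asymphoric, produces standard orthants $Q_1,\dots,Q_k$ with $\dist_{haus}(f(\reals^\nu),\bigcup_{i=1}^k Q_i)\leq D$. For the present statement I will need both $k$ and $D$ to depend only on $K$; this quantitative refinement should be available from the proof of Theorem~\ref{thmi:main} itself.

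Next I would analyze one such orthant $Q$. By definition $Q$ sits inside a standard product region $P_{\vec U}$ associated to pairwise orthogonal $U_1,\dots,U_\nu\in\mathfrak S$ with each $\fontact U_{i}$ unbounded. In the MCG structure, $\mathfrak S$ is indexed by isotopy classes of essential subsurfaces of $S$ and orthogonality is disjointness. Since $S$ is non-sporadic we have $\nu\geq 2$, and since $S$ is disjoint from no non-empty subsurface of itself, none of the $U_i$ can equal $S$; hence each $U_i$ is a proper subsurface, and in particular $U_i\propnest S$. Consequently, for $V=S$ we have $V\not\nest U_i$, $V\not\perp U_i$, and $V\neq U_i$ for every $i$. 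The consistency axioms for an HHS then ensure that $\pi_S(P_{\vec U})$, and in particular $\pi_S(Q)$, has diameter uniformly bounded by a constant $L_0$ depending only on $\mathfrak S$.

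Combining the two ingredients, since $\pi_S$ is coarsely Lipschitz with constants depending only on $\mathfrak S$, the Hausdorff approximation upgrades to $\diam_{\fontact S}(\pi_S(f(\reals^\nu)))\leq L$, with $L$ a function of $k$, $D$, $L_0$ and the Lipschitz constant, hence of $K$ alone. The main delicate point is extracting the uniformity of $k$ and $D$ in $K$ from Theorem~\ref{thmi:main}; the per-orthant projection bound, though the geometric heart of the statement, is essentially immediate from the HHS axioms once the combinatorial observation $U_i\propnest S$ has been made.
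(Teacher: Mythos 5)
There is a genuine gap, and it sits exactly at the point you flag as the ``main delicate point.'' Your argument needs the Hausdorff distance $D=\dist_{haus}(f(\reals^\nu),\bigcup_i Q_i)$ to depend only on $K$, but this uniformity is not available: the paper states explicitly (opening of the ``Controlled distance'' subsection) that, as in the cubical case, one cannot give an effective bound on the Hausdorff distance between a quasiflat and its approximating union of orthants. What \emph{is} uniform is the number of orthants (Theorem~\ref{thm:uniform}) and the weaker containment $f(\reals^\nu)\subseteq\neb_L(H_\theta(\bigcup_{i=1}^N O_i))$ with $L=L(K)$ (Lemma~\ref{lem:flats_close_to_hulls} and Corollary~\ref{cor:flats_close_to_hulls}); the paper's proof of Theorem~\ref{thm:cone_bound} (of which Theorem~\ref{thmi:MCG} is a special case) is built on the hull statement, together with the observation that $q(H_\theta(A))$ coarsely coincides with $H_\theta(q(A))$, so that a bound on $\diam\, q(\bigcup_i O_i)$ suffices. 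Your per-orthant bound on $\pi_S(Q)$ is correct (it is Remark~\ref{rem:bounded_proj} together with the observation that $S$ cannot lie in a complete support set when $\nu\geq 2$), but the reduction to it is broken.

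A second, related issue: even granting that each $\pi_S(Q_i)$ has diameter at most $L_0$, the union of $k$ such sets need not have diameter $kL_0$ unless the orthants are known to be close to one another in $\fontact S$. With a uniform $D$ this would follow from coarse connectedness of $f(\reals^\nu)$, but since $D$ is not uniform you need a substitute. The paper supplies this with Proposition~\ref{prop:orthant_chain} (any two orthants in the decomposition are joined by a chain in which consecutive orthants coarsely intersect in a codimension-$1$ sub-orthant) together with Lemma~\ref{lem:ass_bump}, which bounds the image of such a pair in the factored space. So to repair your proof you would replace the Hausdorff approximation by the hull approximation of Corollary~\ref{cor:flats_close_to_hulls}, note that $\pi_S(H_\theta(\bigcup_i O_i))$ lies $\theta$--close to $\hull_{\fontact S}(\pi_S(\bigcup_i O_i))$, and then bound $\diam_{\fontact S}(\pi_S(\bigcup_i O_i))$ by chaining via Proposition~\ref{prop:orthant_chain}; this is essentially the paper's route.
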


In Corollary \ref{cor:induced_qi}, we prove that any quasi-isometry 
between HHSs (satisfying a mild condition) induces a quasi-isometry of the factored
spaces obtained by coning off the standard product regions containing
top-dimensional quasiflats.  This is very important because one can
extract further information about the original quasi-isometry from the
induced quasi-isometry on factored spaces, and even take further factored
spaces for additional data.  This is totally unexplored territory, since, for
example, it provides a way to study quasi-isometries of CAT(0) cube
complexes that requires leaving the world of cube complexes.  

We expect this strategy to be crucial to prove quasi-isometric rigidity
results for, say, right-angled Artin and Coxeter groups.
We discuss this in more detail below; for now we just give
an example of two right-angled Artin groups whose quasi-isometry
classes can be distinguished using this method, but not by any other known
methods: see Figure \ref{fig:two_graphs}.  

The obstruction to their 
being quasi-isometric is that, despite having the same rank, their factored
spaces as in Corollary \ref{cor:induced_qi} have different rank (which
is a quasi-isometry invariant by Theorem \ref{thm:rank_inv}).  We note
that the graphs we chose do not fit the hypotheses of
\cite{Huang:finite,Huang:infinite}, or that of any other class of 
right-angled Artin groups which have been classified including those 
considered in \cite{BehrstockNeumann:qigraph, 
BehrstockJanuszkiewiczNeumann:highdimartin, BestvinaKleinerSageev:RAAG1}.

\begin{figure}[h]
 \includegraphics[width=0.4\textwidth]{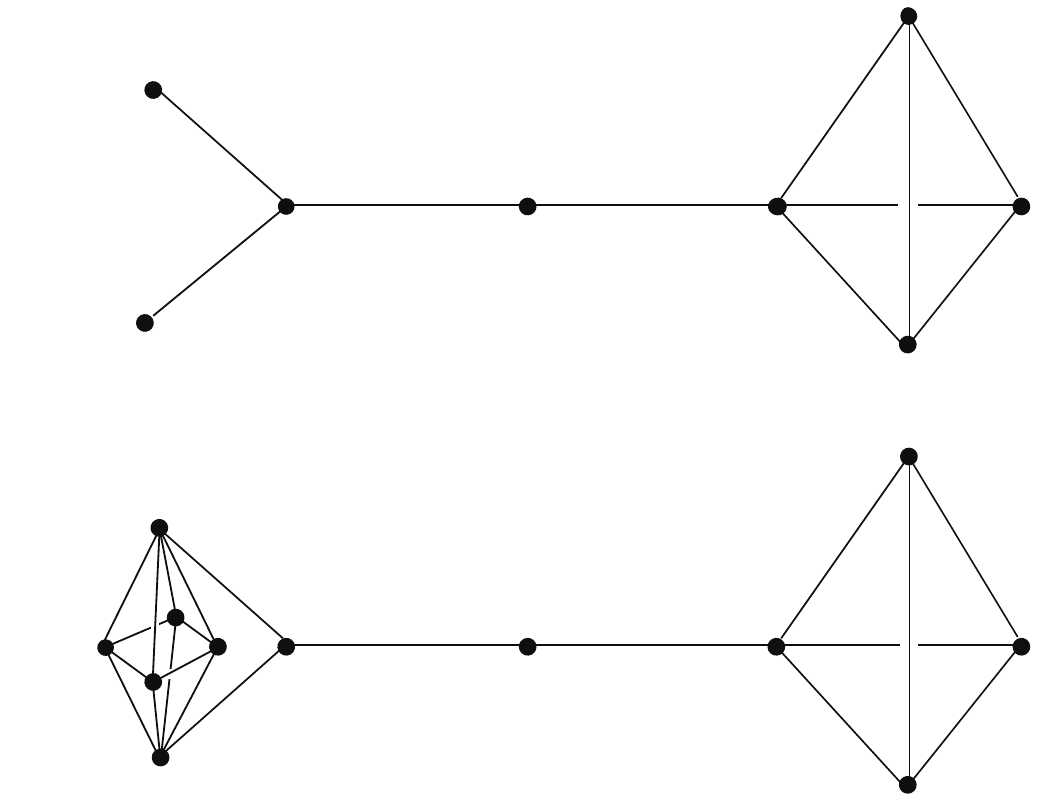}
 \caption{The right-angled Artin groups associated to the two graphs both have rank 4. However, the 4-dimensional flats get collapsed in the corresponding factored spaces, leaving only 2-dimensional flats in the case of the first RAAG, while there are 3-dimensional flats that persist in the case of the second RAAG.}\label{fig:two_graphs}
\end{figure}

\subsection*{Induced automorphisms of combinatorial data and 
quasi-isometric rigidity}

The Quasiflats Theorem provides a powerful tool for
proving quasi-isometric rigidity results for various HHSs, e.g. right-angled Artin and Coxeter groups.  In fact, the 
set of quasiflats and, more importantly, their intersection patterns, can be 
easily converted into purely combinatorial data.  

In good cases, one
can extract from the output of the Quasiflats Theorem (and with
basically no further knowledge about the geometry of the HHS) an automorphism of a
combinatorial structure encoding the data, and therefore reduce
proving quasi-isometric rigidity to proving that a certain
combinatorial structure is ``rigid''.  The kind of combinatorial
structure that the reader should keep in mind is $\mathfrak S$ endowed
with the partial order  given by nesting, $\nest$,  and the symmetric 
relation of orthogonality, $\orth$.

Rather than a general but complicated statement, we give a template
for this procedure.  In Theorem \ref{thm:clique_map} we give an
example of the combinatorial automorphism one can extract from a
quasi-isometry, under additional assumptions on the HHS. These
additional assumptions are satisfied by mapping class groups.
Accordingly, in Section~\ref {subsec:mcgqi}, we
use Theorem \ref{thm:clique_map} to give a new proof of
quasi-isometric rigidity of mapping class groups which, once we have 
established Theorem~\ref{thmi:main}, requires simpler combinatorial considerations than previous proofs, cf.\ 
\cite{BKMM,Bowditch:rigid,Hamenstadt:geometry}.

\begin{thmi}[QI rigidity for mapping class groups; \cite{BKMM}]\label{thm:mcg_rigid}
 Let $\cuco X$ be the the mapping class group of a non-sporadic
 surface $S$.  Then for any $K$ there exists $L$ so that: for each $(K,K)$--quasi-isometry $f\co\cuco X\to\cuco X$ there 
exists a mapping class $g$ so that $f$ $L$--coarsely coincides with
 left-multiplication by $g$.
\end{thmi}

Theorem \ref{thm:clique_map} applies to other spaces and groups as
well, including, for example, right-angled Artin groups with no triangles and no leaves in their
presentation graph, and fundamental groups of non-geometric graph
manifolds.  Variations of Theorem \ref{thm:clique_map} can be tailored
to treat other families of groups as well.

In the case of mapping class groups, there is no need to pass to
factored spaces, but in other contexts (e.g., the right-angled 
Artin groups in Figure \ref{fig:two_graphs}) 
the induced quasi-isometries on factored spaces provide extra
combinatorial data.  

In the study of right-angled Artin and Coxeter groups our results 
allow one to reduce the question of 
quasi-isometric rigidity to the following type of combinatorial 
problem, which we believe is of 
independent interest.

Let $\Gamma$ be a finite simplicial graph, and let
$B_\Gamma$ be either the associated right-angled Artin group or the
associated right-angled Coxeter group.  Recall from \cite[Section 8]{hhs1} that the standard hierarchically 
hyperbolic structure on such a group is obtained by setting $\mathfrak
S_{\Gamma}=\{gB_\Lambda\}/_\sim$, where $g\in B_\Gamma$ and $\Lambda$
is an induced subgraph of $\Gamma$, where $\sim$ is the equivalence
relation defined by $g B_\Lambda\sim h B_\Lambda$ if $g^{-1}h\in
B_{star(\Lambda)}$, and where $star(\emptyset)=\Gamma$ (i.e., $g^{-1}h$
commutes with each $b\in B_\Lambda$).  Declare $[gB_\Lambda]\nest
[gB_{\Lambda'}]$ if $\Lambda\subseteq \Lambda'$ and $[gB_\Lambda]\orth
[gB_{\Lambda'}]$ if $\Lambda\subseteq link(\Lambda')$.
Answers to the following can be used to obtain results on the problems of quasi-isometric 
rigidity and classification:

\begin{probi} 
Study the automorphism group $\Aut(\mathfrak S_\Gamma,\nest,\orth)$ of $(\mathfrak S_\Gamma,\nest,\orth)$. When is every 
element of $\Aut(\mathfrak S_\Gamma,\nest,\orth)$ induced by left multiplication by an element of $B_\Gamma$? When is every 
element of $\Aut(\mathfrak S_\Gamma,\nest,\orth)$ ``induced'' by an automorphism of $B_\Gamma$? (Not all automorphisms of 
$B_\Gamma$ need to ``induce'' an automorphism of $(\mathfrak S_\Gamma,\nest,\orth)$; which ones do?)
\end{probi}

Theorem~\ref{thm:clique_map} states that, under three natural
assumptions, a quasi-isometry $f\co(\cuco X,\mathfrak S)\to(\cuco
Y,\mathfrak T)$ induces a bijection from the set of \emph{hinges} of
$\cuco X$ to that of $\cuco Y$; a \emph{hinge} in $\cuco X$ is a pair
$(U,p)$ with $U\in\mathfrak S$ and $p\in\boundary \fontact U$, where $U$ has
the additional property that $U\in\{U_i\}_{i=1}^\nu$ where $\nu$ is
the rank of $\cuco X$, each $\fontact U_i$ is unbounded, and the $U_i$
are pairwise-orthogonal.

Since it preserves orthogonality, this bijection determines a
simplicial isomorphism from the union of the top-dimensional simplices
of the HHS boundary $\boundary\cuco X$ to $\boundary\cuco
Y$ (see~\cite{HHS_boundary} for more on the HHS boundary and its
simplices).  One should be able to articulate natural conditions
defining a subclass of HHSs for which one can use this map, perhaps
in conjunction with Section~\ref{sec:factored}, to pass from a
quasi-isometry to a map between HHS boundaries.

\subsection*{Sketch of the proof of the Cubulation of Hulls Theorem}
We provide here a sketch of the proof of 
Theorem~\ref{thm:cubulated_hulls} 
(Approximation of convex hulls in HHSs by CAT(0) cube complexes). This is 
one of the main tools we develop in this paper, allowing one to use ideas from the 
world of cube complexes to study HHSs. This 
result plays a crucial role in the proof of Theorem~\ref{thmi:main} (Quasiflats 
Theorem for HHSs). The full proof is given in 
Section~\ref{sec:hull_level}.

A hierarchically hyperbolic space can be roughly thought of as a subset of the product of a
(typically infinite) collection of hyperbolic spaces. This subset has 
the property that  its projection to any direct product of two factors is surjective if 
and only if those two factors are ``orthogonal.'' This allows 
one to move back and forth between properties of the HHS, $\cuco X$, and 
properties in the associated hyperbolic spaces, $\{\fontact U\}$. 
Here is a construction from \cite[\S 6]{hhs2} that exploits this point of view.   

Given a set a points in $\cuco X$, one can 
build the ``hull'' of that set by looking at the projections of that finite 
set of points to each of the associated hyperbolic spaces, $\fontact U$, taking 
coarse convex hulls in the $\fontact U$, and then looking at the points 
in $\cuco X$ that in each $\fontact U$ project close to the hull.

The \emph{realization theorem} \cite[Theorem 3.1]{hhs2} gives, roughly, a 
characterization of points in the product of the hyperbolic spaces that lie in the image of $\cuco X$, in terms of 
\emph{consistency conditions} (in the mapping class group context, one such condition is given by \cite[Theorem 
4.3]{Behrstock:asymptotic}). In this paper we rely on the construction of hulls and the realization theorem 
in an essential way. 

Following Sageev, the main method to cubulate a space is to explicitly
build \emph{walls}, that is, ``codimension-one'' subspaces which separate
the space. For hulls of a finite set of points in a hierarchically
hyperbolic space, this can be done in the following manner. Consider a finite set of points and their projections to each
hyperbolic space $\fontact U$.  By Gromov's theorem, in a hyperbolic
space the convex hull of a finite set of points can be uniformly
approximated by a geodesic tree \cite[\S 6.2 Geodesic trees]{Gromov}.

Taking an appropriately dense collection of points in each
such geodesic tree and considering their inverse images in $\cuco
X$, one obtains walls that can be used to construct a CAT(0) cube complex.  One needs to verify that this actually works as
needed to prove Theorem~\ref{thm:cubulated_hulls}. In particular, a key point is to show that any vertex in the CAT(0) 
cube complex ``corresponds'' to a point in the hull in $\cuco X$ of the finitely many points. Establishing this requires a 
careful analysis of the ``consistency conditions'', with the aim of
invoking the aforementioned realization theorem, \cite[Theorem 3.1]{hhs2}.

\subsection*{Sketch of the proof of the Quasiflats Theorem}
An important ingredient in our proof of Theorem~\ref{thmi:main}
(Quasiflats Theorem for HHSs) is Huang's result
\cite[Theorem~1.1]{Huang:quasiflats} which classifies $n$--dimensional
quasiflats in $n$--dimensional CAT(0) cube complexes (for emphasis: 
in Huang's theorem 
the dimension of the quasiflats under consideration coincides with
that of the CAT(0) cube complex).

We prove Theorem~\ref{thmi:main} by constructing an appropriate 
CAT(0) cube complex to which we can apply Huang's theorem.

The first step is to use a result of Bowditch \cite[Proposition 
1.2]{Bowditch:rigid} about ``local cubulations'' of top-dimensional flats in median metric spaces.  The median spaces in 
question are (bilipschitz equivalent to) asymptotic cones of the HHS $\cuco X$.  This allows us to show that 
any finite ball in a quasiflat is coarsely contained in the hull of a \emph{uniformly} bounded number of points.

By our cubulation of hulls theorem discussed above, Theorem~\ref{thm:cubulated_hulls}, we know that any such hull is 
uniformly quasi-isometric to a CAT(0) cube complex. Taking an ultralimit of these CAT(0) cube complexes, we obtain a finite 
dimensional CAT(0) cube complex which quasi-isometrically embeds in our HHS, and the quasiflat is contained in a bounded 
neighborhood of the image of the quasi-isometric embedding.

By construction and Theorem~\ref{thm:cubulated_hulls}, the CAT(0) cube complex we build has the same 
dimension as the quasiflat, thus allowing us to apply Huang's result \cite[Theorem~1.1]{Huang:quasiflats}. This finishes the 
proof since one can show that the orthants in the CAT(0) cube complex we construct correspond to 
standard orthants in the original HHS. 

As can be seen in this sketch, our theorem relies on 
Huang's result \cite[Theorem~1.1]{Huang:quasiflats} in an essential 
way. We also note that because the CAT(0) cube complex we construct is built using Theorem~\ref{thm:cubulated_hulls}, this 
complex always has the same dimension as its top-dimensional quasiflats. So,  
although our argument factors through Huang's theorem, our result extends Huang's in the setting of cocompact CAT(0) cube complexes 
with factor systems. For instance, our theorem applies to 
arbitrary right-angled Coxeter groups, even though the dimension of the CAT(0) cube complex associated to a right-angled Coxeter is typically
(much) larger than the dimension of the quasiflats it contains.

\subsection*{Outline}
Section~\ref{sec:background} contains background on hierarchically
hyperbolic spaces, wallspaces/cube complexes, median and coarse median spaces, and asymptotic cones.
In Section~\ref{sec:hull_level} we build walls in hulls of finite
sets, proving Theorem~\ref{thm:cubulated_hulls}.  The main goal of
Section~\ref{sec:quasiflats_and_cones} is to prove
Corollary~\ref{cor:fixed_flat_ball_hull}, showing that balls in
quasiflats in an HHS can be uniformly well-approximated by hulls of
uniformly finite sets of points.  In Section~\ref{sec:main},
we develop background on standard orthants in HHSs, and then prove
Theorem~\ref{thmi:main}. We also prove stronger versions in which we
control both the number of standard orthants (using a volume growth
argument) and the distance from the quasiflat to the approximating
orthants, in terms of the quasi-isometry constants.  In
Section~\ref{sec:standard_flat}, we impose additional assumptions on
an HHS enabling one to study the effect of quasi-isometries on the
underlying combinatorial structure; see Theorem~\ref{thm:clique_map}. It is in this section that we give a new proof of 
quasi-isometric rigidity of the mapping class group, i.e., Theorem~\ref{thm:mcg_rigid}.  Finally, in
Section~\ref{sec:factored}, we discuss factored spaces.  We first prove Theorem~\ref{thm:cone_bound} and then deduce 
Corollary~\ref{cor:induced_qi}, which is about induced quasi-isometries of factored spaces.

\subsection*{Acknowledgments}
We would like to thank an anonymous ancient Greek scholar for early 
input on this paper, in particular for 
suggesting the adjective ``asymphoric,'' derived from the word
$\sigma\upsilon\mu\phi o \rho\acute{\alpha}$ and roughly meaning 
``mishap averted.''

We thank Ela Behrstock for drawing two of the figures (Figure~\ref{fig:bridge} and 
Figure~\ref{fig:same_walls}).  We thank Jingyin Huang, for pointing out that the RAAGs
appearing in Figure~\ref{fig:two_graphs} in an earlier version of this
paper could be distinguished by using
\cite[Theorem~3.28]{Huang:finite} and a clever trick.  We 
thank Brian Bowditch, Harry Petyt, and Jacob Russell for several
helpful comments. We also thank the numerous referees for very useful comments; in 
particular, the 87 enumerated comments of one of the referees helped us to significantly improve the exposition.

The authors were supported by the National Science Foundation
under Grant No.\  DMS-1440140 at the Mathematical Sciences Research
Institute in Berkeley during Fall 2016 program in Geometric Group
Theory. 

Behrstock was supported by NSF grant DMS-1710890. Hagen was supported by EPSRC grant EP/L026481/1. 

Sisto was partially supported by the Swiss National Science Foundation (grant \#182186).

We thank the Isaac Newton 
Institute for Mathematical
Sciences, Cambridge, for support and hospitality during the program
\emph{Non-positive curvature group actions and cohomology} where work
on this paper was done; this work was partly supported by EPSRC grant
no.\  EP/K032208/1.  %In Section~\ref{sec:hull_level}, we build walls; 
%the Newton Institute paid for them.

\section{Background}\label{sec:background}

\subsection{Hierarchically hyperbolic spaces}
Throughout this paper, we work with a \emph{hierarchically hyperbolic space}, which is a pair $(\cuco X,\mathfrak S)$ with 
some additional extra structure described in Definition 1.1 of~\cite{hhs2}.  
Roughly, an HHS consists of:
\begin{itemize}
 \item a quasigeodesic metric space $\cuco X$;
 \item a set of uniformly hyperbolic spaces $\{\fontact U:U\in\mathfrak S\}$;
 \item uniformly coarsely-Lipschitz coarsely-surjective maps $\pi_U\co \cuco X\to\fontact U$;
 \item three relations $\nest$ (a partial order), $\orth$ (an anti-reflexive symmetric relation), $\transverse$ (the complement of $\nest$ and $\orth$) on $\mathfrak S$;
 \item a unique $\nest$--maximal element of $\mathfrak S$, and a uniform bound on the length of $\nest$--chains in $\mathfrak S$;
 \item for $U\propnest V$ or $U\transverse V$, a uniformly bounded set $\rho^U_V$;
 \item for $U\propnest V$, a coarse map $\rho^V_U:\fontact V\to\fontact U$.
\end{itemize}
Definition 1.1 of~\cite{hhs2} consists of several axioms governing
this data.  The definition and basic properties of HHSs were first laid
out in \cite{hhs1}; below we list \cite{hhs2} as the primary reference
since a few of the properties were first established there and this
provides for unified notation.  The properties of HHSs which are
central to this article are listed below.

\begin{remi}[QI invariance]
As explained in~\cite[Proposition 1.10]{hhs2}, the property of being a
hierarchically hyperbolic space is preserved under quasi-isometries.  If $(\cuco X,\mathfrak S)$ is a hierarchically hyperbolic
space and $f:\cuco X'\to\cuco X$ is a quasi-isometry, then $(\cuco
X',\mathfrak S)$ is an HHS; where the structure in $\cuco X$ is 
obtained  
by replacing each
projection $\pi_U,U\in\mathfrak S$ by $\pi_U\circ f$.
\end{remi}

The first property says that the ``coordinates'' $(\pi_U(x))_{U\in\mathfrak S}$ for some $x\in\cuco X$ cannot be arbitrary. In fact, for certain pairs $U,V$ there are conditions that need to be satisfied by $\pi_U(x),\pi_V(x)$. There is no condition for $U\orth V$, which corresponds to the fact that in this case $U,V$ should be thought of as factors of a product region, as we will see later.

\begin{axiom}[Consistency axioms]\label{axiom:consistency}
Let $(\cuco X,\mathfrak S)$ be hierarchically hyperbolic.  Then there is a constant $E=E(\cuco X,\mathfrak S)$ so that the following hold for all $x\in\cuco X$ and $U,V,W\in\mathfrak S$:
\begin{itemize}
 \item if $V\transverse W$, then $$\min\left\{\dist_{
 W}(\pi_W(x),\rho^V_W),\dist_{
 V}(\pi_V(x),\rho^W_V)\right\}\leq E;$$
 \item if $V\propnest W$, then $$\min\left\{\dist_{
 W}(\pi_W(x),\rho^V_W),\diam_{\fontact
 V}(\pi_V(x)\cup\rho^W_V(\pi_W(x)))\right\}\leq E.$$ 
\end{itemize}
Finally, if $U\nest V$, then $\dist_W(\rho^U_W,\rho^V_W)\leq E$ whenever $W\in\mathfrak S$ satisfies either $V\propnest W$ or $V\transverse W$ and $W\not\orth U$.
\end{axiom}

\begin{remi}[Consistent tuples]
The consistency axiom has a sort of converse, the \emph{realization theorem} stated below.  The idea is that the projection 
maps $\pi_U,U\in\mathfrak S$ allow us to think of points in $\cuco X$ as tuples in $\prod_{U\in\mathfrak S}\fontact U$.  The 
consistency axiom imposes conditions on which tuples can be in the image of the map $\cuco X\to\prod_{U\in\mathfrak 
S}\fontact U$ given by the projections.  The realization theorem says that the consistency conditions actually 
(coarsely) characterize tuples in the image of $\cuco X$.  The 
precise statement is Theorem~\ref{thm:realization}, which is 
formulated using the notion of a \emph{consistent tuple}, which we now define.

Fix a constant $\kappa\ge0$.  Given a tuple $(b_U)_U\in\prod_{U\in\mathfrak S}\fontact U$, we say that $(b_U)_U$ is 
\emph{$\kappa$--consistent} if it satisfies the conditions from Axiom~\ref{axiom:consistency}, except with each occurrence 
of $\pi_V(x)$ (resp. $\pi_W(x)$) replaced by $b_V$ (resp. $b_W$).  So, the axiom says that tuples in the image of $\cuco X$ 
are $E$--consistent.
\end{remi}

Now we can state the realization theorem:

\begin{thm}[Realization of consistent tuples; \cite{hhs2}]\label{thm:realization}
 For each $\kappa\geq1$ there exist $\theta_e,\theta_u\geq0$ such that
 the following holds.  Let $\tup
 b\in\prod_{W\in\mathfrak S}2^{\fontact W}$ be $\kappa$--consistent 
 (\cite[Definition 1.17]{hhs2});
 for each $W$, let $b_W$ denote the $\fontact W$--coordinate of $\tup
 b$.

 Then there exists $x\in \cuco X$ so that $\dist_{
 W}(b_W,\pi_W(x))\leq\theta_e$ for all $\fontact W\in\mathfrak S$.
 Moreover, $x$ is \emph{coarsely unique} in the sense that the set of
 all $x$ which satisfy $\dist_{ W}(b_W,\pi_W(x))\leq\theta_e$ in each
 $\fontact W\in\mathfrak S$, has diameter at most $\theta_u$.
\end{thm}

The realization theorem is one of what we see as three foundational theorems about HHSs.  The other two are closely related: 
the \emph{distance formula} and the existence of \emph{hierarchy paths}.

The distance formula provides a way to compute distances in $\cuco X$
in terms of distances in the various $\fontact U$, thereby reducing
the study of the geometry of $\cuco X$ to that of the family of
hyperbolic spaces $\{\fontact U\}_{U\in\mathfrak S}$.

We write $A\asymp_{K,C} B$ if $A/K-C\leq B\leq KA+C$. Also, we let $\ignore{A}{s}=A$ if $A\geq s$, and $\ignore{A}{s}=0$ 
otherwise. Moreover, we denote $\dist_{ W}(x,y)=\dist_{\fontact W}(\pi_W(x),\pi_W(y))$ (the distance between $x$ and $y$ from 
the point of view of $W$).

\begin{thm}[Distance Formula; \cite{hhs2}]\label{thm:distance_formula}
 Let $(X,\mathfrak S)$ be hierarchically hyperbolic. Then there exists $s_0$ such that for all $s\geq s_0$ there exist
 constants $K,C$ such that for all $x,y\in\cuco X$,
 $$\dist_{\cuco X}(x,y)\asymp_{K,C}\sum_{W\in\mathfrak S}\ignore{\dist_{ W}(x,y)}{s}.$$
\end{thm}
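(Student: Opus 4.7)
The plan is to establish the two inequalities making up $\asymp_{K,C}$ separately, with the threshold $s_0$ chosen in terms of the consistency constant $E$ from Axiom~\ref{axiom:consistency} and the (uniform) coarse-Lipschitz constants of the projections $\pi_W$.

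For the upper bound on the sum, $\sum_{W\in\mathfrak S}\ignore{\dist_W(x,y)}{s}\leq K\dist_{\cuco X}(x,y)+C$, each individual term is controlled by coarse Lipschitzness of $\pi_W$, so the remaining task is to bound the number of $W$ for which $\dist_W(x,y)\geq s$. I would take a discrete geodesic $x=x_0,x_1,\dots,x_n=y$ with $n\asymp\dist_{\cuco X}(x,y)$ and $\dist_{\cuco X}(x_i,x_{i+1})\leq 1$; for each contributing $W$ there must be consecutive pairs $(x_i,x_{i+1})$ along which $\pi_W$ makes definite progress. A passing-up argument, relying on the consistency axioms and the finite-complexity axiom (uniform bound on $\nest$-chains), shows that at each step only boundedly many $W$ can be ``active'' at threshold $s$, yielding a linear count of contributing domains.

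For the lower bound, $\dist_{\cuco X}(x,y)\leq K'\sum_{W\in\mathfrak S}\ignore{\dist_W(x,y)}{s}+C'$, I would induct on the complexity $\chi$ of $(\cuco X,\mathfrak S)$, i.e., the maximal length of a $\nest$-chain. The base case $\chi=1$ is immediate, since $\cuco X$ is then quasi-isometric to the single hyperbolic space $\fontact S$ via $\pi_S$. For the inductive step, with $S$ the $\nest$-maximal element, take a geodesic $\gamma$ from $\pi_S(x)$ to $\pi_S(y)$ in $\fontact S$. By bounded geodesic image (deducible from the consistency axioms), each $W\propnest S$ with $\dist_W(x,y)\geq s$ corresponds to a subarc of $\gamma$ passing uniformly close to $\rho^W_S$. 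Partition $\gamma$ into controlled pieces accordingly, and use partial realization to lift the endpoints and intermediate points of the partition to points $z_j\in\cuco X$; then bound each $\dist_{\cuco X}(z_j,z_{j+1})$ by applying the inductive hypothesis inside a standard product region of strictly lower complexity, whose contributing domains are nested into some proper subdomain of $S$.

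The main obstacle is this inductive step in the lower bound: one must simultaneously control the contribution from $\fontact S$ itself (via hyperbolicity of $\fontact S$), the transitions between realized coordinate tuples at consecutive subdivision points (via partial realization and consistency), and the recursive contributions from sub-HHS structures associated to proper subdomains. The role of bounded geodesic image is to guarantee that only the subdomains truly charged by the sum enter the inductive estimate, while the realization theorem is what promotes consistent coordinate tuples to genuine points of $\cuco X$ whose $\cuco X$-distance can be estimated. Choosing $s_0$ larger than all implicit constants appearing in these tools (in particular larger than $E$ and the coarse-Lipschitz constants) ensures the thresholded sums in the inductive steps remain meaningful.
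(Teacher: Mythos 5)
First, a point of order: the paper does not prove this theorem. It is quoted verbatim from~\cite{hhs2} as background, so there is no in-paper argument to compare yours against; the relevant comparison is with the proof in~\cite{hhs2} itself. Your outline does follow the broad strategy of that proof: the two inequalities are handled separately, the forward direction ($\sum\lesssim\dist$) rests on bounding the number of domains with large projection, and the reverse direction is an induction on complexity using bounded geodesic image, partial realization, and realization. So the architecture is right.

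However, as written the sketch has a genuine gap in the forward direction. The claim that ``at each step only boundedly many $W$ can be active at threshold $s$'' does not yield the linear count you need: a domain $W$ with $\dist_W(x,y)\geq s$ may accumulate its progress in increments far below any fixed threshold, spread over many of the unit steps $x_i,x_{i+1}$, so no single step ever sees $W$ as ``active'' and the charging scheme collapses. The actual argument in~\cite{hhs2} has to bound the \emph{sum} $\sum_W\ignore{\dist_W(x,y)}{s}$, not merely the cardinality of $\relevant_s(x,y)$, and does so by a more global use of the passing-up lemma (Lemma~\ref{lem:passing_up} here) combined with induction on complexity, not by a per-step count. Relatedly, in your inductive step for the lower bound you need to justify that for consecutive subdivision points $z_j,z_{j+1}$ \emph{every} domain contributing to $\dist(z_j,z_{j+1})$ is either the top domain $S$ (where the contribution is controlled by the subdivision) or nested into a single proper subdomain where the inductive hypothesis applies; domains transverse or orthogonal to your chosen subdomain must be ruled out via consistency and BGI, and this bookkeeping is where most of the work in~\cite{hhs2} lives. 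Your plan identifies the right tools, but these two steps are precisely the nontrivial content of the theorem and are not yet carried out.
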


\begin{remi}[Uniqueness axiom]
Notice that a special case of the distance formula is that, roughly
speaking, if $x,y\in\cuco X$ are so that $\pi_U(x),\pi_U(y)$ are close
for each $U$, then $x,y$ are close in $\cuco X$.  This special case is
the \emph{uniqueness axiom}, which is part of the definition of a
hierarchically hyperbolic space~\cite[Definition 1.1.(9)]{hhs2}.
There are various places in Section~\ref{sec:hull_level} where we
apply the distance formula, but could probably get away with just
using the uniqueness axiom.  In fact, since we initially posted this
paper, Bowditch has given an independent proof of
Theorem~\ref{thm:cubulated_hulls}, not using the distance formula.
One can then deduce the distance formula from
Theorem~\ref{thm:cubulated_hulls}, which Bowditch does in
\cite{Bowditch:hulls}.
\end{remi}

\emph{Hierarchy paths} are quasi-geodesics in the HHS whose 
projections to 
each associated hyperbolic space
are 
(coarsely) monotone.  Any two
points can be joined by a hierarchy path:

\begin{thm}[Existence of Hierarchy Paths; \cite{hhs2}]\label{thm:monotone_hierarchy_paths}
Let $(\cuco X,\mathfrak S)$ be hierarchically hyperbolic. Then there exists $D$ so that any $x,y\in\cuco X$ are 
joined by a $D$-hierarchy path, i.e., a $(D,D)$--quasi-geodesic projecting to an unparameterized 
$(D,D)$--quasi-geodesic between $\pi_U(x)$ and $\pi_U(y)$ in $\fontact U$ for each $U\in\mathfrak S$.  
\end{thm}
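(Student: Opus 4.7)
My plan is to prove both assertions simultaneously by induction on the \emph{complexity} of $(\cuco X,\mathfrak S)$, i.e.\ the maximum length of a $\nest$-chain in $\mathfrak S$. In the base case of complexity one, $\cuco X$ coarsely reduces to a single hyperbolic $\fontact S$, and any $\fontact S$-geodesic lifted to $\cuco X$ via $\pi_S$ is a hierarchy path. For the inductive step, fix $x,y\in\cuco X$ and a threshold $s\geq s_0$ as in Theorem~\ref{thm:distance_formula}. The set $\relevant_s(x,y)=\{U\in\mathfrak S:\dist_U(x,y)\geq s\}$ is finite and
\[
\dist_{\cuco X}(x,y)\asymp_{K,C}\sum_{U\in\relevant_s(x,y)}\dist_U(x,y).
\]

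Next I would linearize $\relevant_s(x,y)$ using consistency. For any transverse pair $U,V\in\relevant_s(x,y)$, applying the first bullet of Axiom~\ref{axiom:consistency} to both $x$ and $y$ and using that $\dist_U(x,y)$ and $\dist_V(x,y)$ both exceed $3E$, one sees that either $\pi_U(x)$ is $E$-close to $\rho^V_U$ while $\pi_V(y)$ is $E$-close to $\rho^U_V$, or the roles of $x,y$ are swapped; this rigid dichotomy picks a preferred order in which to ``resolve'' $U$ before $V$ or vice versa. Nested pairs are ordered by descent into the nested sub-HHS, while orthogonal pairs are left unordered and will be traversed in parallel, reflecting the product-region structure of an HHS. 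I then choose a $\nest$-maximal $U_1$ of top priority, sample a $\fontact{U_1}$-geodesic from $\pi_{U_1}(x)$ to $\pi_{U_1}(y)$ at spaced points, and use the realization principle for consistent tuples (a standard consequence of the full axiom set of~\cite{hhs2}) to lift these samples to waypoints $x=p_0,p_1,\dots,p_n=y$ in $\cuco X$. Consecutive waypoints have strictly smaller $\relevant_s$ once $U_1$ has been ``resolved,'' so by descending into the appropriate sub-HHS the induction hypothesis supplies hierarchy paths joining them, which I concatenate.

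The main obstacle is to certify \emph{uniform} unparametrized quasigeodesicity of the projection of the concatenated path to every $W\in\mathfrak S$, with constants independent of $x,y$. For $W\in\relevant_s(x,y)$ this is built into the construction via the chosen order and the induction hypothesis. For $W\notin\relevant_s(x,y)$ one must show the projection stays coarsely bounded; the delicate cases are $W\propnest U_i$ and $W\transverse U_i$ for some active $U_i$. Here Axiom~\ref{axiom:consistency} pins $\pi_W$ of the intermediate waypoints to a neighborhood of $\rho^{U_i}_W$ whenever the current position in $\fontact{U_i}$ lies outside a bounded neighborhood of $\rho^W_{U_i}$; this handles the bulk of the path, while the remaining short portions near $\rho^W_{U_i}$ contribute uniformly bounded error. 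Once this uniform projection-control is in place, Theorem~\ref{thm:distance_formula} converts it to the global $(D,D)$-quasigeodesic bound in $\cuco X$, with $D$ depending only on $(\cuco X,\mathfrak S)$.
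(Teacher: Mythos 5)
First, a point of comparison: this paper does not actually prove Theorem~\ref{thm:monotone_hierarchy_paths}; it is quoted from~\cite{hhs2} as background, so there is no in-paper proof to match against. Your outline does follow the same overall strategy as the proof given in~\cite{hhs2}: induction on complexity, discretization of a geodesic in the top-level hyperbolic space, realization of consistent tuples to produce waypoints, and consistency/bounded geodesic image to control projections to the non-relevant domains. The partial order you extract on transverse relevant domains via the consistency inequalities is also genuinely used there, in verifying that the projections are \emph{unparametrized} quasigeodesics.

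There is, however, a real gap at the heart of your inductive step. You assert that once $U_1$ (necessarily the $\nest$-maximal element $S$ on the first pass) is ``resolved,'' consecutive waypoints $p_i,p_{i+1}$ can be joined ``by descending into the appropriate sub-HHS.'' But knowing only that $\dist_{U_1}(p_i,p_{i+1})$ is bounded does not produce a sub-HHS of lower complexity containing the pair: the domains in $\relevant_s(p_i,p_{i+1})$ are properly nested in $S$, yet they may be pairwise transverse and need not be nested into any common proper subdomain, so the complexity has not visibly dropped for the pair as a whole. The tool that rescues this---and which your sketch never invokes---is the large link axiom, which confines all such relevant domains to boundedly many proper subdomains $T_1,\dots,T_K\propnest S$. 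Even granting that, the descent is not a single application of the inductive hypothesis: one must traverse the standard product regions $F_{T_j}\times E_{T_j}$ one at a time, entering each via the gate map and applying the inductive hypothesis inside $F_{T_j}$ (an HHS with index set $\mathfrak S_{T_j}$ of strictly smaller complexity), and only then run the distance-formula and ordering arguments on the full concatenation. Your last paragraph correctly identifies the uniform projection control as the main obstacle but leaves it as a plan; combined with the missing large-link step, what you have is the correct strategy (essentially the one used in~\cite{hhs2}) rather than a proof.
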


The following says that when moving along a hierarchy path $\gamma$,
in order to change projection to $\fontact U$, when $U\propnest V$,
one must pass close in $\fontact V$ to a specific point, namely
$\rho^U_V$.  The first assertion is the \emph{bounded geodesic image}
axiom for an HHS~\cite[Definition 1.1.(6)]{hhs2} and the second
assertion follows easily from the first, together with
Axiom~\ref{axiom:consistency}; for ease of reference we record this 
here as:

\begin{lem}(Bounded geodesic image)\label{lem:BGI}
 Let $\cuco X$ be a hierarchically hyperbolic space. There exists $B$ so that the following holds. Let $W\in\mathfrak S$, $V\propnest W$.  Suppose that $\gamma$ is a geodesic in $\fontact W$ with $\gamma\cap\neb_B(\rho^V_W))=\emptyset$.  Then $\diam_{\fontact V}(\rho^W_V(\gamma))\leq B$.
 
 Moreover, suppose $x,y\in \cuco X$ and that there exists a geodesic $\gamma$ in $\fontact W$ from $\pi_W(x)$ to $\pi_W(y)$ so that $\gamma\cap\neb_B(\rho^V_W))=\emptyset$. Then $\dist_V(x,y)\leq B$.
\end{lem}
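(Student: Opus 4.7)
The first assertion is essentially the Bounded Geodesic Image axiom from the HHS definition of \cite{hhs2}, so the strategy is just to cite it, possibly enlarging $B$ so it simultaneously serves as the neighborhood threshold in the hypothesis and the diameter bound in the conclusion. The real work is in deducing the ``moreover'' clause about $x,y\in\cuco X$ from the first assertion combined with the consistency axiom.

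The plan is to fix $B$ strictly larger than the consistency constant $E$ from Axiom~\ref{axiom:consistency}. Given $x,y$ and a geodesic $\gamma$ in $\fontact W$ from $\pi_W(x)$ to $\pi_W(y)$ avoiding $\neb_B(\rho^V_W)$, the endpoints $\pi_W(x)$ and $\pi_W(y)$ are themselves more than $E$ from $\rho^V_W$. Since $V\propnest W$, the second clause of the consistency axiom forces the second alternative to hold at both endpoints, namely
\[
\diam_{\fontact V}\bigl(\pi_V(x)\cup\rho^W_V(\pi_W(x))\bigr)\leq E
\quad\text{and}\quad
\diam_{\fontact V}\bigl(\pi_V(y)\cup\rho^W_V(\pi_W(y))\bigr)\leq E.
\]

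Next, I apply the first assertion to $\gamma$ itself: since $\gamma$ avoids $\neb_B(\rho^V_W)$, the image $\rho^W_V(\gamma)$ has diameter at most $B$ in $\fontact V$. In particular $\dist_V\bigl(\rho^W_V(\pi_W(x)),\rho^W_V(\pi_W(y))\bigr)\leq B$. Stringing together this estimate with the two consistency estimates above via the triangle inequality in $\fontact V$ gives $\dist_V(x,y)\leq 2E+B$, and I absorb the additive $2E$ into a single (final) choice of $B$ valid for both parts.

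The only subtlety is constant bookkeeping: one must choose $B$ once, large enough to (i) exceed $E$ so consistency gives the needed alternative, (ii) be a valid threshold/diameter bound in the Bounded Geodesic Image axiom, and (iii) dominate $2E+B_{\mathrm{BGI}}$ in the final estimate. This is routine. I do not anticipate any conceptual obstacle; the lemma is essentially a packaging of the BGI axiom plus one invocation of consistency at each of the two endpoints.
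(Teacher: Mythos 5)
Your argument is correct, and in fact the paper gives no proof of this lemma at all: it is stated as imported background (the first assertion is the bounded geodesic image axiom of \cite{hhs2}, and the ``moreover'' clause is the standard consequence). Your derivation --- consistency at the two endpoints to replace $\pi_V(x),\pi_V(y)$ by $\rho^W_V(\pi_W(x)),\rho^W_V(\pi_W(y))$ up to error $E$, the first assertion applied to $\gamma$ to bound the distance between those two images, and a triangle inequality with a final enlargement of $B$ --- is exactly the intended argument, and your remark that enlarging $B$ only strengthens the hypothesis (so the axiom still applies) disposes of the one potential circularity.
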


Another part of the definition of a hierarchically hyperbolic space is the \emph{large links axiom} (Definition 1.1.(7) 
in~\cite{hhs2}).  It says roughly that, if $x,y\in\cuco X$ and $V\in\mathfrak S$, then the number of $U\propnest V$ on which 
$x,y$ have very different projections, and $U$ is $\nest$--maximal with those properties, can be bounded in terms of 
$\dist_V(x,y)$.  Typically, one does not apply the large links axiom directly.  Instead, one uses a consequence, Lemma 2.5 
of~\cite{hhs2}, which we call ``passing up large projections.''  We will use a variant of that lemma, which we state 
presently (it is applied in an essential way in the proof of Lemma~\ref{lem:same_walls}, which is part of the proof of 
Theorem~\ref{thm:cubulated_hulls}).

For $V\in\mathfrak S$, we denote $\mathfrak S_V=\{U\in\mathfrak S: U\nest V\}$.

\begin{lem}[Passing large projections up the $\nest$--lattice]\label{lem:passing_up}
 There exists $E$ with the following property. For every $C\geq0$ there exists $N_0=N_0(C)$ with the following property.  Let 
$V\in\mathfrak S$, let $x,y\in\cuco X$, and let $\{V_i\}_{i=1}^{N_0}\subseteq \mathfrak S_V$ be distinct and satisfy 
$\dist_{V_i}(x,y)\geq E$. Then there exists $W\in\mathfrak S_V$ and $i,j$ so that $V_i,V_j\propnest W$ and 
$\dist_{W}(\rho^{V_i}_W,\rho^{V_j}_W)\geq C$.
\end{lem}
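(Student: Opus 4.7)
The plan is to deduce this variant directly from the original ``Passing Up'' Lemma (Lemma~2.5 of~\cite{hhs2}), which is identical in statement except that it only produces $W\in\mathfrak S$ rather than $W\in\mathfrak S_V$. The remaining work reduces to showing that, under a slightly strengthened choice of constants, the $W$ coming out of that lemma must automatically lie $\nest$--below $V$.

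Concretely, let $E_0$ be large enough to serve simultaneously as the constant $E$ in the original Passing Up Lemma and as the constant $E$ in Axiom~\ref{axiom:consistency}. Given $C\geq 0$, set $C' := \max(C,\,2E_0+1)$ and apply Lemma~2.5 of~\cite{hhs2} with constant $C'$ to obtain $N'_0 = N_0(C')$; define the $N_0(C)$ in our statement to be $N'_0$, and take $E = E_0$. Then for any configuration satisfying the hypothesis, that lemma produces $W\in\mathfrak S$ and indices $i,j$ with $V_i,V_j\propnest W$ and $\dist_W(\rho^{V_i}_W,\rho^{V_j}_W)\geq C'\geq C$.

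It remains to check $W\nest V$ by ruling out the other three possibilities for the relation between $W$ and $V$. If $W\orth V$, then $V_i\nest V$ forces $V_i\orth W$, contradicting $V_i\propnest W$. Otherwise, suppose $V\propnest W$ or $V\transverse W$. Apply the last clause of Axiom~\ref{axiom:consistency} with $U=V_i$: in the transverse sub-case, $V_i\propnest W$ ensures $W\not\orth V_i$, so the hypothesis of that clause is satisfied either way, giving $\dist_W(\rho^{V_i}_W,\rho^V_W)\leq E_0$. The same argument with $U=V_j$ gives $\dist_W(\rho^{V_j}_W,\rho^V_W)\leq E_0$, so the triangle inequality yields $\dist_W(\rho^{V_i}_W,\rho^{V_j}_W)\leq 2E_0 < C'$, a contradiction. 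Thus $W\nest V$, as required.

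I do not anticipate a genuine obstacle here: the final clause of the consistency axiom is essentially tailor-made to produce exactly this sort of bound on $\rho$--images of $\nest$--smaller domains into transverse or $\nest$--larger domains, and this drives the entire case analysis. The only bookkeeping is to ensure the constant $E$ in our statement and the counting function $N_0(C)$ are chosen compatibly with the constants from the original lemma, which amounts to taking a maximum.
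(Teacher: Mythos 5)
Your reduction rests on a mischaracterization of the lemma you are reducing to. Lemma~2.5 of \cite{hhs2} already produces $W\in\mathfrak S_V$ (its hypothesis and conclusion are both relativized to $\mathfrak S_V$), so the relativization you spend the case analysis establishing is not the issue. What is genuinely different here is the \emph{conclusion}: the original lemma yields a single index $i$ with $V_i\propnest W$ and the bound $\dist_W(x,y)\geq C$, whereas the present statement demands \emph{two} indices $i,j$ with $V_i,V_j\propnest W$ and $\dist_W(\rho^{V_i}_W,\rho^{V_j}_W)\geq C$. Your argument never addresses how to pass from ``$\dist_W(x,y)$ is large and one $V_i$ is properly nested in $W$'' to ``two of the $\rho$--sets $\rho^{V_i}_W,\rho^{V_j}_W$ are $C$--separated in $\fontact W$,'' and this implication does not hold in general: a priori only one of the $V_i$ need be nested in the $W$ produced by the original lemma, so there may be no second $\rho$--point to separate from the first. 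This is precisely the content that the paper's proof supplies by re-running the induction on level: if all the $\rho^{V_i}_V$ cluster within distance $C$ along a geodesic from $\pi_V(x)$ to $\pi_V(y)$, one truncates to points $x',y'$ with $\dist_V(x',y')\leq C+100B$ (bounded geodesic image guarantees the $\dist_{V_i}(x',y')$ remain large), invokes the large link axiom to cover the $V_i$ by boundedly many $T_j\propnest V$, and pigeonholes into a lower level. None of this is recoverable from the statement of \cite[Lemma~2.5]{hhs2} alone.

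The portion of your argument that is carried out in detail — ruling out $W\orth V$ via inheritance of orthogonality, and ruling out $V\propnest W$ and $V\transverse W$ via the last clause of the consistency axiom — is correct as far as it goes, but it solves a problem that does not arise, and it presupposes you already have the two-index $\rho$--separation in hand. To repair the proof you would need to reproduce (or cite a result containing) the inductive clustering argument above; a black-box application of the one-index passing-up lemma will not suffice.
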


\begin{exmp}\label{exmp:raag_pass_up}
Since the statement of the preceding lemma is somewhat opaque, we now give an example before proceeding to the proof.  Let 
$\cuco X$ be the Cayley graph of the free group on generators $a,b$.  We can make $\cuco X$ an HHS by taking $\mathfrak S$ 
to consist of all left cosets of all subgroups generated by subsets of $\{a,b\}$.  The space $\fontact \langle a\rangle$ is 
just $\reals$, and similarly for $\fontact\langle b\rangle$.  The space $\fontact\langle a,b\rangle$ is obtained from $\cuco 
X$ by coning off each coset in $\mathfrak S$.  

Consider the path $w=(a^Eb^E)^N$, for 
some $E\geq 1$.  Then there are $N$ cosets of $\langle a\rangle$ and $\langle b\rangle$ on which the endpoints of the above 
path have projections lying at distance $E$.  For any $C$, by making $N$ sufficiently large, we see that the coset $\langle 
a\rangle$ and the coset $wb^{-E}\langle b\rangle$ are at least $C$--distant in $\fontact\langle a,b\rangle$ and hence 
satisfy the conclusion of the lemma.      
\end{exmp}

\begin{proof}[Proof of Lemma~\ref{lem:passing_up}]
First of all, we choose constants. Let $B\geq 1$ be the constant from Lemma~\ref{lem:BGI}, and suppose that $B$ is also an upper bound on the diameter of $\rho^U_V$ for any $U\propnest V$. Moreover, supposed $B\geq D$, for $D$ as in Theorem \ref{thm:monotone_hierarchy_paths}, and moreover that $(D,D)$--quasi-geodesics in a $\delta$--hyperbolic space stay $B$--close to geodesics with the same endpoints, where $\delta$ is a hyperbolicity constant for all the $\fontact U$.

If $U\in\mathfrak S$ is $\nest$--minimal, we say that its \emph{level} is 1. Inductively, $U\in\mathfrak S$ has level $k$ if it is $\nest$--minimal among all $V\in\mathfrak S$ not of level $\leq k-1$. The proof is by induction on the level $k$ of a $\nest$-minimal $V\in\mathfrak S$ into which each $V_i$ is nested, with $E=100kB$. The base case $k=1$ is empty.
  Suppose that the statement holds for a given $N=N(k)$ when the level of $V$ as above is at most $k$. Suppose instead that $|\{V_i\}|\geq N(k+1)$ (where $N(k+1)$ is a constant much larger than $N(k)$ that will be determined shortly) and there exists a $\nest$-minimal $V\in\mathfrak S$ of level $k+1$ into which each $V_i$ is nested.  There are two cases.

If $\max_{i,j}\{\dist_{V}(\rho^{V_i}_V,\rho^{V_j}_V)\}\geq C$, then we are done. Hence, suppose not. All the $\rho^{V_i}_V$ lie $B$--close to a geodesic $[\pi_V(x),\pi_V(y)]$ by bounded geodesic image, and by the assumption they all lie close to a sub-geodesic of length $C+10B$. Hence, we can replace $x,y$ with suitable $x',y'$ on a hierarchy path from $x$ to $y$ chosen so that
\begin{itemize}
 \item $\dist_V(x',y')\leq C+100B$,
 \item  $\pi_V(x'),\pi_V(y')$ lie $B$--close to a geodesic $[\pi_V(x),\pi_V(y)]$, and
 \item  the geodesics $[\pi_V(x),\pi_V(x')]$, $[\pi_V(y),\pi_V(y')]$ do not pass $B$--close to any $\rho^{V_i}_V$.
\end{itemize}

 By Lemma~\ref{lem:BGI}, $\dist_{V_i}(x',y')\geq 100kB$, since $\dist_{V_i}(x',y')$ is approximately equal to $\dist_{V_i}(x,y)$.

The large link axiom (\cite[Definition 1.1.(6)]{hhs2}) implies that there exists $K=K(C+100B)$ and $T_1,\dots,T_K$,  each 
properly nested in $V$ (thus of level strictly less than $k+1$), so that any $V_i$ is nested in some $T_j$. In particular, if 
$N(k+1)\geq KN(k)$, there exists $j$ so that $\geq N(k)$ elements of $\{V_i\}$ are nested into $T_j$. By the induction 
hypothesis, we are done.
\end{proof}

\subsubsection{A few more basic HHS notions}\label{subsubsec:basic_hhs_notions} 
We now collect a few more basic notions about HHSs that will be used throughout the paper.

First, each of the HHS axioms (and their variants stated above) involves some constants, which are taken to be part of the 
HHS structure $(\cuco X,\mathfrak S)$.  For the sake of sanity, where possible, we can assume these constants are all the 
same:

\begin{notation}[Naming constants]
 In the remainder of the paper, following \cite[Remark 1.6]{hhs2}, 
 we fix a constant $E$ larger than each of the constants in \cite[Definition 1.1]{hhs2} and also satisfying the conclusion of 
Lemma~\ref{lem:passing_up}, Lemma~\ref{lem:BGI}, and Axiom~\ref{axiom:consistency}.
\end{notation}

Given $x,y\in\cuco X$, it is convenient to consider the subset of $\mathfrak S$ on whose associated hyperbolic spaces $x,y$ 
project far apart, where ``far'' is determined by some threshold, generally specified in advance independently of $x,y$:

\begin{defn}[Relevant]\label{defn:relevant}
Given points $x,y\in\cuco X$, we say that $U\in\mathfrak S$
is \emph{relevant} (with respect to $x,y$ and a constant $\theta>0$) if
$\dist_U(x,y)>\theta$.  Denote by $\relevant_{\theta}(x,y)$ the
set of relevant elements.  Note that, for all sufficiently large $\theta$, the distance formula implies that 
$\relevant_\theta(x,y)$ is finite.  In fact, using Lemma~2.5 of~\cite{hhs2}, one can bound its cardinality in terms of 
$\theta,E$, and $\dist_{\cuco X}(x,y)$ without using the distance formula.
\end{defn}

The notion of the rank of $(\cuco X,\mathfrak S)$ is easy to define, 
but it is of significant importance in the present paper:

\begin{defn}[Rank]
The \emph{rank} $\nu=\nu(\cuco X,\mathfrak S)$ of the HHS $(\cuco
X,\mathfrak S)$ is the maximal $n$ so that there exist pairwise
orthogonal $U_1,\dots,U_n\in\mathfrak S$ for which $\pi_{U_i}(\cuco X)$
is unbounded for all $i$.
\end{defn}

The rank is closely related to \emph{standard product regions} in $\cuco X$, which are a useful tool whose 
construction we now review; see also 
\cite[Section~13]{hhs1} and 
\cite[Section 5]{hhs2}. These products are built out of 
the following two spaces, which we define abstractly, but often 
implicitly identify with their images as subsets of $\cuco X$.

\begin{defn}[Nested partial tuples]\label{defn:nested_partial_tuple}
Recall that $\mathfrak S_U=\{V\in\mathfrak S \mid V\nest U\}$.  Fix
$\kappa\geq E$ and let $\FU U$ be the set of
$\kappa$--consistent tuples in $\prod_{V\in\mathfrak S_U}2^{\fontact
V}$.
\end{defn}

\begin{defn}[Orthogonal partial tuples]\label{defn:orthogonal_partial_tuple}
Let $\mathfrak S_U^\orth=\{V\in\mathfrak S\mid V\orth U\}$. Fix $\kappa\geq E$ and let $\EU U$ be the set of
$\kappa$--consistent tuples in $\prod_{V\in\mathfrak
S_U^\orth}2^{\fontact V}$.
\end{defn}

\begin{defn}[Standard product regions in $\cuco X$]\label{const:embedding_product_regions}
Given $\cuco X$ and $U\in\mathfrak S$, there are coarsely well-defined
maps $\phi^\nest,\phi^\orth\co\FU U,\EU U\to\cuco X$ 
which extend to a coarsely
well-defined map $\phi_U\co\FU U\times \EU U\to\cuco X$. 
Indeed, for each $(\tup a,\tup b)\in
\FU U\times \EU U$, and each $V\in\mathfrak S$, the
coordinate $(\phi_U(\tup a,\tup b))_V$ is defined as follows.  If $V\nest U$,
then $(\phi_U(\tup a,\tup b))_V=a_V$.  If $V\orth U$, then
$(\phi_U(\tup a,\tup b))_V=b_V$.  If $V\transverse U$, then
$(\phi_U(\tup a,\tup b))_V=\rho^U_V$.  Finally, if $U\propnest V$, let $(\phi_U(\tup a,\tup b))_V=\rho^U_V$. We refer to 
$\FU U\times \EU U$ as a \emph{standard product region}, whose image in $\cuco X$ we also call a \emph{standard product 
region} and denote by $P_U$.
\end{defn}

The image of $F_U$ in $\cuco X$ is again a hierarchically hyperbolic
space, with index set $\mathfrak S_U$ and hyperbolic spaces and
projections inherited from those in $\mathfrak S$.  The same is true
for $E_U$, although one must replace $\mathfrak S_U$ with the set of
$V\in\mathfrak S$ such that $V\orth U$, together with some element
$A\in\mathfrak S$ into which each such $V$ is nested (such an $A$ is
provided by the HHS axioms).  We won't have much need for this here,
and refer the interested reader to \cite[Section 5]{hhs2} for details.

\subsubsection{Hierarchically hyperbolic groups}
A finitely generated group $G$ is a \emph{hierarchically hyperbolic group (HHG)} if some (hence any) 
Cayley graph of $G$ is an HHS, and the HHS structure is $G$--invariant.  Specifically, an HHG is a finitely 
generated group $G$, equipped with a specific word metric, so that there is an HHS $(G,\mathfrak S)$ where:
\begin{itemize}
     \item $G$ acts cofinitely on $\mathfrak S$, preserving each relation $\nest,\orth,\transverse$;
     \item for each $U\in\mathfrak S$ and $g\in G$, there is an 
	 isometry $g\colon\fontact U\to\fontact gU$, and if $h\in G$, then 
the isometry $gh\colon\fontact U\to\fontact ghU$ is the same as the composition $\fontact U\stackrel{h}{\longrightarrow}\fontact 
hU\stackrel{g}{\longrightarrow}\fontact ghU$;
\item for each $U\in\mathfrak S$ and $g,x\in G$, the points $g\circ \pi_U(x)$ and $\pi_{gU}(gx)$ uniformly coarsely 
coincide;
\item for each $U,V\in\mathfrak S$ such that $U\transverse V$ or $U\propnest V$, and each $g\in G$, we have 
$\rho^{gU}_{gV}=g(\rho^U_V)$.
\end{itemize}

Examples of hierarchically hyperbolic groups include mapping class groups of finite-type orientable surfaces and fundamental 
groups of compact special cube complexes, see \cite{hhs1,hhs2} for 
details and additional examples.

The only property of HHGs that we use in this paper is immediate from
the definition, in particular from the 
property that $G$ acts cofinitely on $\mathfrak S$: 
there exists $C\geq0$ such that for all $U\in\mathfrak
S$, either $\diam(\fontact U)\leq C$, or $\fontact U$ has unbounded
diameter.

\subsubsection{Rank as a quasi-isometry invariant}

We now introduce a technical assumption on the HHS that we will assume 
throughout the paper.  This condition is satisfied by all HHGs; it is 
also satisfied for all naturally occurring examples
of HHSs. We impose it in order to rule out product regions with
bounded but arbitrarily large factors.  Our results likely have analogues that hold in the absence of this
hypothesis, but would require custom-tailoring to the situation at 
hand.

 \begin{defn}[Asymphoric]\label{defn:asymphoric}
 We say that the HHS $(\cuco X,\mathfrak S)$ of rank $\nu$ is
 \emph{asymphoric} if there exists a constant $C$ with the property
 that there does not exist a set of $\nu+1$ pairwise orthogonal
 elements $U$ of $\mathfrak S$ where each $\fontact U$ has diameter at
 least $C$.  In this case, without loss of generality, we assume that
 $E$ is chosen to be at least as large as $C$.
\end{defn}

For completeness, we remark that a result from \cite{hhs1}  
implies that the rank is a quasi-isometry invariant of asymphoric HHSs:

\begin{thm}[Quasi-isometry invariance of rank]\label{thm:rank_inv}
Let $(\cuco X,\mathfrak S)$ be an asymphoric HHS. Then the rank $\nu$
of $\cuco X$ coincides with the maximal $n$ for which there exists $K$
and $(K,K)$--quasi-isometric embeddings $f\co(B_R(0)\subseteq
\reals^n)\to \cuco X$ for all $R\geq 0$.  In particular, the rank is a
quasi-isometry invariant of asymphoric HHS.
\end{thm}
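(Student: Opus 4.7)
The plan is to establish two inequalities relating $\nu$ to the invariant $n$ defined by the existence of a uniform family of quasi-isometrically embedded balls. I would handle the lower direction ($n\geq\nu$) using the standard product region construction, and the upper direction ($n\leq\nu$) via an asymptotic cone argument combined with the rank bound on cones from \cite{hhs1}. The hard step, and the only place where asymphoricity is used, is the upper direction.

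For $n\geq\nu$, I would pick pairwise orthogonal $U_1,\dots,U_\nu\in\mathfrak S$ with each $\pi_{U_i}(\cuco X)$ unbounded, and assemble a quasi-isometric embedding of $\FU{U_1}\times\cdots\times\FU{U_\nu}$ into $\cuco X$ by iterating Definition~\ref{const:embedding_product_regions}. Each factor contains a quasi-isometrically embedded hierarchy ray: by Theorem~\ref{thm:monotone_hierarchy_paths}, hierarchy paths from a base point to a sequence making unbounded progress in $\fontact U_i$ subconverge to such a ray. Taking the product yields a standard $\nu$-orthant $[0,\infty)^\nu\to\cuco X$, and any ball $B_R(0)\subseteq\reals^\nu$ embeds isometrically into the translate $[R,2R]^\nu$ inside this orthant, producing uniform $(K,K)$-quasi-isometric embeddings for all $R$.

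For $n\leq\nu$, I would argue by contradiction: assume $n\geq\nu+1$ and uniform $(K,K)$-quasi-isometric embeddings $f_R\co B_R(0)\to\cuco X$. Fix a non-principal ultrafilter $\omega$, set basepoints $\star_R=f_R(0)$, and rescale by $s_R=R$ to form the asymptotic cone $\Xomega$. Under this rescaling, $x\mapsto f_R(Rx)$ is a $(K,K/R)$-quasi-isometric embedding of $B_1(0)\subseteq\reals^n$ into $(\cuco X,\dist/R)$, so the ultralimit yields a $K$-bi-Lipschitz embedding $B_1(0)\hookrightarrow\Xomega$. The key input from \cite{hhs1} is that the topological dimension of $\Xomega$ equals $\nu$ when $(\cuco X,\mathfrak S)$ is asymphoric of rank $\nu$; this contradicts invariance of domain, since $n\geq\nu+1$ would require the bi-Lipschitz image of an open ball in $\reals^n$ to sit inside a space of smaller topological dimension.

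The main obstacle is precisely this dimension bound on $\Xomega$: without asymphoricity one could have an asymptotic cone containing isometric copies of $\reals^{\nu+k}$, because products of unboundedly many factors of bounded-but-diverging diameter could ``wake up'' in the limit to contribute extra dimensions, and the upper-bound argument would break down exactly as one would expect. Granted the characterization, quasi-isometry invariance of $\nu$ is automatic, since a quasi-isometry between asymphoric HHS transports the uniform family of embedded $\reals^n$-balls from one to the other, forcing the two ranks to agree.
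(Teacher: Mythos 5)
Your overall architecture matches the paper's: the lower bound $n\geq\nu$ comes from building a standard orthant out of a maximal pairwise-orthogonal family (the paper dismisses this direction in one sentence), and the upper bound is supposed to come from passing to an asymptotic cone and invoking invariance of domain. The problem is in the one step that actually carries the weight of the upper bound. You assert that ``the key input from \cite{hhs1} is that the topological dimension of $\Xomega$ equals $\nu$ when $(\cuco X,\mathfrak S)$ is asymphoric of rank $\nu$.'' No such result exists in \cite{hhs1}: asymphoricity is not defined there (it is introduced in the present paper), and a global bound $\dim\Xomega\leq\nu$ is essentially equivalent to the statement you are trying to prove. What \cite{hhs1} does give is only that cones of HHS are finite-dimensional with a bound coming from the \emph{complexity}, which can be strictly larger than the rank $\nu$ (think of a hyperbolic cube complex of large dimension). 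So as written, the upper-bound direction rests on a citation to a theorem that is not in the cited source, and the place where asymphoricity must actually be used is left unproved — even though your closing remarks show you correctly understand \emph{why} the hypothesis is needed.

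The fix is to localize the argument, as the paper does. By \cite[Theorem 13.11.(2)]{hhs1}, the bi-Lipschitz ball $B_1(0)\subseteq\Xomega$ you produce is contained in an ultralimit of \emph{standard boxes}: products of intervals sitting inside standard product regions, whose factors are the various $F_U$. Asymphoricity says that among any pairwise-orthogonal family, at most $\nu$ of the associated $\fontact U$ have diameter exceeding the fixed constant $C$; hence after rescaling by $s_R\to\infty$, at most $\nu$ factors of any such box survive in the ultralimit and the rest collapse to points. The ultralimit of standard boxes is therefore homeomorphic to a subset of $\reals^\nu$, and invariance of domain applied to this subset (not to all of $\Xomega$) gives $n\leq\nu$. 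An alternative repair would be to invoke Corollary~\ref{cor:rank_coarse_median} of the present paper together with Bowditch's theorem that asymptotic cones of rank-$\nu$ coarse median spaces are rank-$\nu$ median spaces of topological dimension at most $\nu$; but that route runs through the cubulation machinery of Section~\ref{sec:hull_level} and is far heavier than the intended argument, and in any case it is not what you cited.
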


\begin{proof}
 It is easy to construct a quasi-isometric embeddings of balls in
 $\reals^n$ starting from $n$ pairwise orthogonal elements $U$ of
 $\mathfrak S$ with unbounded $\pi_U(\cuco X)$.  Hence, we have to
 show that if there exist quasi-isometric embeddings as in the
 statement, then $n$ is at most the rank.  This is because, by
 \cite[Theorem 13.11.(2)]{hhs1}, there exists an asymptotic cone
 $\seq{\cuco X}$ where a copy of the unit ball in $\mathbb R^n$ is
 contained in an ultralimit of standard boxes.  These are products of
 intervals contained in a subspace coarsely decomposing as product whose factors are various subspaces $F_U$, 
so that any ultralimit of
 standard boxes in $\seq{\cuco X}$ is homeomorphic to a subset of
 $\reals^\nu$ because $\cuco X$ is asymphoric.  Hence, $n\leq \nu$, as
 required.
\end{proof}

\subsection{Hulls and gates}

Sets in an HHS have \emph{hulls}, built from coarse convex hulls in 
hyperbolic spaces:

\begin{defn}[Hull of a set; Section 6 of \cite{hhs2}]\label{defn:hull}
 For each $A\subset \cuco X$ and $\theta\geq 0$, let the \emph{hull}, 
 $H_{\theta}(A)$,  be the set of all $p\in\cuco X$ so that, for each
 $W\in\mathfrak S$, the set $\pi_W(p)$ lies at distance at most
 $\theta$ from $\hull_{\fontact W}(A)$, the coarse convex hull of $A$ in the 
 hyperbolic space $\fontact W$ (that is to say, the union of all geodesics in $\fontact W$ joining points of $A$). 
 Note that $A\subset H_{\theta}(A)$.
\end{defn}

Hulls are examples of \emph{hierarchically quasiconvex} subspaces of $\cuco X$.  The other notable examples are standard 
product regions.  The idea behind hierarchical quasiconvexity is to simultaneously capture (in our coarse setting) various 
notions of (coarse) convexity:
\begin{itemize}
     \item First, hierarchical quasiconvexity directly generalizes the usual notion of quasiconvexity in a hyperbolic space: 
when $\cuco X$ is a hyperbolic HHS, the two notions coincide.  More generally, hierarchical quasiconvexity of a subspace 
$\cuco Y\subset\cuco X$ requires that $\cuco Y$ has uniformly quasiconvex projections to all hyperbolic spaces $\fontact 
U$ for $U\in\mathfrak S$.
\item Second, hierarchical quasiconvexity imitates, in the HHS setting, the notion of a convex subcomplex $A$ of a 
CAT(0) cube complex $M$.  That notion has many equivalent formulations; one of them says that $M$ is convex provided that 
the median of $x,y,z$ lies in $A$ whenever at least two of the 
vertices $x,y,z$ lie in $A$.  This generalizes naturally to a 
notion of coarse median convexity in Bowditch's \emph{coarse median spaces}~\cite{Bowditch:coarse_median}, which are 
discussed in more detail below.  It was verified in~\cite[Section 7]{hhs2} that HHSs are coarse median spaces (we rely 
heavily on this 
fact in the rest of the paper) and that hierarchically quasiconvex subspaces are coarsely median convex.  Recently, 
Russell-Spriano-Tran have proved the converse~\cite{RST}.
\item From a point of view that emphasizes paths rather than coarse medians or projections, hierarchically quasiconvex 
subspaces are ``quasiconvex with respect to hierarchy paths'': if $\cuco Y$ is hierarchically quasiconvex, then any 
hierarchy path with endpoints in $\cuco Y$ stays close to $\cuco Y$.
\end{itemize}

A subset $\cuco Y\subset\cuco X$ is \emph{hierarchically quasiconvex} if it has quasiconvex projections to 
the various hyperbolic spaces, and coarsely contains all realization points for tuples whose $U$--coordinate lies in 
$\pi_U(\cuco Y)$ for all $U\in\mathfrak S$.  More precisely:

\begin{defn}[Hierarchical quasiconvexity, Definition 5.1 of~\cite{hhs2}]\label{defn:hierarchical_quasiconvexity}
Let $(\cuco X,\mathfrak S)$ be a hierarchically hyperbolic space.  
Then $\cuco Y\subseteq\cuco X$ is \emph{$k$--hierarchically quasiconvex}, for some $k\co[0,\infty)\to[0,\infty)$, if the following hold:
\begin{enumerate}
 \item For all $U\in\mathfrak S$, the projection $\pi_U(\cuco Y)$ is 
 a $k(0)$--quasiconvex subspace of the $\delta$--hyperbolic space $\fontact U$.
 \item For all $\kappa\geq0$ and $\kappa$-consistent tuples $\tup
 b\in\prod_{U\in\mathfrak S}2^{\fontact U}$ with $b_U\subseteq\pi_U(\cuco Y)$ for all $U\in\mathfrak S$, each point
 $x\in\cuco X$ for which $\dist_{
 U}(\pi_U(x),b_U)\leq\theta_e(\kappa)$ (where $\theta_e(\kappa)$ is as in Theorem~\ref{thm:realization}) satisfies $\dist(x,\cuco Y)\leq
 k(\kappa)$.
\end{enumerate}
\end{defn}

As one might expect, hulls of arbitrary sets are hierarchically quasiconvex, although in this paper we mainly consider 
hulls of finite sets:

\begin{prop}\cite[Lemma 6.2]{hhs2}\label{prop:coarse_retract}
There exists $\theta_0$ so that for each $\theta\geq \theta_0$ there
exists $\kappa\co\reals_+\to\reals_+$ so that for each $A\subset 
\cuco X$ the set $H_\theta(A)$ is
$\kappa$--hierarchically quasiconvex.
\end{prop}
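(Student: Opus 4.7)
The plan is to establish, for sufficiently large $\theta$, the coarse equality $\pi_U(H_\theta(A)) \asymp \mathcal N_\theta(\hull_{\fontact U}(A))$ in each $\fontact U$, and then use this to verify both clauses of Definition~\ref{defn:hierarchical_quasiconvexity}. The inclusion $\pi_U(H_\theta(A)) \subseteq \mathcal N_\theta(\hull_{\fontact U}(A))$ is immediate from Definition~\ref{defn:hull}. Since convex hulls in the $\delta$--hyperbolic space $\fontact U$ are $2\delta$--quasiconvex and uniform neighborhoods of quasiconvex sets remain quasiconvex, once the reverse coarse inclusion is in hand, clause (1) follows with $k(0)$ depending only on $\theta$ and $\delta$.

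The heart of the argument is producing, for each $q\in\hull_{\fontact U}(A)$, a point $p\in H_\theta(A)$ with $\pi_U(p)$ close to $q$. Fix a reference point $a_0\in A$ and, using that $\hull_{\fontact U}(A)$ is a union of geodesics between projections of points of $A$, choose $a_1\in A$ so that $q$ lies within $2\delta$ of some geodesic $[\pi_U(a_0),\pi_U(a_1)]$. For each $V\in\mathfrak S\setminus\{U\}$, define $b_V$ to be a point on a geodesic $[\pi_V(a_0),\pi_V(a_1)]\subseteq \hull_{\fontact V}(A)$, matching the combinatorial ``position'' of $q$ along $[\pi_U(a_0),\pi_U(a_1)]$, and set $b_U=q$. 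Showing that the resulting tuple $\tup b$ is $\kappa$--consistent for some uniform $\kappa$ proceeds case-by-case in the three relations $\nest$, $\orth$, $\transverse$: since the tuples $(\pi_V(a_0))_V$ and $(\pi_V(a_1))_V$ are consistent and each $b_V$ lies on a geodesic between the corresponding endpoints, bounded geodesic image (Lemma~\ref{lem:BGI}) transfers the $\rho$--estimates from endpoints to the interior points $b_V$ whenever the relevant $\rho$--sets are avoided, while otherwise the first clause of the consistency axiom is already fulfilled at an endpoint. Applying Theorem~\ref{thm:realization} produces $p\in\cuco X$ with $\pi_V(p)$ within $\theta_e(\kappa)$ of $b_V\subseteq\hull_{\fontact V}(A)$ for every $V$, so that $p\in H_\theta(A)$ once $\theta\geq\theta_e(\kappa)$, and $\pi_U(p)$ is close to $q$.

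With the coarse equality $\pi_U(H_\theta(A)) \asymp \mathcal N_\theta(\hull_{\fontact U}(A))$ in hand, clause (2) is almost automatic: given a $\kappa$--consistent tuple $\tup b$ with each $b_U\subseteq\pi_U(H_\theta(A))$, Theorem~\ref{thm:realization} supplies $x\in\cuco X$ with $\pi_U(x)$ within $\theta_e(\kappa)$ of $b_U$, which in turn lies within $\theta$ of $\hull_{\fontact U}(A)$, so $x\in H_{\theta+\theta_e(\kappa)}(A)$. Applying the tuple-construction of the previous paragraph with $\pi_V(x)$ in place of $q$ then shows that $H_{\theta'}(A)$ lies in a uniform neighborhood of $H_\theta(A)$ whenever $\theta'\geq\theta\geq\theta_0$, producing the required $k(\kappa)$. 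The main obstacle is the tuple construction itself: coordinating $b_U=q$ with the choices $b_V\in\hull_{\fontact V}(A)$ so that consistency survives across $\nest$, $\orth$, and $\transverse$ simultaneously is the only nontrivial step, and it is precisely bounded geodesic image that makes it go through.
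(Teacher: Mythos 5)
The paper itself does not prove this proposition --- it is imported from \cite[Lemma 6.2]{hhs2} --- but your architecture (coarse coincidence of $\pi_U(H_\theta(A))$ with $\hull_{\fontact U}(A)$ for clause (1); realize the tuple inside $H_{\theta'}(A)$ and push back into $H_\theta(A)$ for clause (2)) is the standard one. Two of your steps, however, do not go through as written. The first is the tuple construction behind coarse surjectivity. ``Matching the combinatorial position of $q$'' is undefined, and no recipe that places each $b_V$ on $[\pi_V(a_0),\pi_V(a_1)]$ while ignoring the relations among the domains $V\neq U$ can work: if $V\transverse W$ are both relevant for $a_0,a_1$, then $\rho^W_V$ lies near one endpoint of $[\pi_V(a_0),\pi_V(a_1)]$ and $\rho^V_W$ near one endpoint of $[\pi_W(a_0),\pi_W(a_1)]$, and consistency of the pair $(b_V,b_W)$ forces $b_V$ to be near $\rho^W_V$ or $b_W$ near $\rho^V_W$; that is, the choices must respect the partial order on relevant domains. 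Your appeal to consistency ``being fulfilled at an endpoint'' does not help, since consistency must hold for the interior points you chose, not for the endpoints; you are in effect re-deriving the existence of hierarchy paths, which is a nontrivial induction. The clean repair is to quote Theorem~\ref{thm:monotone_hierarchy_paths}: a $D$--hierarchy path $\gamma$ from $a_0$ to $a_1$ has some $\gamma(t)$ with $\pi_U(\gamma(t))$ close to $q$, and every $\pi_V(\gamma(t))$ lies uniformly close to $[\pi_V(a_0),\pi_V(a_1)]\subseteq\hull_{\fontact V}(A)$, so $\gamma(t)\in H_\theta(A)$ once $\theta_0$ exceeds the relevant constants.

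The second gap is in clause (2). Running your step-2 construction ``with $\pi_V(x)$ in place of $q$'' controls only the single coordinate $\fontact V$: the other coordinates of the resulting point sit on geodesics determined by a pair $a_0,a_1$ chosen from the $\fontact V$--picture and bear no relation to $\pi_W(x)$ for $W\neq V$, so this does not produce one point of $H_\theta(A)$ close to $x$. What is needed is the coarse retraction of \cite{hhs2}: set $b_V$ equal to the closest-point projection of $\pi_V(x)$ to $\hull_{\fontact V}(A)$ for \emph{every} $V$ simultaneously, verify that this tuple is uniformly consistent (using $\dist_V(x,b_V)\leq\theta'+O(\delta)$ for all $V$), realize it to obtain $y\in H_\theta(A)$, and then use coarse uniqueness (equivalently the distance formula, Theorem~\ref{thm:distance_formula}) to convert the coordinatewise bounds $\dist_V(x,y)\leq \theta'+\theta_e+O(\delta)$ into a bound on $\dist_{\cuco X}(x,y)$. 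This last conversion --- from a uniform bound in every $\fontact V$ to a bound in $\cuco X$ --- is also missing from your write-up, and without it the claimed ``uniform neighborhood'' conclusion, hence the function $k(\kappa)$, is not justified.
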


\begin{rem}Whenever we are working with a fixed HHS $(\cuco
X,\mathfrak S)$, the notation $\theta_0$ will refer
to the constant from Proposition~\ref{prop:coarse_retract}, and we fix
once and for all a constant $\theta\geq\theta_0$.\end{rem}

\subsubsection{The gate map to a hierarchically quasiconvex subspace, and the bridge lemma}
We now recall a construction from Section~5 of~\cite{hhs2}, namely the \emph{gate map} to a hierarchically quasiconvex 
subspace, and prove some additional facts about it.   (The terminology is inspired by the similarity with the notion of a 
gate map to a convex subspace of a median space; see Section~\ref{subsec:median}.)  

Fix a hierarchically hyperbolic space 
$(\cuco X,\mathfrak S)$.  

Let $A\subset\cuco X$ be $\kappa$--hierarchically quasiconvex.  
Recall, this implies that for each $U\in\mathfrak S$, the set $\pi_U(A)$ is
$\kappa(0)$--quasiconvex in $\fontact U$ and there is thus a coarse
closest-point projection $p_{U,A} \co \fontact U \to\pi_U(A)$.  Define a 
\emph{gate 
map} $\gate_A \co \cuco X\to A$ as follows: given $x\in\cuco X$, for each
$U\in\mathfrak S$ let $b_U=p_{U,A}(x)$.  In~\cite[Section
5]{hhs2} we show that the tuple $(b_U)_{U\in\mathfrak S}$ is uniformly
(depending on $\kappa(0)$) consistent, so
Theorem~\ref{thm:realization} and hierarchical quasiconvexity of $A$
produce a coarsely unique point $\gate_A(x)\in A$ such that
$\pi_U(\gate_A(x))$ uniformly coarsely coincides with $b_U$ for all
$U\in\mathfrak S$.

Intuitively, the gate map $\gate_A$ takes $x$ to some realization point for the tuple whose $U$--coordinate, 
for each $U$, is a closest point to $\pi_U(x)$ in the quasiconvex subspace $\pi_U(A)$.

The following lemma, Lemma~\ref{lem:parallel_gates} (``the bridge lemma''), contains a lot of information about the gates of 
a 
hierarchically quasiconvex sets $A,B$. It essentially describes a ``bridge'' of the form $\gate_A(B)\times 
H_{\theta}(\{a,b\})$, for suitable $a\in A,b\in B$, that connects the two. An efficient way to go from $a'\in 
A$ to $b'\in B$ is to start at $a'$, get to the bridge, cross it, and then go to $b'$.

The lemma collects more information than we will need in this paper, for future reference. The proof can be 
safely skipped on first reading.  Before we state it, we give some intuition coming from CAT(0) cube complexes:

\begin{remi}
The bridge lemma is well-illustrated by an analogy to CAT(0) cube 
complexes, where the notion was introduced by Behrstock--Charney 
\cite{BehrstockCharney}.  In the 
analogy, let $P,Q$ be convex subcomplexes of a CAT(0) cube complex $\cuco M$, and let $\gate_P,\gate_Q:\cuco 
M\to P,Q$ be cubical closest-point projection; on the $0$--skeleton, these are the usual gate maps in the median space 
sense.  (So, a hyperplane separates $\gate_P(x)$ from $\gate_P(y)$ if 
and only if it crosses $P$ and separates $x,y$.)  Then $\gate_P(Q)$ is a convex subcomplex of $P$ crossed by 
exactly those hyperplanes that cross $P$ and $Q$, and $\gate_P(Q)$ is a convex subcomplex of $Q$ crossed by 
the same hyperplanes.  The convex hull of $\gate_P(Q)\cup\gate_Q(P)$ is crossed by the above hyperplanes, 
together with the hyperplanes that separate $P$ from $Q$.  Hyperplanes of the latter type cross hyperplanes of 
the former type, and so the convex hull decomposes as a product, which one can view as a ``bridge'' between 
$P$ and $Q$, in the sense that combinatorial geodesics from $P$ to $Q$ travel through the bridge.  

Lemma~\ref{lem:parallel_gates} is analogous, 
except we have replaced the CAT(0) cube complex with an HHS, replaced convexity with hierarchical 
quasiconvexity, and replaced the cubical closest-point projection with the gate map.

Lemma~\ref{lem:parallel_gates} will be important later on in the paper.  We use it in 
Section~\ref{sec:quasiflats_and_cones} to study boxes in asymptotic cones of an HHS; we use it in 
Section~4 to study coarse intersections between standard orthants, the key point being that if $A,B$ are 
hierarchically quasiconvex, then the ``coarse intersection'' of $A$ and $B$ coarsely coincides with 
$\gate_A(B)$. Also, this lemma is useful for obtaining simplifications of the 
distance formula in various instances, see for instance 
Corollary~\ref{cor:gate_in_product} where we obtain a formula for the 
distance between a point and a product region. We note that another 
inspiration for this lemma is its analogue in the mapping class 
group, as developed in \cite[Section~3]{BKMM}.
\end{remi}

 \begin{figure}[h]
 \begin{overpic}[width=0.35\textwidth]{./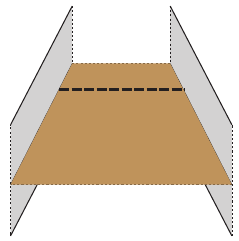}
 \put(22,77){$A$}
 \put(88,65){$B$}
 \put(6,40){\rotatebox{61}{$\gate_A(B)$}}
 \put(80,55){\rotatebox{-59}{$\gate_B(A)$}}
 \put(20,62){$a$}
 \put(77,62){$b$}
 \put(35,66){$H_\theta(\{a,b\})$}
 \end{overpic}
 \caption{The bridge between quasi-convex sets $A$ and $B$}\label{fig:bridge}
 \end{figure}

\begin{lem}[Bridge lemma]\label{lem:parallel_gates}
 For every $\kappa:[0,\infty)\to[0,\infty)$ and all $K_0\geq 10\kappa(0)E$, the following holds.   There exists a function $\kappa'$ and constants 
$K_1=K_1(\kappa,E,K_0)$ and $K_2=K_2(\kappa,E,K_0)$ and $K_3=K_3(\kappa,E,K_1)$ such that for all $\kappa$--hierarchically 
quasiconvex sets $A,B$, we have:
 \begin{enumerate}
  \item\label{item:gate_hq} $\gate_{A}(B)$ is
  $\kappa'$--hierarchically quasi-convex.  
  
  \item\label{item:double_gate_surjective}  The composition $\gate_A\circ\gate_B|_{\gate_A(B)}$ is bounded distance from the identity $\gate_A(B)\to\gate_A(B)$.
  
  \item\label{item:bridge}
  For any $a\in \gate_A(B),b=\gate_B(a)$, we have a $(K_1,K_1)$--quasi-isometric
  embedding $f\co\gate_A(B)\times H_\theta(\{a,b\})\to\cuco X$ with
  image $H_\theta(\gate_A(B)\cup\gate_B(A))$, so that 
  $f(\gate_A(B)\times\{b\})$ $K_1$--coarsely coincides with $\gate_B(A)$.
  \end{enumerate}
  
  Let $\mathcal H=\{U\in\mathfrak S: \diam(\gate_A(B))> K_0\}$.  Let $a,b,f$ be as above.
  \begin{enumerate}\setcounter{enumi}{3}
  \item \label{item:df_bridge_0}For each $p,q\in \gate_A(B)$ and $t\in H_\theta(\{a,b\})$, we have 
$\relevant_{K_2}(f(p,t),f(q,t))\subseteq \mathcal H$.
  \item\label{item:df_bridge}  For each $p\in \gate_A(B)$ and $t_1,t_2\in H_\theta(\{a,b\})$, we have 
$\relevant_{K_2}(f(p,t_1),f(p,t_2))\subseteq \mathcal H^\perp$.
  \item\label{item:take_the_bridge} For each $p\in A,q\in B$ we have $$\dist(p,q)\asymp_{K_3,K_3} 
\dist(p,\gate_A(B))+\dist(q,\gate_B(A))+\dist(A,B)+\dist(\gate_{\gate_B(A)}(p),\gate_{\gate_B(A)}(q)).$$
 \end{enumerate}
\end{lem}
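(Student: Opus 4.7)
The plan is to derive all six items from the coordinatewise description of the gate map, namely that for each $U\in\mathfrak S$ the projection $\pi_U(\gate_A(x))$ coarsely coincides with the closest-point projection $p_{U,A}(\pi_U(x))$ in the $\delta$-hyperbolic space $\fontact U$. Combined with the realization, consistency, and bounded geodesic image axioms, together with standard facts about closest-point projection between uniformly quasiconvex subsets of a hyperbolic space, this will produce all the conclusions.

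For (1), $\pi_U(\gate_A(B))$ coarsely equals $p_{U,A}(\pi_U(B))$, the image of a uniformly quasiconvex set under closest-point projection to another such set in $\fontact U$, and hence is uniformly quasiconvex. For the second axiom of hierarchical quasiconvexity, a $\kappa$-consistent tuple with coordinates in $\pi_U(\gate_A(B))$ is a fortiori one with coordinates in $\pi_U(A)$; realization and hierarchical quasiconvexity of $A$ give a point of $A$ realizing it, and coordinatewise closest-point projection of that point to $\pi_U(B)$, followed again by realization, puts it uniformly close to $\gate_A(B)$. For (2), given $a\in\gate_A(B)$, each $\pi_U(a)$ is already coarsely $p_{U,A}(\pi_U(b))$ for some $b\in B$, and in a $\delta$-hyperbolic space the composition $p_{U,A}\circ p_{U,B}$ is the identity up to uniform error on the closest-point image of a quasiconvex set; gathering these coordinate equalities and invoking the uniqueness part of the realization theorem gives the claim.

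The heart of the argument is (3). Fix $a\in\gate_A(B)$ and $b=\gate_B(a)$, let $\mathcal H$ be as in the statement, and let $\mathcal V=\{U\in\mathfrak S:\diam_{\fontact U}(\pi_U(H_\theta(\{a,b\})))>K\}$. The key structural claim, whose proof is the main obstacle, is that every $U\in\mathcal H$ is orthogonal to every $V\in\mathcal V$: a transverse or nesting relation would, via consistency and bounded geodesic image, pin either $\pi_U(\gate_A(B))$ or $\pi_V(H_\theta(\{a,b\}))$ to a bounded neighborhood of some $\rho$, contradicting the defining size of the set it belongs to (crucially using that $a\in\gate_A(B)$ and that $\pi_V(a),\pi_V(b)$ together span $\pi_V(H_\theta(\{a,b\}))$). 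Granting this, define $f\co\gate_A(B)\times H_\theta(\{a,b\})\to\cuco X$ by specifying the $\fontact U$-coordinate of $f(x,t)$ to be $\pi_U(x)$ for $U\in\mathcal H$, $\pi_U(t)$ for $U\in\mathcal H^\orth$ (which contains $\mathcal V$ by the claim), and the appropriate $\rho$-value for the remaining $U$. The orthogonality claim together with consistency of the tuples for $x$ and $t$ makes this tuple $\kappa$-consistent; realization produces $f(x,t)$, the distance formula makes $f$ a quasi-isometric embedding, and hierarchical quasiconvexity of hulls identifies its image with $H_\theta(\gate_A(B)\cup\gate_B(A))$.

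Items (4) and (5) are then immediate from the coordinatewise construction: fixing $t$ and varying $x$ changes projections substantially only for $U\in\mathcal H$, while the reverse variation does so only for $U\in\mathcal H^\orth$. For (6), the distance formula applied to a hierarchy path from $p\in A$ to $q\in B$ factors the sum of relevant projection distances into four groups, according to whether the relevant domain witnesses motion from $p$ to $\gate_A(B)$, along the bridge in the $\gate_A(B)$-direction, across the bridge in the $H_\theta(\{a,b\})$-direction, or from $\gate_B(A)$ to $q$; these groups produce the four summands in the asymptotic formula.
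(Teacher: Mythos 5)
Your proposal is correct and follows essentially the same route as the paper: the same two families of domains (those where $\gate_A(B)$ is wide versus those where the bridge $H_\theta(\{a,b\})$ is long), the same key orthogonality claim proved by consistency and bounded geodesic image (the paper packages this as a four-point lemma), the same construction of $f$ by realizing mixed coordinate tuples, and the same distance-formula decomposition for item (6). Two small points of care when writing it up: for the second half of (1) you must gate the realization point back into $A$ after projecting to $B$ (i.e., compare $x$ with $\gate_A(\gate_B(x))$, not with $\gate_B(x)$), and for domains neither in $\mathcal H$ nor orthogonal to it the correct coordinate for $f(x,t)$ is the common coarse value of $\pi_U$ on $\gate_A(B)\cup H_\theta(\{a,b\})$ (which is bounded there) rather than a $\rho$-set, since such $U$ need not be transverse to or nest into anything relevant.
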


The reader is referred to Figure~\ref{fig:bridge} for a heuristic picture of the content of the 
lemma.

\begin{proof}[Proof of Lemma~\ref{lem:parallel_gates}]
We start with a definition and an observation.

\textbf{The sets $\mathcal V,\mathcal H$:} Let $\mathcal V$ be the set of $V\in\mathfrak S$ with
$\dist_V(A,B)\geq 100E\kappa(0)$.  Fix $K_0\geq10E\kappa(0)$ and let $\mathcal H_{K_0}$ be the set of $H\in\mathfrak S$ with 
$\dist_H(a,a')>K_0$ for some $a,a'\in\gate_A(B)$, say $a=\gate_A(b),a'=\gate_A(b')$ for some $b,b'\in B$.  

The following claim can be proved using standard thin quadrilateral arguments in the hyperbolic space $\fontact 
V$ for each $V\in\mathcal V$:

\begin{claim}\label{claim:bounded_V}
$\pi_V(\gate_A(B))$
and $\pi_V(\gate_B(A))$ have diameter $\leq 10E\kappa(0)$ for
$V\in\mathcal V$. 

For $U\in\mathfrak S-\mathcal V$ and $x\in\gate_A(B)$, $\dist_U(x,\gate_B(x))\leq 10E\kappa(0)$.
\end{claim}

The next auxiliary claim is a sufficient condition for orthogonality between $H\in\mathcal H_{K_0}$ and $V\in\mathcal V$:

\begin{claim}\label{lem:four_points}
Let $C\geq E$ and let $a,b,a',b'\in\cuco X$ and suppose that $H,V\in\mathfrak S$ satisfy 
\begin{itemize}
 \item $\dist_V(a,a'),\dist_V(b,b')\leq C$;
 \item $\dist_V(a,b)>10C$;
 \item $\dist_H(a,b),\dist_H(a',b')\leq C$;
 \item $\dist_H(a,a')>10C$;
\end{itemize}
Then $H\orth V$.
\end{claim}

\begin{proof}[Proof of Claim~\ref{lem:four_points}]
	To establish that $H\orth V$ we must show that 
	$H$ and $V$ are  
	not related by either the transversality or the nesting 
	relation. Our proof is by contradiction.
	
Suppose $V\transverse H$. First, assume that we are in the case that 
$\dist_V(a,\rho^H_V)\leq E$. We then have that 
$\dist_V(\rho^H_V,b)>8C$ and thus $\dist_V(\rho^H_V,b')>6C$. 
Then, by consistency $\rho^V_H$ lies
$E$--close to both $\pi_H(b),\pi_H(b')$, which is impossible since
$d_H(b,b')>6C$.  It remains to consider the case where 
$\dist_V(a,\rho^H_V)>E$. Here, by consistency, we have that 
$\dist_H(a,\rho^V_H)\leq E$.  Hence $\dist_H(a',\rho^V_H)\geq5E$, and 
so, by consistency, we have $\dist_V(a',\rho^H_V)\leq E$. This case 
now reduces to the first case, with $a'$ replacing $a$, again 
yielding a contradiction.

Suppose $V\propnest H$.  Since $\dist_H(a,a')>10C$ and
$\dist_H(b,b')>6C$, at least one of the pairs $a,b$ or $a',b'$ has the property that geodesics in $\fontact H$ connecting the 
corresponding projection points
are $E$--far from $\rho^V_H$.  By the bounded geodesic
image axiom, we have, say, $\dist_V(a,b)\leq E$, a contradiction.  
The same argument rules out $H\propnest V$.  

Since we have ruled out nesting and transversality, we thus have $H\orth V$.
\end{proof}

The preceding two claims imply that $V\orth H$ for all $V\in\mathcal V$ and $H\in\mathcal H_{K_0}$.  We now proceed to the 
proofs of the enumerated assertions.\\

 \textbf{Assertion~\eqref{item:gate_hq} and Assertion~\eqref{item:double_gate_surjective}:} First we claim that $\pi_U(\gate_A(B))$ is uniformly quasiconvex for all $U\in\mathfrak S$.  Observe that $\pi_U(\gate_A(B))$ uniformly coarsely coincides with $p_{U,A}(\pi_U(B))$.  On the other hand, (uniform) quasiconvexity of $\pi_U(B)$ and a thin quadrilateral argument show that $p_{U,A}(\pi_U(B))$ is uniformly quasiconvex, as required.

We now verify that $\gate_A(B)$ satisfies the second part of the definition of hierarchical quasiconvexity. To that end, let $(t_U)_{U\in\mathfrak S}$ be a consistent tuple so that 
$t_U=p_{U,A}(b_U)$ for some $b_{U}\in \pi_{U}(B)$ for each $U\in\mathfrak S$.
Theorem~\ref{thm:realization} and hierarchical quasiconvexity of $A$
provide a realization point $x\in A$ for $(t_U)$.  

To complete the proof of hierarchical quasiconvexity, we must show that in fact $x$
lies uniformly close to $\gate_A(B)$.  Let $y=\gate_A(\gate_B(x))$.  Since $y\in\gate_A(B)$,  it suffices to show that $x$ 
and $y$ are uniformly close.  To do so, we show that $\pi_U(x),\pi_U(y)$ are uniformly close for each $U\in\mathfrak S$, but 
this follows by considering the two possibilities for $U$ covered by Claim~\ref{claim:bounded_V}.  This proves 
Assertion~\eqref{item:gate_hq}.

For $b\in B$, Claim~\ref{claim:bounded_V} can be applied as above to show that $\pi_U(\gate_A(\gate_B(\gate_A(b))))$ uniformly coarsely
coincides with $\pi_U(\gate_A(b))$ for each $U\in\mathfrak S$, and
hence $\gate_A(\gate_B(\gate_A(b)))$ uniformly coarsely coincides with
$\gate_A(b)$ for all $b\in B$, thus proving 
Assertion~\eqref{item:double_gate_surjective}.\\

\textbf{Defining $f$:} Fix $a\in\gate_A(B)$.  Choose $b''\in B$ so that $a=\gate_A(b'')$, and
let $b=\gate_B(a)$.  Note that
$100E\kappa(0)\leq\dist_V(a,b) \leq \dist_V(A,B)+20E\kappa(0)$ 
for
$V\in\mathcal V$; the second inequality here follows from Claim~\ref{claim:bounded_V}.  
Since $a\in A$ and $b\in B$ we also have 
$\dist_V(A,B)\leq\dist_V(a,b)$.  

Let $a'\in\gate_A(B)$.  Assertion~\ref{item:double_gate_surjective} implies that, up to uniformly bounded distance, 
$a'=\gate_A(b')$ for some $b'\in\gate_B(A)$.  For each $U\in\mathfrak S-\mathcal V$, set $b_U=\pi_U(a')$. 
For each $V\in\mathcal V$, let $\gamma_V$ be a geodesic from $\pi_V(a)$ to $\pi_V(b)$ and, for a 
fixed $h\in H_\theta(\{a,b\})$, set $b_V=\pi_V(h)$, which lies $\theta$--close to $\gamma_V$.

\begin{claim}\label{claim:consistent}
For each $a',h$ as above, the associated tuple $(b_W)_{W\in\mathfrak S}$ defined above is $20K_0$--consistent.
\end{claim}

\begin{proof}[Proof of Claim~\ref{claim:consistent}]
If $W,W'\in\mathfrak S-\mathcal V$, or if $W,W'\in\mathcal V$, then $b_W,b_{W'}$ satisfy any consistency inequality involving $W,W'$, since $b_W,b_{W'}$ coincide with the projections to $\fontact W,\fontact W'$ of a common point in those cases.

If $W\in\mathfrak S-\mathcal V$ and $V\in\mathcal V$, then either
\begin{itemize}
 \item $W\in\mathcal H_{K_0}$, or
 \item  $\diam_W(\pi_W(\gate_A(B)))\leq K_0$ and
$\dist_W(a,b)\leq 100E\kappa(0)$.
\end{itemize}

 In the first case, $V\orth W$ by Claim~\ref{lem:four_points}, so
there is no consistency inequality to check.

In the second case, if
$W\notorth V$, then a $200E\kappa(0)$--consistency inequality 
holds, as we now show. 
Indeed, if $W\transverse V$, then $\pi_W(a'),\pi_W(b')$ coarsely coincide,  as do $\pi_V(a),\pi_V(a')$ and 
$\pi_V(b),\pi_V(b')$.  At least one of $\pi_V(a')$ or $\pi_V(b')$ is $E$--far from $\rho^W_V$, so either $\pi_W(a')$ or 
$\pi_W(b')$ is uniformly close to $\rho^V_W$, but these two points coarsely coincide, so $\pi_W(a')=b_W$ is uniformly 
close to $\rho^V_W$.  The nested cases are similar.
\end{proof}

\textbf{Assertion~\eqref{item:bridge}:}
Given the consistent tuple provided by Claim~\ref{claim:consistent}, 
the realization theorem, Theorem~\ref{thm:realization}, then provides a coarsely
unique $x\in\cuco X$ realizing $(b_W)$, and we let $f(a',h)=x$.  This gives a map $f\co\gate_A(B)\times
H_\theta(a,b)\to\cuco X$, and one can see using the distance formula that there exists $K_1=K_1(\kappa,E)$ so that $f$ is a 
$(K_1,K_1)$--quasi-isometric embedding.  
In the next claims, we check that $f$ satisfies the remaining 
properties of Assertion~\eqref{item:bridge}.\\

\begin{claim}\label{claim:coarse_contained_1}
$f(\gate_A(B)\times H_\theta(\{a,b\}))$ is coarsely contained in $H_\theta(\gate_A(B)\cup\gate_B(A))$.
\end{claim}

\begin{proof}[Proof of Claim~\ref{claim:coarse_contained_1}]
Let $h\in H_\theta(\{a,b\})$.  Let $c\in B$ and let $x=f(\gate_A(c),h)$.  Let $U\in\mathfrak S$.  If $U\in\mathcal V$, then 
$\pi_U(x)$ uniformly coarsely coincides with $\pi_U(h)$, which in turn lies $\theta$--close to the geodesic $\gamma_U$ in 
$\fontact U$ from $\pi_U(a)$ to $\pi_U(b)$, by the definition of a $\theta$--hull.  

If $U\in\mathfrak S-\mathcal V$, then $\pi_U(x)$ lies uniformly close to $\pi_U(\gate_A(c))$.  In either case, $\pi_U(x)$  
lies uniformly close to a geodesic starting and ending in $\pi_U(\gate_A(B)\cup\gate_B(A))$, so $x$ lies uniformly close to 
$H_\theta(\gate_A(B)\cup\gate_B(A))$.
\end{proof}

\begin{claim}\label{claim:coarsely_contained_2}
$H_\theta(\gate_A(B)\cup\gate_B(A))$ is coarsely contained in the image of $f$.
\end{claim}

\begin{proof}[Proof of Claim~\ref{claim:coarsely_contained_2}]
Suppose that $x\in H_\theta(\gate_A(B)\cup\gate_B(A))$.  Let
$y=f(\gate_{\gate_A(B)}(x),\gate_{H_\theta(\{a,b\})}(x))$.  We claim that
$\pi_U(y)$ coarsely coincides with $\pi_U(x)$ for all $U\in\mathfrak
S$, and hence $x$ coarsely coincides with $y$.  Indeed, suppose that
$U\in\mathcal V$.  By Claim~\ref{claim:bounded_V}, we have that 
$\pi_U(\gate_A(B)),\pi_U(\gate_B(A))$ are
uniformly bounded; thus 
$\pi_U(H_\theta(\gate_A(B)\cup\gate_B(A)))$ coarsely coincides with
$\pi_U(H_\theta(\{a,b\}))$.  Hence, since $x\in
H_\theta(\gate_A(B)\cup\gate_B(A))$, we have $\pi_U(x)$ coarsely
coincides with $\pi_U(\gate_{H_\theta(\{a,b\})}(x))$.  By definition, this
coarsely coincides with $\pi_U(y)$.

Suppose that $U\in\mathfrak S-\mathcal V$.  Then $\pi_U(\gate_A(B))$ coarsely coincides with $\pi_U(\gate_B(A))$ and hence $\pi_U(H_\theta(\gate_A(B)\cup\gate_B(A)))$ coarsely coincides with $\pi_U(\gate_A(B))$.  Hence, since $x\in H_\theta(\gate_A(B)\cup\gate_B(A))$, we have $\pi_U(x)$ coarsely coincides with $\pi_U(\gate_{\gate_A(B)}(x))$, which coarsely coincides with $\pi_U(y)$ by definition.  
\end{proof}

\begin{claim}\label{claim:hull_last}
$\gate_B(A)$ coarsely coincides with $f(\gate_A(B)\times\{b\})$.
\end{claim}

\begin{proof}[Proof of Claim~\ref{claim:hull_last}]
By Claim~\ref{claim:coarsely_contained_2}, $\gate_B(A)$ is coarsely contained in the image of $f$.   Moreover, if 
$x\in\gate_B(A)$, then $\pi_V(x)$ coarsely coincides with $\pi_V(b)$ for all $V\in\mathcal V$, since $b\in\gate_B(A)$ and 
$\pi_V(\gate_B(A))$ is bounded by Claim~\ref{claim:bounded_V}.  Hence $\gate_B(A)$ is coarsely contained in 
$f(\gate_A(B)\times\{b\})$.  

Conversely, for any $a'\in\gate_A(B)$, $f(a',b)$ coarsely coincides with $\gate_B(a')$.   Indeed, for $V\in\mathcal V$, 
$\pi_V(f(a',b))$ coarsely coincides with $\pi_V(b)$ by definition.  But $\pi_V(b)\in\pi_V(\gate_B(A))$, by the choice of $b$. 
 Since $\pi_V(\gate_B(A))$ is uniformly bounded, $\pi_V(\gate_B(a'))$ coarsely coincides with $\pi_V(b)$ and hence 
$\pi_V(f(a',b))$.  

Let $H\in\mathfrak S-\mathcal V$.  Since $\dist_H(A,B)\leq100E\kappa(0)$, we have that $\pi_V(\gate_B(a'))$ coarsely 
coincides with $\pi_V(a')$.    By definition $\pi_V(f(a',b))$ coarsely coincides with $\pi_V(a')$.  Hence 
$f(\gate_A(B)\times\{b\})$ is coarsely contained in $\gate_B(A)$.
\end{proof}

\textbf{Assertions~\eqref{item:df_bridge_0},\eqref{item:df_bridge}:}  Let $p,q\in \gate_A(B)$ and $t_1,t_2\in 
H_\theta(\{a,b\})$.   Then there exists $K_2$, depending on $\kappa,K_1,E$ such that the following hold by the construction 
of $f$.  First, if $H\in\relevant_{K_2}(f(p,t_1),f(q,t_1))$, then $H\in\mathcal H_{K_0}$.  

Second, if $V\in\relevant_{K_2}(f(p,t_1),f(p,t_2))$, then $V\in\mathcal V$, so $V\in\mathcal H^\perp_{K_0}$ by 
Lemma~\ref{lem:four_points}, as explained above.\\

\textbf{Assertion~\eqref{item:take_the_bridge}:} Let
$F=H_\theta(\gate_A(B)\cup\gate_B(A))$, and consider $p\in A$ and 
$q\in B$. Assertion~\eqref{item:bridge} and Lemma~\ref{lem:distance_between_sets} provides $K_4=K_4(\kappa,\cuco X)$ so that
$$\dist(\gate_F(p),\gate_F(q))\asymp_{K_4,K_4}\dist(A,B)+\dist(\gate_{\gate_B(A)}(p),\gate_{\gate_B(A)}(q)),$$
so it suffices to compare $\dist(p,q)$ with
$\dist(p,\gate_F(p))+\dist(\gate_F(p),\gate_F(q))+\dist(q,\gate_F(q))$.
The upper bound is just the triangle inequality.  For $U\in\mathfrak
S$, examining a thin quadrilateral shows
\begin{eqnarray*}
\dist_U(p,q)&\geq&\dist_U(p,p_{U,F}(\pi_U(p)))+\dist_U(p_{U,F}(\pi_U(p)),p_{U,F}(\pi_U(q)))+\dist_U(q,p_{U,F}(\pi_U(q)))-T\\
&\geq&\dist_U(p,\gate_F(p))+\dist_U(\gate_F(p),\gate_F(q))+\dist_U(q,\gate_F(q))-10T
\end{eqnarray*}
for some uniform $T$.  Given $L\geq0$, let
$\sigma_L(p,q)=\sum_{U\in\mathfrak S}\ignore{\dist_U(p,q)}{L}$.

By the distance formula (Theorem~\ref{thm:distance_formula}),
$\dist(p,q)\geq K_3^{-1}\sigma_{10T}(p,q)-K_3$ for some $K_3$.  Since, 
$10 \sigma_{10T}(p,q)
\geq \sigma_{100T}(p,\gate_F(p))+\sigma_{100T}(\gate_F(p),\gate_F(q))+\sigma_{100T}(\gate_F(p),q)$,
the claim follows from another use of the distance formula (on the
right, with threshold $100T$).
\end{proof}

The next lemma is used in the proof of the final assertion of Lemma~\ref{lem:parallel_gates}, but it is also interesting in its 
own right, since it says in particular that $\gate_A(a)$ is the ``coarsely closest point'' of the hierarchically quasiconvex 
set $A$ to the (arbitrary) point $a\in\cuco X$.

\begin{lem}\label{lem:distance_between_sets}
Let $A,B\subset\cuco X$ be $\kappa$--hierarchically quasiconvex sets.
Then there exists $K=K(\kappa,\cuco X,\mathfrak S)$ so that 
for all
$a\in\cuco X$ we have $\dist(a,B)\asymp_{K,K}\dist(a,\gate_B(a)).$ 
Moreover, for any $a\in A$:
$$\dist(A,B)\asymp_{K,K}\dist(\gate_B(a),\gate_A(\gate_B(a))).$$ 
\end{lem}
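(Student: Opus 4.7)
My plan is to prove each assertion by reducing to coordinate-wise estimates in the hyperbolic spaces $\fontact U$ via Theorem~\ref{thm:distance_formula}, using the defining property of the gate map: $\pi_U(\gate_B(a))$ coarsely coincides with the closest-point projection of $\pi_U(a)$ onto the $\kappa(0)$--quasiconvex set $\pi_U(B)\subseteq\fontact U$. For the first assertion, $\dist(a,B)\leq\dist(a,\gate_B(a))$ is immediate since $\gate_B(a)\in B$. For the converse, fix any $b\in B$: since $\pi_U(b)\in\pi_U(B)$ and $\pi_U(\gate_B(a))$ lies within a uniform additive constant of the closest point of $\pi_U(B)$ to $\pi_U(a)$, we obtain $\dist_U(a,\gate_B(a))\leq\dist_U(a,b)+C_0$ for some $C_0=C_0(\kappa)$. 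Applying the distance formula with threshold $s\geq s_0+C_0$ gives $\dist(a,\gate_B(a))\leq K\dist(a,b)+K$, and taking the infimum over $b\in B$ completes Part~1.

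For the second assertion, set $b=\gate_B(a)$ and $a'=\gate_A(b)$, so that $\dist(A,B)\leq\dist(b,a')$ is immediate. For the reverse inequality, Part~1 applied to the point $b\in\cuco X$ and the hierarchically quasiconvex set $A$ gives $\dist(b,a')=\dist(b,\gate_A(b))\asymp\dist(b,A)$, so it suffices to bound $\dist(b,A)$ by $\dist(A,B)$ up to affine error. Choose $a^*\in A$ and $b^*\in B$ with $\dist(a^*,b^*)\leq\dist(A,B)+1$. The key coordinate-wise input is a ``bridge'' bound in $\fontact U$: the four points $\pi_U(a),\pi_U(a^*),\pi_U(b^*),\pi_U(b)$ form a quadrilateral in which the first two lie in the $\kappa(0)$--quasiconvex set $\pi_U(A)$, the latter two lie in $\pi_U(B)$, and $\pi_U(b)$ is coarsely the closest-point projection of $\pi_U(a)$ onto $\pi_U(B)$; then $\delta$--thinness and quasiconvexity force $\dist_U(\pi_U(b),\pi_U(A))\leq\dist_U(a^*,b^*)+C_1$ for a uniform $C_1$. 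A second application of the distance formula yields $\dist(b,A)\lesssim\dist(a^*,b^*)\leq\dist(A,B)+1$, completing the proof.

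I expect the main obstacle to be precisely this coordinate-wise bridge estimate: that for $\kappa(0)$--quasiconvex subsets $Y,Z$ of a $\delta$--hyperbolic space and $y\in Y$, the closest-point projection of $y$ onto $Z$ lies within $\dist(Y,Z)+O(\delta,\kappa(0))$ of $Y$. This is a standard but non-automatic consequence of $\delta$--thinness applied to the appropriate quadrilateral, in the same spirit as the quadrilateral arguments in Claim~\ref{claim:bounded_V} and Lemma~\ref{lem:four_points}. Everything else reduces to routine term-wise manipulation via the distance formula.
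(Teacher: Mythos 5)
Your proof is correct. The first assertion is argued exactly as in the paper: the coordinate-wise bound $\dist_U(a,p_{U,B}(\pi_U(a)))\leq\dist_U(a,b)+C_0$ for every $b\in B$, followed by the distance formula with a suitably raised threshold. For the ``moreover'' part your route genuinely differs from the paper's. The paper notes that $p_{U,A}(p_{U,B}(\pi_U(a)))$ lies uniformly close to any geodesic from $\pi_U(a)$ to $p_{U,B}(\pi_U(a))$, deduces $\dist(\gate_B(a),\gate_A(\gate_B(a)))\leq K'\dist(a,\gate_B(a))+K'$, and then specializes to an $a\in A$ nearly realizing $\dist(A,B)$, where the right-hand side is comparable to $\dist(A,B)$ by the first assertion. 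You instead apply the first assertion to the point $\gate_B(a)$ and the set $A$, and control $\dist_U(\gate_B(a),\pi_U(A))$ in each coordinate by the bridge estimate against a pair $(a^*,b^*)$ nearly realizing $\dist(A,B)$; that estimate is indeed the standard one you identify (the geodesic $[\pi_U(a),\pi_U(b^*)]$ passes close to $p_{U,B}(\pi_U(a))$ and lies $\delta$--close to $[\pi_U(a),\pi_U(a^*)]\cup[\pi_U(a^*),\pi_U(b^*)]$, the first leg staying near $\pi_U(A)$ by quasiconvexity and the second having length $\dist_U(a^*,b^*)$), so the argument goes through. A small dividend of your version: it gives the nontrivial inequality $\dist(\gate_B(a),\gate_A(\gate_B(a)))\leq K\dist(A,B)+K$ for \emph{every} $a\in A$, matching the quantifier in the statement, whereas the paper's written argument establishes that direction only for the nearly distance-minimizing $a$ (the reverse inequality being immediate for all $a$).
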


\begin{proof}
First let $a\in\cuco X$ and $b\in B$.  Recall that for $U\in\mathfrak
S$, the map $p_{U,B}\co\fontact U\to\pi_U(B)$ is coarsely the
closest-point projection.  For any $U\in\mathfrak S$, we have
$\dist_U(a,p_{U,B}(\pi_U(a)))\leq \dist_U(a,b)+1$.  By the definition
of the gate, and the distance formula, we thus have $K'$, depending on
$\kappa$, so that $\dist(a,\gate_B(a))\leq K'\dist(a,b)+K'$.  Since
this holds for any $b\in B$, this proves the first assertion.

Now let $a\in A$ and let $U\in\mathfrak S$.  Then
$p_{U,A}(p_{U,B}(\pi_U(a)))$ lies uniformly close to any $\fontact
U$--geodesic from $\pi_U(a)$ to $p_{U,B}(\pi_U(a))$, so by the
distance formula and the definition of the gate,
$\dist(a,\gate_B(a))\geq \dist(\gate_B(a),\gate_A(\gate_B(a)))/K'-K'$
for $K'$ depending only on $\cuco X,\mathfrak S$, and $\kappa$.

Choose $a\in A$ so that $\dist(A,B)\geq \dist(a,B)-1$.  Then
$\dist(A,B)\geq K'\dist(a,g_B(a))/K'-K'-1$, by the first 
assertion and the choice of $a$.  As above,
$\dist(a,\gate_B(a))\geq\dist(\gate_B(a),\gate_A(\gate_B(a)))/K'-K'$.
Combining these facts shows that, up to uniform constants,
$\dist(A,B)$ is bounded below by
$\dist(\gate_B(a),\gate_A(\gate_B(a)))$, as required.
\end{proof}

Although we will not use it in the rest of the paper, we note the following interesting corollary, which is useful elsewhere:

\begin{cor}\label{cor:gate_in_product}
Let $(\cuco X,\mathfrak S)$ be an HHS.  Then for all sufficiently large $s$, there exists $K$ such that the following 
holds.  Let $U\in\mathfrak S$.  Let $P_U$ be a corresponding standard product region and let $x\in\cuco X$.  Let $\mathcal 
R$ be the set of $V\in\mathfrak S$ such that $U\propnest V$ or $U\transverse V$ and $\dist_V(x,\rho^U_V)>s$.  Then 
$$\dist(x,P_U)\asymp_{K,K}\sum_{V\in\mathcal R}\dist_V(x,\rho^U_V).$$ 
\end{cor}

\begin{proof}
By construction, $P_U$ is $\kappa$--hierarchically quasiconvex, where $\kappa$ depends only on $E$.  
Lemma~\ref{lem:distance_between_sets} provides $K_0$ such that $\dist(x,P_U)\asymp_{K_0}\dist(x,\gate_{P_U}(x))$.  Now, by 
the definition of $\gate_{P_U}$, the projections $\pi_V(x)$ and $\pi_V(\gate_{P_U}(x))$ uniformly coarsely coincide unless 
$U\propnest V$ or $U\transverse V$.  In the latter case, $\gate_{P_U}(x)$ projects uniformly close to $\rho^U_V$, by 
Definition~\ref{defn:nested_partial_tuple} and Definition~\ref{defn:orthogonal_partial_tuple}.  The claim now follows from 
the distance formula, Theorem~\ref{thm:distance_formula}.     
\end{proof}

\subsection{Wallspaces}
\emph{Wallspaces} were introduced by
Haglund--Paulin~\cite{HaglundPaulin} and then further developed by 
Hruska--Wise in~\cite{HruskaWise}; there are now numerous variants of
the notion.  Here, we recall the 
relevant 
definitions for Section~\ref{sec:hull_level}.  See,
e.g., \cite{HruskaWise} for more background on CAT(0) cube complexes.

\begin{defn}[Wallspace, coherent orientation]\label{defn:wallspace}
A \emph{wallspace} $(\mathcal S,\mathcal W)$ consists of a set $\mathcal S$ and a collection $\mathcal W=\{(\OL W,\OR 
W)\}$ of partitions of $\mathcal S$; each such partition is called a \emph{wall}.  The subsets $\OL W,\OR 
W\subset\mathcal S$ are the \emph{halfspaces associated to} $(\OL W,\OR W)$.  A \emph{orientation} $x$ of $\mathcal 
W$ is a map $\mathcal W\ni (\OL W,\OR W)\mapsto x(\OL W,\OR W)\in\{\OL W,\OR W\}$.  The orientation $x$ is 
\emph{coherent} if $x(\OL W,\OR W)\cap x(\OL W',\OR W')\neq\emptyset$ for all $(\OL W',\OR W'),(\OL W,\OR 
W)\in\mathcal W$.  The orientation $x$ is \emph{canonical} if there exists $s\in\mathcal S$ so that $s\in x(\OL 
W',\OR W')$ for all but finitely many $(\OL W',\OR W')\in\mathcal W$.  When $\mathcal W$ is finite, as it will always be the case in this 
paper, any orientation is canonical.
\end{defn}

\begin{defn}[Dual cube complex]\label{defn:dual cube complex}
The \emph{dual cube complex} $C=C(\mathcal S,\mathcal W)$ associated to the wallspace $(\mathcal S,\mathcal W)$ is 
the CAT(0) cube complex whose $0$--cubes are the coherent, canonical orientations of $\mathcal W$, with two 
$0$--cubes joined by a $1$--cube if the corresponding orientations differ on exactly one wall.  The resulting graph 
is  median, as was proven independently by Chatterji-Niblo~\cite{ChatterjiNiblo} and Nica~\cite{Nica}, building on 
work of Sageev~\cite{Sageev}. Thus this graph is the $1$--skeleton of a uniquely determined CAT(0) cube 
complex, by a theorem of Chepoi~\cite{Chepoi}; we call this complex $C$.  Note that, given a CAT(0) cube complex $C$, each 
hyperplane $W$ yields a 
wall in $C^{(0)}$ by partitioning $C^{(0)}$ into the vertex sets of the two components of $C-W$.  The CAT(0) cube complex 
dual to the resulting wallspace is exactly $C$.
\end{defn}

\begin{defn}[Hyperplane, crossing]
A \emph{hyperplane} in $C$ is a connected subspace whose intersection with each cube $c=[-1,1]^n$ is either $\emptyset$ or a subspace obtained by restricting exactly one coordinate to $0$.   
\end{defn}

The hyperplanes in $C(\mathcal S,\mathcal W)$ correspond bijectively to the walls in $\mathcal W$.  Moreover, two hyperplanes have nonempty intersection if and only if the corresponding walls \emph{cross} in the sense that all four possible intersections of associated halfspaces are nonempty.  It follows that the dimension of $C$ is equal to the largest cardinality of a subset of $\mathcal W$ consisting of pairwise-crossing walls.

We occasionally use the \emph{convex hull} of a set $A\subset C(\mathcal S,\mathcal W)$: this is the largest subcomplex contained in the intersection of all halfspaces containing $A$.

Finally, we need the notion of a \emph{cubical orthant}.  Let $C$ be a CAT(0) cube complex.  Let $n\geq1$ and let $\mathbf 
R$ be the standard tiling of $[0,\infty)$ by $1$--cubes.  A \emph{cubical $n$--orthant} is a copy of the CAT(0) cube complex 
$\mathbf R^n$ with the obvious product cubical structure.  A cubical $n$--orthant \emph{in $C$} is a subcomplex $O$ of $C$ 
that is isomorphic to $\mathbf R^n$ and has the property that $O\hookrightarrow C$ is an isometric embedding (and in 
particular a median homomorphism) on the $0$--skeleton.

\subsection{Ultralimits and asymptotic cones}
We now recall the definitions of ultralimits and asymptotic cones of metric spaces.  A more detailed discussion can be 
found, for example, in the book~\cite{DrutuKapovich} or in~\cite{Drutu}; we recall just the notions we need.

Let $(M, d)$ be a metric space and let $\omega\subset 2^{\naturals}$ be
a non-principal ultrafilter on $\naturals$.  Given a sequence
$m=(m_n\in M)_{n\in\naturals}$ of \emph{observation points} and a positive
sequence $s=(s_n)_{n\in\naturals}$ with
$s_n\stackrel{n}{\longrightarrow}\infty$,
the \emph{asymptotic cone} $\mathbf M$ is the ultralimit of the
based metric spaces
$\lim_{\omega}(M,m_n,\frac{d}{s_n})$: define a
pseudometric $\mathbf d$ on $\prod_n M$ by $\mathbf d(y,z)=\lim_{\omega}\frac{d(y_n,z_n)}{s_n}$,
and consider the induced pseudometric on the component containing $m$, i.e.,
\[\widehat M=\left\{(y_n)_{n\in\naturals}\in\prod_n(M,\frac{d}{d_n})\,:\,\mathbf
d(y,m)<\infty\right\}.\]
Then $\mathbf M$ is the associated quotient metric space,
obtained from $\widehat M$ by identifying points $y$ and $z$ for which $\mathbf
d(y,z)=0$.  

More generally, given a sequence $(M_n,d_n)$ of metric spaces, with a basepoint $m_n\in M_n$ for each $n$, we 
define the ultralimit as follows.  Given $x=(x_n),y=(y_n)\in\prod_nM_n$, let $\mathbf d(x,y)=\lim_\omega 
d_n(x_n,y_n)$.  We identify $(x_n),(y_n)$ when $\mathbf d(x,y)=0$, and restrict ourselves to 
points $(x_n)$ for which $\lim_\omega d(x_n,m_n)<\infty$.  The resulting based space is the ultralimit 
$\lim_\omega (M_n,d_n,m_n)$.  Note that the asymptotic cone $\mathbf M$ defined above is just 
$\lim_\omega(M_n,d_n/s_n,m_n)$.  When taking ultralimits of a sequence of spaces without rescaling, we will 
emphasize this by saying ``non-rescaled''.

We will adopt the following notational conventions for asymptotic 
cones.  We let $\omega$ denote a 
non-principal ultrafilter on $\naturals$, fixed once and for all.  Given a sequence $(M_i)_{i\in\naturals}$ of 
based metric spaces, we denote by $\mathbf M$ the corresponding ultralimit.  Given $m\in\mathbf M$, a 
representative of $m$ is a sequence $(m_i\in M_i)_{i\in\naturals}$, and, when there is no possibility of 
confusion, we use a boldface letter to denote this representative, viz. $\mathbf m=(m_i)$.  

We also denote by $\,^\omega\reals_+$ the ultrapower of the set
$\reals_+$ of nonnegative reals.  Given $\lambda\in\,^\omega\reals_+$,
we sometimes use the notation, e.g., $\mathbf r$ to denote a sequence
$(r_m)_{m\in\naturals}$ representing $\lambda$.

\subsection{Median, coarse median, quasimedian}\label{subsec:median}
We recall some background on median spaces and coarse median spaces.  The latter were introduced by 
Bowditch~\cite{Bowditch:coarse_median} and we refer the reader to~\cite{Bowditch:coarse_median,Bowditch:rigid} for
a more detailed discussion of both concepts.

The discussion of coarse median spaces in
\cite{Bowditch:coarse_median} is given in terms of \emph{(finite)
median algebras}.  For concreteness, we first consider only the
following example of a (finite) median algebra: let $\cuco Y$ be a
CAT(0) cube complex (with finitely many $0$--cubes).  Recall 
 that there exists a \emph{median} map $\mu\co (\cuco
Y^{(0)})^3\to\cuco Y^{(0)}$ with the property that, for all
$x_1,x_2,x_3\in\cuco Y^{(0)}$, the $0$--cube $\mu(x_1,x_2,x_3)$ lies
on a combinatorial geodesic from $x_i$ to $x_j$ for all distinct
$i,j\in\{1,2,3\}$, see e.g., \cite{Chepoi}.  This $0$--cube with the 
given property is unique.

\begin{rem}[Median and walls]\label{rem:walls_and_media}
Let $\cuco Y$ be a CAT(0) cube complex and let $x,y,z$ be $0$--cubes.
The \emph{median}, $\mu=\mu(x,y,z)$, can be described in terms of orientations
of walls as follows.  If $W$ is a wall in $\cuco Y$ so that some
associated halfspace $W^+$ contains $x,y,z$, then $\mu$ orients $W$
toward $W^+$.  Otherwise, $W$ has two associated halfspaces $W^\pm$ so
that $W^+$ contains exactly two of the points $\{x,y,z\}$ and $W^-$
contains exactly one of these points.  Then $\mu$ orients $W$ toward
$W^+$.  This choice of orientation of all walls is coherent and easily
verified to yield a $0$--cube which is the median of $x,y,z$.
\end{rem}

The above discussion provides the basis for the definition of a coarse median space.

\begin{defn}[Coarse median space; \cite{Bowditch:coarse_median}]\label{defn:coarse_median}
Let $(\cuco L,\dist)$ be a metric space and let $\mu\co \cuco
L^3\to\cuco L$ be a ternary operation.  We say that $\cuco L$,
equipped with $\mu$, is a \emph{coarse median space} if there exists a
constant $k$ and a map $h\co\naturals\to[0,\infty)$ so that the
following hold:
\begin{itemize}
 \item For all $x,y,z,x',y',z'\in\cuco L$, $$\dist(\mu(x,y,z),\mu(x',y',z'))\leq k(\dist(x,x')+\dist(y,y')+\dist(z,z'))+h(0).$$
 \item For all $p\in\naturals$ and $A\subseteq\cuco L$ with $|A|\leq 
 p$, there is a CAT(0) cube complex $\cuco Y_A$ with finite 
 $0$--skeleton and median map $\mu_A$, and maps $f\co A\to\cuco 
 Y_A^{(0)}$ and $g\co\cuco Y_A^{(0)}\to A$ so that the following hold: 
 \begin{itemize}
  \item $\dist(\mu(g(x),g(y),g(z)),g(\mu_A(x,y,z)))\leq h(p)$ for all $x,y,z\in\cuco Y_A^{(0)}$;
  \item $\dist(a,g(f(a)))\leq h(p)$ for all $a\in A$.
 \end{itemize}
\end{itemize}
The \emph{coarse median rank} $\nu$ of $\cuco L$ is the smallest integer $\nu$ so that $\cuco Y_A$ can be taken to have dimension $\leq\nu$ for all finite $A$.
\end{defn}

It was shown in~\cite{hhs2} that every hierarchically hyperbolic space
is a coarse median space; we refer the reader there for details of the
construction. The property of coarse medians we need in this paper is 
that, given an HHS $(\cuco X,\mathfrak S)$, there exists a
constant $\ell$, depending only on the HHS constant $E$, so that the following holds.
Given $x,y,z\in\cuco X$ and letting $m\in\cuco X$ be their coarse 
median, then for all $U\in\mathfrak S$, the point $\pi_U(m)$ lies
$\ell$--close to any geodesic in $\fontact U$ joining $a,b$, where
$a,b\in\{\pi_U(x),\pi_U(y),\pi_U(z)\}$ are distinct.

\begin{defn}[Quasimedian map]\label{defn:quasimedian}
Let $\cuco Y$ be a CAT(0) cube complex with median map $\mu_{\cuco Y}$
on its $0$--skeleton.  Let $(\cuco L,\mu,\dist)$ be a coarse median
space.  Let $h\geq0$.  An \emph{$h$--quasimedian map} is a map
$q\co\cuco Y\to\cuco L$ for which
$$\dist(\mu(q(x),q(y),q(z)),q(\mu_{\cuco Y}(x,y,z)))\leq h$$ for all
$x,y,z\in\cuco Y$.
\end{defn}

Note that quasimedian maps are referred to by 
\cite{Bowditch:coarse_median} as ``quasimorphisms,'' but we use a 
different terminology 
to avoid any confusion with other uses of that word.

When studying asymptotic cones of HHSs, it isn't sufficient to restrict oneself to finite median algebras/CAT(0) cube 
complexes.  So, we need a few more standard notions about general median algebras and median metric spaces.

Recall that a set $\mathcal M$ equipped with a ternary
operation $\mu\co\mathcal M^3\to\mathcal M$ is a \emph{median algebra}
if for all finite $A\subset\mathcal M$, there is a finite
$B\subset\mathcal M$ so that $A\subseteq B$, and $B$ is closed under
$\mu$, and $(B,\mu)$ is a finite median algebra in the above sense
(i.e., we can identify its elements with points in a finite CAT(0) cube
complex in such a way that $\mu$ coincides with the cubical median).
The \emph{rank} of a median algebra is defined as in
Definition~\ref{defn:coarse_median} in terms of the dimensions of the
cube complexes approximating finite sets.

Given $a,b\in \mathcal M$, the \emph{interval} $[a,b]$ is the set of
$c\in \mathcal M$ with $\mu(a,b,c)=c$, and $\mathcal N\subset\mathcal
M$ is \emph{median convex} if $[a,b]\subseteq\mathcal N$ whenever
$a,b\in\mathcal N$.

If $\mathcal M$ is also a Hausdorff topological space, and $\mu$ is continuous, then $(\mathcal M,\mu)$ is a \emph{topological median algebra}.  We consider the following special case.  Let $(M,\dist)$ be a metric space.  For any $a,b\in M$, let $[a,b]$ be the set of $c\in M$ for which $\dist(a,b)=\dist(a,c)+\dist(c,b)$.  If $M$ has the property that for all $a,b,c\in M$, the intersection $[a,b]\cap[b,c]\cap[c,a]$ consists of a single point $\mu(a,b,c)$, then the map $(a,b,c)\mapsto\mu(a,b,c)$ makes $(M,\dist)$ a topological median algebra.  In this situation, we say $M$ is a \emph{median (metric) space}.  The metric notion of an interval agrees with the median notion discussed above.

Bowditch showed, in \cite[Theorem~2.3]{Bowditch:coarse_median}, that any asymptotic cone of a coarse median 
space of rank $\nu$ is a topological median algebra of rank $\nu$, where the median of points represented by 
sequences $(x_n),(y_n),(z_n)$ is represented by a sequence whose $n^{th}$ term is the coarse median of 
$x_n,y_n,z_n$.  Moreover, Bowditch showed in \cite[Proposition 2.4]{Bowditch:rigid} (see also Theorem 6.9 of 
the same paper) that any asymptotic cone of a coarse median space is bilipschitz homeomorphic to a metric 
median space, where the median is as just described.  When we work with asymptotic cones of HHSs (recall that 
each HHS is coarse median of finite rank), we will only be interested in the bilipschitz homeomorphism class, 
and will therefore assume that the asymptotic cone, with the given median, is a median metric space.

We collect the above in the following proposition, which plays an important role throughout 
Section~\ref{sec:quasiflats_and_cones}:

\begin{prop}[Asymptotic cones of HHS are median metric spaces]\label{prop:cone_median}
Let $(\cuco X,\mathfrak S)$ be a hierarchically hyperbolic space.  Let $\mathbf{\cuco X}$ be an asymptotic 
cone of $\cuco X$.  Let $\mu:\cuco X^3\to\cuco X$ be the coarse median map, and let 
$\boldsymbol{\mu}:\mathbf{\cuco X}^3\to\mathbf{\cuco X}$ be the map sending $\mathbf x=(x_n),\mathbf 
y=(y_n),\mathbf z=(z_n)$ to the point represented by $(\mu(x_n,y_n,z_n))$.  Then:
\begin{itemize}
 \item $\boldsymbol{\mu}$ makes $\mathbf{\cuco X}$ into a topological median space of finite rank.  If $(\cuco 
X,\mathfrak S)$ is asymphoric and has rank $\nu$, then the median space $\mathbf{\cuco X}$ has rank at 
most $\nu$.
\item $\mathbf{\cuco X}$, equipped with the median $\boldsymbol{\mu}$, is bilipschitz equivalent to a median 
metric space.
\end{itemize}
\end{prop}

\begin{proof}
It is shown in~\cite[Section 7]{hhs2} that $\cuco X$ is a coarse
median space.  From Theorem~\ref{thm:cubulated_hulls} 
it follows that the rank of $\cuco X$ as a
coarse median space is bounded above by the maximal cardinality $m$ of
collections $\{U_i\}\subset\mathfrak S$ of pairwise orthogonal
elements.  (So, in general, the coarse median rank of $\cuco X$ is
bounded only by the complexity of $\mathfrak S$.)

The bound on the coarse median rank in the asymphoric case is
Corollary~\ref{cor:rank_coarse_median} below.
 
The first assertion now follows 
from \cite[Theorem~2.3]{Bowditch:coarse_median} which provides the 
median structure and implies that $\nu$ is an upper bound on the rank of the median space $(\seq{\cuco 
X},\boldsymbol{\mu})$.  

The second assertion follows from \cite[Theorem~6.9]{Bowditch:rigid}.  More specifically, being an 
asymptotic cone of a quasigeodesic space, $\seq{\cuco X}$ is a 
complete geodesic space (see e.g., \cite[Proposition 10.70]{DrutuKapovich}).  The proof of 
\cite[Proposition 9.1]{Bowditch:coarse_median} shows that $\seq{\cuco X}$, with the given median, 
satisfies the hypotheses of \cite[Proposition~2.4]{Bowditch:rigid}, which then yields the second assertion. 
\end{proof}

Finally, we conclude with some background about the notion of gate maps in a median space, and the notion of a block; these 
again are vital in Section~\ref{sec:quasiflats_and_cones}.  

\begin{defn}[Block, median gate]\label{defn:block_gate}
Let $(M,\mu,\dist)$ be a median metric space.  A
\emph{$n$--block} in $M$ is a median convex subspace isometric to the  
product of $n$ nontrivial compact intervals in $\reals$, endowed with the $\ell_1$
metric.

Recall that the (median) interval in $M$ between points $m$ and $n$ is the set $[m,n]$ of all $m'$ such that 
$\mu(m,m',n)=m'$.

If $N\subset M$ is a closed median convex subset, a \emph{median gate map} 
$\gate_N\co M\to N$ is a map such that $\gate_N(m)\in[m,n]$ for all $m\in M,n\in N$.  

Closed convex subsets of a complete median space always admit a unique gate map (see e.g.~\cite[Lemma 6.26]{DrutuKapovich}). 
 If $N,N'$ are  median convex, then $\gate_N(N')$ is again median convex; see~\cite{Bowditch:rigid}.
\end{defn}

\subsection{Identifying hierarchy paths}\label{subsec:quasimedian_hierarchy_ray}
We now prove a sufficient condition for a path in the HHS $(\cuco X,\mathfrak S)$ to be a hierarchy ray.  It is 
straightforward, but it will play a role in Section~\ref{sec:main}.

In the lemma, ``quasimedian'' will mean with respect to the coarse median 
on $\cuco X$ and the usual median on $\reals$, i.e., $\gamma:\reals\to \cuco X$ is quasimedian if whenever 
$r,s,t\in\reals$ satisfy $r<s<t$, then the coarse median of $\gamma(r),\gamma(s),\gamma(t)$ is $\lambda$--close to 
$\gamma(s)$.

\begin{lem}\label{lem:quasimedian_hierarchy_ray}
Let $(\cuco X,\mathfrak S)$ be an HHS.  Then for all $\lambda\geq1$, there exists $D=D(\lambda)$ such that the following 
holds.  Let $I\subset\reals$ be a (possibly unbounded) subinterval 
and let $\gamma\colon I\to\cuco X$ be a $\lambda$--quasimedian 
$(\lambda,\lambda)$--quasi-isometric embedding.  Then $\gamma$ is a $(D,D)$--hierarchy path.     
\end{lem}

\begin{proof}
The path $\gamma$ is a $(\lambda,\lambda)$--quasigeodesic by 
hypothesis, so to show that it is a hierarchy path we only
need to prove that there exists a constant $D$ so that, for each $U\in\mathfrak S$, the composition of $\gamma$ with
$\pi_U$ is an unparameterized $(D,D)$--quasigeodesic
in $\fontact U$. In order to do so, it suffices to show that there exists a constant $D'$ so that for each $r,s,t\in I$ with $r<s<t$, we have that $\pi_U(\gamma(s))$ lies $D'$--close to a geodesic from $\pi_U(\gamma(r))$ to $\pi_U(\gamma(t))$.

Let $r,s,t\in I$ satisfy $r<s<t$.  Let $m$ be the coarse median of
$\gamma(r),\gamma(s),\gamma(t)$.  Since $\pi_U$ is $E$--coarsely
Lipschitz and $\gamma$ is $\lambda$--quasimedian, we have
$\dist_U(\pi_U(\gamma(s)),\pi_U(m))\leq E\lambda+E$.  By the
definition of the coarse median, there exists 
$\lambda'=\lambda'(E,\lambda)$ such that
$\dist_U(\gamma(s),m_U)\leq\lambda'$, where $m_U$ is the coarse median
in the hyperbolic space $\fontact U$ of the three points 
$\pi_U(\gamma(r)),\pi_U(\gamma(s)),\pi_U(\gamma(t))$. The distance from $m_U$ to any geodesic $[\pi_U(\gamma(r)),\pi_U(\gamma(t))]$ is bounded in terms of the hyperbolicity constant of $\fontact U$, so we are done.
\end{proof}

\section{Cubulation of hulls}\label{sec:hull_level}
Fix a hierarchically hyperbolic space $(\cuco X,\mathfrak S)$.  In
this section, we prove that the hull of any finite set $A\subset \cuco
X$ can be cubulated, i.e., approximated by a finite CAT(0) cube
complex in such a way that both distances and (coarse) medians are
coarsely preserved.

We achieve the cubulation of $H_\theta(A)$ by constructing finitely many walls in $H_\theta(A)$ and then passing to the dual 
cube complex, using the work of Chatterji--Niblo, Nica, and Sageev mentioned in Definition~\ref{defn:dual cube complex}.  

In the case where $(\cuco X,\mathfrak S)$ is a rank-one HHS --- which, as we will see below, is equivalent to being hyperbolic 
--- the cubulation of the hull of $A$ reduces to a classical fact about hyperbolic spaces: the coarse convex hull of any 
finite collection of points can be approximated by a quasi-isometrically embedded $1$--dimensional CAT(0) cube 
complex, i.e., a tree~\cite{Gromov:hyperbolic}.    

We exploit this special case to build walls in $H_\theta(A)$, roughly as follows.  We consider $U\in\frak S$ and consider 
a tree which approximates the coarse convex hull of $\pi_{U}(A)$ in the hyperbolic space $\contact U$.  We then find an
appropriate separated net in this tree and, for each point in this 
net, we use $\pi_{U}^{-1}$ of a connected component of the 
complement as one of our walls.  

The fact that the quality of the tree approximation in $\fontact U$ depends on $|A|$ is the 
most obvious way in which the dependence of the quality of our cubical approximation on $|A|$ makes itself felt. However, there 
are also several other essentially different ways in which $|A|$ influences the quality of the approximation. First, it does so in 
our choice of separated nets (roughly, the larger the total branching of a tree is, the harder it is to approximate the tree with a separated net), and the other two are in 
Lemma~\ref{lem:diambv_bounded} and Lemma~\ref{lem:same_walls}.

We now turn to the formal statement of the cubulation of hulls theorem (which is Theorem~\ref{thm:cubulated_hulls} of the 
introduction):

 \begin{thm}[Cubulation of hulls]\label{thm:cubulated_hull}
  Let $(\cuco X,\mathfrak S)$ be a hierarchically hyperbolic space and let $k\in\naturals$.  Then there exists $M_0$ so that for all $M\geq M_0$ there is a constant $C_1$ so that for any $A\subset\cuco X$ of cardinality $\leq k$, there is a $C_1$--quasimedian
  $(C_1,C_1)$--quasi-isometry $\mathfrak p_A\co\cuco Y\to H_\theta(A)$, where $\cuco Y$ is a CAT(0) cube complex.
  
  Moreover, let $\mathcal U$ be the set of $U\in\mathfrak S$ so that $\dist_U(x,y)\geq M$ for some $x,y\in A$.  Then $\dimension\cuco Y$ is equal to the maximum cardinality of a set of pairwise-orthogonal elements of $\mathcal U$.
  
  Finally, there exist $0$--cubes $y_1,\ldots,y_{k'}\in\cuco Y$ so that $k'\leq k$ and $\cuco Y$ is equal to the convex hull in $\cuco Y$ of $\{y_1,\ldots,y_{k'}\}$.
 \end{thm}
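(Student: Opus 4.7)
The plan is to build a wallspace $(H_\theta(A),\mathcal W)$ whose dual CAT(0) cube complex $\cuco Y$ will have all the asserted properties. For each $U\in\mathcal U$, first approximate $\hull_{\fontact U}(\pi_U(A))$ by a tree $T_U$ of bounded combinatorial complexity (the standard hyperbolic approximation of a set of at most $k$ points), and fix a maximal $M$-separated net $\mathcal N_U\subset T_U$. Each $p\in\mathcal N_U$ lying in the interior of an edge partitions $T_U$ into two halves, and pulling this partition back via $\pi_U$ (composed with the closest-point projection $\fontact U\to T_U$) defines a wall $W_{U,p}$ of $H_\theta(A)$. Set $\mathcal W=\bigcup_{U\in\mathcal U}\{W_{U,p}:p\in\mathcal N_U\}$ and let $\cuco Y$ be the dual cube complex.

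The dimension claim is the key combinatorial step. Walls with the same $U$ descend from a tree and so pairwise do not cross. Walls $W_{U,p}$ and $W_{V,q}$ with $U\not\orth V$ also do not cross, provided $M_0$ is chosen large compared to $E$ and the constant $B$ of Lemma \ref{lem:BGI}. Indeed, if $U\propnest V$ (or $U\transverse V$), then the consistency axiom and bounded geodesic image force the $\fontact V$-projections of all points of $H_\theta(A)$ lying far on one side of $W_{U,p}$ to cluster within $E$ of $\rho^U_V$, and hence to lie on a single side of $W_{V,q}$; the remaining transverse/nested subcases are symmetric. Conversely, a pairwise-orthogonal family $U_1,\dots,U_n\subset\mathcal U$ yields pairwise-crossing walls by choosing interior net points of the respective trees and using realization (Theorem \ref{thm:realization}) together with the product-region structure to exhibit points of $H_\theta(A)$ realizing all four sign patterns of each pair; hence $\dimension\cuco Y$ equals the asserted maximum.

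The map $\mathfrak p_A\co\cuco Y\to H_\theta(A)$ is defined via Theorem \ref{thm:realization}. A coherent canonical orientation $\sigma$ of $\mathcal W$ restricts, for each $U\in\mathcal U$, to a coherent orientation of the walls coming from $T_U$, which picks out a region of $T_U$ of diameter comparable to $M$ and thus a point $b_U(\sigma)\in\fontact U$; for $U\notin\mathcal U$ we set $b_U(\sigma)=\pi_U(a_0)$ for a fixed $a_0\in A$. The same consistency and bounded-geodesic-image inputs as in the dimension step show $(b_U(\sigma))_{U\in\mathfrak S}$ is uniformly consistent, so Theorem \ref{thm:realization} produces a point in $\cuco X$ which by Proposition \ref{prop:coarse_retract} lies within bounded distance of $H_\theta(A)$; composing with the gate onto $H_\theta(A)$ defines $\mathfrak p_A(\sigma)$. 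The distance formula (Theorem \ref{thm:distance_formula}), together with the relation $\#\{W_{U,p}\text{ separating }\sigma,\sigma'\}\asymp\dist_U(\mathfrak p_A(\sigma),\mathfrak p_A(\sigma'))/M$, shows that $\mathfrak p_A$ is a $(C_1,C_1)$-quasi-isometry. The quasimedian property follows from Remark \ref{rem:walls_and_media}: the cubical median orients every wall by majority among three input $0$-cubes, which matches the coarse HHS median via the tree median in each $\fontact U$.

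For the final claim, each $a\in A$ orients every wall $W_{U,p}$ toward the side of $T_U$ containing the tree image of $\pi_U(a)$, giving a $0$-cube $y_a\in\cuco Y$. Because each $p\in\mathcal N_U$ lies in the interior of the tree spanned by $\pi_U(A)$, every wall $W_{U,p}$ separates some pair $y_a,y_{a'}$; thus every hyperplane of $\cuco Y$ meets $\{y_a:a\in A\}$ nontrivially, so $\cuco Y$ is the cubical convex hull of the $k'\leq k$ distinct points among $\{y_a\}$. The main obstacle throughout is the calibration of $M_0$: it must be large enough that tree-level separation by a net point triggers the consistency axiom and bounded geodesic image in all transverse/nested cases (in the spirit of Lemma \ref{lem:passing_up}), and simultaneously that the realized tuples $(b_U(\sigma))$ are consistent with constants independent of $|A|\leq k$.
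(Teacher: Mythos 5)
Your overall strategy is the same as the paper's (spanning trees $T_U$ in each $\fontact U$, walls from a separated net of points in each tree, the dual cube complex, realization of consistent tuples, ``cross iff orthogonal''), but there is a genuine gap in where you place the net points, and it is not cosmetic. You take $\mathcal N_U$ to be an arbitrary maximal $M$--separated net in $T_U$. The paper instead requires each net point $p^U_i$ to be $M$--far from the leaves of $T_U$ (i.e.\ from $\pi_U(A)$) and $M$--far from the points $r^W_U\in T_U$ closest to $\rho^W_U$ for every $W\in\mathcal U$ with $W\propnest U$. This positioning is exactly what makes the non-crossing half of the dimension claim work: if $U\propnest V$ and a net point $q\in T_V$ happens to lie within $O(E)$ of $\rho^U_V$, then both components of $T_V-\{q\}$ contain points whose $\fontact V$--coordinate is close to $\rho^U_V$, the consistency inequality is vacuous there, and (by realization) $H_\theta(A)$ contains points on either side of $W_{V,q}$ with arbitrary $\fontact U$--coordinate in $\hull_{\fontact U}(\pi_U(A))$. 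The walls $W_{U,p}$ and $W_{V,q}$ then cross even though $U\not\orth V$, so $\dimension\cuco Y$ can strictly exceed the maximal cardinality of a pairwise-orthogonal subset of $\mathcal U$. The same failure occurs in the transverse case, since by Lemma~\ref{lem:rho_tree} $\rho^V_U$ sits near a leaf of $T_U$ and your net points are allowed near leaves. Your sentence ``the projections of all points lying far on one side of $W_{U,p}$ cluster within $E$ of $\rho^U_V$'' is the right mechanism, but it only produces disjoint halfspaces after the net has been pushed away from these special points.

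Once you do impose those exclusion zones, a second problem surfaces that your realization step does not address: the components of $T_U$ minus the $U$--supported net points need no longer have diameter comparable to $M$, because a chain of excluded balls around consecutive points $r^{W}_U$ can carry no net points at all while having diameter growing with the number of nested relevant domains. So the region of $T_U$ ``picked out'' by the $U$--walls alone can be large, and $b_U(\sigma)$ is not well-defined up to bounded error. The paper's fix is to define $b_V(p)$ as the intersection over \emph{all} walls $(U,i)$ of the shadows $S_{U,i,V}(p)\subseteq T_V$ of the chosen halfspaces (using Helly for trees), so that walls supported on a nested $W$ cut down $T_V$ near $r^W_V$ via bounded geodesic image; this is the content of Lemma~\ref{lem:diambv_bounded}, and the consistency check (Lemma~\ref{lem:consistent_walls}) leans on the same structure. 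Finally, your lower bound $\dist_{\cuco Y}(\sigma,\sigma')\lesssim\dist_{\cuco X}(\mathfrak p_A(\sigma),\mathfrak p_A(\sigma'))$ does not follow from the distance formula alone: one must bound, uniformly over $\sigma$, the number of domains $U$ in which $\beta_U(\mathfrak p_A(\sigma))$ drifts from $b_U(\sigma)$ across a net point (the ``separators''), which the paper does via Ramsey's theorem and the passing-up lemma (Lemmas~\ref{lem:Ramsey} and~\ref{lem:same_walls}). The remaining parts of your argument (crossing for orthogonal pairs via partial realization, the quasimedian property via majority orientation, and the convex-hull statement from the $0$--cubes $y_a$) match the paper and are fine.
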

 
 \begin{remi}
Since we posted an earlier version of this paper, Bowditch has given
a new proof of this theorem under somewhat
more general hypotheses (very similar to, but strictly weaker than, the HHS axioms); see Theorem 1.3 
in~\cite{Bowditch:hulls}.
 \end{remi}

The proof is carried out over the next several subsections.  We fix once and for all $(\cuco X,\mathfrak S)$, some $k\in\naturals$, and a subset $A=\{x_1,\dots,x_k\}\subseteq\cuco X$. 

\subsection{The candidate finite CAT(0) cube complex}\label{subsec:cubulated_hull}
Fix $U\in\mathfrak S$.  For each $x_j\in A$, recall that $\pi_U(x_j)$ is a subset of the $\delta$--hyperbolic 
space $\fontact U$ of diameter at most $E$; for each $j$, choose $\ell_j^U\in\pi_U(x_j)$, to obtain $k$ points 
$\ell_1^U,\ldots,\ell_k^U\in\fontact U$.  As shown by Gromov, there exists $C=C(k,\delta)$ so 
that there is a finite tree $T_U$ and an embedding $T_U\hookrightarrow\fontact U$, sending edges to geodesics 
of $\fontact U$, such that:
\begin{itemize}
 \item $\dist_U(p,q)\leq\dist_{T_U}(p,q)\leq\dist_U(p,q)+C$ for all $p,q\in T_U$;
 \item $\ell_j^U\in T_U$ for $1\leq j\leq k$;
 \item each leaf of $T_U$ lies in $\{\ell_1^U,\ldots,\ell_k^U\}$.
\end{itemize}
This is the usual spanning tree of a finite subset of a hyperbolic
space; see~\cite{Gromov:hyperbolic}.  The given properties of $T_U$ ensure that,
up to increasing $C$ uniformly, $\dist_{Haus}(T_U,\hull_{\fontact
U}(\pi_U(A)))\leq C$.

Our choice of $T_U$ ensures that, for each $x_j\in A$, every leaf of $T_U$ is contained in $\pi_U(x_j)$ for 
some $x_j\in A$ and each $\pi_U(x_j)$ contains a point of $T_U$.

Let $M$ be a (large) constant to be specified below.  We will point
out the conditions that $M$ must satisfy as we proceed.  

Let $\mathcal
U$ be the set of all $U\in\mathfrak S$ with
$\diam(\pi_U(A))\geq 100Mk$.

Let $\mathcal U_1\subseteq \mathcal U$ be
the set of $\nest$--minimal elements of $\mathcal U$. 
Given $\mathcal U_{n-1}$, let $\mathcal U_n\subseteq \mathcal U$ be
 the set of all $\nest$--minimal elements of $\mathcal U- \mathcal
 U_{n-1}$.  Finite complexity ensures that there is some $s$ so that
 $\bigcup_{n=1}^s\mathcal U_s=\mathcal U$.  For each

 $U\in\mathcal U$, 
 let $\mathcal U^{\nest, U}=\{V\in\mathcal
 U:V\propnest U\}$. 
 For each $V\in \mathcal U^{\nest, U}$, choose
  $r^V_U\in T_U$ closest to $\rho^V_U$; the set of choices is bounded 
  diameter (moreover, in Lemma
  \ref{lem:rho_tree_co-nest}, we prove that $r^V_U$ is $100EC$--close to
  $\rho^V_U$).

\begin{remi}[Finiteness of $\mathcal U$]
For sufficiently large $M$ (in terms of the threshold constant in the distance formula), Theorem~\ref{thm:distance_formula} 
implies that $\mathcal U$ is finite.  One could also deduce this from the large link axiom (Definition 1.1.(7) 
in~\cite{hhs2}), avoiding use of the distance formula.  Finiteness of $\mathcal U$ is used below; it will ensure that the 
wallspace we construct has finitely many walls, so that the $0$--cubes of $\cuco Y$ correspond bijectively to coherent 
orientations of the walls (recall Definition~\ref{defn:dual cube complex}); in other words, we don't have to worry about the 
``canonical orientation'' condition from Definition~\ref{defn:dual cube complex}, because we will be dealing with a finite 
wallspace.
\end{remi}

We now proceed to the construction of the walls.
  
Starting with each $U\in\mathcal U_1$ and then repeating for 
$\mathcal U_2$ up to $\mathcal U_s$, we choose a finite set of 
elements $p_i^U\in T_U$ satisfying the following 
conditions (which implies that the $p_i^U$ together with the $r^{V}_{U}$ 
provide a $10Mk$--net which is $M$--separated):
\begin{enumerate}
 \item $\dist_U(p_i^U,x_j)\geq M$,
 \item $\dist_U(p_i^U,p_j^U)\geq M$,
 \item $\dist_U(p_i^U,r^V_U)\geq M$ for each $V\in  \mathcal U^{\nest,
 U}$ (when $U\in \mathcal U_1$, there are no such $V$), and  
 \item each component of $T_U- \Big(\{p_i^U\}\cup \{r^V_U\}_{V\in
  \mathcal U^{\nest, U}})\Big)$ has diameter at most $10Mk$ (when $U\in \mathcal U_1$, there are no such $V$, 
so 
 the criterion is only about complements of the $\{p_i^U\}$).
 \end{enumerate}
 
 The existence of such a net is justified as follows.  Fix $U\in\mathcal U$.  For each $x_j\in A$, choose 
$y_j\in T_U$ lying in $\pi_U(x_j)$.

For each $j\leq k$, let $T_U^j$ be the subtree of $T_U$ spanned by $y_1,\ldots,y_j$.  Consider the geodesic 
$T_U^2$.  Let $a_1,\ldots,a_\ell$ be the points $a_s$ on $T_U^2$ such that there is a 
(possibly trivial) geodesic in $T_U$ that intersects $T_U^2$ at $a_s$ and joins $a_s$ to a point in 
$\{y_3,\ldots,y_k\}$.  Note that $\ell\leq k-2$.

Note that $T_U$ is the union of $T_U^2$ along with $\ell$ subtrees $C_s$, each of which intersects $T_U^2$ at 
a point $a_s,\ s\leq\ell$.  Choose a (possibly empty) $M$--separated set of points $p^U_i$ in $T_U^2$ so that 
each $p^U_i$ is $M$--far from each $a_s$, and $M$--far from $y_1,y_2$, and $M$--far from each 
$\rho^V_U,V\in\mathcal U^{\nest,U}$ belonging to $T_U^2$.  Any collection that is maximal with these properties 
has the property that $\{p^U_i\}\cup\{\rho^V_U\in T_U^2\}$ is an $M(\ell+2)$--net in 
$T_U^2$.   If $k=2$, then $\{a_s\}=\emptyset$ and we are done.

Otherwise, each tree $C_s$ contains at most $k-2$ of the points $y_j$, and exactly one of the points 
$a_1,\ldots,a_\ell$, namely $a_s$.  So, by induction, $C_s$ contains an $M$--separated collection of points 
$\{p^U_i(s)\}_i$ that are $M$ far from any $\rho^V_U\in C_s$, and $M$--far from any $y_j\in C_s$, and $M$--far 
from $a_s$, such that $\{p^U_i(s)\}_i\cup \{r^V_U\in C_s\}$ is an $M(k-1)$--net in $C_s$.  Observe that the 
set of $p^U_i$, together with the union over $s$ of the $\{p^U_i(s)\}_i$, has the properties listed above.

(Since the points $p_i^U$ are $M$--far from each $y_j$, they are $(M-E)$--far from $\pi_U(x_j)$, and so we 
rename $M-E$ to $M$ to see that the first property on the list holds for $\pi_U(x_j)$, not just $y_j$.  With 
the renamed constant, we now have an $(M+E)k$--net, and in particular a $10Mk$--net, provided $M\geq E/9$.  We 
assume this just to simplify computations later.) 
 
 \begin{defn}[Walls in $H_\theta(A)$]\label{defn:walls_in_hull}
 Given $U\in\mathcal U$ and $\{p_i^U\}$ as above, for each $i$ we define a partition $H_{\theta}(A)=\OL W^U_i\sqcup \OR W^U_i$ of $H_\theta(A)$ as follows. 
 Choose a component $T'_U$ of $T_U-\{p_i^U\}$ and let $\OL W^U_i=\beta_U^{-1}(T'_U)\cap H_{\theta}(A)$, and set $\OR W^U_i=H_\theta(A)-(\OL W^U_i)$.  Let $\mathcal L^U_i=(\OL W^U_i,\OR W^U_i)$.
 \end{defn}
 
 Observe that the (finite) set of walls in $H_{\theta}(A)$ specified in Definition~\ref{defn:walls_in_hull} 
depends on our choice of $M$ (since that determines $\mathcal U$) and on our choice of the $p_i^U$ (which is 
also constrained by the choice of $M$ and the number of points $x_j$).  Let $\cuco Y$ be the CAT(0) cube 
complex dual to the wallspace just defined.  Since the set of walls is finite, there is exactly one $0$--cube 
in $\cuco Y$ for each coherent orientation of all the walls (recall that a coherent orientation is a choice of 
halfspace for each wall such that, for any two walls, the chosen halfspaces have nonempty intersection).

The cubes in $\cuco Y$ are closely related to the standard product regions in $\cuco X$.  Specifically, each 
cube corresponds to a collection of pairwise-crossing walls.  Each wall was determined by some $U\in\mathcal 
U$, namely the $U$ for which the halfspaces in the wall are preimages of complementary components of some 
$p_i^U$.  We think of $U$ as labeling the wall.  Now, each edge of $\cuco Y$ is labeled by the same $U$ that 
labels its dual wall.  Below, we will map $\cuco Y$ to $\cuco X$ in such a way that an edge $e$ labeled $U$ 
has the property that its endpoints are sent to points in $\cuco X$ that project uniformly close on $\fontact 
V$ to $\rho^U_V$ whenever $U\propnest V$ or $U\transverse V$.  In this sense, we will be mapping cubes to 
standard product regions.

 \subsection{Lemmas supporting consistency of certain tuples}\label{subsec:consistency_lemmas} 
 The map $\cuco Y\to H_\theta(A)$ will be constructed roughly as follows.  For each $0$--cube $p\in\cuco Y$, we will 
construct a tuple $(b_V)\in\prod_V\fontact V$.  In Lemma~\ref{lem:consistent_walls}, we will verify that this tuple is 
consistent, and this will require the following technical lemmas, which are essentially just applications of consistency 
(Axiom~\ref{axiom:consistency}) and bounded geodesic image (Lemma~\ref{lem:BGI}).

The content of the lemmas is the following.  Given distinct, non-orthogonal $U\in\mathcal U,V\in\mathfrak S$, there are 
three possibilities: we can have $U\transverse V,U\propnest V$, or $V\propnest U$.  In the first two cases, the coarse point 
$\rho^U_V$ lies close to $T_V$ in $\fontact V$.  In the second case, for any $x\in T_U$ far from $\rho^V_U$, the coarse 
point $\rho^U_V(x)$ lies close to $T_V$.

  \begin{lemma}[$\rho^U_V$ close to $T_V$, transverse case]\label{lem:rho_tree}
  For all $M>10E$, the following holds.  Let $U\in\mathcal U$ and $V\in\mathfrak S$. If $U\transverse V$ then $\rho^U_V$ is $E$--close to some $\pi_V(x_i)$, and hence $2E$--close to $T_V$.   
  \end{lemma}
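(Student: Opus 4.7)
The plan is to apply the transverse consistency axiom separately to each point $x_i \in A$ and to use the fact that $\diam(\pi_U(A))$ is very large (at least $100M \gg E$) to rule out a uniform coincidence.

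First, I would recall that because $U \transverse V$, Axiom~\ref{axiom:consistency} gives, for each $x_i \in A$,
$$\min\{\dist_U(\pi_U(x_i),\rho^V_U),\,\dist_V(\pi_V(x_i),\rho^U_V)\} \leq E.$$
If the first alternative held for \emph{every} $x_i$, then the whole projection $\pi_U(A)$ would lie $E$--close to the uniformly bounded set $\rho^V_U$ (whose diameter is at most $E$ by our choice of $E$), forcing $\diam(\pi_U(A)) \leq 3E$. Since $U \in \mathcal U$ means $\diam(\pi_U(A)) \geq 100M > 10E$, this is impossible, so there exists some $x_i \in A$ with $\dist_U(\pi_U(x_i),\rho^V_U) > E$; for this $x_i$, the consistency inequality forces $\dist_V(\pi_V(x_i),\rho^U_V) \leq E$. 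That proves the first assertion.

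For the second assertion, I would use the construction of the spanning tree $T_V$: the leaf $\ell^V_i$ of $T_V$ was chosen inside $\pi_V(x_i)$, and $\pi_V(x_i)$ has diameter at most $E$. Hence every point of $\pi_V(x_i)$ is within $E$ of $\ell^V_i \in T_V$, and combining this with the bound from the previous paragraph gives $\dist(\rho^U_V, T_V) \leq 2E$.

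There is essentially no obstacle here: the lemma is a direct pigeonhole application of the transverse consistency axiom, together with the definitional fact that the spanning tree $T_V$ contains a representative of each $\pi_V(x_i)$. The only subtlety is checking that the quantitative hypothesis $M > 10E$ is strong enough to guarantee that at least one index $i$ escapes the first alternative of the consistency inequality — and the lower bound $\diam(\pi_U(A)) \geq 100M$ coming from $U \in \mathcal U$ makes this immediate.
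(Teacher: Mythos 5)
Your proof is correct and follows essentially the same route as the paper's: both arguments use the largeness of $\diam_{\fontact U}(\pi_U(A))$ (forced by $U\in\mathcal U$) to find some $x_i$ with $\dist_U(x_i,\rho^V_U)>E$, then apply transverse consistency to conclude $\dist_V(x_i,\rho^U_V)\leq E$, and finish using the fact that $\pi_V(x_i)$ has diameter at most $E$ and contains a leaf of $T_V$. Your explicit pigeonhole justification for the existence of such an $x_i$ is simply a spelled-out version of the step the paper states in one line.
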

  
  \begin{proof}
  Since $U\in\mathcal U$, we have $\diam_{\fontact U}(\pi_U(A))\geq 100Mk\geq 100M>10^3E$.  Hence we can 
choose $x_i\in A$ so that $\dist_U(x_i,\rho^V_U)>E$.  Consistency yields $\dist_V(x_i,\rho^U_V)\leq E$.  Since 
$\pi_V(x_i)$ has diameter $\leq E$ and contains a point of $T_V$, we have $\dist_V(T_V,\rho^U_V)\leq 2E$.
  \end{proof}
  
  \begin{lemma}[$\rho^U_V$ close to $T_V$, nested case]\label{lem:rho_tree_co-nest}
  For any $M> 10E$, the following holds. Let $U\in\mathcal U,V\in\mathfrak S$, with $U\propnest V$. Then $\dist_V(\rho^U_V,T_V)\leq 100EC$.
  \end{lemma}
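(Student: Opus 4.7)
The plan is to argue by contradiction using bounded geodesic image (Lemma~\ref{lem:BGI}): if $\rho^U_V$ were far from $T_V$, then $\fontact V$--geodesics between pairs of points of $\pi_V(A)$ would avoid a large neighborhood of $\rho^U_V$, and BGI (applied to the nesting $U\propnest V$) would force the $\fontact U$--projections of these points to be uniformly close, contradicting $U\in\mathcal U$. This is the exact analogue, in the nested case, of Lemma~\ref{lem:rho_tree}, which handled the transverse case using consistency.

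In more detail, since $U\in\mathcal U$ gives $\diam_{\fontact U}(\pi_U(A))\geq 100M$, I can choose $x_i,x_j\in A$ with $\dist_U(x_i,x_j)>B$, where $B$ is the constant from Lemma~\ref{lem:BGI}; this just requires $M_0>B/50$ say. I will also arrange, by enlarging $C$ at the outset if necessary, that $C\geq B$; this is harmless, since $C$ is only required to satisfy $\dist_{\mathrm{haus}}(T_V,\hull_{\fontact V}(\pi_V(A)))\leq C$ along with the two other defining inequalities, all of which are preserved under enlarging $C$.

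Now assume for contradiction that $\dist_V(\rho^U_V,T_V)>100EC$. Let $\gamma$ be an $\fontact V$--geodesic from a point of $\pi_V(x_i)$ to a point of $\pi_V(x_j)$. Since both endpoints lie in $\pi_V(A)$, we have $\gamma\subseteq\hull_{\fontact V}(\pi_V(A))$, and by construction of $T_V$ this hull lies within Hausdorff distance $C$ of $T_V$. Hence every point of $\gamma$ is at distance $>100EC-C>B$ from $\rho^U_V$, so $\gamma\cap\mathcal N_B(\rho^U_V)=\emptyset$. The second assertion of Lemma~\ref{lem:BGI}, applied with $W=V$ and with the nested element being $U\propnest V$, then gives $\dist_U(x_i,x_j)\leq B$, contradicting the choice of $x_i,x_j$. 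Therefore $\dist_V(\rho^U_V,T_V)\leq 100EC$.

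There is no real obstacle here; the argument is a direct application of BGI combined with the defining property of $T_V$. The only subtlety is ordering the choice of constants: $B$ is fixed first by Lemma~\ref{lem:BGI}, then $C$ is enlarged so that $C\geq B$ (which also forces $100EC-C>B$), and finally $M_0$ is chosen large enough that $U\in\mathcal U$ yields a pair of points in $A$ whose $\fontact U$--distance exceeds $B$.
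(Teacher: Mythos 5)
Your proof is correct and is essentially the paper's argument: both assume $\dist_V(\rho^U_V,T_V)>100EC$, use that $T_V$ is $C$--Hausdorff-close to $\hull_{\fontact V}(A)$ to conclude that geodesics between points of $\pi_V(A)$ miss a large neighborhood of $\rho^U_V$, and then apply bounded geodesic image to contradict $\diam(\pi_U(A))\geq 100M$. The only cosmetic difference is that you invoke the ``moreover'' clause of Lemma~\ref{lem:BGI} where the paper combines the first clause of BGI with the consistency inequality, and your (harmless, uniform) enlargement of $C$ to exceed $B$ is not really needed once one notes $B\leq E$ and $C\geq 1$.
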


  \begin{proof}
   Suppose that $\dist_V(\rho^U_V,T_V)>100EC$.  Then, since $T_V$
   $C$--coarsely coincides with $\hull_{\fontact V}(A)$, and the
   latter is $5E$--quasiconvex, we have that $\rho^U_V$ lies at
   distance greater than $E$ from any geodesic joining points in $\pi_V(A)$.
   Hence, by consistency and bounded geodesic image, any such geodesic
   projects to a geodesic in $\fontact U$ of diameter at most $E$, 
   i.e., $\pi_U(A)$ has diameter bounded by $10E$.  This contradicts
   $U\in\mathcal U$, provided $M>10E$.
  \end{proof}
 
 \begin{lemma}[$\rho^U_V(x)$ close to $T_V$]\label{lem:rho_tree_nest}
 For any $M>10EC$ the following holds.  Consider $U\in\mathcal U$ and
 any $V\in\mathfrak S$ with $V\propnest U$.  Then for each $x\in T_U- \neb_M(\rho^V_U)$
 there exists $x_j\in A$ with $\dist_V(\rho^U_V(x),x_j)\leq 2E$ (in
 particular, $\rho^U_V(x)$ is $10E$--close to $T_V$).
 \end{lemma}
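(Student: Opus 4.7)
The plan is to find a leaf $\ell^U_j\in T_U$, with $\ell^U_j\in\pi_U(x_j)$ for some $x_j\in A$, such that the $\fontact U$--geodesic from $x$ to $\ell^U_j$ avoids a large neighborhood of $\rho^V_U$. Bounded geodesic image (Lemma~\ref{lem:BGI}) will then force $\rho^U_V$ to be coarsely constant along this geodesic, and the nested consistency axiom will relate $\rho^U_V(\pi_U(x_j))$ to $\pi_V(x_j)$.

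To choose the leaf, let $y\in T_U$ be a point realizing $\min_{z\in T_U}\dist_{\fontact U}(z,\rho^V_U)$, and view $T_U$ as rooted at $y$. Let $T_x\subseteq T_U$ be the closed subset of points whose unique $T_U$--path to $y$ passes through $x$. If $x$ is itself a leaf of $T_U$, set $\ell:=x$; otherwise $T_x$ is a nontrivial subtree of the finite tree $T_U$, and hence contains some leaf, which we take as $\ell$. In either case $\ell=\ell^U_j$ for a suitable $x_j\in A$, and every point on the $T_U$--geodesic $[x,\ell]_{T_U}$ lies in $T_x$, so has $T_U$--distance to $y$ at least $\dist_{T_U}(x,y)$.

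To convert this into $\fontact U$--distance control away from $\rho^V_U$, one uses that $T_U$ is uniformly quasi-convex in the $\delta$--hyperbolic space $\fontact U$, being $C$--Hausdorff close to $\hull_{\fontact U}(\pi_U(A))$. A standard closest-point-projection argument gives $\dist_{\fontact U}(z,\rho^V_U)\geq \dist_{T_U}(z,y)-K$ for every $z\in T_U$, with $K=K(C,\delta,E)$; combined with $\dist_{\fontact U}(x,\rho^V_U)>M$ this yields $\dist_{T_U}(x,y)\geq M-K$, so every $z\in[x,\ell]_{T_U}$ is at $\fontact U$--distance $>M-K$ from $\rho^V_U$. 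An $\fontact U$--geodesic $\gamma$ from $x$ to $\ell$ is Hausdorff-close to $[x,\ell]_{T_U}$ (with constants depending only on $C$ and $\delta$), so the hypothesis $M>10E$, together with the convention that $E$ absorbs $K$ and the bounded-geodesic-image constant, makes $\gamma$ disjoint from the $B$--neighborhood of $\rho^V_U$.

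The remainder is bookkeeping. Lemma~\ref{lem:BGI} gives $\diam_{\fontact V}(\rho^U_V(\gamma))\leq B\leq E$, hence $\dist_{\fontact V}(\rho^U_V(x),\rho^U_V(\ell^U_j))\leq E$; since $\pi_U(x_j)\ni\ell^U_j$ has diameter at most $E$, the same bound controls $\rho^U_V(\pi_U(x_j))$. Because $\pi_U(x_j)$ is also more than $E$--far from $\rho^V_U$ (by the same tree estimate applied at $\ell^U_j$), the nested case of Axiom~\ref{axiom:consistency} yields $\dist_{\fontact V}(\pi_V(x_j),\rho^U_V(\pi_U(x_j)))\leq E$. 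Combining the two estimates produces the target bound $\dist_V(\rho^U_V(x),x_j)\leq 2E$; the parenthetical $10E$--closeness of $\rho^U_V(x)$ to $T_V$ then follows because $\ell^V_j\in\pi_V(x_j)\cap T_V$ and $\pi_V(x_j)$ has diameter $\leq E$. I expect the main technical obstacle to be the hyperbolic inequality $\dist_{\fontact U}(z,\rho^V_U)\geq\dist_{T_U}(z,y)-K$ with constants compatible with $M>10E$, but this is a standard consequence of the uniform quasi-convexity of $T_U$ in $\fontact U$ together with the thin-triangle property.
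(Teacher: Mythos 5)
Your proposal is correct and follows essentially the same route as the paper's proof: locate a leaf $\ell_j^U\in\pi_U(x_j)$ on the far side of $x$ from $\rho^V_U$, check that the $\fontact U$--geodesic from $x$ to that leaf misses a large neighborhood of $\rho^V_U$, and then combine bounded geodesic image with the nested consistency inequality $\diam_V(\pi_V(x_j)\cup\rho^U_V(\pi_U(x_j)))\leq E$. The paper simply asserts the existence of such a leaf in the component of $T_U-\neb_{M/2}(\rho^V_U)$ containing $x$, whereas you supply the rooted-tree selection and the quasiconvexity estimate justifying it — a more detailed rendering of the same argument.
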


\begin{proof}
 There exists a leaf of $T_U$, contained in $\pi_U(x_j)$ for some
 $x_j\in A$, in the same connected component of $T_U-
 \neb_{M/2}(\rho^V_U)$ as $x$.  Geodesics from $x$ to $\pi_U(x_j)$ thus
 stay $E$--far from $\rho^V_U$, so that the desired conclusion
 follows from bounded geodesic image (and consistency, which says
 $\diam_V(\pi_V(x_j)\cup\rho^U_V(\pi_U(x_j)))\leq E$).
\end{proof}

 \subsection{The proof of Theorem~\ref{thm:cubulated_hull}}\label{subsec:proof_of_cubulated_hull_theorem}
 We now prove Theorem~\ref{thm:cubulated_hull}.  Some auxiliary lemmas appear immediately below the proof, organized according to which part of the proof they support.
 
 \begin{proof}[Proof of Theorem \ref{thm:cubulated_hull}]
 We break the proof into several parts.
 
 \textbf{Definition of $\mathfrak p_A$:}  We first define $\mathfrak 
 p_A\co\cuco Y\to \cuco X$, noting that it suffices to define $\mathfrak p_A$ on the $0$--skeleton of $\cuco Y$.  Let $p\in\cuco Y^{(0)}$; we view $p$ as a coherent orientation of the walls $\mathcal L^U_i$ provided by Definition~\ref{defn:walls_in_hull}.
  
  For $U\in\mathcal U$, $V\in\mathfrak S$ and each $p^U_i$ (which we 
  recall gives a pair $\{\OL
  W^U_i,\OR W^U_i\}$),  we can consider $\overline W_i(U)\in \{\OL
  W^U_i,\OR W^U_i\}$ which is the halfspace given by the orientation $p$,
  namely $p(\overleftarrow W_i(U),\overrightarrow W_i(U))$. We 
  let
  $S_{U,i,V}(p)\subseteq T_V$ be the convex hull in $T_V$ of
  $\beta_V(\overline W_i(U))$, where, as above, $\beta_{V}$ is the composition 
  of projection to $\fontact V$ and the closest point projection to 
  $T_{V}$.

  By the definition of a coherent orientation, for any $U,i,U',i'$, we have $\beta_V(\overline W_i(U))\cap \beta_V(\overline W_{i'}(U'))\neq\emptyset$, whence $S_{U,i,V}(p)\cap S_{U',i',V}(p)\neq\emptyset$.  The Helly property for trees thus ensures that $\bigcap_{U,i} S_{U,i,V}(p)\neq\emptyset$ for each $V\in\mathfrak S$, and we let $b_V=b_V(p)=\bigcap_{U,i} S_{U,i,V}(p)$.
  Lemma~\ref{lem:diambv_bounded}, below, proves that $\diam(b_V)$ are 
  uniformly bounded. Lemma~\ref{lem:consistent_walls}, below, shows the 
  $(b_{V})$ are $\eta$--consistent, where $\eta=\eta(M,k,\cuco X)$.

  We can now define $\mathfrak p_A(p)\in \cuco X$ to be a realization point 
  associated to 
  $(b_U)$ via Theorem~\ref{thm:realization}.  Specifically, there
  exists $\xi=\xi(\eta,E)$ so that for all $U\in\mathfrak S$, we have
  $\dist_U(\pi_U(\mathfrak p_A(p)),b_U)\leq\xi$.
  
  \textbf{The image of $\mathfrak p_A$ coarsely coincides with $H_\theta(A)$:}  Let $x\in
  H_\theta(A)$.
  
  For each wall in $H_\theta(A)$ (the walls are those from Definition~\ref{defn:walls_in_hull}), choose the 
halfspace containing $x$; there is exactly one such halfspace since a wall is, by construction, a partition of 
$H_\theta(A)$ into two halfspaces.  Now, any two of the chosen halfspaces contain $x$, so by 
Definition~\ref{defn:wallspace}, this orientation is coherent, and it is a canonical orientation simply because there are 
only finitely many walls.

So, this orientation of all walls determines a $0$--cube $p\in\cuco Y$, by Definition~\ref{defn:dual cube 
complex}.  Now, by construction, the tuple $(b_V(p))$ has the property that, for all $U\in\mathcal U$, we have 
$\beta_U(x)\in b_U(p)$.  Since $\dist_U(\pi_U(x),\beta_U(x))\leq\theta$, because $x\in H_\theta(A)$, we see 
that $\dist_U(x,b_U(p))\leq\theta$.  Now, $\dist_U(b_U(p),\mathfrak p_A(p))\leq\xi$, so $\dist_U(x,\mathfrak 
p_A(p))\leq \xi+\theta$ for all $U\in\mathcal U$.  If $U\not\in\mathcal U$, then $\dist_U(x,\mathfrak 
p_A(p))\leq \theta+100Mk$.  So, by the uniqueness axiom for HHS (Definition 1.1.(9) in~\cite{hhs2}), or simply 
by Theorem~\ref{thm:distance_formula}, we have  $\dist_{\cuco X}(x,\mathfrak
  p_A(p))\leq C_1'$, where $C_1'=C_1'(M,k,\cuco X,\theta)$.  Hence $H_\theta(A)$ lies in a uniform 
neighborhood of $\image\mathfrak p_A$.

On the other hand, if $p\in\cuco Y$, then $\pi_U(\mathfrak p_A(p))$ lies uniformly close (in terms of $\xi$) 
to $\hull(\pi_U(A))$ for all $U\in\mathfrak S$.  The definition of hierarchical 
quasiconvexity, together with the fact that $H_\theta(A)$ is hierarchically quasiconvex, ensures that $\mathfrak 
p_A(p)$ lies uniformly close to $H_\theta(A)$, i.e., $\image\mathfrak p_A$ lies in
a uniform neighborhood of $H_\theta(A)$.

After enlarging $C'_1$ if necessary, we thus see that there exists $C_1'=C_1'(M,k,\cuco X,\theta)$ such that 
$H_\theta(A)$ and $\image\mathfrak p_A$ lie at Hausdorff distance at most $C_1'$.
  
  \textbf{Distance estimates:} For $p\in\cuco Y$, we say $p^U_i$ is a \emph{separator} for $p$ if
  $p^U_i$ separates $\beta_U(\mathfrak p_A(p))$ from $b_U(p)$.  We call
  $U$ the \emph{support} of the separator. In 
  Lemma~\ref{lem:same_walls} we produce a constant $T=T(M,k,\eta,\xi,\mathfrak S)$, so 
  that for each $p\in \cuco Y$ there are at most $T$ separators for 
  $p$.
  
  We first relate the number of walls separating a pair of points in 
  $\cuco Y$ to the number of points separating their images under 
  $\mathfrak p_{A}$.
  
  Specifically, let $p,q\in\cuco Y$.  By the definition of distance in a CAT(0) cube complex, $\dist_{\cuco 
Y}(p,q)$ is the  number of walls separating $p$ and $q$.  Let $\mathcal L^V_i$ be a wall separating $p$ from 
$q$.  Then, by the construction of the tuples $b_V(p)$ ad $b_V(q)$, the subtrees $b_V(p)$ and $b_V(q)$ of $T_V$ 
lie on opposite sides of the wall in $T_V$ determined by $p^V_i$.  Conversely, if $b_V(p)$ and $b_V(q)$ are 
separated by the partition of $T_V$ determined by some $p^V_i$, then $\mathcal L^V_i$ corresponds to a 
wall in $\cuco Y$ separating $p$ from $q$.

Hence $\dist_{\cuco Y}(p,q)$ is the sum of the
  numbers of $p^V_i$ separating $b_V(p)$ from $b_V(q)$, as $V$ varies.  Now, $\mathcal L^V_i$ separates 
$b_V(p)$ from $b_V(q)$ but fails to separate $\beta_V(\mathfrak p_A(p))$ from $\beta_V(\mathfrak 
  p_A(q))$ only if $\mathcal L^V_i$ is a separator for $p$ or for $q$.  Similarly, $\mathcal L^V_i$ separates 
$\beta_V(\mathfrak p_A(p))$ from $\beta_V(\mathfrak 
  p_A(q))$ but fails to separate $b_V(p),b_V(q)$ only if $\mathcal L^V_i$ is a separator for $p$ or for $q$.
  
  Lemma~\ref{lem:same_walls} shows that $p$ has at most $T$ separators and $q$ has at most $T$ separators.  
Let $Q(p,q)$ be the sum over all $V$ of the number of $p^V_i$ separating $\beta_V(\mathfrak p_A(p))$ from 
$\beta_V(\mathfrak p_A(q))$.  The preceding discussion shows that $|\dist_{\cuco Y}(p,q)-Q(p,q)|\leq 2T$.

  Observe that: if, for some $V$, there
  exist distinct $p^V_i,p^V_{i'}$ separating $\beta_V(\mathfrak
  p_A(p))$ from $\beta_V(\mathfrak p_A(q))$, then $V$ contributes to
  the distance formula sum between $\mathfrak p_A(q)$ and $\mathfrak
  p_A(p)$, at some fixed threshold $L$ chosen in terms of 
$E$ and $M$.  Moreover, $V$ also contributes to the distance formula sum in the case where $\beta_V(\mathfrak
  p_A(p))$ and $\beta_V(\mathfrak p_A(q))$ are both $M/2$--close to $\pi_V(A)$
  and there exists at least one $p^V_i$ separating $\beta_V(\mathfrak
  p_A(p))$ from $\beta_V(\mathfrak p_A(q))$.  
  
  Applying Lemma
  \ref{lem:count_separators} and Lemma \ref{lem:Ramsey}, we have 
  $$\dist_{\cuco X}(\mathfrak p_A(p),\mathfrak p_A(q))\asymp \sum_{U\in\mathfrak S} 
  \ignore{\dist_U(\mathfrak p_A(p),\mathfrak p_A(q))}{L}\geq 
  Q(p,q)-100EC\theta N,$$
  where $N$ is the constant from Lemma~\ref{lem:Ramsey}.  Hence there exists $C_1''=C_1''(M,\cuco 
X,\mathfrak S,k)$ so that $\dist_{\cuco X}(\mathfrak p_A(p),\mathfrak p_A(q))\geq \dist_{\cuco 
Y}(p,q)/C_1''-C_1''$ for $p,q\in\cuco Y$.
  
  \textbf{$\mathfrak p_A$ is coarsely Lipschitz:} Crossing one hyperplane of $\cuco Y$ corresponds to changing
  only one coordinate $(b_U)$ as above by a bounded amount, so there
  exists $C_1'''=C_1'''(M,k,\cuco X)$ so that $\mathfrak p_A$ is
  $(C_1''',C_1''')$--coarsely Lipschitz.  
  
  \textbf{Dimension:}  The assertion about dimension follows from Lemma~\ref{lem:cross} and the well-known fact that any finite set of $n$ pairwise crossing hyperplanes in a CAT(0) cube complex intersect in the barycenter of some $n$--cube.
  
  \textbf{Convex hull:}  For each $x_j\in A$, let $y_j$ be the orientation of the walls in $H_\theta(A)$ 
obtained by choosing, for each wall $(\OL W^U_i,\OR W_i^U)$, the halfspace containing $x_j$.  This orientation 
is coherent by definition, so it determines a $0$--cube of $\cuco Y$, which we also denote $y_j$.  By 
construction, each wall separates two elements of $A$, so every hyperplane of $\cuco Y$ separates two of the 
chosen $0$--cubes $y_i,y_j$.  Thus no intersection of combinatorial halfspaces properly contained in $\cuco Y$ 
contains all of the $y_j$, so $\cuco Y$ is the convex hull in $\cuco Y$ of the set of $y_j$.  
  
  \textbf{Conclusion:}  Lemma~\ref{lem:quasimedian} provides $C_1''''$ so that $\mathfrak p_A$ is $C_1''''$--quasimedian,  so 
the proof is complete once we take $C_1=\max\{C_1',C_1'',C_1''',C_1''''\}$.   
% %    We now produce a constant $C_1$ so that $\dist_{\cuco X}(\mathfrak p_A(p),\mathfrak p_A(q))/C_1-C_1\leq \dist_{\cuco 
% Y}(p,q)$ for each $p,q\in\cuco Y$.  First, $\dist_{\cuco Y}(p,q)$ is the number of walls between $p$ and $q$, and hence it is 
% the sum of the numbers of $p^V_i$ separating $b_V(p)$ from $b_V(q)$, as $V$ varies. By Lemma \ref{lem:same_walls}, up to an 
% additive error this is the same number $Q(p,q)$ of $p^V_i$ separating $\beta_V(\mathfrak p_A(p))$, $\beta_V(\mathfrak 
% p_A(q))$.
% % 
% %   Observe that: if, for some $V$, there
% %   exist distinct $p^V_i,p^V_{i'}$ separating $\beta_V(\mathfrak
% %   p_A(p))$ from $\beta_V(\mathfrak p_A(q))$, then $V$ contributes to
% %   the distance formula sum between $p$ and $q$.  Moreover, $V$ also contributes to the
% %   distance formula sum in the case where $\beta_V(\mathfrak
% %   p_A(p)),\beta_V(\mathfrak p_A(q))$ are both $C$--close to $\pi_V(A)$
% %   and there exists at least one $p^V_i$ separating $\beta_V(\mathfrak
% %   p_A(p)),\beta_V(\mathfrak p_A(q))$.  Applying Lemma
% %   \ref{lem:count_separators} and Lemma \ref{lem:Ramsey}, we have 
% %   $$\dist_{\cuco X}(\mathfrak p_A(p),\mathfrak p_A(q))\asymp \sum_{U\in\mathfrak S} 
% %   \ignore{\dist_U(\mathfrak p_A(p),\mathfrak p_A(q))}{L}\geq 
% %   Q(p,q)-2N,$$
% %   which proves the theorem.
 \end{proof}

 \subsubsection{Lemmas supporting realization}\label{subsubsec:realization}
 
 The two lemmas below are used to construct a point in $\cuco X$ via 
 realization, given the tuple $(b_V(p))=(b_V)$ associated to a $0$--cube $p\in\cuco Y$ (which we fix for the 
purposes of the next two lemmas).  The first lemma shows that $b_V$ is a uniformly bounded set in each 
$\contact V$, and the second verifies that the tuple $(b_V)$ is $\eta$--consistent (and bounds $\eta$).

The realization theorem (Theorem~\ref{thm:realization}) then provides a point $\mathfrak p_A(p)\in\cuco X$ 
that projects $\xi$--close to $b_V$ in each $\fontact V$, where $\xi$ just depends on $E$ and $\eta$.  This is 
how we defined the map $\mathfrak p_A:\cuco Y\to\cuco X$ in the proof of Theorem~\ref{thm:cubulated_hull}.
 
 \begin{lem}\label{lem:diambv_bounded}
 There exists $\tau=\tau(M,k)>0$ (independent of $V$) so that $\diam(b_V(p))\leq\tau$ for all $p\in\cuco Y$.
 \end{lem}
 
 \begin{proof}
 Fix $p\in\cuco Y$ and write $b_V=b_V(p)$.
 
 If $V\in\mathfrak S-\mathcal U$, then $\diam(b_V)\leq\diam(T_V)\leq 100M$.  Hence suppose that $V\in\mathcal 
U$.

There exists $\tau=\tau(M,k)\geq 50Mk(k-2)$ such that the following holds.  Suppose that $x,y\in\cuco X$ 
satisfy $\dist_V(x,y)>\tau$.  Then there exists $\alpha\in \{p_i^V\}_i\cup\{r^W_V\}_{W\in \mathcal U_1\cap 
V^{\nest,\mathcal V}}$ so that $\alpha$ is $10M$--far from $\beta_V(x),\beta_V(y)$ and from all points of 
$T_V$ of valence larger than $2$, and separates $\beta_V(x)$ from $\beta_V(y)$.  Indeed, there are at most 
$k-2$ points of valence larger than $2$, since each leaf of $T_V$ belongs to $\pi_V(A)$ and $|A|=k$.  So the 
geodesic from $\beta_V(x)$ to $\beta_V(y)$ has a sub-segment of length at least $50Mk$ avoiding the points of 
valence more than $2$.  This subsegment contains a point $\alpha$ that necessarily separates $\beta_V(x)$ from 
$\beta_V(y)$ and either lies in $\{p^V_i\}$ or $\{r^W_V\}_{W\in\mathcal V^{\nest,V}}\}$, because such points 
form a $10Mk$--net.  The restriction to $\mathcal U_1$ is justified by the fact that 
for $W'\propnest W\propnest U$, we have that $\rho^{W'}_V$ coarsely coincides with $\rho^W_V$, so we can 
assume each $r^W_V$ as above coincides with $r^{W'}_V$ for some $W'\in\mathcal U_1$ nested in $V$.
 
 Choose any $x,y\in\cuco X$ projecting $M$--close to $b_V$, and suppose by contradiction that $\dist_V(\beta_V(x),\beta_V(y))>\tau$. Let $\alpha$ be as above.
 
 If $\alpha=p_i^V$, then we clearly have a contradiction since $b_V$ is contained in one of the connected 
components of $T_V-\{p^V_i\}$.  If $\alpha=r^W_V$, then we write $A\cup\{x,y\}=A'\sqcup A''$, where we group 
together all elements of $A\cup\{x,y\}$ corresponding to a point of $T_V$ in a given connected component of 
$T_V-\{r^W_V\}$. By bounded geodesic image and the fact that $r^W_V$ is close to $\rho^W_V$ (Lemma 
\ref{lem:rho_tree_co-nest}), $\pi_W(A')$ and $\pi_W(A'')$ are uniformly bounded, so that $T_W$ consists of two 
uniformly bounded sets, respectively containing $\pi_W(A')$ and $\pi_W(A'')$, that are joined by a segment in 
$T_W$ which is a geodesic $\gamma$ of $\fontact W$ containing no  vertex of valence more than $2$.  Moreover, 
this geodesic has $\beta_W(x),\beta_W(y)$ uniformly close to its endpoints.
 
Since $W\in\mathcal U_1$, there exists some $p^W_i$ in $T_W$. Let us show that $S_{W,i,V}(p)$ is far from one 
of $\beta_V(x)$ or $\beta_V(y)$, which is a contradiction. If there is a $p^W_i$ in $T_W$, then since $p^W_i$ 
was chosen far from the leaves of $T_W$, we have that $p_i^W\in\gamma$, lying at distance $M/2$ from 
$\beta_W(x)$ and from $\beta_W(y)$.
 
 Let $\overline{T}$ be one of the two connected components of $T_W-\{p^W_i\}$. Then $\beta_W^{-1}(\overline T)$ cannot contain points $x',y'$ with $\beta_V(x'),\beta_V(y')$ far from $r^W_V$ and in different components of $T_V- \{r^W_V\}$, which is the required property of $S_{W,i,V}(p)$. Indeed, otherwise bounded geodesic image would imply that $x',y'$ project respectively close to $\pi_W(A')$ and $\pi_W(A'')$, thus on opposite sides of $p^W_i$.
 \end{proof}

 \begin{lemma}\label{lem:consistent_walls}
  There exists $\eta=\eta(M,k,\cuco X)$ such that the following holds.  Let $p\in\cuco Y$.  Then the tuple 
$(b_V(p))$ is $\eta$--consistent.
 \end{lemma}
 
 \begin{proof}  
 Let $U\transverse V$. If $U,V\in\mathfrak S-\mathcal U$, we are done because the corresponding coordinates 
$b_U,b_V$ $(100Mk+E)$--coarsely coincide with those of, say, $x_1$. If $U\in\mathcal U$ and $V\in\mathfrak 
S-\mathcal U$, then any point in $T_V$, whence also any point in $b_V(p)$, is $(100Mk+C+2E)$--close to $\rho^U_V$ by Lemma \ref{lem:rho_tree} and the definition of $\mathcal U$, 
so we are done. 

Now suppose that $U,V\in\mathcal U$. 
Let $c_U$ be a point in $T_U$ which is $10E$--close to $\rho^V_U$, and define $c_V$ similarly ($c_U$ and $c_V$ 
are provided by Lemma \ref{lem:rho_tree}). If both $b_U$ and $b_V$ are $100Mk$--far from $\rho^V_U$ and 
$\rho^U_V$ respectively, then there are $S_{W,i,U}(p),S_{W',i',V}(p)$ containing $b_U,b_V$ but far from 
$c_U,c_V$. There cannot be $q\in \cuco X$ with $\beta_U(q)\in S_{W,i,U}(p),\beta_V(q)\in S_{W',i',V}(p)$ by 
consistency, implying that the intersection of the halfspaces chosen from $\mathcal L^W_i, \mathcal 
L^{W'}_{i'}$ is empty. This contradicts the coherence of the orientation defining $p$.

 \begin{figure}[h]
 \begin{overpic}[width=0.5\textwidth]{./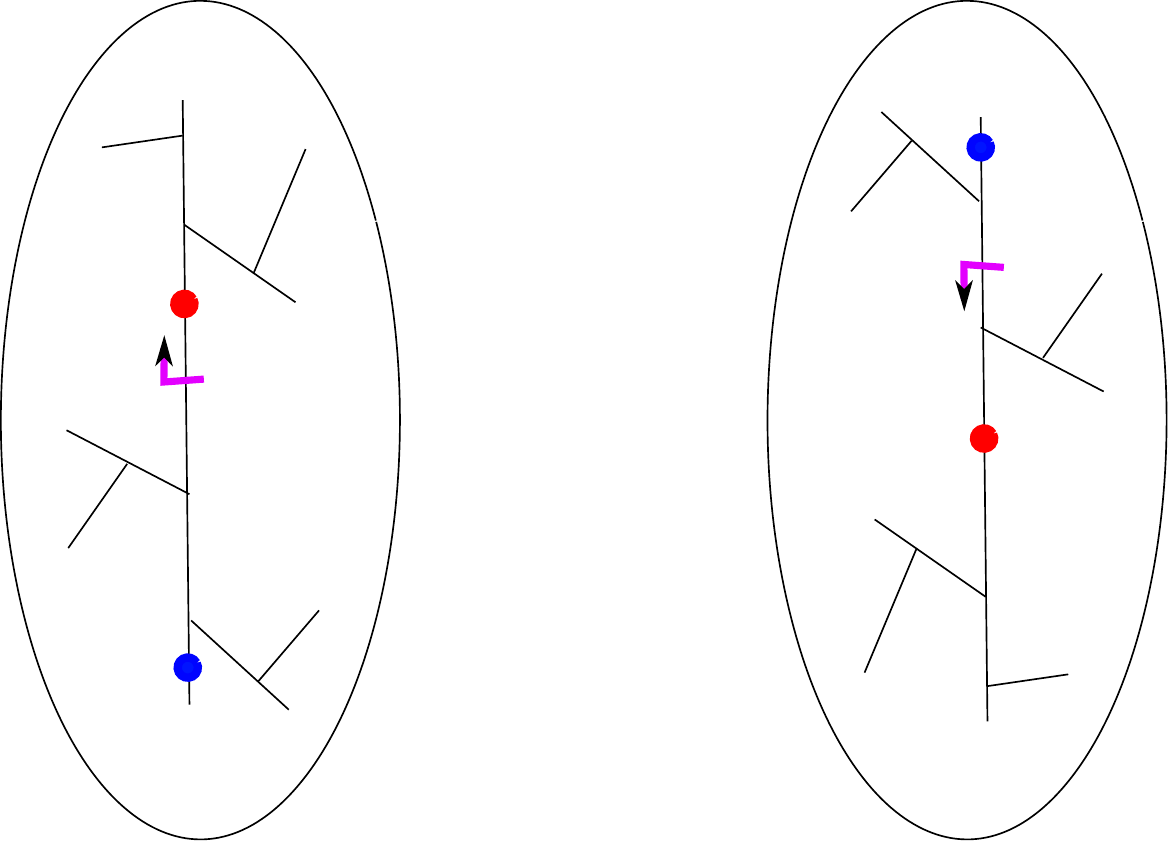}
 \put(29,65){$\fontact U$}
 \put(65,65){$\fontact V$}
 \put(17,45){$b_U$}
 \put(86,33){$b_V$}
 \put(86,60){$\rho^U_V$}
 \put(-14,12){$\rho^V_U$ or $\rho^V_U(b_V)$}
 \put(43,40){$\transverse$ or $\propnest$}
 \end{overpic}
 \caption{Proof of Lemma~\ref{lem:consistent_walls}. $S_{W,i,U}(p),S_{W',i',V}(p)$ are shown as oriented halfspaces in the trees $T_U,T_V$.}\label{fig:consistent_walls}
 \end{figure}

 Let $U\propnest V$. If $V\in\mathfrak S-\mathcal U$, then by Lemma \ref{lem:rho_tree_co-nest} we have that 
$\rho^U_V$ is $100EC$--close to $b_V$. Hence, we can assume $V\in\mathcal U$. If $U\in\mathfrak S-\mathcal U$, 
similarly, the corresponding coordinates $b_U,b_V$ coarsely coincide with those of a point in $H_{\theta}(A)$ 
that projects close to $b_V$ in $\fontact V$.
 
 Finally, suppose $U,V\in\mathcal U$.  The argument is very similar
 to the final argument in the transverse case above.  Let $c_V=r^U_V$
 (which is $10E$--close to $\rho^U_V$ by Lemma
 \ref{lem:rho_tree_co-nest}); and, as given by Lemma
 \ref{lem:rho_tree_nest}, we 
 let $c_U$ be a point in $T_U$ which is 
 $100EC$--close to $\rho^V_U(b_V)$.  If both $b_U$ and $b_V$ are $100Mk$--far
 from the corresponding $\rho$, then there exist
 $S_{W,i,U}(p),S_{W',i',V}(p)$ containing $b_U,b_V$ but far from
 $c_U,c_V$.  By the bounded geodesic image axiom, $\rho^V_U(S_{W',i',V}(p))$
 has uniformly bounded diameter.  Hence, there cannot be $q\in \cuco
 X$ with $\beta_U(q)\in S_{W,i,U}(p),\beta_V(q)\in S_{W',i',V}(p)$ by
 consistency, implying that the intersection of the halfspaces chosen
 from $\mathcal L^W_i, \mathcal L^{W'}_{i'}$ is empty.  This
 contradicts the coherence of the orientation defining $p$.
 \end{proof}
 
 \subsubsection{Lemmas supporting the distance estimate}\label{subsubsec:separator_control} The next three 
lemmas support the distance estimate in the proof of Theorem~\ref{thm:cubulated_hull}.  

The first lemma bounds projection distances from below in terms of the walls; it was used above to give a 
lower bound on $\dist_{\cuco X}(\mathfrak p_A(p),\mathfrak p_A(q))$ in terms of the distance in the cube 
complex $\cuco Y$ between $p$ and $q$.

 \begin{lem}\label{lem:count_separators}
Let $U\in\mathcal U$.  For each $x,y\in H_\theta(A)$, we have $\dist_U(x,y)+50EC\theta\geq |\{i: p^U_i\in 
[\beta_U(x),\beta_U(y)]\}|$. Moreover, if $\pi_U(x),\pi_U(y)$ are both $C$--close to $\pi_U(A)$, then 
$\dist_U(x,y)\geq |\{i: p^U_i\in [\beta_U(x),\beta_U(y)]\}|$.
\end{lem}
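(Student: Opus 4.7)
The plan is to use the $M$-separation of the $p_i^U$'s together with the tree approximation $T_U$ of $\hull_{\fontact U}(\pi_U(A))$ to convert the count of separators into a length estimate in $T_U$, and then transfer that length estimate to $\fontact U$ via the closeness of $\beta_U$ to $\pi_U$ on $H_\theta(A)$. Write $N := |\{i : p_i^U \in [\beta_U(x), \beta_U(y)]\}|$ for the quantity being bounded.

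First I would observe that by condition (2) in the construction the $p_i^U$ are pairwise at $T_U$-distance at least $M \geq 1$, so if $N$ of them lie on the tree segment $[\beta_U(x), \beta_U(y)]$, that segment has tree-length at least $N - 1$. Combining with the bilipschitz relation $\dist_U \leq \dist_{T_U} \leq \dist_U + C$ on $T_U$ yields $\dist_U(\beta_U(x), \beta_U(y)) \geq N - 1 - C$.

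Next I would transfer the estimate to $\pi_U$. Because $x \in H_\theta(A)$, by definition $\pi_U(x)$ lies $\theta$-close to $\hull_{\fontact U}(\pi_U(A))$, and the latter is $C$-Hausdorff close to $T_U$; accounting for the $E$-diameter ambiguity of the fiber $\pi_U(x)$, this gives $\dist_U(\pi_U(x), \beta_U(x)) \leq \theta + C + E$. Applying this at both endpoints and invoking the triangle inequality yields $\dist_U(x, y) \geq N - 1 - 2\theta - O(C + E)$, which is comfortably within the stated bound $\dist_U(x,y) + 50EC\theta \geq N$.

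For the moreover part, the hypothesis that $\pi_U(x), \pi_U(y)$ are both $C$-close to $\pi_U(A)$ lets me bypass the $\theta$ error entirely: since $\pi_U(A)$ is contained in $T_U$ up to the $E$-fiber ambiguity, $\beta_U(x)$ is within $C + E$ of $\pi_U(x)$. I would then repeat the tree-length argument, but now exploit the \emph{full} $M$-separation of the $p_i^U$ rather than just $M \geq 1$; each separator on the path contributes nearly $M$ to the tree segment, and since $M_0$ is chosen large compared to $C$ and $E$, the residual additive error is absorbed into the gain-per-separator to give $\dist_U(x,y) \geq N$ exactly. The main obstacle is purely bookkeeping: verifying that the $M_0$ promised by the enclosing Theorem~\ref{thm:cubulated_hull} can be chosen, depending only on $\cuco X$, $\mathfrak S$, $k$, and $\theta$, large enough to simultaneously absorb the additive errors from tree approximation ($C$), hull-thickening ($\theta$), and fiber ambiguity ($E$).
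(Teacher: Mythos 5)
Your proposal is correct and follows essentially the same route as the paper: bound the tree-length of $[\beta_U(x),\beta_U(y)]$ below by the number of separators via $M$--separation, then transfer to $\dist_U(x,y)$ using $\diam(\pi_U(p)\cup\beta_U(p))\leq 10(E+C+\theta)$ and the $(1,C)$--quasi-isometric embedding of $T_U$. One small point to pin down in the ``moreover'' part: pairwise $M$--separation of $N$ points only yields tree-length $\geq (N-1)M$, which gives nothing when $N=1$, so you must also invoke condition (1) of the construction (each $p_i^U$ is $M$--far from the points $\pi_U(x_j)$, hence from the leaves of $T_U$ and hence from $\beta_U(x),\beta_U(y)$ under the $C$--closeness hypothesis) — this is exactly the fact the paper cites, and it is what makes ``each separator contributes nearly $M$'' true even for a single separator.
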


\begin{proof}
Let $x,y\in H_\theta(A)$.  Recall that $\diam(\pi_U(x)\cup\beta_U(x))\leq 10(E+C+\theta)$, so $\dist_U(x,y)\geq 
\dist_U(\beta_U(x),\beta_U(y))-20(E+C+\theta)$.  Hence $\dist_U(x,y)\geq 
\dist_{T_U}(\beta_U(x),\beta_U(y))-40EC\theta$.  Therefore, $\dist_U(x,y)\geq |\{i: p^U_i\in 
[\beta_U(x),\beta_U(y)]\}|-40EC\theta-1$, as required.  The ``moreover'' statement follows in a similar way 
using the fact that the $p^i_U$ are $M$--far from leaves of $T_U$.
\end{proof}

The next lemma is a simple application of Ramsey theory and the consistency property of an HHS.  This lemma 
is used in tandem with the one above.  It is also used below to control the number of separators associated to 
$p\in\cuco Y$.  Recall that $p^U_i$ is said to be a separator for $p$ if $p^U_i$ separates $b_U(p)$ from 
$\beta_U(\mathfrak p_A)$ in the tree $T_U$.

 \begin{lemma}\label{lem:Ramsey}
   There exists $N=N(\cuco X)\geq0$ so that for each $x\in H_\theta(A)$ there are at most $N$ elements 
$U\in\mathcal U$ so that $\dist_U(\beta_U(x),\pi_U(A))>100E$.
  \end{lemma}
  
  \begin{proof}
   One axiom of an HHS is that there is a  
   bound, $c$,  on the cardinality of subsets of $\mathfrak S$
   whose elements are pairwise $\nest$--comparable. 
   By \cite[Lemma~2.1]{hhs2}, $c$ also bounds the maximum 
   cardinality of a set of pairwise orthogonal elements.  Given
   $x\in H_\theta(A)$, consider the set of $U\in\mathfrak S$ such that
   $\dist_U(x,A)>100E$.  Ramsey's theorem provides $N$ (the Ramsey
   number $R(c,c)$) for which either there are at most $N$ such $U$, or
   there exist $U_1,U_2$ with $U_1\transverse U_2$ and
   $\dist_{U_l}(x,A)>100E$ for $l=1,2$.  By Lemma~\ref{lem:rho_tree},
   $\rho^{U_1}_{U_2}$ is $10E$--close to an element of
   $\pi_{U_2}(A)$~and 
   thus $90E$--far from $\pi_{U_2}(x)$.  The same
   holds with $U_1$ and $U_2$ reversed, contradicting consistency.
  \end{proof}

  The next lemma bounds the number of separators in terms of $M,k$, and the constants $\xi,\tau$.  Since the proof is 
somewhat technical, we first give a heuristic discussion.  We first show that if $p\in\cuco Y$ has, say, $T'$ separators, 
then there are at least $T'M/\xi$ elements $U\in\mathcal U$ that support separators (this is achieved by bounding the maximal number of separators supported on any given $U\in\mathcal U$).  Lemma~\ref{lem:Ramsey} shows that, for ``most'' 
such $U$, the point $\mathfrak p_A(p)$ projects in $T_U$ close to some $\pi_U(x_j)$.  So, if $T'$ is too large, there is a 
specific pair $x_j,x_k$ such that, in many $U$ as above, $\mathfrak p_A(p)$ projects close to $\pi_U(x_j)$ and far from 
$x_k$.  Lemma~\ref{lem:passing_up} then provides $U_1,U_2$ with these properties, both nested into some $V\in\mathfrak S$, 
such that $\rho^{U_1}_V,\rho^{U_2}_V$ are very far in $\fontact V$ 
(in terms of $M,\xi,\tau$).  Applications of bounded 
geodesic image, consistency, and coherence of the orientation of walls corresponding to the $0$--cube $p$ allow us to 
conclude that $\dist_V(\pi_V(\mathfrak p_A(p)),b_V)>\xi$, which contradicts how the point $\mathfrak p_A(p)$ was chosen, 
namely as a realization point with constant $\xi$.  For the last part of the argument, the reader will find it helpful to 
consult Figure~\ref{fig:same_walls}.
 
 \begin{lem}\label{lem:same_walls}
  There exists $T=T(M,k,\xi,\tau,\cuco X,\mathfrak S)$ such that for any $p\in\cuco Y$ there exist at most
  $T$ separators for~$p$.
 \end{lem}
 
 \begin{proof}
 We fix $p\in\cuco Y$, after which, we can simplify our notation by 
 writing $b_V$ to mean $b_V(p)$.  Recall from 
Lemma~\ref{lem:consistent_walls} that $(b_V)$ is an $\eta$--consistent tuple, where $\eta$ depends on $M$, 
$k$, and the global HHS constants, but is independent of $p$.  Recall that the realization point $\mathfrak 
p_A(p)\in\cuco X$ provided by Theorem~\ref{thm:realization} is 
characterized by the property that 
$\dist_V(b_V,\mathfrak p_A(p))\leq \xi$ for all $V\in\mathfrak S$, where $\xi$ depends on $\eta$ and the 
global HHS constants, but is independent of $p$.

First, for each $V\in\mathcal U$, we bound the number of separators
$p^V_i$.  Since each $p^V_i$ separates $b_V$ from $\beta_V(\mathfrak
p_A(p))$, and the set of $p^V_i$ in $T_V$ is $M$--separated by
construction, there are at most $\xi/M$ separators with support $V$.
Let $\mathrm{Sep}$ be the set of $V\in\mathcal U$ that support a
separator for $p$.

By the previous paragraph, it remains to bound the cardinality $|\mathrm{Sep}|$ of $\mathrm{Sep}$.  Now, 
Lemma~\ref{lem:Ramsey} provides a uniform constant $N$ so that there are at most $N$ elements of $\mathcal U$ 
where $\mathfrak p_A(p)$ projects $100E$--far from every element of $A$.

Suppose that $|\mathrm{Sep}|> N+N_0k(k-1)$, where $N_0=N_0(M,\xi,\cuco X,\mathfrak S)$ will be chosen 
momentarily.  (If the preceding inequality does not hold, then we have bounded $|\mathrm{Sep}|$ 
independently of $p$, as required.)  This lower bound implies that there are at least $N_0k(k-1)$ elements $V\in\mathrm{Sep}$ where 
$\beta_V(\mathfrak p_A(p))$ is $100E$--close to $\pi_V(A)$.  Hence, there exists $x_j\in A$ so that there are 
at least $N_0(k-1)$ elements $V\in\mathrm{Sep}$ where $\beta_V(\mathfrak p_A(p))$ is $100E$--close to 
$\pi_V(x_j)$.

Now, for each such $V$, we have a separator $p^V_i$ separating $\beta_V(\mathfrak p_A(p))$ from $b_V$, and 
necessarily lying $M$--far from $\pi_V(x_j)$, because of how our net in $T_V$ was chosen.  Hence $b_V$ 
separates $\pi_V(x_j)$ from some $\pi_V(x_\ell)$.  Hence there is a pair $x_j,x_\ell$ and at least $N_0$ 
elements $U\in\mathcal U$ such that:
\begin{itemize}
 \item $\beta_U(\mathfrak p_A(p))$ is $100E$--close to $\pi_U(x_j)$;
 \item there exists a separator $p_i^U$ for $p$, with support $U$, separating $\beta_U(x_j)$ from 
$\beta_U(x_\ell)$.
\end{itemize}

Now, let $L=1000(M+\xi+\tau)$.  Suppose we chose $N_0=N_0(L)$, the constant from 
Lemma~\ref{lem:passing_up}.  So, if $|\mathrm{Sep}|> N_0k(k-1)+N$, then Lemma~\ref{lem:passing_up} provides 
some $V\in\mathfrak S$ and two 
elements $U_1,U_2\in\mathrm{Sep}$ with the above two listed properties, such that $U_1\propnest V$, and 
$U_2\propnest V$, and $\dist_V(r^{U_1}_V,r^{U_2}_V)>10E+\xi$.  

For $t=1,2$, there exists $p^{U_t}_{i_t}$ separating $\beta_{U_t}(\mathfrak p_A(p))$ (which is $100E$--close 
to $\pi_{U_t}(x_j)$) from $\pi_{U_t}(x_\ell)$, so $\dist_{U_t}(x_j,x_\ell)>M$.

By bounded geodesic image, the geodesic in $T_V$ from $\beta_V(x_j)$ to $\beta_V(x_k)$ must pass $E$--close to 
$r^{U_1}_V$ and $r^{U_2}_V$.  (So $V$ is necessarily in $\mathcal U$.)

For concreteness, suppose $U_1,U_2$ are labeled so that $\neb_E(\rho^{U_1}_V)$ separates $\pi_V(x_j)$ from 
$\rho^{U_2}_V$ and $\pi_V(x_\ell)$.

Bounded geodesic image and consistency imply that $\beta_V(\mathfrak p_A(p))$ lies $E$--close to the 
connected component of $T_V-\neb_E(r^{U_t}_V)$ containing $\pi_V(x_j)$ for $t=1,2$.  Indeed, this holds 
because $\beta_{U_t}(\mathfrak p_A(p))$ is $(M-100E)$--far in $T_{U_t}$ from $\pi_{U_t}(x_\ell)$.

We now analyze two cases, according to how close $b_V$ lies to the component $\Pi$ of 
$T_V-\neb_E(\rho^{U_2}_V)$ containing $\pi_V(x_k)$.

Recall that $b_V$ is the intersection of various subtrees $S_{W,s,V}$, each of which coarsely coincides with 
the projection to $\fontact V$ of a halfspace belonging to the coherent orientation $p$.  

The first case is where every such 
subtree lies $E$--close to $\Pi$. In this case, the intersection of the subtrees --- which is by definition $b_V$ --- lies 
$10E$--close to $\Pi$.  Hence, by Lemma~\ref{lem:diambv_bounded}, $b_V$ is contained in the 
$(10E+\tau)$--neighborhood of $\Pi$.  This implies that $\dist_V(b_V,\rho^{U_1}_V)\geq L-(10E+\tau)$.  Since 
we saw that $\beta_V(\mathfrak p_A(p))$ is $E$--close to the component of $T_V-\neb_E(r^{U_1}_V)$ containing 
$\pi_V(x_j)$, we have $\dist_V(\mathfrak p_A(p),b_V)>L-(10E+\tau)-E$. By our choice of $L$, this quantity exceeds 
$\xi$, which contradicts  the definition of $\mathfrak p_A(p)$.

So we must be in the second case, where some halfspace $H$ belonging to the coherent orientation $p$ projects 
to a tree $S$ in $T_V$ --- necessarily containing $b_V$ --- that is $E$--far from $\Pi$.  Hence, $S$ and 
$\pi_V(x_j)$ lie in the same component of $T_V-\neb_E(\rho^{U_2}_V)$.
So, by the consistency and bounded geodesic
image axioms, $\pi_{U_2}(H)$ is contained in the $E$--neighborhood in $T_{U_2}$ of $\pi_{U_2}(x_j)$.  But since 
$p^{U_2}_{i_2}$ is $M$--far from $\pi_{U_2}(x_j)$ and separates $\beta_{U_2}(\mathfrak p_A(p))$ (which is 
$100E$--close to $\pi_{U_2}(x_j)$) from $b_{U_2}$, we see that $p^{U_2}_{i_2}$ separates $\pi_{U_2}(H)$ from 
$b_{U_2}$.  But the coherence of the orientation $p$ and the definition of $b_{U_2}$ requires $b_{U_2}$ to be 
contained in $\pi_{U_2}(H)$, which gives a contradiction.

\begin{figure}[h]
\begin{overpic}[width=0.55\textwidth]{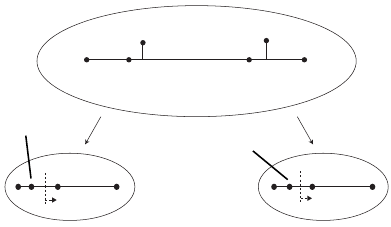}
\put(95,40){$\fontact V$}
\put(36,10){$\fontact U_1$}
\put(57,10){$\fontact U_2$}
\put(17,40){$x_j$}
\put(80,40){$x_\ell$}
\put(34,50){$r^{U_1}_V$}
\put(65,50){$r^{U_2}_V$}
\put(30,37){$\beta_V(\mathfrak p_A(p))$}
\put(64,37){$b_V$}
\put(3,6){$x_j$}
\put(28,6){$x_\ell$}
\put(68,6){$x_j$}
\put(93,6){$x_\ell$}
\put(-4,24){$\beta_{U_1}(\mathfrak p_A(p))$}
\put(52,22){$\beta_{U_2}(\mathfrak p_A(p))$}
\put(15,12){$b_{U_1}$}
\put(81,12){$b_{U_2}$}
\end{overpic}
\caption{The proof of Lemma~\ref{lem:same_walls}}.\label{fig:same_walls}
\end{figure}

We conclude that $|\mathrm{Sep}|\leq N_0(L)k(k-1)+N$, which is independent of $p$.  Hence there are at most 
$\xi(N_0(L)k(k-1)+N)/M$ separators for $p$, which is again independent of $p$ because the realization constant 
$\xi$ depends on $p$ only to the extent that it depends on the consistency constant $\eta$ for $(b_V(p))$, 
which was shown in Lemma~\ref{lem:consistent_walls} to be independent of $p$.
\end{proof}

\subsubsection{Walls cross if and only if orthogonal}
We now check that the walls $\mathcal L^U_i,\mathcal L^V_j$ cross if and only if $U\orth V$.  One direction, 
done in the first lemma, is essentially just the partial realization axiom for HHS.  The other direction, 
which is the second lemma, relies on our specific choice of walls.
 
 \begin{lemma}\label{lem:walls_cross}
  Suppose $U,V\in\mathcal U$ and $U\orth V$, and fix any $p\in \hull_{\fontact U}(A)$, $q\in \hull_{\fontact V}(A)$. Then there exists $x\in H_{\theta}(A)$ that coarsely projects to $p$ in $\fontact U$ and to $q$ in $\fontact V$.
 \end{lemma}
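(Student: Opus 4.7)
The plan is to build a $\kappa$-consistent tuple $(b_W)_{W\in\mathfrak S}$ with $b_U$ coarsely equal to $p$, $b_V$ coarsely equal to $q$, and each $b_W$ uniformly close to $\hull_{\fontact W}(A)$; to realize this tuple via Theorem~\ref{thm:realization} to obtain a point $x\in\cuco X$ with the desired projections; and to conclude that $x$ lies in (a uniform neighborhood of) $H_\theta(A)$ using hierarchical quasiconvexity of $H_\theta(A)$, i.e., Proposition~\ref{prop:coarse_retract}.

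As a preliminary step, I would produce witnesses $a,b\in H_\theta(A)$ with $\pi_U(a)$ close to $p$ and $\pi_V(b)$ close to $q$. Since $p\in\hull_{\fontact U}(A)$, the point $p$ lies on a $\fontact U$-geodesic between $\pi_U(x_i)$ and $\pi_U(x_j)$ for some $x_i,x_j\in A$; Theorem~\ref{thm:monotone_hierarchy_paths} then furnishes a hierarchy path in $\cuco X$ from $x_i$ to $x_j$ whose $\fontact U$-image is an unparameterized quasi-geodesic, and therefore passes uniformly close to $p$. Any point $a$ on this hierarchy path with $\pi_U(a)$ close to $p$ belongs to $H_\theta(A)$, because at every $W\in\mathfrak S$ the hierarchy path projects close to a $\fontact W$-geodesic with endpoints in $\pi_W(A)$. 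Define $b$ analogously for $V$.

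Next I would define $(b_W)$ by cases, partitioning $\mathfrak S$ according to its relationship to $V$. Set $b_U=p$ and $b_V=q$. For $W\not\in\{U,V\}$ with $W\nest V$, take $b_W=\pi_W(b)$; for $W\not\in\{U,V\}$ with $W\transverse V$ or $V\propnest W$, take $b_W=\rho^V_W$ (or $\rho^V_W(\pi_W(b))$ in the latter case). For every remaining $W$, namely those orthogonal to $V$ and distinct from $U$, use the symmetric prescription with $a$ and $U$ in place of $b$ and $V$. By construction, each $b_W$ is uniformly close to $\hull_{\fontact W}(A)$: in the projection cases this is because $a,b\in H_\theta(A)$, and in the $\rho$-cases it follows from Lemma~\ref{lem:rho_tree} and Lemma~\ref{lem:rho_tree_co-nest}, since $U,V\in\mathcal U$.

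The main obstacle is verifying consistency of $(b_W)$. Same-side pairs are automatic: if $W_1,W_2$ are both handled via $b$ (respectively $a$), the required inequalities follow from the consistency of the coordinates of the actual point $b\in\cuco X$ (respectively $a$) together with the consistency axioms relating $\rho$ values to projections. The nontrivial cross-side pairs are killed by orthogonality: if $W_1$ is on the $V$-side and $W_2$ on the $U$-side, then $W_2$ is orthogonal to (or equal to) $U$ while $W_1$ is not orthogonal to $V$ only when $W_1$ is nested into, transverse to, or containing $V$; in each subcase the HHS axiom propagating $\orth$ through $\nest$, together with $U\orth V$, yields $W_1\orth W_2$, so no consistency inequality applies. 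The only remaining mixed configurations involve $\rho^V_{W_1}$ and a $U$-side value at $W_2$, and are handled by a short case analysis using the consistency axiom and the fact that the $\rho$-points in question are uniformly close to $\hull_{\fontact{W}}(A)$ by Lemmas~\ref{lem:rho_tree} and~\ref{lem:rho_tree_co-nest}. Once consistency and proximity to the hulls are in hand, Theorem~\ref{thm:realization} produces $x\in\cuco X$ coarsely realizing $(b_W)$, Proposition~\ref{prop:coarse_retract} allows us to move $x$ a bounded amount into $H_\theta(A)$, and by construction $\pi_U(x),\pi_V(x)$ coarsely coincide with $p,q$.
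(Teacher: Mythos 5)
Your overall architecture (produce a point of $\cuco X$ with the prescribed $U$-- and $V$--coordinates, then push it into $H_\theta(A)$ using hierarchical quasiconvexity) matches the paper's, but the paper obtains the point in one step from the \emph{partial realization axiom} (Definition 1.1.(8) of \cite{hhs2}), applied to the orthogonal pair $U,V$ and the points $p,q$, and then applies $\gate_{H_\theta(A)}$; the gate preserves the $U$-- and $V$--coordinates because $\pi_U(H_\theta(A))$ coarsely contains $\hull_{\fontact U}(A)\ni p$ and likewise for $V$. Your proof instead tries to rebuild partial realization by hand, and the consistency verification is exactly where it breaks down.

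Concretely, the claim that the mixed pairs are ``killed by orthogonality'' or ``handled by a short case analysis using the consistency axiom'' fails for transverse domains on opposite sides. Take $W_1\transverse V$ (so $b_{W_1}=\rho^V_{W_1}$) and $W_2$ with $W_2\orth V$, $W_2\transverse U$ (so $b_{W_2}=\rho^U_{W_2}$), with $W_1\transverse W_2$. Consistency demands that $\rho^V_{W_1}$ be $E$--close to $\rho^{W_2}_{W_1}$ or that $\rho^U_{W_2}$ be $E$--close to $\rho^{W_1}_{W_2}$. Neither follows from the consistency axioms quoted in the paper (those only compare $\rho^U_W$ and $\rho^V_W$ for \emph{nested} $U\nest V$), and proximity of each coordinate to its hull does not help: $\hull_{\fontact{W_1}}(A)$ and $\hull_{\fontact{W_2}}(A)$ can each have large diameter, and a tuple whose $W_1$--coordinate is taken from one point of $A$ and whose $W_2$--coordinate is taken from another is in general inconsistent at a transverse pair $(W_1,W_2)$. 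The statement you actually need here --- that for orthogonal $V\orth W_2$ the sets $\rho^V_{W_1}$ and $\rho^{W_2}_{W_1}$ are uniformly close in any $W_1$ transverse to both --- is itself a consequence of partial realization and not of the other axioms, so you cannot avoid invoking that axiom; once you do, the whole tuple-building exercise becomes unnecessary. (Two smaller issues: domains $W$ with $W\orth U$ and $W\orth V$ receive no coordinate under your prescription; and for $V\propnest W$ the object $\rho^V_W$ is a bounded subset of $\fontact W$, not a map, so the parenthetical $\rho^V_W(\pi_W(b))$ does not parse.)
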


\begin{proof}
By partial realization (Definition~1.1.(8) in~\cite{hhs2}), there exists $x'\in\cuco X$ projecting $E$--close to $p$ in 
$\fontact U$ and $q$ in $\fontact V$.  Up to replacing $E$ with a uniform constant depending on $\theta$, the projection 
$\gate_{H_\theta(A)}(x')$ to $H_\theta(A)$ has the same property, as required.
\end{proof}

  \begin{lemma}[Cross iff orthogonal]\label{lem:cross}
  The walls $\mathcal L^U_i$ and $\mathcal L^V_j$ cross if and only if $U\orth V$.  
 \end{lemma}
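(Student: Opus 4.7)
The plan is to handle the two implications separately. The reverse direction (orthogonal implies crossing) will follow directly from the partial-realization result Lemma~\ref{lem:walls_cross}, whereas the forward direction (non-orthogonal implies non-crossing) will rely on consistency together with the tree-location lemmas of Subsection~\ref{subsec:consistency_lemmas}.

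For the reverse direction, I would fix $U\orth V$ together with choices of halfspace $T'_U$ of $T_U - \{p^U_i\}$ and $T'_V$ of $T_V - \{p^V_j\}$. Construction conditions~(1) and~(4) guarantee that each component of $T_U - \{p^U_i\}$ contains a leaf $\pi_U(x_k)$ of $T_U$, sitting at distance at least $M$ from $p^U_i$, and similarly for $T_V$. Pick such leaves $p\in T'_U$ and $q\in T'_V$. Lemma~\ref{lem:walls_cross} then produces $x\in H_\theta(A)$ whose projections to $\fontact U$ and $\fontact V$ coarsely coincide with $p$ and $q$; once $M$ exceeds the implicit constant, $\beta_U(x)\in T'_U$ and $\beta_V(x)\in T'_V$, showing that the two selected halfspaces meet. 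Running through the four possible choices of halfspace then establishes that $\mathcal L^U_i$ and $\mathcal L^V_j$ cross.

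For the forward direction, I would argue the contrapositive, splitting into the cases $U\transverse V$, $V\propnest U$, and (by symmetry) $U\propnest V$. The common strategy is to use Lemma~\ref{lem:rho_tree} in the transverse case and Lemma~\ref{lem:rho_tree_co-nest} in the nested case to locate $\rho^V_U$ uniformly close to a distinguished point of $T_U$ -- either a leaf, or the chosen point $r^V_U$. Construction condition~(1) or~(3) places $p^U_i$ at distance $\geq M$ from this distinguished point. Letting $T''_U$ be the component of $T_U - \{p^U_i\}$ not containing it, any $x\in H_\theta(A)$ with $\beta_U(x)\in T''_U$ satisfies $\dist_U(\pi_U(x),\rho^V_U) > M - O(E+C+\theta)$. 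Consistency (Axiom~\ref{axiom:consistency}), bounded geodesic image (Lemma~\ref{lem:BGI}), and Lemma~\ref{lem:rho_tree_nest} in the nested case, then force $\pi_V(x)$ to lie uniformly close to a bounded set $R\subset\fontact V$ whose intersection with $T_V$ is coarsely the union of leaves $\pi_V(x_k)$ accessible through $T''_U$. For $M$ large enough relative to the diameter of $R$ and the leaf-separation bound from condition~(1), the set $R$ lies in a single component of $T_V - \{p^V_j\}$, so $\beta_V(x)$ is forced into a fixed halfspace of $\mathcal L^V_j$; the opposite halfspace of $\mathcal L^V_j$ therefore fails to meet the $T''_U$-halfspace of $\mathcal L^U_i$.

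The main obstacle will be the $V\propnest U$ case, where $\rho^U_V$ is a coarse \emph{map} rather than a fixed bounded set, so one must verify that $\rho^U_V(T''_U)$ is indeed a single bounded region in $\fontact V$ (applying bounded geodesic image to $T_U$-geodesics joining pairs of leaves in $T''_U$, which avoid $\neb_B(\rho^V_U)$), and that this region lies on one specific side of $p^V_j$ in $T_V$. All such considerations impose constraints on $M_0$, which must be chosen to dominate the hyperbolicity constant $\delta$, the bounded geodesic image constant $B$, the tree-approximation constant $C$, the hull constant $\theta$, and the uniform bound from Lemma~\ref{lem:diambv_bounded}; this final calibration of $M$ is the last piece the proof will require.
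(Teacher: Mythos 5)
Your proposal is correct and follows essentially the same route as the paper: the ``only if'' direction is exactly the application of Lemma~\ref{lem:walls_cross}, and the ``if'' direction is the contrapositive argument locating $\rho^V_U$ (resp.\ $\rho^U_V$) far from $p^U_i$ (resp.\ $p^V_j$) via Lemmas~\ref{lem:rho_tree} and~\ref{lem:rho_tree_co-nest} and then invoking consistency. Your treatment of the nested case --- using bounded geodesic image and Lemma~\ref{lem:rho_tree_nest} to pin down the image of the chosen halfspace under the coarse map $\rho^V_U$ --- is a legitimate and more explicit rendering of what the paper compresses into ``apply the same argument.''
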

 
 \begin{proof}
 If $U\orth V$, then $\mathcal L_i^U$ crosses $\mathcal L_j^V$ (recall that this means that
 each of the four possible intersections of
 halfspaces, one associated to each wall, is nonempty) by
 Lemma~\ref{lem:walls_cross}.
 
 Conversely, suppose $U\not\orth V$.  We claim $\mathcal
 L^U_i$ and $\mathcal L^V_j$ do not cross.  First, suppose
 $U\transverse V$.  Then, by
 Lemma~\ref{lem:rho_tree}, $\rho^V_U$ and $\rho^U_V$ are uniformly
 close to the image of $A$ in each of the corresponding trees $T_U,T_V$ and hence far from 
 $p^V_j, p^U_i$. Thus, we can
 choose a halfspace from $\mathcal L^U_i$ (resp.  $\mathcal L^V_j$)
 so that all its points project far from $\rho^V_U$ (resp.
 $\rho^U_V$).  The chosen halfspaces are disjoint by consistency.
 
 Second, suppose $U\propnest V$.  By construction, $p_V^j$ is $M$--far from $\rho^{U}_V$, so we can choose a 
halfspace $H$ associated to $\mathcal L^V_j$ such that $\pi_V(H)$ contains a point $\pi_V(x_\ell)$ and is 
disjoint from $\neb_E(\rho^U_V)$.  Consistency and bounded geodesic image imply that $\pi_U(H)$ is $E$--close 
to $\pi_U(x_\ell)$ and hence $M$--far from $p^U_i$.  Thus we can choose a halfspace $H'$ for $\mathcal L^U_i$ 
such that $\pi_U(H)\cap \pi_U(H')=\emptyset$, so $H\cap H'=\emptyset$, and hence $\mathcal L^U_i$ and 
$\mathcal L_j^V$ do not cross.
\end{proof}

\subsubsection{The map $\mathfrak p_A$ is quasimedian}\label{subsubsec:quasimedian}
The next lemma is used to show that $\mathfrak p_A$ is quasimedian, i.e., that it takes medians in the cube complex 
$\cuco Y$ (whose $1$--skeleton is necessarily a median graph) close to coarse medians in $\cuco X$.

\begin{lem}\label{lem:quasimedian}
There exists $C_1''''=C_1''''(\cuco X, k, M)$ so that $\mathfrak p_A$ is $C_1''''$--quasimedian.
\end{lem}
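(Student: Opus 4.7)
The strategy is to verify the quasimedian condition coordinate-by-coordinate using the realization theorem. Recall from~\cite[Section~7]{hhs2} that the coarse median on $(\cuco X, \mathfrak S)$ of three points is obtained by taking, for each $V \in \mathfrak S$, a coarse median in the hyperbolic space $\fontact V$ of the three projections, verifying consistency, and realizing the resulting tuple via Theorem~\ref{thm:realization}. Hence, by the coarse uniqueness clause of Theorem~\ref{thm:realization}, it suffices to exhibit a uniform bound $R = R(\cuco X, \mathfrak S, k, M)$ so that for every $V \in \mathfrak S$ and all $0$--cubes $p, q, r \in \cuco Y$ the projection $\pi_V(\mathfrak p_A(\mu_{\cuco Y}(p,q,r)))$ lies within $\fontact V$--distance $R$ of a coarse median in $\fontact V$ of $\pi_V(\mathfrak p_A(p)), \pi_V(\mathfrak p_A(q)), \pi_V(\mathfrak p_A(r))$; the lemma then follows with $C_1''''$ extracted from the realization constants.

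Write $\mu = \mu_{\cuco Y}(p, q, r)$. By Remark~\ref{rem:walls_and_media}, at every wall $(\OL W^U_i, \OR W^U_i)$ the cubical median $\mu$ picks the halfspace containing at least two of $p, q, r$. Consequently, for each $V \in \mathfrak S$, the subtree $S_{U,i,V}(\mu) \subseteq T_V$ contains $b_V(\ast)$ for at least two of $\ast \in \{p, q, r\}$, since $b_V(\ast) \in S_{U,i,V}(\ast)$ by construction. Let $\tilde m_V \in T_V$ be the tree median of $b_V(p), b_V(q), b_V(r)$. In a tree, any convex subtree containing two of three points contains their tree median, so $\tilde m_V \in S_{U,i,V}(\mu)$ for every wall, whence $\tilde m_V \in b_V(\mu) = \bigcap_{U,i} S_{U,i,V}(\mu)$. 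Combined with Lemma~\ref{lem:diambv_bounded}, which bounds $\diam_{T_V}(b_V(\mu))$ uniformly, this yields $\dist_{T_V}(\tilde m_V, b_V(\mu)) = O(1)$, and therefore, via the $\xi$--closeness of $\pi_V(\mathfrak p_A(\mu))$ to $b_V(\mu)$, we get the same bound in $\fontact V$.

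Since $\pi_V(\mathfrak p_A(\ast))$ is $\xi$--close to $b_V(\ast)$ for each $\ast \in \{p,q,r\}$, and since $T_V$ embeds in $\fontact V$ with edges mapping to geodesics and image $C$--Hausdorff close to $\hull_{\fontact V}(\pi_V(A))$, the tree median $\tilde m_V$ lies within an error depending only on $\delta$ and $C$ of any coarse median in the $\delta$--hyperbolic space $\fontact V$ of the three hyperbolic projections. Combining with the previous paragraph, this furnishes the required bound $R$. For $V \notin \mathcal U$, where $\pi_V(H_\theta(A))$ has diameter $O(M)$, the coordinate estimate is trivial. Applying Theorem~\ref{thm:realization} to the two coarsely agreeing tuples then produces the desired $C_1''''$.

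The main obstacle is the inclusion $\tilde m_V \in S_{U,i,V}(\mu)$ for every wall, including those indexed by $U \neq V$: the Helly intersection defining $b_V(\mu)$ must genuinely capture the tree median built from $b_V(p), b_V(q), b_V(r)$ rather than being shrunk away by contributions of unrelated domains $U$. The tree-approximation Lemmas~\ref{lem:rho_tree}, \ref{lem:rho_tree_co-nest}, and \ref{lem:rho_tree_nest} from Subsection~\ref{subsec:consistency_lemmas} are exactly what controls this: they ensure that $\beta_V$ of the majority halfspace $\overline W_i(U)$ captures the relevant leaves of $T_V$, so that whenever it contains two of the $b_V(\ast)$, its $T_V$--convex hull $S_{U,i,V}(\mu)$ automatically contains their connecting geodesic, hence $\tilde m_V$.
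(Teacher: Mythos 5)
Your proof is correct, and while it shares the paper's key combinatorial observation, it concludes by a genuinely more direct route. The shared ingredient is that the cubical median orients each wall toward the majority halfspace, so that $S_{U,i,V}(\mu)$ coincides with $S_{U,i,V}(\ast)$ for at least two of $\ast\in\{p,q,r\}$. You push this further than the paper does: since each such $S_{U,i,V}(\ast)$ contains $b_V(\ast)$ (by definition of $b_V$ as a Helly intersection), and a convex subtree containing two of three points contains their tree median, you obtain the actual membership $\tilde m_V\in b_V(\mu)$ for \emph{every} $V\in\mathfrak S$; combined with Lemma~\ref{lem:diambv_bounded} this gives a uniform coordinate-wise estimate in each $\fontact V$, and you finish with the coordinate-wise description of the HHS coarse median plus realization/uniqueness. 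The paper instead records only the weaker fact that no $p_i^U$ separates $b_U(m)$ from the tree median $m_U$, identifies $\mu(\mathfrak p_A(x),\mathfrak p_A(y),\mathfrak p_A(z))$ with $\mathfrak p_A(q)$ for some $0$--cube $q$ (via coarse surjectivity of $\mathfrak p_A$ onto the hierarchically quasiconvex set $H_\theta(A)$), and then invokes the wall-counting distance estimate from the proof of Theorem~\ref{thm:cubulated_hull} to bound $\dist_{\cuco Y}(m,q)$. Your version buys a cleaner, more self-contained argument that does not route through the distance estimate; the paper's buys economy by reusing machinery already established in the main proof. One small remark: your closing paragraph flags the inclusion $\tilde m_V\in S_{U,i,V}(\mu)$ as the main remaining obstacle and appeals to Lemmas~\ref{lem:rho_tree}--\ref{lem:rho_tree_nest}, but your second paragraph has already disposed of it purely formally, with no input from those consistency lemmas needed.
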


\begin{proof}
Let $\mu\co \cuco X^3\to\cuco X$ be the coarse median map.  Recall from~\cite{hhs2} that $\mu$ is 
characterized by the following property: for all $a,b,c\in\cuco X$ and all $U\in\mathfrak S$, 
$\pi_U(\mu(a,b,c))$ uniformly coarsely coincides with a coarse median point in $\fontact U$ for 
$\pi_U(a),\pi_U(b),\pi_U(c)$.  

Let $x,y,z\in\cuco Y$, and let $m$ be their median.  By Remark~\ref{rem:walls_and_media}, $m$ corresponds to 
the following orientation of the walls of $\cuco Y$: for each wall $W$, $m(W)$ is the halfspace which contains 
at least two of $x,y,z$.  In other words, for each $U\in\mathcal U$ and each $p_i^U\in T_U$, the orientation 
that $m$ assigns to $\{\OL W_i(U),\OR W_i(U)\}$ is the halfspace $\overline W_i(U)$ assigned by at least two of the 
orientations $x,y,z$. 

By definition, for any $V\in\mathfrak S$, we have $b_V(m)=\bigcap_{U\in\mathcal U,i}S_{U,i,V}(m)$, where, for 
each $U,i$, we have that $S_{U,i,V}(m)$ coincides with at least two of 
$S_{U,i,V}(x),S_{U,i,V}(y),S_{U,i,V}(z)$.  

In particular, for each $V\not\in\mathcal U$, we have that $b_V(m)$ coarsely coincides with each of 
$\beta_V(x),\beta_V(y),\beta_V(z)$.  

Also, for each $U\in\mathcal U$ and each $p_i^U$, we have that $b_U(m)$ lies in the same $p_i^U$--halfspace of 
$T_U$ as at least two of the points $b_U(x),b_U(y),b_U(z)$.  Hence $b_U(m)$ lies in the same $p_i^U$--halfspace 
of $T_U$ as $m_U$, where $m_U$ is the median of $b_U(x),b_U(y),b_U(z)$ in the tree $T_U$.  We have shown that 
no $p_i^U$ separates $b_U(m)$ from $m_U$, for any $U\in\mathcal U$.

Our $(1,C)$--quasi-isometrically embedded choice of $T_U$ ensures that $m_U$ is, up to uniformly bounded error, 
a coarse median point for  the images in $\fontact U$ of $\mathfrak p_A(x),\mathfrak p_A(y),\mathfrak p_A(z)$.  
In other words, $\mu(\mathfrak p_A(x),\mathfrak p_A(y),\mathfrak p_A(z))$ is a realization point for 
$(m_V)_{V\in\mathfrak S}$.  As shown earlier in the proof of Theorem~\ref{thm:cubulated_hull}, the image of 
$\mathfrak p_A$ coarsely coincides with $H_\theta(A)$, which is hierarchically quasiconvex by 
Proposition~\ref{prop:coarse_retract}.   Hence $\mu(\mathfrak p_A(x),\mathfrak p_A(y),\mathfrak p_A(z))$ 
uniformly coarsely coincides with $\mathfrak p_A(q)$ for some $q\in\cuco Y$.

% The distance estimates section in the proof of Theorem~\ref{thm:cubulated_hull}
% shows that
Hence there exists $q\in\cuco Y$ such that
$$\dist_{\cuco X}(\mathfrak p_A(m),\mu(\mathfrak
p_A(x),\mathfrak p_A(y),\mathfrak p_A(z)))\asymp\dist_{\cuco
X}(\mathfrak p_A(m),\mathfrak p_A(q))\asymp\dist_{\cuco Y}(m,q)$$ and 
establishes that this distance can 
be bounded in terms of the number of walls separating $m,q$.  Up to
additive error, this is just the sum over $U\in\mathcal U$ of the
number of $p^U_i$ separating $b_U(m)$ from $m_U$, which we established
above was $0$, as required.
\end{proof}

At this point, we have proved all of the lemmas supporting Theorem~\ref{thm:cubulated_hull}.
 
\subsection{Application to coarse median rank and hyperbolicity}\label{subsec:coarse_median}
In~\cite[Theorem 7.3]{hhs2}, we showed that any HHS is a coarse median
space (in the sense of~\cite{Bowditch:coarse_median}) of rank bounded
by the complexity.  In the asymphoric case, the following strengthens 
that result.

The following corollary bounds the median space rank of any asymptotic cone of $\cuco X$; see 
Proposition~\ref{prop:cone_median}.

\begin{cor}\label{cor:rank_hull}
  Suppose that $\cuco X$ is asymphoric. Then any CAT(0) cube complex $\cuco Y$ from Theorem~\ref{thm:cubulated_hull} satisfies $\dimension\cuco Y\leq\nu$, where $\nu$ is the rank of $\cuco X$.
 \end{cor}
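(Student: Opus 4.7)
The plan is to combine the dimension formula from Theorem~\ref{thm:cubulated_hull} with the asymphoric hypothesis. Theorem~\ref{thm:cubulated_hull} identifies $\dimension \cuco Y$ with the maximum cardinality of a pairwise-orthogonal subset of $\mathcal U = \{U\in\mathfrak S : \diam_{\fontact U}(\pi_U(A))\geq 100M\}$. So it suffices to bound the size of any pairwise-orthogonal collection inside $\mathcal U$ by $\nu$.

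First, I would recall that the asymphoric hypothesis guarantees a constant $C$, and by our convention $E\geq C$. Choose the constant $M_0$ from Theorem~\ref{thm:cubulated_hull} large enough that $M_0\geq E$; the theorem allows us to increase $M_0$ without losing any of its conclusions, so this is harmless. Then for any $M\geq M_0$, every $U\in\mathcal U$ satisfies
\[
\diam(\fontact U) \;\geq\; \diam_{\fontact U}(\pi_U(A)) \;\geq\; 100M \;\geq\; 100E \;\geq\; C,
\]
since $\pi_U(A)\subseteq\fontact U$.

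Now suppose for contradiction that $\mathcal U$ contains a set $\{U_1,\dots,U_{\nu+1}\}$ of pairwise-orthogonal elements. Each $\fontact{U_i}$ has diameter at least $C$ by the inequality above, which directly contradicts the definition of asymphoric. Hence any pairwise-orthogonal subset of $\mathcal U$ has cardinality at most $\nu$, and by the dimension statement of Theorem~\ref{thm:cubulated_hull} we conclude $\dimension\cuco Y\leq\nu$.

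There is essentially no obstacle here: the content of the corollary lies entirely in Theorem~\ref{thm:cubulated_hull}, which already produces a cube complex whose dimension is controlled by pairwise orthogonality in $\mathcal U$ rather than in $\mathfrak S$. The asymphoric hypothesis is precisely what forbids the pathological situation in which many pairwise orthogonal $\fontact U$ have large (but uniformly bounded) diameter, which would otherwise let $\mathcal U$ harbor an orthogonal set larger than $\nu$. The only ``choice'' to make is to enlarge $M_0$ so that the $100M$ threshold in the definition of $\mathcal U$ exceeds the asymphoric constant $C$.
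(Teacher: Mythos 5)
Your argument is correct and matches the paper's proof: both invoke the dimension statement of Theorem~\ref{thm:cubulated_hull} (i.e.\ Lemma~\ref{lem:cross}) to identify $\dimension\cuco Y$ with the maximal size of a pairwise-orthogonal subset of $\mathcal U$, and then use that every $U\in\mathcal U$ has $\diam(\fontact U)\geq 100M> E\geq C$ to cap that size at $\nu$ via asymphoricity. Your precaution about enlarging $M_0$ is harmless but unneeded, since the paper's $M_0$ already exceeds $10E\geq C$.
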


\begin{cor}\label{cor:rank_coarse_median}
If $\cuco X$ is an asymphoric HHS of rank $\nu$, then $\cuco X$ is coarse median of rank $\nu$. 
\end{cor}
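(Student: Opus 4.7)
The plan is to read Corollary~\ref{cor:rank_coarse_median} as having two halves, and to derive each from what is already set up. The upper bound, that the coarse median rank is at most $\nu$, is obtained by feeding the cubical approximations of hulls from Theorem~\ref{thm:cubulated_hull} directly into Definition~\ref{defn:coarse_median}, and invoking Corollary~\ref{cor:rank_hull} for the bound on their dimension. The lower bound, that the coarse median rank is at least $\nu$, is a routine consequence of Theorem~\ref{thm:rank_inv}.

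For the upper bound, fix $p\in\naturals$ and a finite $A\subseteq\cuco X$ of cardinality $k\leq p$. Apply Theorem~\ref{thm:cubulated_hull} with the fixed constant $M=M_{0}$ to produce a CAT(0) cube complex $\cuco Y_A$ and a $C_1$-quasimedian $(C_1,C_1)$-quasi-isometry $\mathfrak p_A\co\cuco Y_A\to H_\theta(A)$, together with distinguished $0$-cubes $y_1,\ldots,y_k\in\cuco Y_A^{(0)}$ whose convex hull in $\cuco Y_A$ is all of $\cuco Y_A$. Here $C_1$ depends only on $p$, and Corollary~\ref{cor:rank_hull} gives $\dimension\cuco Y_A\leq\nu$. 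Define $f\co A\to\cuco Y_A^{(0)}$ by $f(x_j)=y_j$, and define $g\co\cuco Y_A^{(0)}\to \cuco X$ by $g=\mathfrak p_A$ (followed, if the targeted formulation requires it, by closest-point projection to $A$, which adjusts $g$ by a uniformly bounded amount since the $y_j$ are close-to-injective in $A$). The first condition in Definition~\ref{defn:coarse_median} then follows directly from the $C_1$-quasimedian property of $\mathfrak p_A$ combined with the fact that the coarse median map $\mu$ on $\cuco X$ is (by construction, or by a short argument using the distance formula) coarsely Lipschitz in its three variables. The second condition, $\dist(a,g(f(a)))\leq h(p)$, comes from inspecting the construction of $y_a$ in the proof of Theorem~\ref{thm:cubulated_hull}: the $0$-cube $y_a$ is defined by coherently orienting each wall toward the halfspace containing $a$, so $\mathfrak p_A(y_a)$ is a realization point of the tuple whose $U$-coordinate coarsely equals $\pi_U(a)$ for each $U\in\mathfrak S$, and uniqueness of realization (Theorem~\ref{thm:realization}) puts it uniformly close to $a$.

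For the lower bound, observe that by Theorem~\ref{thm:rank_inv} there are uniform quasi-isometric embeddings of arbitrarily large balls of $\reals^{\nu}$ into $\cuco X$. If the coarse median rank of $\cuco X$ were some $r<\nu$, then passing to an asymptotic cone would give a median metric space of rank at most $r$ containing a bi-Lipschitz copy of $\reals^{\nu}$, by Theorem~2.3 of~\cite{Bowditch:coarse_median}; this is impossible since a median space of rank $r$ cannot contain an $n$-block for $n>r$. Hence the coarse median rank is exactly $\nu$.

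The only step requiring care is the verification of the two bullets of Definition~\ref{defn:coarse_median} with constants depending only on $p$; this is what the choice $M=M_0$ in Theorem~\ref{thm:cubulated_hull} arranges, so that $C_1$, and hence $h(p)$, depend only on $p$ rather than on the specific set $A$. Everything else is a short bookkeeping exercise on top of Theorem~\ref{thm:cubulated_hull} and Corollary~\ref{cor:rank_hull}.
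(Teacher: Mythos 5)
Your proof is correct and, for the substantive upper bound, follows essentially the same route as the paper: feed the cubical approximations of hulls from Theorem~\ref{thm:cubulated_hull} into the definition of a coarse median space (the paper phrases this as verifying Bowditch's condition (C2) via the quasimedian map $\cuco Y^{(0)}\to H_\theta(A)$), with the dimension bound coming from Corollary~\ref{cor:rank_hull}. The one difference is that you also supply the lower bound via Theorem~\ref{thm:rank_inv} and Bowditch's asymptotic-cone result, which the paper leaves implicit; that addition is sound (the passage from a bi-Lipschitz copy of $\reals^\nu$ in the cone to a $\nu$--block is exactly \cite[Proposition~1.2]{Bowditch:rigid}, used elsewhere in the paper) and is not needed for any of the corollary's later applications, which only use the upper bound.
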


\begin{proof}[Proof of Corollary~\ref{cor:rank_hull} and Corollary~\ref{cor:rank_coarse_median}]
Choose $M$ as in the proof of Theorem~\ref{thm:cubulated_hull}; since 
$M>E$, in particular $M$ exceeds the asymphoricity constant.
For any finite $A\subset\cuco X$, let
$\cuco Y$ be the cube complex and $\cuco Y\to H_\theta(A)$ be the
$C_1$--quasimedian $(C_1,C_1)$--quasi-isometry provided by
Theorem~\ref{thm:cubulated_hull}.  By Lemma~\ref{lem:cross},
$\dimension\cuco Y$ is equal to the maximal cardinality of sets of
pairwise-orthogonal elements of $\mathcal U$.  But since elements of
$\mathcal U$ have associated hyperbolic spaces of diameter $\geq M$, such subsets
have cardinality bounded by $\nu$.  This proves
Corollary~\ref{cor:rank_hull}.  Moreover, $\cuco Y^{(0)}\to
H_\theta(A)$ is a quasimedian map from a finite median algebra
satisfying the condition $(C2)$ from the definition of a coarse median
space in~\cite[Section 8]{Bowditch:coarse_median}.  The rank of this
median algebra is, by definition, $\dimension\cuco Y\leq\nu$.  Hence
$\cuco X$ is coarse median of rank $\nu$.
\end{proof}

We can also use the proof of Corollary~\ref{cor:rank_coarse_median} 
to characterize hyperbolic HHS.  We say that a quasi-geodesic metric space $X$ is \emph{hyperbolic} if there exist $D$ and 
$\delta$ so that
\begin{itemize}
 \item any pair of points of $X$ is joined by a $(D,D)$--quasi-geodesic, and
 \item $(D,D)$--quasi-geodesic triangles are $\delta$--thin.
\end{itemize}

For us, the distinction between hyperbolic geodesic spaces and hyperbolic quasi-geodesic spaces does not matter.  Indeed, any quasi-geodesic metric space $X$ is quasi-isometric to a geodesic metric space $Y$ (in fact, a graph). If, in addition, $X$ is hyperbolic then $Y$ is hyperbolic (in the usual sense). There is a number of ways to see this, one of which is the ``guessing geodesics'' criterion for hyperbolicity from \cite[Section 3.13]{MasurSchleimer}\cite[Proposition 3.1]{Bowditch:uniform}.  It thus follows from~\cite[Theorem 2.1]{Bowditch:coarse_median} that a coarse median quasigeodesic space is hyperbolic if and only if it has rank at most $1$.

We thus get a characterization of HHSs which are hyperbolic, which we use below in the proof of 
Lemma~\ref{lem:quasi_median_top_dimensional}:

\begin{cor}\label{cor:hyperbolic}
Let $(\cuco X,\mathfrak S)$ be an HHS.  Then the following are equivalent:
\begin{itemize}
 \item $\cuco X$ is coarse median of rank $\leq 1$, and is thus hyperbolic;
 \item (Bounded orthogonality) There exists $q\in\reals$ so that $\min\{\diam(\fontact U),\diam(\fontact V)\}\leq q$ for all $U,V\in\mathfrak S$ satisfying $U\orth V$.
\end{itemize}
\end{cor}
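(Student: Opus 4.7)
The plan is to combine Corollary~\ref{cor:rank_coarse_median} with an explicit obstruction to tree-like coarse median approximation of a carefully chosen four-point set; the hyperbolicity $\leftrightarrow$ coarse median rank $\leq 1$ equivalence from Bowditch (cited in the preceding discussion) then bridges the two formulations.

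For the implication bounded orthogonality $\Rightarrow$ coarse median rank $\leq 1$ (hence hyperbolic), I would first observe that bounded orthogonality with constant $q$ forces the HHS rank to satisfy $\nu \leq 1$, since no two orthogonal elements can have both associated hyperbolic spaces unbounded. Setting $C = q+1$, no two pairwise orthogonal elements can have both $\diam \fontact U \geq C$ and $\diam \fontact V \geq C$, so $(\cuco X,\mathfrak S)$ is asymphoric. Corollary~\ref{cor:rank_coarse_median} then delivers coarse median rank $\leq 1$, and the cited theorem of Bowditch yields hyperbolicity.

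For the converse I argue the contrapositive. Assume bounded orthogonality fails, so for each $n$ there exist $U_n \orth V_n$ with $\diam \fontact{U_n}, \diam \fontact{V_n} > n$. Pick $p_1^n, p_2^n \in \fontact{U_n}$ and $r_1^n, r_2^n \in \fontact{V_n}$ with pairwise distance $\geq n/2$. Inside the standard product region associated to $\{U_n, V_n\}$ (Definition~\ref{const:embedding_product_regions}), produce for each $(i, j) \in \{1, 2\}^2$ a point $x_{ij}^n \in \cuco X$ with $\pi_{U_n}(x_{ij}^n)$ uniformly close to $p_i^n$, $\pi_{V_n}(x_{ij}^n)$ uniformly close to $r_j^n$, and a common choice of coordinates in all other factors. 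The coordinate-wise formula for the coarse median in an HHS then yields $\mu(x_{11}^n, x_{12}^n, x_{22}^n) \sim x_{12}^n$: the median in $\fontact{U_n}$ of $(p_1^n, p_1^n, p_2^n)$ coarsely equals $p_1^n = \pi_{U_n}(x_{12}^n)$, the median in $\fontact{V_n}$ of $(r_1^n, r_2^n, r_2^n)$ coarsely equals $r_2^n = \pi_{V_n}(x_{12}^n)$, and in every other $\fontact W$ at least two of the three projections coarsely coincide with $\pi_W(x_{12}^n)$ (directly by the product region construction when $W$ is orthogonal to both $U_n$ and $V_n$, and by consistency when $W$ is nested into or transverse to either). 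Analogous identities for the remaining three triples show the four triple-medians are uniformly close to four distinct corners, whose pairwise $\cuco X$-distances grow linearly with $n$ by the distance formula (Theorem~\ref{thm:distance_formula}).

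Now suppose for contradiction that $\cuco X$ is coarse median of rank $\leq 1$. Then for $A_n = \{x_{ij}^n\}$ there exist a tree $\cuco Y_n$ and maps $f_n \co A_n \to \cuco Y_n^{(0)}$, $g_n \co \cuco Y_n^{(0)} \to A_n$ satisfying Definition~\ref{defn:coarse_median} with constant $h(4)$ independent of $n$. The decisive observation is that in any tree, the medians of the four triples drawn from a fixed four-element vertex-set take at most two distinct values, as one sees from the Steiner tree: either a path (with medians among its two middle vertices) or a ``caterpillar'' with two trivalent Steiner points, each of which is the median of exactly two of the four triples. Hence $g_n$ applied to the four tree-medians yields at most two distinct points in $A_n$; by the quasimedian identity and the coarse Lipschitzness of $\mu$, these must coarsely approximate all four of the distinct triple-medians in $\cuco X$ computed above. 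Since those four medians are pairwise $\gtrsim n$ apart, this is impossible for $n$ sufficiently large. The main care point is arranging the $x_{ij}^n$ so that the coordinate-wise coarse median genuinely picks out the missing corner in every domain; once that is in place, the tree lemma is elementary and the contradiction is immediate.
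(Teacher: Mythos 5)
Your proposal is correct, and for the substantive direction (bounded orthogonality $\Rightarrow$ coarse median rank $\leq 1$ $\Rightarrow$ hyperbolic) it is exactly the paper's argument: observe asymphoricity with $\nu\leq 1$, apply Corollary~\ref{cor:rank_coarse_median}, and invoke Bowditch's Theorem~2.1. For the converse the paper offers only the one-line remark that it ``easily follows from the construction of standard product regions''; you instead give a complete contrapositive argument, building arbitrarily fat quasi-rectangles in a product region, computing the four triple-medians coordinatewise to recover the four corners, and then using the elementary fact that the four triple-medians of a four-point set in a tree take at most two values. That is a legitimate and slightly different mechanism — it obstructs coarse median rank $\leq 1$ directly rather than obstructing hyperbolicity via large two-dimensional quasiflats — and it has the virtue of being self-contained and of yielding explicit constants; the cost is the bookkeeping needed to verify that the coordinatewise median genuinely selects the missing corner in every domain $W$, which you correctly flag as the one point requiring care.
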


\begin{proof}
 The fact that hyperbolicity implies bounded orthogonality easily
 follows from the construction of standard product regions.  The
 reverse implication follows from Corollary
 \ref{cor:rank_coarse_median}, with $\nu=1$, and the aforementioned
 \cite[Theorem 2.1]{Bowditch:coarse_median}.
\end{proof}

\begin{rem}
 One can prove that bounded orthogonality implies hyperbolicity using
 the guessing geodesics criterion instead of the coarse median rank.
 More specifically, triangles of hierarchy paths are thin because any
 such triangle is contained in the hull of the vertices, which is
 quasi-isometric to a $1$--dimensional CAT(0) cube complex, i.e., a tree.
\end{rem}

\section{Quasiflats and asymptotic cones}\label{sec:quasiflats_and_cones}

Fix an asymphoric hierarchically hyperbolic space $(\cuco X,\mathfrak
S)$ of rank $\nu$ and let $\seq{\cuco X}$ be an asymptotic cone of
$\cuco X$.  According to Proposition~\ref{prop:cone_median}, the
coarse median map on $\cuco X$ limits to a median map on $\seq{\cuco X}$
making it into a topological median space of rank at most $\nu$.  By
the same proposition, after changing the metric on $\seq{\cuco X}$
within its bilipschitz equivalence class, we can assume that
$\seq{\cuco X}$, with its given median, is a median metric space.

With this setup in mind, we now outline this section.  First, the goal
is to show that given a quasiflat in $\cuco X$, there are arbitrarily
large balls contained in a uniform neighborhood of the hull of
boundedly many points; this is made precise in Corollary
\ref{cor:fixed_flat_ball_hull}, and this is what will allow us to
apply Huang's quasiflat theorem for CAT(0) cube complexes
\cite{Huang:quasiflats} to describe quasiflats in HHSs.  Subsection
\ref{subsec:ultralim_median} contains preliminary lemmas that relate ultralimits of objects in $\cuco X$ defined in terms of 
the HHS structure to objects in $\seq{\cuco X}$ defined in terms of the median structure.

The content of Lemma \ref{lem:squish} and
Proposition \ref{prop:thanks_Brian} is best explained in reversed
order: In Proposition \ref{prop:thanks_Brian}, we argue that there are
balls of large radius $R$ in quasiflats in $\cuco X$ that stay
$\epsilon R$--close to hulls of finitely many points, for a fixed small
$\epsilon>0$.  Taking ultralimits, this gives a bilipschitz flat in an
asymptotic cone that stays within bounded distance of a certain median
convex subspace, and Lemma \ref{lem:squish} says that this means that, 
in fact, the flat is contained in the convex subspace.  This
corresponds to an improvement from ``$\epsilon R$'' to ``$o(R)$''.  We
then need to further improve this to ``$O(1)$'', which is performed in
Proposition \ref{prop:thanks_Mauro} by shrinking the previously found
balls.  Further discussion of the various statements and the
corresponding proofs can be found below.

\subsection{Ultralimits of hulls and some median preliminaries}\label{subsec:ultralim_median}
Given $m,m'$ in a median space $M$, we let $\hull(m,m')$ denote the set of $z\in M$ for which the median of $m,m',z$ 
is $z$.  (Note that $\hull(m,m')=[m.m']$, where $[m.m']$ is the \emph{median interval}, defined just as before.  The term 
``median interval'' is more standard, but we think of median intervals as convex hulls of pairs of points, which explains 
our choice of notation.)

Fix a hierarchically quasiconvex subspace $A\subseteq \cuco X$ and points $p,q\in A$, $x\in\cuco X$. Note that the coarse 
median of 
$(p,q,x)$ lies uniformly close to $A$ (see e.g., \cite[Section 
7]{hhs2} or \cite[Section 5]{RST}) --- this easily yields the 
first 
assertion of the following lemma, which we use freely throughout this 
section.

\begin{lemma}
 For any $\kappa$, the ultralimit of any sequence of
 $\kappa$--hierarchically quasiconvex subspaces is median convex.  Moreover, if $(A_n)$ is a sequence of 
$\kappa$--hierarchically quasiconvex subspaces and $\seq{A}\subset\seq{\cuco X}$ is their ultralimit, then the 
maps $\gate_{A_n}\colon \cuco X\to A_n$ limit to the median gate map 
$\gate\colon \seq{\cuco X}\to\seq{A}$.
\end{lemma}

\begin{proof}
We prove the assertion about gates, as the other facts are already
established, as noted above.  Fix $\seq{x}\in\seq{\cuco X}$,
represented by a sequence $(x_n)$ in $\cuco X$.  Fix
$\seq{a}\in\seq{A}$, represented by a sequence $(a_n)$.  For each $n$,
let $b_n=\gate_{A_n}(x_n)$, and let $\seq b$ be represented by $(b_n)$.

By the definition of the gate and the coarse
median, the coarse median of $a_n,b_n,x_n$ is uniformly close to
$b_n$, so the median of $\seq a,\seq b,\seq x$ is $\seq b$.  Hence the
median interval between $\seq x$ and any point in $\seq A$ contains
$\seq b$; it follows immediately from the definition of gate that $\seq b=\gate(\seq x)$.
\end{proof}

We will also tacitly use the next lemma throughout this section.  It 
states that the (median) convex hull of a pair of points 
in an asymptotic cone of $\cuco X$ arises as a limit of $\theta$--hulls of pairs of points in $\cuco X$.

\begin{lemma}\label{lem:ultralimit_of_hulls}
 Let $\seq x,\seq y\in\seq{\cuco X}$. Then $\hull(\{\seq x,\seq y\})=\lim_\omega H_{\theta}(\{x_n,y_n\})$.
\end{lemma}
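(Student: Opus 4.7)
The plan is to reduce the claimed identity $\hull(\{\seq x, \seq y\}) = \lim_\omega H_\theta(\{x_n, y_n\})$ to the fact that in the asymptotic cone $\seq{\cuco X}$ the median of three points is represented by a sequence of coarse medians in $\cuco X$ (the consequence of Bowditch's Theorem 2.3 recalled at the end of Section~\ref{subsec:median}), and then verify each inclusion by checking projections to each $\fontact U$. Throughout, I use two standard projection-level facts: (i) $\pi_U$ sends the coarse median in $\cuco X$ uniformly close to the hyperbolic median in $\fontact U$, and (ii) in a $\delta$--hyperbolic space the median of three points is uniformly close to the closest-point projection of any one of them onto a geodesic between the other two.

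For the inclusion $\supseteq$, let $\seq z$ be represented by a sequence $(z_n)$ with $z_n \in H_\theta(\{x_n, y_n\})$. By Definition~\ref{defn:hull}, for each $U \in \mathfrak S$ the set $\pi_U(z_n)$ lies $\theta$--close to a geodesic of $\fontact U$ from (a point of) $\pi_U(x_n)$ to $\pi_U(y_n)$. By (i) and (ii), $\pi_U(\mu(x_n, y_n, z_n))$ is therefore uniformly close to $\pi_U(z_n)$, with a constant depending only on $\theta$, $\delta$ and the HHS constants. The uniqueness axiom (equivalently, the distance formula, Theorem~\ref{thm:distance_formula}) then yields a uniform bound on $\dist_{\cuco X}(z_n, \mu(x_n, y_n, z_n))$. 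Dividing by $s_n \to \infty$ and taking the $\omega$--limit gives $\mu(\seq x, \seq y, \seq z) = \seq z$, i.e.\ $\seq z \in \hull(\{\seq x, \seq y\})$.

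For the inclusion $\subseteq$, suppose $\seq z \in \hull(\{\seq x, \seq y\})$, and set $z'_n := \mu(x_n, y_n, z_n)$. The hypothesis translates, via the characterization of the cone median, into $\dist(z_n, z'_n)/s_n \to_\omega 0$, so the sequence $(z'_n)$ also represents $\seq z$; it therefore suffices to show that $z'_n$ lies uniformly close to $H_\theta(\{x_n, y_n\})$. By (i), $\pi_U(z'_n)$ is uniformly close to the hyperbolic median of $\pi_U(x_n)$, $\pi_U(y_n)$, $\pi_U(z_n)$, which in turn lies on a geodesic joining $\pi_U(x_n)$ to $\pi_U(y_n)$, hence in $\hull_{\fontact U}(\{\pi_U(x_n), \pi_U(y_n)\})$. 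Feeding the tuple $(\pi_U(z'_n))_U$ into Theorem~\ref{thm:realization}, together with the hierarchical quasiconvexity of $H_\theta(\{x_n, y_n\})$ supplied by Proposition~\ref{prop:coarse_retract}, produces a point of $H_\theta(\{x_n, y_n\})$ at uniformly bounded distance from $z'_n$. Taking ultralimits finishes the proof. The only substantive content is packaging (i) and (ii); no deeper obstacle appears, since both are either explicit in~\cite{hhs2} or immediate from $\delta$--hyperbolicity.
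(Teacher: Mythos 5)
Your proposal is correct and follows essentially the same route as the paper's proof: both directions reduce to the facts that the cone median is represented by the sequence of coarse medians and that $\mu(x_n,y_n,z_n)$ coarsely coincides with $z_n$ exactly when $z_n$ is uniformly close to $H_\theta(\{x_n,y_n\})$. Your write-up simply makes explicit the projection-level and realization/hierarchical-quasiconvexity details that the paper leaves implicit.
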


\begin{proof}
	If $z_n\in H_{\theta}(x_n,y_n)$ then $m(x_n,y_n,z_n)$ coarsely
	coincides with $z_n$, which yields $$\lim_\omega
	H_{\theta}(x_n,y_n)\subseteq \hull(\seq x,\seq y).$$ 
	
	To prove the
	other containment, suppose $\seq z'\in \hull(\seq x,\seq y)$ (and 
	whence, by definition of the hull, that $\seq z'=m(\seq x,\seq y,\seq z')$), 
	and let $(z'_{n})$ be a representative for $\seq z'$.  Let
	$z_n=m(x_n,y_n,z'_n)\in H_{\theta}(x_n,y_n)$ and note that this 
	implies $\seq z=m(\seq x,\seq y,\seq {z'})$, where $\seq z$ is 
	the point represented by $(z'_{n})$.
	Since $\seq{\cuco X}$ is a 
	median space, the median of a triple is unique and thus $\seq
	z'=\seq z$; whence $\seq z'\in \lim_\omega H_{\theta}(x_n,y_n)$.
\end{proof}

\subsection{Bilipschitz flats in asymptotic cones}

The next lemma relies on results of Bowditch about \emph{cubulated} subsets of median metric spaces~\cite{Bowditch:rigid}.  
The import of the lemma is the following. Consider a top-rank bilipschitz flat $\seq F$ in $\seq{\cuco X}$ and a 
median-convex subspace $\seq H$ arising as an ultralimit of hierarchically quasiconvex subspaces of $\cuco X$. If $\seq F$ lies in a uniform neighborhood of $\seq H$, then it must actually be contained in $\seq 
H$.  This will be applied in the proof of Proposition~\ref{prop:thanks_Mauro} in the case where $\seq H$ is a limit of 
$\theta$--hulls of finite sets in $\cuco X$ of bounded cardinality.

Roughly, the idea of proof is as follows.  If the bilipschitz flat
$\mathbf F$ was median convex, we would have gate maps on both
$\mathbf F$ and $\seq H$, and $\mathbf F$ could only stay close to
$\seq H$ around $\gate_{\mathbf F}(\seq H)$, which then needs to be
the whole of $\mathbf F$.  Since $\mathbf F$ is top-dimensional, and
$\gate_{\mathbf F}(\seq H)$ is one of the factors of a product
subspace of $\seq{\cuco X}$ (in view of
Lemma~\ref{lem:parallel_gates}), the other factor has to be trivial.
On the other hand, the other factor being trivial is the same as
$\mathbf F$ being contained in $\seq H$.  Now, $\mathbf F$ need not be
median convex, and to deal with this we rely on results from
\cite{Bowditch:rigid} that, roughly, give us a decomposition of (large
portions of) the quasiflat into ``blocks,'' each of which is median
convex, and we then consider chains of such blocks.

\begin{lem}[Close to convex implies contained in convex]\label{lem:squish}
 Let $\seq{\cuco X}$ be an asymptotic cone of $\cuco X$ and let
 $\mathbf F\subseteq \seq{\cuco X}$ be a bilipschitz  $\nu$--flat.
 Let
 $\seq H$ be an ultralimit of uniformly hierarchically quasiconvex
 subsets of $\cuco X$ and suppose that $\mathbf F$ is contained in a
 neighborhood of $\seq H$ of finite radius.  Then $\mathbf F\subseteq
 \seq H$.
\end{lem}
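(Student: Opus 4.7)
The plan is to use the median gate map. By Corollary~\ref{cor:rank_coarse_median}, the cone $\seq{\cuco X}$ is a median space of rank $\nu$, and by the preceding lemma $\seq H$ is closed and median convex. Hence the median gate $g := g_{\seq H}\co\seq{\cuco X}\to\seq H$ is a well-defined, $1$--Lipschitz, median-preserving map, and by hypothesis $d(p,g(p))\leq R$ for every $p\in\mathbf F$. The task is to upgrade this to $g(p)=p$, equivalently $p\in\seq H$.

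First I would observe that $g|_\mathbf F$ is a $(1,2R)$--quasi-isometric embedding into $\seq H$. The gate-geodesic identity $d(p,g(p'))=d(p,g(p))+d(g(p),g(p'))$ holds because $g(p)\in[p,y]$ for every $y\in\seq H$ (applied to $y=g(p')$); combining this with the triangle inequality yields $|d(p,p')-d(g(p),g(p'))|\leq 2R$ for all $p,p'\in\mathbf F$. Consequently $g(\mathbf F)\subset\seq H$ is, up to additive $2R$, itself a bilipschitz $n$--flat, lying at Hausdorff distance at most $R$ from $\mathbf F$.

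The crux is upgrading this near-coincidence to equality. My plan is to apply the bridge construction of Lemma~\ref{lem:parallel_gates} to approximating sequences of hierarchically quasiconvex subsets of $\cuco X$ (the sequence defining $\seq H$ and a sequence of $R s_n$--neighborhoods of the quasiflats representing $\mathbf F$) and pass to the ultralimit. This realizes a neighborhood of $\mathbf F\cup g(\mathbf F)$ in $\seq{\cuco X}$ as a bilipschitz product $g(\mathbf F)\times \mathbf D$ where the fiber $\mathbf D$ has diameter at most $2R$. The flat $\mathbf F$ is then the graph of a Lipschitz map $\sigma\co g(\mathbf F)\to\mathbf D$, and I would argue this section is constant at the level corresponding to $\seq H$: a non-trivial direction of $\sigma$ would combine with the $n$ independent orthogonal directions in $g(\mathbf F)$ to produce more orthogonal median directions than permitted by the rank of $\seq{\cuco X}$, or alternatively would force $\mathbf F$ to project injectively onto a bounded subset of $\mathbf D$, contradicting the bilipschitz $n$--flat structure. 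This forces $\mathbf F=g(\mathbf F)\subset\seq H$.

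The main obstacle is twofold: producing the bridge decomposition rigorously at the level of ultralimits while preserving uniform constants, and in particular ruling out non-constant Lipschitz sections $\sigma$. A non-constant section into a bounded median factor does not obviously contradict anything, so the argument must exploit finer properties such as the interplay between median-preservation of $g$, the orthogonality structure on $\mathfrak S$, and the rank bound $\nu$. An alternative that sidesteps the product decomposition is a topological argument: since $g|_\mathbf F$ is a continuous $(1,2R)$--quasi-isometric embedding of a bilipschitz $n$--flat into the median space $\seq H$, one could try to use invariance of domain together with the median structure to show that $\mathbf F\cap\seq H$ is simultaneously open and closed in $\mathbf F$, and then establish non-emptiness via the gate identities.
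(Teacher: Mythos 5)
Your proposal assembles the right tools (the median gate onto $\seq H$, median convexity of ultralimits of hierarchically quasiconvex sets, the bridge of Lemma~\ref{lem:parallel_gates}, the rank bound), and the observation that $g|_{\mathbf F}$ is a $(1,2R)$--quasi-isometric embedding is correct. But the central step --- upgrading ``$\mathbf F$ lies within distance $R$ of $\seq H$'' to ``$\mathbf F\subseteq\seq H$'' --- is not carried out, and you concede as much: a non-constant Lipschitz section into a bounded median factor contradicts nothing about rank, since bounded factors do not register in the rank of the cone. There are also two problems with the setup itself. First, Lemma~\ref{lem:parallel_gates} applies to hierarchically quasiconvex subsets of $\cuco X$, and neither the quasiflats representing $\mathbf F$ nor their metric neighborhoods are hierarchically quasiconvex, so the claimed product decomposition of a neighborhood of $\mathbf F\cup g(\mathbf F)$ is not what the bridge lemma provides (it only describes the hull of the two gate sets). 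Second, $\mathbf F$ is not median convex, so $g(\mathbf F)$ need not be a flat or a block, and the ``graph of a section over $g(\mathbf F)$'' picture is unjustified.

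The paper closes the gap by a different mechanism, which is the idea your proposal is missing. Assuming some $p\in\mathbf F-\seq H$, it uses Bowditch's result that large balls in $\mathbf F$ are covered by blocks meeting along common faces; each block is an ultralimit of hulls of pairs of points (Lemma~\ref{lem:ultralimit_of_hulls}), hence an ultralimit of hierarchically quasiconvex sets, so the bridge lemma legitimately applies to a block $B_0$ containing $p$ (chosen disjoint from $\seq H$) and to $\seq H$. The gate $\gate_{B_0}(\seq H)$ is a sub-block $B'$ of dimension $i$, and since $B_0\cap\seq H=\emptyset$ the bridge interval is non-degenerate, yielding a topologically embedded $[0,1]^{i+1}$ and hence $i<\nu$; note this rank argument only bounds $\dim B'$ and does not by itself force containment. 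The contradiction then comes from propagation: extending $B_0$ across successive adjacent blocks of the flat, one shows $\gate_{B_n}(\seq H)=\gate_{B_0}(\seq H)$ for all $n$, yet points of $\mathbf F$ far from $B_0$ have points of $\seq H$ within distance $R$ of them, which would have to gate back near $B_0$. Without this propagation across the block decomposition (or a genuine substitute for it), neither your section argument nor the open-and-closed alternative goes through.
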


\begin{proof}
Suppose by contradiction that there exists some $p\in \mathbf F- \seq H$.

By \cite[Proposition 1.2]{Bowditch:rigid}, $\seq F$ is
\emph{cubulated in the sense of~\cite{Bowditch:rigid}}, which means 
that there are arbitrarily large balls in $\seq F$ each of which is
contained in a finite union of blocks.  By
\cite[Proposition~3.3]{Bowditch:rigid}, this implies that there are
arbitrarily large balls $B$ in $\mathbf F$ with the following
property: $B$ is contained in a subset of $\mathbf F$ which is a union
of blocks whose pairwise intersections are each either empty or a common face.  We let
$\mathbf F'$ be such a union of blocks which contains a ball around
$p\in \mathbf F$ of radius $r$ much larger than $\sup_{x\in \mathbf
F}\dist(x,\seq H)$.
 
  After possibly subdividing the cubulation of $\mathbf F'$, there is
  a $\nu$--block $B_0$ of $\mathbf F'$ containing $p$ and disjoint
  from $\seq H$.  After subdividing, we can assume that each side of $B_0$ has length bounded by some $\ell$ 
much smaller than $r$.   
  
  Being a block, $B_0$ is the median interval between a pair of opposite corners of $B_0$.  So, by Lemma 
\ref{lem:ultralimit_of_hulls}, $B_0$ is the ultralimit $\seq H_0$ of a sequence $(H_\theta(c_n,d_n))$ of $\theta$--hulls of 
pairs of points.  
 
 As noted in Definition~\ref{defn:block_gate}, $\gate_{\subseq H_0}(\seq H)$ is a  median convex subspace.  So, 
$\gate_{\subseq H_0}(\seq H)$ is a sub-block $B'$ of $B_0$. 

On the other hand, by hypothesis, $\seq H$ is the limit of uniformly hierarchically quasiconvex subspaces $H_n$ of $\cuco 
X$.  By Lemma~\ref{lem:ultralimit_of_hulls}, $\gate_{\subseq H_0}(\seq H)$ is the limit of the subspaces 
$\gate_{H_\theta(c_n,d_n)}(H_n)$.  By Lemma~\ref{lem:parallel_gates}.\eqref{item:bridge}, $\gate_{H_\theta(c_n,d_n)}(H_n)$ 
is coarsely contained in a quasi-isometrically embedded copy of $\gate_{H_\theta(c_n,d_n)}(H_n)\times I_n$, where $I_n$ is a $\theta$--hull.  So, taking limits, we see that, if $B'$  has dimension $i$, then there is an
  $(i+1)$--dimensional topologically embedded copy of $[0,1]^{i+1}$ in
  $\seq{\cuco X}$.  This implies $i<\nu$.
 
 For any codimension--$1$ face $B_2$ of $B_0$ not intersecting $B'$, there exists a
 block $B'_1$ whose intersection with $B_0$ is $B_2$. So,
 $B_1=B_0\cup B'_1$ is a block by \cite[Lemma~3.2]{Bowditch:rigid}.  We claim
 $\gate_{B_1}(\seq H)=\gate_{B_0}(\seq H)$, which implies that $B_1$
 is also disjoint from $\seq H$.  
 
 To prove the claim, note that
 $B'=\gate_{B_0}(\seq H)=\gate_{B_0}(\gate_{B_1}(\seq H))$.  (It is a general fact about median metric spaces, following 
directly from the definition of a gate, that if $A,C$ are median-convex closed subspaces and $A\subset C$, then 
$\gate_A=\gate_A\circ\gate_C$.) 

Now, since $B_0$ is a sub-block of the block $B_1$, and $B_0$ intersects the closure of its complement in $B_1$ along a 
common codimension--$1$ face, and $B_1$ is median-isomorphic to a finite product of intervals with the $\ell_1$--metric (by 
the definition of a block), $\gate_{B_0}|_{B_1}$ is just the natural retraction.  So, this map is
 one-to-one on $B'$, and the claim follows.
 
Now proceed inductively until we find a block $B_m$ that we
cannot extend to a block $B_{m+1}$ using the procedure above,
implying that we reached the boundary of $\mathbf F'$.  By induction, $\gate_{B_m}(\seq H)=\gate_{B_0}(\seq H)$.

Hence $\gate_{B_m}(\seq H)=B'$, since we had $B'=\gate_{B_0}(\seq H)$.  Let $q\in B_m$ lie in the boundary of $\mathbf F'$. 
 Then $\dist_{\seq{\cuco X}}(q,B')$ is at least $\dist_{\seq{\cuco X}}(q,p)-\nu \ell$, which exceeds 
$\sup_{x\in \mathbf 
F}\dist(x,\seq H)$.  Hence there exists $h\in\seq H$ with $\dist_{\seq{\cuco X}}(h,q) < \dist_{\seq{\cuco X}}(B',h)$.  This 
contradicts that $\gate_{B_m}(\seq H)=B'$.  (Here we are using the median metric $\dist_{\seq{\cuco X}}$, for which the 
notions of median gate and closest-point projection coincide, by the definition of a median gate.)  This is the required 
contradiction.
\end{proof}

\subsection{Quasiflats and hulls}

As mentioned above, we now argue that given a quasiflat in $\cuco X$ there are balls of large radius $R$ that stay $\epsilon R$-close to hulls of finitely many points, for a fixed small $\epsilon>0$. Once again we use \cite[Proposition
 1.2]{Bowditch:rigid}, which provides a subdivision of (large portions) of the ultralimit of the quasiflat into blocks, and then use the fact that each such block is the ultralimit of hulls of pairs of points.

\begin{prop}\label{prop:thanks_Brian}
 Let $F\co\reals^\nu\to\cuco X$ be a quasiflat.  Then, there exists
 $N$ (depending on $F$) so that the following holds.  For any $\epsilon>0$ and every
 $R_0$ there exists a ball $B=B_R(0)\subseteq \reals^\nu$ of radius $R\geq
 R_0$ and a set $A\subseteq \cuco X$ with $|A|\leq N$ so that
 $F(B)\subseteq \neb_{\epsilon R}(H_\theta(A))$.
\end{prop}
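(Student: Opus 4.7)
My plan is to prove the proposition with $N=2$, by contradiction, using asymptotic cones and the block decomposition of bilipschitz top-dimensional flats in median spaces.

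Suppose the conclusion fails with $N=2$: there exist $\epsilon_0>0$ and $R_0>0$ such that for every ball $B\subseteq\reals^\nu$ of radius $R\geq R_0$ and every pair $\{a,b\}\subseteq\cuco X$, $F(B)\not\subseteq\neb_{\epsilon_0 R}(H_\theta(\{a,b\}))$. Fix a non-principal ultrafilter $\omega$ on $\naturals$ and a sequence $R_n\to\infty$ with $R_n\geq R_0$, and form the asymptotic cone $\seq{\cuco X}$ based at $F(0)$ with scaling $(R_n)$. Since $F$ is a quasi-isometric embedding, the induced map $\mathbf F\co\reals^\nu\to\seq{\cuco X}$, $\mathbf F(v)=[F(R_n v)]_{n\in\naturals}$, is a bilipschitz embedding, and in particular a topological $\nu$--manifold.

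I next locate a $\nu$--block inside $\mathbf F$. By the portion of \cite[Proposition~1.2, Lemma~3.3]{Bowditch:rigid} already invoked in the proof of Lemma~\ref{lem:squish}, $\mathbf F$ contains arbitrarily large Euclidean balls lying in subspaces that are unions of blocks meeting, if at all, in common faces; after subdividing one obtains a genuine $\nu$--block $\mathbf B\subseteq\mathbf F$. Let $\seq c_-,\seq c_+$ be a pair of opposite corners of $\mathbf B$, represented by sequences $(c_{\pm,n})\subset\cuco X$. Since $\mathbf B$ is median-isometric to a product of positive-length compact intervals with the $\ell^1$ metric, it equals the median interval $[\seq c_-,\seq c_+]=\hull(\seq c_-,\seq c_+)$, which by Lemma~\ref{lem:ultralimit_of_hulls} is $\lim_\omega H_\theta(\{c_{-,n},c_{+,n}\})$.

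Pick $v\in\reals^\nu$ so that $\mathbf F(v)$ lies in the topological interior of $\mathbf B$ as a subset of $\mathbf F$, and pick $r>0$ with $\mathbf F(\overline{B_r(v)})\subseteq\mathbf B$; set $q_n=R_n v$. A standard compactness-plus-ultralimit argument (using that $\overline{B_r(v)}$ is compact and $\mathbf F$ is bilipschitz) shows that the inclusion $\mathbf F(\overline{B_r(v)})\subseteq\mathbf B$ unpacks to: for every $\delta>0$, $\omega$-a.e.\ $n$ satisfies $F(B_{rR_n}(q_n))\subseteq\neb_{\delta R_n}(H_\theta(\{c_{-,n},c_{+,n}\}))$. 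Choosing $\delta=\epsilon_0 r/2$ and writing $R=rR_n$, this yields, for $n$ large, $F(B_R(q_n))\subseteq\neb_{\epsilon_0 R/2}(H_\theta(\{c_{-,n},c_{+,n}\}))\subseteq\neb_{\epsilon_0 R}(H_\theta(\{c_{-,n},c_{+,n}\}))$ with $R\geq R_0$, contradicting the failure assumption. The main technical point is producing a genuine $\nu$--block with nonempty topological interior in $\mathbf F$; without this the radius $r$ would degenerate to zero. This is exactly the subdivision step from Bowditch's structure theorem, exploiting that $\mathbf F$ is a topological $\nu$--manifold, the same step already used in Lemma~\ref{lem:squish}.
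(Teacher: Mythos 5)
Your argument is correct and rests on the same machinery as the paper's proof: rescale along a sequence of radii, apply Bowditch's block decomposition \cite[Proposition~1.2]{Bowditch:rigid} to the limit flat, identify blocks with ultralimits of hulls of pairs via Lemma~\ref{lem:ultralimit_of_hulls}, and unpack by a compactness/contradiction argument. The one genuine difference is where you localize: the paper covers the image of the whole unit ball by the entire finite union of blocks and takes $A$ to be the set of all of their corners, so its $N$ is twice the number of blocks in the decomposition; you instead pass to a sub-ball whose image lies in a \emph{single} $\nu$--block and take only its two opposite corners, obtaining the sharper constant $N=2$. This refinement is legitimate, but it does require the two observations you flag: that at least one block in the decomposition is genuinely $\nu$--dimensional (a finite union of lower-dimensional blocks cannot cover an open subset of the topological $\nu$--manifold $\mathbf F$), and that such a block has nonempty interior in $\mathbf F$ (invariance of domain), so that the radius $r$ of your sub-ball is positive. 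The trade-off is that your ball is centered at $q_n=R_nv$ rather than at the observation point, but since the proposition places no constraint on the center of $B$, this is consistent with the statement being proved.
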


\begin{proof}
The proof has two parts.

 \textbf{Choosing $N$:} Let $\seq{\cuco X}$ be a \emph{fixed} asymptotic cone of $\cuco X$ with
 observation points a constant sequence $(F(0))$.  Let $\seq F\co\reals^\nu\to
 \seq{\cuco X}$ be the corresponding ultralimit of $F$.  Let $\seq B$ be a
 ball of radius $1$ in $\reals^\nu$.  By \cite[Proposition
 1.2]{Bowditch:rigid}, $\seq F(\seq B)$ is contained in a finite union of
 blocks.  Notice that each block is the convex hull of a pair of
 opposite corners. The cardinality of the number of corners provides 
 the desired $N$.  By Lemma \ref{lem:ultralimit_of_hulls}, $\seq
 F(\seq B)$ is contained in the ultralimit of hulls of pairs of points.
 Thus, $\seq F(\seq B)$ is contained in the ultralimit of a sequence of
 hulls of sets of at most $N$ points (the hull of a union contains the
 union of the hulls). 
 
\textbf{Remark on non-uniformity of $N$:}  We remark that, for the purposes of this proof, $N$ is allowed to depend on the 
particular quasiflat $F$, not just the quasi-isometry constants.  We are also allowing $N$ to depend on our choice 
$\seq{\cuco X}$ of asymptotic cone.  Bowditch's proposition (Proposition 1.2 in~\cite{Bowditch:rigid}) provides only 
that $\seq F(\seq B)$ is contained in a \emph{finite} union of 
blocks, but does not bound the number; for our result we only need 
finiteness.
 
 \textbf{Conclusion:}  Now, suppose by contradiction that the conclusion of the proposition fails.  Then for each $N$, and 
in particular the $N$ we found above, there is $\epsilon>0$ so that, for all balls $B(0,R)$ of sufficiently large radius $R$, 
we have that $F(B(0,R))$ cannot be contained in $\neb_{\epsilon R}(H_\theta(A))$ for any $A\subseteq \cuco X$ with $|A|\leq 
N$. Let $B_n=B(0,R_n)$, where $(R_n)$ is the scaling factor of the asymptotic cone $\seq{\cuco X}$ fixed above. Then $\seq B$ 
is the ultralimit of the $B_n$. The fact that $\seq F(\seq B)$ is contained in the ultralimit of a sequence of hulls 
$H_\theta(A_n)$ of sets $A_n$ of at most $N$ points implies that, for $\omega$--a.e. $n$, $F(B_n)$ is contained in 
$\neb_{\epsilon R_n}(H_\theta(A_n))$, a contradiction.
\end{proof}

The following is the most technical proposition of this section, and
it says that by shrinking the balls provided by Proposition
\ref{prop:thanks_Brian}, we obtain balls contained in a uniform
neighborhood of hulls of boundedly many points.  The rough reason for
this is the following. In view of Lemma \ref{lem:squish}, in any asymptotic
cone the ultralimit of the balls is contained in the ultralimit of the
hulls; this means that the distance from the flat to the hulls
grows more slowly than any superlinear function. From this we deduce 
the distance is bounded. 
To make this work, we must consider only asymptotic cones where the ultralimit of the balls is a bilipschitz
flat, so the observation point must be deep in the balls; in 
the proof we deal with this by 
using balls of half the radius to ensure 
this holds in the relevant asymptotic cones.

\begin{prop}\label{prop:thanks_Mauro}
 For every $K,N$ there exist $\epsilon>0$, $R_0$ and $L$ with the
 following property.  Let $B$ be a ball of radius $R\geq R_0$ in
 $\reals^\nu$, and let $F\co B\to\cuco X$ be a $(K,K)$--quasi-isometric
 embedding.  Let $A\subseteq \cuco X$ have $|A|\leq N$, and suppose
 that $F(B)\subseteq \neb_{\epsilon R}(H_\theta(A))$.  Then
 $F(B')\subseteq \neb_L(H_\theta(A))$, where $B'$ is the sub-ball of
 $B$ with the same center and radius $R/2$.
\end{prop}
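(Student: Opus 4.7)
The plan is to argue by contradiction using asymptotic cones. If the statement fails, one extracts sequences $\epsilon_n\to 0$, $R_n\to\infty$, $(K,K)$--quasi-isometric embeddings $F_n\co B_n\to\cuco X$ with $B_n$ the ball of radius $R_n$, subsets $A_n\subseteq\cuco X$ with $|A_n|\leq N$, and points in $B'_n$ (the sub-ball of radius $R_n/2$) whose $F_n$--images are arbitrarily far from $H_\theta(A_n)$, even though $F_n(B_n)\subseteq\neb_{\epsilon_n R_n}(H_\theta(A_n))$. The key tactical choice will be to take $x_n\in B'_n$ to nearly maximize $\dist(F_n(\cdot),H_\theta(A_n))$ over $B'_n$ and set $d_n$ equal to this near-supremum, so that $d_n\to\infty$ while $d_n\leq\epsilon_n R_n$.

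Next, I would form the asymptotic cone $\seq{\cuco X}$ with observation points $F_n(x_n)$ and scaling sequence $s_n=d_n$. Two features of this choice are crucial. First, $R_n/(2s_n)\geq 1/(2\epsilon_n)\to\infty$, so that the rescaled half-balls $B'_n$ exhaust $\reals^\nu$ in the ultralimit. Second, after rescaling the target by $1/s_n$ (and the $\reals^\nu$ domain correspondingly), the $(K,K)$--quasi-isometric constants become $(K,K/s_n)$, and the additive error vanishes in the limit. Consequently, the ultralimit of $F_n\vert_{B'_n}$ defines a $K$--bilipschitz embedding $\mathbf F\co\reals^\nu\to\seq{\cuco X}$, i.e., a bilipschitz $\nu$--flat whose basepoint $\mathbf F(0)$ is the observation point.

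Let $\seq H$ denote the ultralimit of the $H_\theta(A_n)$. By Proposition~\ref{prop:coarse_retract}, these hulls are uniformly $\kappa(\theta)$--hierarchically quasiconvex, so $\seq H$ satisfies the hypothesis of Lemma~\ref{lem:squish}. The near-maximality of $d_n$ on $B'_n$ gives $\dist(F_n(y),H_\theta(A_n))\leq d_n=s_n$ for all $y\in B'_n$, which rescales to $\mathbf F(\reals^\nu)\subseteq\neb_1(\seq H)$, a finite-radius neighborhood. Lemma~\ref{lem:squish} therefore forces $\mathbf F(\reals^\nu)\subseteq\seq H$.

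The contradiction will come from the basepoint: by the scaling $s_n=d_n$ and the near-maximality, $\mathbf F(0)=\lim_\omega F_n(x_n)$ lies at distance exactly $1$ from $\seq H$, contradicting $\mathbf F(0)\in\seq H$. The main subtlety, and the reason for the near-maximizer choice of $x_n$, is to arrange the scaling so as to simultaneously keep the flat inside a bounded-radius neighborhood of $\seq H$ while keeping the basepoint separated from $\seq H$: scaling by $d_n$ with an arbitrary $x_n$ could leave the flat only in a sublinear-but-unbounded neighborhood of $\seq H$ in the cone, while scaling by $\epsilon_n R_n$ would pull the basepoint into $\seq H$; the coupled choice $s_n=d_n$ together with $x_n$ near-maximal resolves this tension.
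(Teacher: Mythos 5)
Your high-level strategy (argue by contradiction, rescale by the defect, base the cone at a near-maximizer, invoke Lemma~\ref{lem:squish}) is the right one and is also the engine of the paper's proof, but your single-step implementation has a genuine gap at the claim that ``the rescaled half-balls $B'_n$ exhaust $\reals^\nu$ in the ultralimit.'' The condition $R_n/(2s_n)\to\infty$ only guarantees this when the observation point is deep inside $B'_n$; your $x_n$ is a near-maximizer of $\dist(F_n(\cdot),H_\theta(A_n))$ over $B'_n$ and may lie within $O(d_n)$ of the boundary of $B'_n$. In that case the ultralimit of $(B'_n,x_n,\dist/d_n)$ is only a half-space, so the ultralimit of $F_n|_{B'_n}$ is a bilipschitz embedded half-space rather than a flat, and Lemma~\ref{lem:squish} does not apply: its proof needs arbitrarily large balls of $\mathbf F$ around the offending point $p$, and here $p=\mathbf F(0)$ sits on the boundary of the half-space. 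Nor can you repair this by enlarging the domain to all of $B_n$: on $B_n\setminus B'_n$ the only available estimate is $\dist(F_n(\cdot),H_\theta(A_n))\leq\epsilon_n R_n$, and $\epsilon_n R_n/d_n$ need not be bounded, so the resulting genuine flat need not lie in a \emph{finite-radius} neighborhood of $\seq H$; the coarse Lipschitz property only yields the pointwise bound $\dist(\mathbf F(v),\seq H)\leq 1+K|v|$, which is unbounded over the flat, whereas Lemma~\ref{lem:squish} requires a uniform bound. So the tension you correctly identify in your last paragraph is real, and the coupled choice $s_n=d_n$ with $x_n$ near-maximal does not resolve it.

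The paper resolves it with an extra layer of bookkeeping. Setting $\ell_m(t)=\sup_{x\in F_m(B_{\min\{t,R_m\}}(0))}\dist(x,H_\theta(A_m))$, Claim~\ref{claim:mauro_1} runs your cone argument with the maximizer taken over the \emph{smaller} ball $B_{\lambda-\alpha\subseq\ell(\lambda)}$ and scaling factor $\seq\ell(\lambda-\alpha\seq\ell(\lambda))$: the buffer $\alpha\seq\ell(\lambda)$ between this ball and $B_\lambda$ has rescaled size at least $\alpha\gg1$ (so the cone sees a genuine flat around the basepoint), while inside $B_\lambda$ the distance to the hull is at most $\seq\ell(\lambda)$, which under the contradiction hypothesis of that claim is a finite multiple of the scaling factor (so the flat lies in a finite-radius neighborhood of $\seq H$). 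This yields the decay $\seq\ell(\lambda-\alpha\seq\ell(\lambda))\ll\seq\ell(\lambda)$, which is then iterated across the dyadic radii $R_m(1+2^{-j})/2$ in Claim~\ref{claim:mauro_2} to propagate the bound from radius $R_m$, where the hypothesis $\seq\ell(\sigma)\ll\sigma$ lives, down to radius $R_m/2$. Some version of this one-step decay plus iteration appears unavoidable: a single cone cannot simultaneously see a full flat, keep it uniformly close to $\seq H$, and keep the basepoint separated from $\seq H$.
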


\begin{proof}
If not, there exist constants $K,N$ and:
\begin{itemize}
 \item balls $B_m=B_m(0)$ of radius $R_m$ in $\reals^\nu$, and $(K,K)$--quasi-isometric embeddings $F_m:B_m\to\cuco X$,
 \item subsets $A_m\subseteq \cuco X$ with $|A_m|\leq N$ and
 $$\lim_{m\to\infty} \frac{1}{R_m}\sup_{x\in B_m} \dist (F_m(x),H_{\theta}(A_m))=0,$$
 but $\lim_{m\to\infty} \sup_{x\in B_{R_m/2}(0)} \dist (F_m(x),H_{\theta}(A_m))=\infty$.
\end{itemize}

 We define $\ell_m(t)=\sup_{x\in F_m(B_{\min\{t,R_m\}}(0))}
 \dist(x,H_\theta(A_m))$.  The ultrapower $\seq \ell$ of the $\ell_m$
 can be regarded as a function $\seq \ell\co \,^\omega
 \reals_+\to\,^\omega\reals_+$. 
Note that $\seq \ell$ is non-decreasing.
 
 Let $\sigma\in\,^\omega\reals_+$ be represented by $\seq R$. For 
 $\seq S,\seq T\in\,^\omega\reals_+$ we write $\seq S\ll\seq T$ if $\lim_\omega S_m/T_m=0$, and we write $\seq S<\infty$ if $\lim_\omega S_m\neq \infty$, i.e., if $\seq S\gg 1$ does not hold.  We find a contradiction (with the second bullet
 above) provided we show $\seq\ell(\sigma/2)=\lim_{\omega,m}
 \ell_m(R_m/2)<\infty$.

 The first part of the second bullet above implies that $\seq \ell(\sigma)\ll \sigma$. We first need:
 
 \begin{claim}\label{claim:mauro_1}
  For $\lambda\in\,^\omega \reals_+$, if $\seq\ell(\lambda)\gg 1$, then for any $\alpha\gg 1$ we have $\seq\ell(\lambda-\alpha \seq\ell(\lambda))\ll \seq\ell(\lambda)$.
 \end{claim}
 
 \begin{proof}[Proof of Claim~\ref{claim:mauro_1}]
 Suppose not.  Consider an asymptotic cone $\seq{\cuco X}$ of $\cuco
 X$ with the observation point in $F(B_{\lambda-\alpha
 \subseq\ell(\lambda)}(0))$ and scaling factor $\seq\ell(\lambda-\alpha
 \seq\ell(\lambda))$.  Then any point in the image of $\seq F$ has  
 distance from $\seq H$ bounded above by 
 $\seq\ell(\lambda)/\seq\ell(\lambda-\alpha \seq\ell(\lambda))<\infty$.  In fact,
 any point of the image of $F$ which gives a point of $\seq{\cuco X}$
 lies in a ball of radius $\lambda-\alpha \seq\ell(\lambda) +t \seq\ell
 (\lambda-\alpha \seq\ell(\lambda)) \leq \lambda-\alpha \seq\ell(\lambda) +t \seq\ell
 (\lambda)$ for some finite $t$, and hence in particular in the image of
 the ball of radius $\lambda$.
 
 By Lemma \ref{lem:squish} we have $\seq F\subseteq \seq H$.  But, we
 chose an arbitrary observation point in $F(B_{\lambda-\alpha
 \seq\ell(\lambda)}(0))$, and thus we get a contradiction by choosing a point
 that maximizes the distance from $H_\theta(A)$.
 \end{proof}
 
 We claim that there exists $T_0\in\reals_+$ so that the following holds for $\omega$--a.e. $m$: if $\ell_m(t)\geq 
T_0$ for some $t$, and $\alpha\geq T_0$, then $\ell_m(t-\alpha \ell_m(t))\leq \ell_m(t)/2$.
 
  \begin{rem}
The proof follows from Claim \ref{claim:mauro_1} by an application of
the principle from nonstandard analysis called \emph{underspill},
which says that if a predicate is true for all infinitesimal positive 
non-standard reals, 
then it is also true for all sufficiently small standard reals.

Since we
do not wish to require familiarity with non-standard analysis, 
rather than invoking this principle we instead 
provide a self-contained argument in the language of ultrafilters. 
Since our argument is a translation of the non-standard analysis 
argument, rather than providing a convoluted heuristic explanation 
for how this argument works, 
we refer the reader who would like
to do more than check that the argument is formally correct to Tao's
excellent blog post \cite{tao:nonstandard}, which explains all the 
relevant concepts. We note, though, that this argument is the 
usual one which is used to prove that ultrapowers
are saturated models and also in proving the 
nonstandard formulation of continuity, see 
\cite[Proposition 11]{tao:nonstandard}, which is a typical 
application of underspill.
 \end{rem}

 For each
$n\in\naturals$, let $\mathcal U_n$ be the set of $m\ge n$ for which
there exists
$t_{m,n},\alpha_{m,n}\in\reals_+$ so that $\ell_m(t_{m,n})\geq n$ and $\alpha_{m,n}\geq n$ and 
$\ell_m(t_{m,n}-\alpha_{m,n} \ell_m(t_{m,n}))>\ell_m(t_{m,n})/2$.  
Suppose that our claim does not hold, i.e.,  
suppose the desired $T_0$ does not exist.  Then, for arbitrarily large $n$, we have that $m\in\mathcal U_n$ for 
$\omega$--a.e. $m$.  For each $m$, let $n(m)$ be the maximal $n$ for which $m\in\mathcal U_n$.  Our assumption, and 
the fact that $m\not\in\mathcal U_n$ for $n>m$, ensures that $n(m)$ exists for $\omega$--a.e. $m$.  

Let $\lambda\in ^\omega\reals_+$ be the ultralimit of $t_{m,n(m)}$ and let $\alpha$ be that of  
$\alpha_{m,n(m)}$.  Then $\ell(\lambda)\gg 1$ and $\alpha\gg1$, so Claim~\ref{claim:mauro_1} implies that 
$\seq\ell(\lambda-\alpha \seq\ell(\lambda))\ll \seq\ell(\lambda)$.  This contradicts that 
$\ell_m(t_{m,n(m)}-\alpha_{m,n(m)} \ell_m(t_{m,n(m)}))>\ell_m(t_{m,n(m)})/2$ for $\omega$--a.e. $m$.  Thus we have 
$T_0$ with the claimed property for $\omega$--a.e. $m$.  
 
Fix one such $m$, which furthermore satisfies 
$\ell_m(R_m)\leq R_m/(4\alpha_0)$ (which is satisfied by $\omega$--a.e. $m$ by the second bullet).  Let 
$R_m^j=R_m(1+2^{-j})/2$. In particular, $R^0_m=R_m$.
 
\begin{claim}\label{claim:mauro_2}
  Either $\ell(R_m^j)\leq \ell_m(R_m)/2^{j}$ or there exists $i\leq j$ with $\ell_m(R_m^i)<T_0$.
\end{claim}
 
\begin{proof}[Proof of Claim~\ref{claim:mauro_2}]
 We argue by induction on $j$. Suppose that $R_m^j$ satisfy $\ell_m(R_m^j)\leq \ell_m(R_m)/2^{j}$ and 
$\ell_m(R_m^j)\geq T_0$. Note that $R_m^{j+1}=R^j_m-2^{-j-2}R_m=R_m^j-\alpha_m^j\ell_m(R_m^j)$ for some 
$\alpha_m^j\geq T_0$. Hence, the claim gives $\ell_m(R_m^{j+1})\leq \ell_m(R_m^j)/2\leq \ell(R_m)/2^{j+1}$, as 
required.
\end{proof}

 In either of the two cases provided by Claim~\ref{claim:mauro_2}, there exists $j$ with $\ell_m(R^j_m)<T_0$. This implies $\ell_m(R_m/2)<T_0$, and hence $\seq\ell( \sigma/2)<T_0$, as required. 
 \end{proof}

Combining Proposition \ref{prop:thanks_Brian} and Proposition \ref{prop:thanks_Mauro}, one gets:

\begin{cor}\label{cor:fixed_flat_ball_hull}
 For every quasi-isometric embedding $f\co\reals^n\to\cuco X$, there exist $L,N$ so that the following holds.   Then there 
exist arbitrarily large $R$ so that for the ball $B$ of radius $R$ around $0$, there is a set $A_R\subset\cuco X$ with 
$|A_R|\leq N$ and $f(B')\subseteq \neb_{L}(H_\theta(A_R))$, where $B'$ is as in Proposition~\ref{prop:thanks_Mauro}.
\end{cor}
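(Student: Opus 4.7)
The plan is to apply Proposition~\ref{prop:thanks_Brian} and Proposition~\ref{prop:thanks_Mauro} in sequence, with careful attention to the order in which the constants are quantified, so that the hypotheses of the second match the output of the first.

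First, I would apply Proposition~\ref{prop:thanks_Brian} to the given quasi-isometric embedding $f\co\reals^n\to\cuco X$ to extract a constant $N=N(f)$; crucially, $N$ depends only on $f$ (in particular on its quasi-isometry constants), and not on any further parameter. Letting $K$ be a quasi-isometry constant for $f$, I would then feed $K$ and this $N$ into Proposition~\ref{prop:thanks_Mauro} to obtain $\epsilon>0$, $R_0>0$, and $L\geq 0$. These $N$ and $L$ are precisely the constants that will appear in the statement of the corollary.

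With $N,\epsilon,R_0,L$ now fixed, I would return to Proposition~\ref{prop:thanks_Brian} and invoke it a second time with these specific $\epsilon$ and $R_0$: for any prescribed lower bound on the radius, the proposition produces a ball $B\subseteq\reals^n$ of radius $R\geq R_0$ together with a set $A_R\subseteq\cuco X$ of cardinality at most $N$ satisfying $f(B)\subseteq\neb_{\epsilon R}(H_\theta(A_R))$. Since the lower bound on $R$ can be taken arbitrarily large, such pairs $(B,A_R)$ arise for arbitrarily large $R$. To arrange that $B$ is centered at $0\in\reals^n$, I would precompose $f$ with an appropriate translation of $\reals^n$, which leaves the quasi-isometry constants, and hence $N,\epsilon,R_0,L$, unchanged. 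Proposition~\ref{prop:thanks_Mauro} then applies directly to this $B$ and $A_R$ to yield $f(B')\subseteq\neb_L(H_\theta(A_R))$ for $B'$ the concentric sub-ball of radius $R/2$, which is the desired conclusion. No step here is a real obstacle: the substantive work has already been done by the two preceding propositions, and the only care required is to extract $N$ from Proposition~\ref{prop:thanks_Brian} first, to fix $\epsilon,R_0,L$ in terms of $K$ and $N$ via Proposition~\ref{prop:thanks_Mauro}, and only afterwards to re-invoke Proposition~\ref{prop:thanks_Brian} with the specific $\epsilon$ and $R_0$ so produced.
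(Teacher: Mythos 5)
Your proposal is essentially the paper's proof: the paper's entire argument for Corollary~\ref{cor:fixed_flat_ball_hull} is the single sentence ``Combining Proposition~\ref{prop:thanks_Brian} and Proposition~\ref{prop:thanks_Mauro}, one gets,'' and your quantifier bookkeeping (extract $N$ from Proposition~\ref{prop:thanks_Brian} first, feed $K$ and $N$ into Proposition~\ref{prop:thanks_Mauro} to fix $\epsilon,R_0,L$, then re-invoke Proposition~\ref{prop:thanks_Brian} with that specific $\epsilon$ and $R_0$) is exactly the intended combination. The one step that does not work as written is the translation trick: precomposing $f$ with a translation does not force Proposition~\ref{prop:thanks_Brian} to hand you a ball centered at $0$ --- it still only promises \emph{some} ball, and a conclusion for $f\circ T$ on a ball about $0$ is a conclusion for $f$ on a ball about the translate of $0$, not about $0$ itself. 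The clean fix is to note that the proof of Proposition~\ref{prop:thanks_Brian} (asymptotic cones with observation point the constant sequence $f(0)$, contradiction argument applied to the unit ball about the basepoint) already yields balls centered at $0$; with the proposition read that way, your argument is complete.
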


\section{Orthants and quasiflats}\label{sec:main}

From now on, we fix an asymphoric HHS $(\cuco X,\mathfrak S)$ of rank 
$\nu$. The main goal of this section is to prove the quasiflats theorem, Theorem \ref{thmi:main}, which says that quasiflats in $\cuco X$ are at bounded Hausdorff distance from a finite union of standard orthants. After some preliminary work on orthants, we complete the proof in Subsection \ref{subsec:quasiflat_proof}. Then, we prove two further results giving more quantitative control on quasiflats in $\cuco X$ in terms of the number of orthants needed (Theorem \ref{thm:uniform}) and the Hausdorff distance between the quasiflat and not quite the union of the orthants, but rather the hull of the union (Lemma \ref{lem:flats_close_to_hulls}).

\subsection{Orthants in $\cuco X$}\label{subsec:orthants}
We fix once and for all a constant $D$ so that, for any $U\in\mathfrak S$, any two points in $F_U$ are connected by a 
$D$--hierarchy path. (Such a constant is provided by 
Theorem~\ref{thm:monotone_hierarchy_paths}.)

We now discuss \emph{standard orthants} in $\cuco X$, which are one of the basic objects in the statement of 
Theorem~\ref{thmi:main}.

\begin{defn}[Standard orthant, standard flat, standard partial flat]\label{defn:standard_orthant}
Let $U_1,\ldots,U_k$ be pairwise orthogonal elements of $\cuco X$.  Recall that we have a quasimedian quasi-isometric 
embedding $F_{U_1}\times\cdots\times F_{U_k}\to\cuco X$, as described in 
Section~\ref{subsubsec:basic_hhs_notions}, with 
constants independent of the $U_i$.  

For each $i\leq k$ and each $x\in\prod_{j\neq i}F_{U_j}$, the image of 
$F_{U_i}\times\{x\}$ is a (uniformly) hierarchically quasiconvex subset which, abusing notation slightly, we also denote 
$F_{U_i}\times\{x\}$, or simply by $F_{U_i}$ when the choice of parallel copy is not important.  

For each $i$, let $\gamma_i$ be a 
$D$--hierarchy ray in $F_{U_i}$ with the property that $\pi_{U_i}(\gamma_i)$ is unbounded.  We call the image of 
$\gamma_1\times\dots\times\gamma_k\subseteq F_{U_1}\times\dots\times F_{U_k}$ under the standard embedding a \emph{standard 
$k$--orthant} in $\cuco X$ with support set $\{U_i\}$.
 
 A \emph{standard orthant} is a standard $\nu$--orthant, i.e., a standard $k$--orthant of maximum possible 
dimension.

Similarly, given $U_1,\ldots, U_k$ as above, suppose we have for each $i\leq k$ a path $\gamma_i$ in $F_{U_i}$ 
such that $\gamma_i$ is either a $D$--hierarchy ray or a bi-infinite $D$--hierarchy path such that 
$\pi_{U_i}(\gamma_i)$ is unbounded.  Then the image of 
$\gamma_1\times\cdots\times\gamma_k$ is a \emph{standard 
$k$--partial flat}, or a \emph{standard partial flat} if $k=\nu$.  If every $\gamma_i$ is bi-infinite, then we 
use the term \emph{standard $k$--flat}, or \emph{standard flat} if $k=\nu$.
\end{defn}

\begin{rem}\label{rem:bounded_proj}Observe that if
$Q=\gamma_1\times\dots\times\gamma_k\subseteq F_{U_1}\times\dots\times
F_{U_k}$ is a standard $k$--orthant, then it has uniformly bounded
projection to $\fontact U$ unless $U\nest U_i$ for some $i$.  More
precisely, each $\gamma_i$ has uniformly bounded projection to
$\fontact U$ unless $U\nest U_i$ (in particular, $\pi_U(\gamma_i)$ is
uniformly bounded for $U\nest U_j,j\neq i$).  For each $i$ and each
$U\nest U_i$, we have that $\pi_U(Q)$ uniformly coarsely coincides with
$\pi_U(\gamma_i)$.
\end{rem}

The next lemma says that top-dimensional standard orthants in an
asymphoric HHS are hierarchically quasiconvex (with uniform
hierarchical quasiconvexity function).  Here, an analogy to the CAT(0)
cube complex situation is again instructive.  If $\Pi$ is a CAT(0)
cube complex, and $O\subset\Pi$ is a cubical orthant, then although
$O$ is $\ell_1$--isometrically embedded (i.e., its $0$--skeleton is a
$1$--connected median subalgebra) by definition, it need not be convex: picture the
case where $\Pi=\mathbf R^2$ and $O$ is the ray with $0$--skeleton
consisting of points $\{(n,n),(n,n+1):n\in\naturals\}$.  On the other hand, if
$O$ has the property that $\dimension O=\dimension\Pi$, then $O$
cannot contain the ``corner'' of any cube of $\Pi$ that is ``missing''
in $O$, i.e., $O$ is convex.  This cubical fact is important in
Huang's work~\cite{Huang:quasiflats}.  The final assertion of the next
lemma is analogous.

\begin{lem}[Top dimensional orthants are hierarchically quasiconvex]\label{lem:orthant_quasiconvex}
Consider a standard $k$--orthant $O$ whose support set $\{U_i\}$ has the property that,  for some $C$, we have 
$\min\{\diam_{\fontact U}(\pi_U(O)),\diam_{\fontact V}(\pi_V(O))\}\leq C$ whenever $U,V\nest U_i$ are orthogonal and $i\leq 
k$.  Then $O$ is $\kappa$--hierarchically quasiconvex, where $\kappa$ depends on $C,D,\cuco X,\mathfrak S$.

In particular, there exists a function $\kappa$, depending on $(\cuco X,\mathfrak S),D$, and the asymphoricity constant,  so 
that standard orthants are $\kappa$--hierarchically quasiconvex, and the same holds for standard $k$--orthants contained in 
standard orthants.
\end{lem}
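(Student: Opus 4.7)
The plan is to verify the two clauses of Definition~\ref{defn:hierarchical_quasiconvexity} for $O$, with constants depending only on $C$, $D$, and the ambient HHS data. The quasiconvexity of projections is essentially read off from Remark~\ref{rem:bounded_proj}: if $U$ is not nested in any $U_i$, then $\pi_U(O)$ is uniformly bounded (hence trivially quasiconvex), while if $U \nest U_i$ for some $i$, then $\pi_U(O)$ uniformly coarsely coincides with $\pi_U(\gamma_i)$, which is an unparameterized $(D',D')$--quasi-geodesic in the hyperbolic space $\fontact U$ because $\gamma_i$ is a $D$--hierarchy ray. Standard hyperbolic geometry then gives uniform quasiconvexity.

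For the realization clause, given a $\kappa$--consistent tuple $(b_U)_{U\in\mathfrak S}$ with each $b_U$ close to $\pi_U(O)$, I build an approximating $y \in O$ as follows. For each $i \in \{1,\dots,k\}$, choose $p_i \in \gamma_i$ minimizing $\dist_{\fontact U_i}(\pi_{U_i}(p_i), b_{U_i})$; this is coarsely well-defined because $b_{U_i}$ lies near $\pi_{U_i}(\gamma_i)$, which is an unparameterized quasi-geodesic ray. Let $y$ be the image of $(p_1,\dots,p_k)$ under the standard embedding $F_{U_1}\times\dots\times F_{U_k} \hookrightarrow \cuco X$. By Theorem~\ref{thm:realization}, it suffices to show $\dist_{\fontact U}(\pi_U(y), b_U)$ is uniformly bounded for all $U \in \mathfrak S$. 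I split into cases: if $U$ is orthogonal or transverse to all $U_i$, then $\pi_U(y)$ and $b_U$ are both controlled by canonical coarse points ($\rho^{U_i}_U$ or analogous) by the product region structure and consistency of the tuple; if $U \nest U_i$ for a unique $i$, then $\pi_U(y) \approx \pi_U(p_i)$ since the other factors are orthogonal to $U_i$ (and hence to $U$ by the nesting/orthogonality axiom), reducing the problem to checking $\pi_U(p_i) \approx b_U$ inside $F_{U_i}$.

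The main obstacle is precisely this last sub-case when $U \propnest U_i$ and $\pi_U(\gamma_i)$ is \emph{unbounded}: the point $p_i$ is picked using only the $\fontact U_i$--coordinate, and one must argue that this choice automatically aligns all the lower coordinates. Here is where the hypothesis enters decisively: the $C$--bound forbids two orthogonal $V,V' \nest U_i$ from both having unbounded projection from $\gamma_i$, so $\gamma_i$ behaves as a ``rank one'' hierarchy ray inside $F_{U_i}$, and its entire $\nest$--descending projection structure is monotonically steered by $\pi_{U_i}(\gamma_i)$. Specifically, for $U \propnest U_i$, Lemma~\ref{lem:BGI} (bounded geodesic image) implies that $\pi_U$ varies along $\gamma_i$ only while $\pi_{U_i}(\gamma_i)$ is near $\rho^U_{U_i}$; and consistency of $(b_U)$ together with $\pi_{U_i}(p_i) \approx b_{U_i}$ forces either $b_U$ to coarsely coincide with $\rho^{U_i}_U(b_{U_i}) \approx \pi_U(p_i)$ (when $b_{U_i}$ is far from $\rho^U_{U_i}$), or $b_U$ and $\pi_U(p_i)$ both to lie in the bounded set $\pi_U(\gamma_i)$ traversed while $\pi_{U_i}(\gamma_i)$ is near $\rho^U_{U_i}$. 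In either case the discrepancy is uniformly bounded in terms of $C$, $D$, and $E$.

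For the ``in particular'' statement, when $O$ is a full standard $\nu$--orthant, the hypothesis is automatic with $C = E$: if $V, V' \nest U_i$ were pairwise orthogonal with diameter exceeding the asymphoricity constant, then $\{V, V'\} \cup \{U_j : j \neq i\}$ would be a set of $\nu+1$ pairwise-orthogonal domains with unbounded $\fontact$, contradicting asymphoricity (using that $V, V' \orth U_j$ follows from $V, V' \nest U_i \orth U_j$). For a standard $k$--orthant contained in a standard $\nu$--orthant, the same $C$--bound is inherited from the containing full orthant, and the main statement applies.
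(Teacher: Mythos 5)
Your first part (quasiconvexity of the projections) and your verification of the ``in particular'' clause via asymphoricity are both fine and match the paper. The realization part, however, has a genuine gap, located exactly where you flagged ``the main obstacle.'' Your candidate point $p_i\in\gamma_i$ is chosen only by matching the $\fontact U_i$--coordinate, and this is \emph{not} coarsely well-defined: the fibers of $\pi_{U_i}|_{\gamma_i}$ need not have uniformly bounded diameter in $\cuco X$. Concretely, a $D$--hierarchy ray $\gamma_i$ with $\pi_{U_i}(\gamma_i)$ unbounded can make an excursion of arbitrarily large (finite) diameter in some $\fontact U$ with $U\propnest U_i$ while $\pi_{U_i}(\gamma_i)$ dwells near $\rho^U_{U_i}$ (think of a ray in a mapping class group that performs $N$ Dehn twists in an annulus before heading off to infinity in the curve graph). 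In your sub-case where $b_{U_i}$ is close to $\rho^U_{U_i}$, the set ``$\pi_U(\gamma_i)$ traversed while $\pi_{U_i}(\gamma_i)$ is near $\rho^U_{U_i}$'' is bounded for each fixed $\gamma_i$ but is \emph{not} uniformly bounded in terms of $C$, $D$, $E$ — its diameter is essentially $\diam(\pi_U(\gamma_i))$, which depends on the ray. So the claimed uniform bound on $\dist_U(b_U,\pi_U(p_i))$ fails, and with it the appeal to coarse uniqueness in Theorem~\ref{thm:realization}. (The conclusion of the lemma is still true in such examples — the realization point sits at the correct position along the excursion — but your $y$ may be far from it, so your argument does not establish this. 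Ruling out bad tuples here requires consistency relations among \emph{several} domains nested in $U_i$, not just each pair $(U,U_i)$ separately.)

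The paper sidesteps this by a different route: after gating onto $\prod_j F_{U_j}$, it reduces to showing that each ray $\gamma_j$ is itself hierarchically quasiconvex, which is Lemma~\ref{lem:hq_hierarchy_path}. That lemma is where the $C$--hypothesis on orthogonal pairs is really spent: it forces the cubical model of $H_\theta(\gamma(i),\gamma(j))$ from Theorem~\ref{thm:cubulated_hull} to be one-dimensional, hence an interval, so that $\gamma([i,j])$ coarsely \emph{equals} the hull of its endpoints and inherits hierarchical quasiconvexity from Proposition~\ref{prop:coarse_retract}. To repair your argument you would need some substitute for this step — for instance, defining $p_i$ as a gate of the tuple onto $\gamma_i$ using all coordinates nested in $U_i$, which presupposes exactly the hierarchical quasiconvexity of $\gamma_i$ you are trying to prove. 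As written, the proposal is missing this key ingredient.
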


\begin{rem}\label{rem:flats_quasiconvex}
Lemma~\ref{lem:orthant_quasiconvex} holds when the standard orthant $O$ is replaced by a standard flat or 
standard partial flat; the exact same proof works, except with some 
of the rays replaced by bi-infinite paths.  
The main lemma being used is Lemma~\ref{lem:hq_hierarchy_path}, which is stated for arbitrary hierarchy paths.
\end{rem}

\begin{proof}
Let $O$ be a standard $k$--orthant which is the image of $\prod_{i=1}^k\gamma_i$, where each $\gamma_i$ is a hierarchy path in $F_{U_i}$ and $\{U_1,\ldots,U_k\}$ is a pairwise orthogonal set supporting $O$, and let $C$ be the given constant.

By Remark~\ref{rem:bounded_proj} and the fact that hierarchy paths 
project close to geodesics, 
$\pi_U(O)$ is uniformly quasiconvex in $\fontact U$, for $U\in\mathfrak S$.

Suppose $x\in\cuco X$ has the property that $\pi_U(x)$ lies uniformly close to $\pi_U(O)$ for each $U\in\mathfrak S$; to verify hierarchical quasiconvexity of $O$, we must bound the distance from $x$ to $O$.  

By hierarchical quasiconvexity of $\prod_jF_{U_j}$, our $x$ must lie
uniformly close to $\prod_jF_{U_j}$, so it suffices to show that
$\gate_{F_j}(x)$ lies uniformly close to $\gamma_j$ for each $j$,
where $F_j$ denotes the parallel copy of $F_j$ containing the
``corner'' of $O$.  Since $\pi_U(x)$ coarsely coincides with
$\pi_U(\gate_{F_j}(x))$ when $U\nest U_i$, this follows from
hierarchical quasiconvexity of $\gamma_j$, i.e.,
Lemma~\ref{lem:hq_hierarchy_path}.
%Let $O$ be a standard orthant, with support $\{U_1,\ldots,U_\nu\}$.  If $U,V\nest U_i$ for some $i$, then $\{U,V\}\cup\{U_j\}_{j\neq i}$ is a pairwise orthogonal set of $\nu+1$ elements, so by asymphoricity, either $\fontact U$ or $\fontact V$ has diameter uniformly bounded by some $C$.  The first assertion of the lemma thus implies that $O$, and any orthant arising as the product of a subset of the set of hierarchy rays whose product is $O$, is hierarchically quasiconvex.
\end{proof}

The next lemma supports the preceding one.  It gives a sufficient
condition for a hierarchy path to be hierarchically quasiconvex.  The
reader familiar with the work of Huang may find it useful to compare 
this lemma 
with the notion of a ``straight'' geodesic in a CAT(0) cube complex,
defined in~\cite{Huang:quasiflats}.

\begin{lem}[``Straight'' hierarchy paths]\label{lem:hq_hierarchy_path}
Let $\gamma\co I\to\cuco X$ be a $(D,D)$--hierarchy path, where $I\subseteq\reals$ is an interval.  Suppose that there exists $C$ so that, whenever $U\orth V$, either $\pi_U(\gamma)$ or $\pi_V(\gamma)$ has diameter bounded by $C$.  Then $\gamma$ is $\kappa$--hierarchically quasiconvex, where $\kappa=\kappa(D,\cuco X,\mathfrak S,C)$.
\end{lem}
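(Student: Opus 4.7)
The first condition of hierarchical quasiconvexity is immediate from the definition of a hierarchy path: for each $U\in\mathfrak S$, the composition $\pi_U\circ\gamma$ is an unparametrized $(D,D)$--quasi-geodesic in the $\delta$--hyperbolic space $\fontact U$, so $\pi_U(\gamma)$ is uniformly quasi-convex with constant depending only on $D$ and $\delta$. The work thus lies in verifying the second (realization) condition, and the plan is to reduce to the case of compact $I$ and then invoke Theorem~\ref{thm:cubulated_hull} to approximate the hull of the two endpoints by a one-dimensional cube complex, i.e., a tree segment; hierarchical quasiconvexity of that hull (Proposition~\ref{prop:coarse_retract}) then transfers to $\gamma$.

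Assume first that $I$ is compact with endpoints $p,q$, and set $A=\{p,q\}$. Choosing $M$ larger than $C$ and the other threshold constants implicit in Theorem~\ref{thm:cubulated_hull}, the theorem supplies a CAT(0) cube complex $\cuco Y$ and a $C_1$--quasimedian $(C_1,C_1)$--quasi-isometry $\mathfrak p_A\co\cuco Y\to H_\theta(A)$, where $\dim\cuco Y$ equals the maximum cardinality of a set of pairwise-orthogonal elements of $\mathcal U=\{U\in\mathfrak S:\dist_U(p,q)\geq M\}$. Every $U\in\mathcal U$ satisfies $\diam\pi_U(\gamma)\geq M>C$, so the orthogonality hypothesis forbids any two elements of $\mathcal U$ from being orthogonal, forcing $\dim\cuco Y\leq 1$. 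The final clause of Theorem~\ref{thm:cubulated_hull} identifies $\cuco Y$ with the convex hull of at most two $0$--cubes in this tree, hence with a single (possibly trivial) segment, and consequently $H_\theta(A)$ is quasi-isometric to an interval. Since $\gamma$ is itself a $(D,D)$--quasi-geodesic from $p$ to $q$ that is contained in a uniform neighborhood of $H_\theta(A)$ (because each $\pi_W(\gamma)$ lies within bounded distance of $\hull_{\fontact W}(\pi_W(A))$), the sets $\gamma$ and $H_\theta(A)$ coarsely coincide in $\cuco X$.

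To conclude, given a $\kappa$-consistent tuple $\tup b$ and a point $y$ as in Definition~\ref{defn:hierarchical_quasiconvexity}, the containments $b_U\subseteq\pi_U(\gamma)\subseteq\pi_U(H_\theta(A))$, combined with Proposition~\ref{prop:coarse_retract} applied to $H_\theta(A)$, yield $\dist(y,H_\theta(A))\leq\kappa_0(\kappa)$; coarse coincidence then gives $\dist(y,\gamma)\leq\kappa(\kappa)$ for an appropriate $\kappa$. For non-compact $I$ I would exhaust by compact subintervals $I_n$, noting that each $\gamma|_{I_n}$ satisfies the hypothesis with the same $C$ and that for $n$ large enough every $b_U$ contributing significantly to the distance-formula sum for $y$ already lies in $\pi_U(\gamma|_{I_n})$, reducing to the compact case with uniform constants. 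The main obstacle is establishing the coarse coincidence of $\gamma$ with $H_\theta(A)$: this is precisely where the orthogonality hypothesis is essential, since without it $H_\theta(A)$ could fatten in directions orthogonal to $\gamma$, producing higher-dimensional cubes in $\cuco Y$ and invalidating the tree-segment conclusion.
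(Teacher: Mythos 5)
Your proposal is correct and follows essentially the same route as the paper: apply Theorem~\ref{thm:cubulated_hull} to the hull of two points on $\gamma$ with $M\geq\max\{C,M_0\}$, use the orthogonality hypothesis to force the approximating cube complex to be a subdivided interval, deduce that $\gamma$ between those points coarsely coincides with the hull, and then reduce the realization condition for a general (possibly unbounded) $I$ to a finite subinterval capturing the given consistent tuple. The only cosmetic difference is that the paper works directly with arbitrary $i,j\in I$ rather than first treating compact $I$ and then exhausting, but the content is identical.
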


\begin{proof}
Let $i,j\in I$ and let $x=\gamma(i),y=\gamma(j)$.  Choose $M\geq\max\{C,M_0\}$, where $M_0$ is the constant from 
Theorem~\ref{thm:cubulated_hull}. By Theorem~\ref{thm:cubulated_hull}, there exists $C_1$, depending on $M$, $\mathfrak S$ 
and $\cuco X$, so that there is a CAT(0) cube complex $\cuco C(x,y)$ and a $C_1$--quasimedian $(C_1,C_1)$--quasi-isometric 
embedding $\cuco C(x,y)\to \cuco X$ whose image $C_1$--coarsely coincides with $H_\theta(x,y)$.  Since $\gamma|_{[i,j]}$ is a 
hierarchy path from $x$ to $y$, $\gamma([i,j])$ is coarsely (depending on $D$) contained in $H_\theta(x,y)$ and hence 
coarsely (depending on $C_1,D$) contained in the image of $\cuco C(x,y)$.  On the other hand, the dimension bound from 
Theorem~\ref{thm:cubulated_hull}, the hypothesized property of $C$, and our choice of $M\geq C$ imply that $\dimension\cuco 
C(x,y)\leq 1$.  Moreover, Theorem~\ref{thm:cubulated_hull} implies that $\cuco C(x,y)$ is the convex hull of a set of at most 
two $0$--cubes in $\cuco C(x,y)$, so $\cuco C(x,y)$ is a subdivided interval.  Hence $\gamma([i,j])$ and $H_\theta(x,y)$ 
uniformly coarsely coincide.

Now fix $\epsilon$ and suppose $x\in\cuco X$ has  the property that $\pi_U(x)$ lies $\epsilon$--close to the unparameterized 
$(D,D)$--quasigeodesic $\pi_U(\gamma)$ for each $U\in\mathfrak S$.  Then there exists $i\geq0$ so that $x$ lies 
$\epsilon$--close to the image of $\pi_U\circ\gamma|_{[0,i]}$ for all $U$.  Hence $x$ lies $\kappa$--close to 
$H_\theta(\gamma(0),\gamma(i))$, where $\kappa$ depends only on $\epsilon$ and the quasiconvexity function for hulls of pairs 
of points.  But by the above discussion, this implies that $x$ lies uniformly close to $\gamma([0,j])$, as required.
\end{proof}

In the proof of Theorem~\ref{thmi:main}, we will construct a
quasimedian quasi-isometric embedding of a CAT(0) cube complex into
$\cuco X$. Huang's theorem will provide cubical orthants in the CAT(0) cube complex, so we need to prove that the image of each cubical orthant is coarsely a standard
orthant.  For that, we will use the following lemma:

\begin{lem}\label{lem:quasi_median_top_dimensional}
Let $O$ be an $\nu$--dimensional cubical orthant with a quasimedian
quasi-isometric embedding $q\co O\to\cuco X$.  Then there is a standard
orthant $Q\subset \cuco X$ with $\dist_{haus}(q(O),Q)<\infty$.
\end{lem}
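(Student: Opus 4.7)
The plan is to extract $\nu$ pairwise orthogonal domains $U_1,\dots,U_\nu\in\mathfrak S$ from the combinatorics of $q(O)$, build $Q$ as a product of hierarchy rays in the $F_{U_i}$, and verify Hausdorff closeness via the distance formula.

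First, for each large $R$ I would form $A_R=q(\{0,R\}^\nu)$, the images of the $2^\nu$ corners of $[0,R]^\nu\subset O$. By Theorem~\ref{thm:cubulated_hull}, there is a CAT(0) cube complex $\cuco Y_R$ of dimension at most $\nu$ and a quasimedian quasi-isometry $\cuco Y_R\to H_\theta(A_R)$. Composing with $q$ and using that $q$ is a quasimedian $(K,K)$--quasi-isometric embedding of a genuine $\nu$--dimensional cube $[0,R]^\nu$, one sees that $\cuco Y_R$ must contain a $\nu$--cube whose associated $\nu$ parallel classes of hyperplanes are pairwise separated by distance $\geq cR$, for some uniform $c>0$. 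Lemma~\ref{lem:cross} identifies these hyperplane classes with $\nu$ pairwise orthogonal domains $U_1^R,\dots,U_\nu^R\in\mathfrak S$, and the quasimedian property forces, after reordering, $\pi_{U_i^R}(q(\gamma_j))$ to be uniformly bounded for $j\neq i$ while $\pi_{U_i^R}(q(\gamma_i))$ has diameter $\geq cR$, where $\gamma_j$ denotes the $j$-th coordinate ray of $O$.

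Second, I would stabilize these tuples in $R$. The containment $A_R\subseteq A_{R'}$ for $R<R'$ yields the analogous containment at the level of large-projection supports. Asymphoricity bounds the size of any pairwise orthogonal family of such supports with large-diameter associated hyperbolic spaces by $\nu$. Combining this with Lemma~\ref{lem:passing_up}, applied to the projections of the quasigeodesics $q\circ\gamma_i$, rules out the possibility that the $U_i^R$ drift across an infinite nesting chain as $R$ grows; hence, after passing to an increasing subsequence of radii, $(U_1^R,\dots,U_\nu^R)=(U_1,\dots,U_\nu)$ is independent of $R$ for all $R$ sufficiently large.

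With the $U_i$ fixed and pairwise orthogonal, for each $i$ I would choose a $D$--hierarchy ray $\beta_i$ in $F_{U_i}$ based at $\gate_{F_{U_i}}(q(0))$ whose $\pi_{U_i}$--projection coarsely tracks the unparameterized quasigeodesic $\pi_{U_i}(q(\gamma_i))\subseteq\fontact U_i$; such $\beta_i$ exists because $\fontact U_i$ is hyperbolic and $\pi_{U_i}(q(\gamma_i))$ is unbounded. Define $Q$ as the image of $\beta_1\times\dots\times\beta_\nu$ under the embedding of Definition~\ref{const:embedding_product_regions}. By Remark~\ref{rem:bounded_proj} and consistency, $\pi_V(Q)$ and $\pi_V(q(O))$ coarsely coincide for every $V\in\mathfrak S$: for $V\nest U_i$ both track $\pi_V(\beta_i)$, while otherwise both are uniformly bounded. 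The distance formula (Theorem~\ref{thm:distance_formula}), together with $\kappa$--hierarchical quasiconvexity of $Q$ (Lemma~\ref{lem:orthant_quasiconvex}) and realization (Theorem~\ref{thm:realization}), then yields $\dist_{haus}(q(O),Q)<\infty$.

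The main obstacle is the stabilization in the second step: verifying that the support tuples $(U_1^R,\dots,U_\nu^R)$ do not drift with $R$. A priori, the unbounded projection of each $q(\gamma_i)$ could be spread across infinitely many nested or transverse domains of $\mathfrak S$, and eliminating this possibility requires a delicate interplay between asymphoricity and the passing-up lemma, exploiting the quasimedian structure of $q$ to ensure consistent wall combinatorics across scales.
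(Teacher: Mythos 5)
Your overall strategy differs from the paper's, and it has a genuine gap precisely at the step you yourself flag. The paper does not go through cubulated hulls at a sequence of scales at all: it works directly with the coordinate structure of $O$. Quasimedianness of $q$ yields a dichotomy for the images of the sides of any coordinate rectangle (for each $U\in\mathfrak S$, either both ``$i$--sides'' or both ``$j$--sides'' project small), which immediately gives that the families $\mathcal U_i$ of domains seeing large $i$--displacement are pairwise orthogonal across distinct $i$. Then each coordinate axis $\gamma_i'=q(\text{$i$--th axis})$ is a hierarchy ray, and the cited result \cite[Lemma 3.3]{HHS_boundary} supplies a \emph{finite, nonempty} set of domains onto which a hierarchy ray projects unboundedly; orthogonality of the $\mathcal U_i$ plus the rank bound forces this set to be a single $U_i$ for each $i$. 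That external lemma is exactly what replaces your ``stabilization'' step, and your proposed substitute --- asymphoricity plus Lemma~\ref{lem:passing_up} applied to the tuples $(U_1^R,\dots,U_\nu^R)$ --- is not carried out and does not obviously work: a priori the unbounded displacement of $q(\gamma_i)$ could be distributed over an infinite family of domains each with bounded projection, and nothing you write rules out the supports drifting (e.g., along an infinite transverse family) as $R\to\infty$. Since you identify this as the crux and leave it unresolved, the argument is incomplete at its central point.

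There is a second, more local problem in your final step. You choose $\beta_i\subset F_{U_i}$ so that only its $\pi_{U_i}$--projection tracks $\pi_{U_i}(q(\gamma_i))$, and then assert that for $V\nest U_i$ ``both track $\pi_V(\beta_i)$.'' This is unjustified: matching the top-level projections to $\fontact U_i$ says nothing about the projections to domains $V\propnest U_i$, and the distance formula sums over all of these, so finite Hausdorff distance between $Q$ and $q(O)$ could fail even though each individual $\pi_V$--discrepancy is bounded. The paper resolves this by observing that asymphoricity (via Corollary~\ref{cor:hyperbolic}) makes each $F_{U_i}$ hyperbolic, so one can choose $\gamma_i$ to genuinely fellow-travel $\gamma_i'$ in $\cuco X$, not merely to match its shadow in $\fontact U_i$. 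If you repair the stabilization step by invoking the boundary lemma (or proving its content), and replace your choice of $\beta_i$ by a ray fellow-traveling $q(\gamma_i)$ itself, your outline converges to the paper's argument; as written, both gaps are real.
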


\begin{proof}
Let $\lambda$ be so that $q$ is $\lambda$--quasimedian and a $(\lambda,\lambda)$--quasi-isometric embedding.
 
\textbf{Related points and pairs:} We say that $x,y\in O$ are \emph{$i$--related},  for $1\leq i\leq \nu$, if they only 
differ in the $i^{th}$ coordinate. The $i$--related pairs $x,y$ and $x',y'$ are \emph{$j$--related}, for $i\neq j$, if the 
pairs $x,x'$ and $y,y'$ are $j$--related (i.e., if $x,x',y,y'$ are the vertices of a rectangle in the $(i,j)$--plane).

\textbf{Relevant domains:}  Let $M=M(\lambda,\cuco X)$ be sufficiently large.   For $1\leq i\leq \nu$, let $\mathcal U_i$ be 
the collection of all $U\in\mathfrak S$ so that there exist $i$--related $x,y\in O$ with $\dist_U(q(x),q(y))\geq M$.  For any 
$K$, we also let $\relevant_K(q(O))=\{U\in\mathfrak S:\diam_{\fontact U}(\pi_U(q(O)))\geq K\}$.

We now prove two claims about $i$--related pairs and $\cup_i\mathcal U_i$:

\begin{claim}\label{claim:small}
  There exists $C=C(\lambda,\cuco X)$ so that the following holds. Suppose that the $i$--related pairs $x,y$ and $x',y'$ are $j$--related. Then for any $U\in\mathfrak S$ either
  \begin{itemize}
   \item $\dist_U(x,y)\leq C$ and $\dist_U(x',y')\leq C$, or
   \item $\dist_U(x,x')\leq C$ and $\dist_U(y,y')\leq C$.
  \end{itemize}
 \end{claim}
 
 \begin{proof}[Proof of Claim~\ref{claim:small}]
 Let $m:O^3\to O$ be the median on $O$ coming from the cubical structure (so each cube is an $\ell_1$ $\nu$--cube of unit side length).  We have $m(x',x,y)=x$, so that in each $U\in\mathfrak S$ we have that $\pi_U(x)$ lies uniformly close to geodesics $[\pi_U(x'),\pi_U(y)]$. Similarly, $\pi_U(y')$ lies uniformly close to geodesics $[\pi_U(x'),\pi_U(y)]$. Also, $\pi_U(x')$ and $\pi_U(y)$ lie uniformly close to geodesics $[\pi_U(x),\pi_U(y')]$, forcing the endpoints of $[\pi_U(x'),\pi_U(y)]$ and $[\pi_U(x),\pi_U(y')]$ to be uniformly close in pairs, as required.  
 \end{proof}

 \begin{claim}\label{claim:orth}
 For $M$ sufficiently large, $U\orth V$ whenever $U\in\mathcal U_i, V\in\mathcal U_j$ and $i\neq j$.
\end{claim}
 
 \begin{proof}[Proof of Claim~\ref{claim:orth}]
 Consider distinct $i,j$, an $i$--related pair $x,y$ and some $U$ with
 $\dist_U(q(x),q(y))\geq M$, and a $j$--related pair $w,z$ and some
 $V$ so that $\dist_V(q(w),q(z))\geq M$.  
 
 Provided
 $M\geq 10(\nu-1)C$, applying Claim~\ref{claim:small} at most $\nu-1$ times allows us to change the coordinates of $w,z$ (other than the $j^{th}$) to find  an $i$--related pair $x',y'$ which is $j$--related to $x,y$.  Moreover, we have:
 \begin{itemize}
  \item $\dist_V(q(x),q(x'))\geq M/2$ and $\dist_V(q(y),q(y'))\geq M/2$;
  \item $\dist_U(q(x),q(y))\geq M$ and $\dist_U(q(x'),q(y'))\geq M/2$.
 \end{itemize}

 Claim~\ref{claim:small} implies that $\dist_U(q(x),q(x'))\leq C$, $\dist_U(q(y),q(y'))\leq C$ and $\dist_V(q(x),q(y))\leq C$, $\dist_V(q(x'),q(y'))\leq C$.
 
For $M$ large enough, this implies that $U\orth V$.  Indeed, if $U=V$, then the triangle inequality yields $4C\geq M/2$, a contradiction.  If $U\transverse V$, then  there exists $p\in\{x,x',y,y'\}$ with $\pi_U(p)$ $E$--far from $\rho^V_U$ and $\pi_V(p)$ $E$--far from $\rho^U_V$, contradicting consistency.  A similar contradiction arises if $U,V$ are $\propnest$--comparable.  Hence $U\orth V$, as required.
\end{proof}
 
\textbf{The candidate standard orthant:} Let $\gamma'_i$ be the image of the axis along the $i^{th}$ coordinate in $O$.
Since $q$ is quasimedian and a quasi-isometric
embedding, $\gamma'_i$ is a quasi-geodesic projecting to
unparameterized quasi-geodesics in every $\fontact U$, i.e., it is a
$D'=D'(\lambda)$--hierarchy ray, by Lemma~\ref{lem:quasimedian_hierarchy_ray}.  By \cite[Lemma 3.3]{HHS_boundary},
there exist $U^i_1,\dots,U^i_{k_i}$ so that $\pi_{U^i_j}(\gamma_i')$ is
unbounded.  Moreover, by the same lemma, for $1\leq i\leq\nu,\ 1\leq j<j'\leq k_i$, we have $U_i^j\orth U_i^{j'}$.  

Since each $U^i_j\in\mathcal U_i$, Claim~\ref{claim:orth} and the fact that $\cuco X$ has rank $\nu$ implies that $k_i=1$ for 
each $i$.  To streamline notation, let $U_i=U^i_1$.
 
Since $\{U_1,\ldots,U_\nu\}$ is a pairwise-orthogonal set, the following
holds for all $i\leq \nu$: if $U,V\propnest U_i$ have $\diam(\fontact
U),\diam(\fontact V)>E$, then $U\notorth V$, for otherwise
$\{U_1,\ldots,U_{i-1},U,V,U_{i+1},\ldots,U_\nu\}$ would contradict that
$\cuco X$ is asymphoric.  It follows from
Corollary~\ref{cor:hyperbolic} that each $F_{U_i}$ is hyperbolic.  Hence
there exists a $D''$--hierarchy ray $\gamma_i$ in $F_{U_i}$ so that the distance between $\gamma_i(t)$ and $\gamma'_i(t)$ is uniformly bounded for all $t\in[0,\infty)$.

 The $\gamma_i$ define a standard orthant $Q$
with support $\{U_i\}$.

\textbf{$q(O)$ and $Q$ lie within finite Hausdorff distance:} We claim the following. For $p\in O$ we denote by $p_i$ the point on the $i$--th coordinate axis with the same $i$--th coordinate as $p$. Then there exists $C'$ so that $d_{\fontact U}(q(p),q(p_i))\leq C'$ whenever $U\notin \bigcup_{j\neq i}\mathcal U_j$. This holds because we can find a sequence of at most $\nu$ points, starting with $p$ and ending with $p_i$, so that consecutive elements are $j$--related for $j\neq i$. By definition, if consecutive elements have far away projection to some $\fontact U$, then $U\in \mathcal U_j$ for $j\neq i$.

Now let $p\in O$. By the above claim, $\pi_U(q(p))$ coarsely coincides with $\pi_U(q(p_i))$ if $U\in \mathcal U_i$, and 
otherwise it coarsely coincides with $\pi_U(c)$, where $c$ is the image of the ``corner'' of $O$. We can find points 
$\gamma_i(t_i)$ uniformly close to $q(p_i)\in\gamma'_i$, and the $\gamma_i(t_i)$ define a point $p'$ of $Q$. It is readily 
checked that for every $U$, $\pi_U(q(p))$ coarsely coincides with $\pi_U(p')$, so that $q(p)$ and $p'$ are within uniformly 
bounded distance. This proves that $q(O)$ is contained in a finite radius neighborhood of $Q$. A very similar argument proves 
the other containment.
\end{proof}

\subsection{Coarse intersections of orthants}

In this subsection we study coarse intersections of orthants. This is mostly needed for the next section, but we need Lemma \ref{lem:intersection_of_orthants} in the proof of Theorem \ref{thm:uniform}.

\begin{defn}[Coarse intersection]\label{defn:coarse_intersection}
Let $A,B\subset \cuco X$.  Suppose that there exists $R_0$ so that 
for any $R,R'\geq R_0$, we have $\dist_{haus}(\neb_R(A)\cap\neb_R(B),\neb_{R'}(A)\cap\neb_{R'}(B))<\infty$.  Then we refer to any subspace at finite Hausdorff distance from $\neb_{R_0}(A)\cap\neb_{R_0}(B)$ as the \emph{coarse intersection of $A$ and $B$}, which we denote $A\tilde{\cap}B$.
\end{defn}

In the next lemma, we show that, for pairs of hierarchically quasiconvex sets, an $R_0$ as in the definition above exists, and so the coarse 
intersection is well-defined.  This is one of the places where we use the bridge lemma (Lemma~\ref{lem:parallel_gates}).

\begin{lem}[Coarse intersections coarsely coincide with gates]\label{lem:coarse_intersect_hq}
 For all $\kappa,r$, there exists $R_0$ such that the following holds.  Let $A,B$ be $\kappa$--hierarchically quasiconvex 
and suppose $\dist(A,B)\leq r$.  Then for all $R,R'\geq R_0$,  we have 
$\dist_{haus}(\neb_R(A)\cap\neb_R(B),\neb_{R'}(A)\cap\neb_{R'}(B))<\infty$, so $A \tilde{\cap}B$ is well-defined.  Moreover, 
there exists $K=K(\kappa,r)$ such that $A\tilde\cap B$ is at Hausdorff distance at most $K$ from $\gate_A(B)$.
\end{lem}

\begin{proof}
By Lemma~\ref{lem:parallel_gates}.\eqref{item:bridge}, there exists $K_1$, depending only on $\kappa(0)$ and $E$, and a 
$(K_1,K_1)$--quasi-isometric embedding $f\co \gate_A(B)\times H_\theta(\{a,b\}\to\cuco X$ such that $\image f$ 
$K_1$--coarsely coincides with $H_\theta(\gate_A(B)\cup\gate_B(A))$, where $a=\gate_A(b)\in\gate_A(B)$ and $b=\gate_B(a)$. By Lemma \ref{lem:distance_between_sets} (applied exchanging the roles of $A$ and $B$), $\dist(a,b)$ is bounded in terms of $\kappa$ and $r$. Hence,
% By the distance estimate in Lemma \ref{lem:parallel_gates}.\eqref{item:take_the_bridge}, we have that $\dist(a,b)$ is bounded in terms of $\kappa, E,K_1$, and $r$, since all terms are (regarding the last term, note that, since $\gate_B(A)\subseteq B$, we have that $\gate_{\gate_B(A)}(a)$ coarsely coincides $\gate_{\gate_B(A)}(\gate_B(a))=\gate_{\gate_B(A)}(b)$, which in turn coarsely coincides with $b$).  So,
there exists $R_1$, depending on $\kappa$, $K_1$, and $r$, 
such that any point in $\gate_A(B)\subseteq A$ lies at distance at most $R_1$ from $\gate_B(A)\subseteq B$, and hence at distance 
at most $R_1$ from $B$.  So, $\gate_A(B)\subset \neb_{R_1}(A)\cap \neb_{R_1}(B)$.  

On the other hand, if $p\in\neb_R(A)\cap\neb_R(B)$ for some $R$, then apply Lemma~\ref{lem:distance_between_sets} to find 
$K=K(\kappa)$ such that $\dist(p,A)\asymp_{K,K}\dist(p,\gate_A(p))$ and 
$\dist(A,B)\asymp_{K,K}\dist(\gate_A(p),\gate_B(\gate_A(p)))$.  So, $\dist(p,\gate_B(\gate_A(p)))\preceq_{K,K} 
 R+r$.  In other words, $\dist(p,\gate_B(A))$ is uniformly bounded (in terms of $\kappa,R$ and $r$) and 
$\dist(p,\gate_A(B))$ is bounded similarly.  So $\neb_R(A)\cap\neb_R(B)$ uniformly coarsely coincides with $\gate_A(B)$, 
proving the second claim.  Since any two neighborhoods of $\gate_A(B)$ coarsely coincide, the first claim follows.
\end{proof}

The following lemma describes coarse intersections of orthants, 
which, as one might hope, turn out to be sub-orthants.

\begin{lem}[Coarse intersections of orthants]\label{lem:intersection_of_orthants}
Let $O,O'$ be standard orthants in $\cuco X$ with supports
$\{U_i\}_{i\leq \nu},\{U'_i\}_{i\leq \nu}$.  Then $O\tilde{\cap}O'$ is
well-defined, and coarsely coincides with $\gate_{O}(O')$, as well as
with a standard $k$--orthant whose support is contained in
$\{U_i\}_{i\leq \nu}\cap\{U'_i\}_{i\leq \nu}$.
\end{lem}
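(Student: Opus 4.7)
The first two assertions are immediate: by Lemma~\ref{lem:orthant_quasiconvex} (which uses asymphoricity) both $O$ and $O'$ are $\kappa$--hierarchically quasiconvex for a uniform $\kappa$, so Lemma~\ref{lem:coarse_intersect_hq} yields that $O\tilde\cap O'$ is well-defined and coarsely coincides with $\gate_O(O')$. The remaining task is to identify $\gate_O(O')$ with a standard $k$--orthant supported in $V:=\{U_i\}_{i\leq\nu}\cap\{U'_j\}_{j\leq\nu}$.

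My strategy is to exploit the product structure $O=\prod_{i=1}^\nu\gamma_i\subseteq\prod_{i=1}^\nu F_{U_i}$. A standard argument using the gate-hereditary property together with the product decomposition of the gate onto $\prod_iF_{U_i}$ (and the double-gate identity, Lemma~\ref{lem:parallel_gates}\eqref{item:double_gate_surjective}) yields the coarse decomposition $\gate_O(y)\approx(\gate_{\gamma_i}(y))_{i=1}^\nu$ for $y\in\cuco X$. It thus suffices to analyze each $\gate_{\gamma_i}(O')\subseteq\gamma_i$ separately and take the product.

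The key input, modelled on the hyperbolicity reduction in the proof of Lemma~\ref{lem:quasi_median_top_dimensional}, is that asymphoricity together with Corollary~\ref{cor:hyperbolic} forces each $F_U$ (for $U$ in the support of either orthant) to be hyperbolic, and that in this situation any hierarchy ray $\gamma\subseteq F_U$ satisfies $\diam_{\fontact W}(\pi_W(\gamma))\leq C$ uniformly for every $W\propnest U$. Indeed, $\pi_U(\gamma)$ is an unparametrized quasigeodesic ray in the hyperbolic space $\fontact U$ that can visit any fixed neighbourhood of $\rho^W_U$ for only uniformly bounded parameter; outside that parameter range the consistency axiom identifies $\pi_W(\gamma(t))$ with $\rho^U_W(\pi_U(\gamma(t)))$, which Lemma~\ref{lem:BGI} bounds uniformly because the relevant subgeodesic of $\pi_U(\gamma)$ avoids $\rho^W_U$; on the remaining bounded parameter range the hierarchy-path quasi-isometric embedding property controls the diameter. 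Combined with Remark~\ref{rem:bounded_proj}, this means $\pi_W(\gamma_i)$ (respectively $\pi_W(\gamma'_l)$) can be unbounded only for $W=U_i$ (respectively $W=U'_l$).

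Consequently $\pi_W(\gate_{\gamma_i}(O'))$, which is the closest-point projection of $\pi_W(O')$ onto $\pi_W(\gamma_i)$, is uniformly bounded except when $W=U_i\in V$; writing $U_i=U'_{j(i)}$ in that case, $\pi_{U_i}(\gate_{\gamma_i}(O'))$ is the closest-point projection in the $\delta$--hyperbolic $\fontact U_i$ of the quasigeodesic ray $\pi_{U_i}(\gamma'_{j(i)})$ onto the quasigeodesic ray $\pi_{U_i}(\gamma_i)$, hence either uniformly bounded or coarsely a sub-ray. Letting $S\subseteq V$ consist of those $U\in V$ for which this projection is a sub-ray $\gamma''_U\subseteq\gamma_U$, the distance formula applied in $F_{U_i}$ shows that $\gate_{\gamma_i}(O')$ is uniformly bounded for $U_i\notin S$ and uniformly close to $\gamma''_{U_i}$ for $U_i\in S$. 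The product decomposition then yields $\gate_O(O')\approx Q:=\prod_{U\in S}\gamma''_U$, a standard $|S|$--orthant supported in $S\subseteq V$, as required. The main obstacle is establishing the hyperbolic-HHS projection bound for hierarchy rays described above, which is precisely what rules out extra ``spurious'' support elements of the coarse intersection lying outside $V$.
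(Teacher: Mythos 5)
Your proposal is correct in substance and rests on the same underlying mechanism as the paper's proof, but it is packaged differently. The paper does not decompose the gate factor-by-factor; instead it directly names the candidate sub-orthant $O''\subseteq O$ supported on those $V_j=U_i=U'_{i'}$ for which $\gamma_i$ and $\gamma'_{i'}$ lie at finite Hausdorff distance, observes that both $O''$ (Lemma~\ref{lem:orthant_quasiconvex}) and $G=\gate_O(O')$ (Lemma~\ref{lem:parallel_gates}\eqref{item:gate_hq}) are hierarchically quasiconvex, and then reduces the whole lemma to checking that $\pi_U(O'')$ and $\pi_U(G)$ coarsely coincide for each $U$, which it does via Remark~\ref{rem:bounded_proj}. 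Your route instead pushes the analysis into each $\fontact{U_i}$ via the (asserted but unproved) product decomposition $\gate_O(y)\approx(\gate_{\gamma_i}(y))_i$ and a ray-onto-ray closest-point-projection argument; this buys a more explicit description of which factors survive, at the cost of an extra reduction step the paper avoids by comparing the two hierarchically quasiconvex sets wholesale. Two remarks. First, your selection criterion (the $\fontact{U_i}$--projections of $\gamma_i$ and $\gamma'_{j(i)}$ are asymptotic) and the paper's (the rays are Hausdorff-close) do agree, but only because asymphoricity forces each $F_{U_i}$ to be hyperbolic, so that hierarchy rays with asymptotic $\fontact{U_i}$--projections fellow-travel in $\cuco X$ up to the bounded discrepancy between the two parallel copies of $F_{U_i}$; it would be worth saying this explicitly, since it is exactly the point where the two descriptions of the support could a priori diverge. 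Second, your claim that $\diam_{\fontact W}(\pi_W(\gamma))\leq C$ \emph{uniformly} over all $W\propnest U$ is an overstatement: bounded geodesic image gives finiteness of each such diameter (the ray eventually leaves a neighborhood of $\rho^W_U$ for good), but the bound genuinely depends on $W$ and on $\gamma$, since an unparametrized quasigeodesic may linger near $\rho^W_U$ for an arbitrarily long parameter interval. This does not damage the proof, because the lemma only asserts coarse coincidence with no uniform constants, but the word ``uniformly'' should be deleted, and the final appeal to the distance formula should be phrased (as the paper does) through hierarchical quasiconvexity of the two sets being compared rather than through termwise bounds on infinitely many coordinates.
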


\begin{proof}
By Lemma \ref{lem:coarse_intersect_hq}, we only need to show that
$\gate_O(O')$ coarsely coincides with a standard $k$--orthant whose
support is contained in $\{U_i\}\cap\{U'_i\}$.

Let $\gamma_i$ be the hierarchy ray in $F_{U_i}$ participating in $O$,
and similarly for $\gamma'_i$ and $O'$.  Let $\{V_j\}_{j=1,\dots,k}$
be the set of all $V_j=U_i=U'_{i'}$ so that $\gamma_i$ and
$\gamma'_{i'}$ lie within bounded Hausdorff distance, in which case
set $\alpha_j=\gamma_i$.  Let $O''$ be a standard $k$--orthant
contained in $O$ with support set $\{V_j\}$ defined by the $\alpha_j$.
We claim that $O''$ represents $O\tilde{\cap}O'$.

By Lemma~\ref{lem:orthant_quasiconvex}, $O''$ is hierarchically quasiconvex, and $G=\gate_{O}(O')$ is hierarchically quasiconvex by Lemma~\ref{lem:parallel_gates}.\eqref{item:gate_hq}.  We claim that $O''$ coarsely coincides with $G$.  Since they are hierarchically quasiconvex, we only need to argue that their projections to each $\fontact U$ coarsely coincide. 

By Remark~\ref{rem:bounded_proj}, for each $U$, $\pi_U(O'')$ coarsely coincides with some $\pi_U(\alpha_j)$.  
In particular, if $U$ is not nested in some $U_j$, then $\pi_U(O'')$ uniformly coarsely coincides with  
each $\pi_U(\alpha_j(0))$.  

Also, $\pi_U(G)$ coarsely coincides with the projection of a single $\gamma_i$, if $\gamma_i=\alpha_j$ for some 
$j$. Otherwise $\pi_U(G)$ coarsely coincides with $\pi_U(\alpha_j(0))$ for each $j$.  Hence $\pi_U(G)$ and 
$\pi_U(O'')$ coarsely coincide for all $U$.
\end{proof}

In the proof of Theorem~\ref{thm:clique_map} below, we will need the following version of the above lemma, 
stated for coarse intersections of standard flats instead of standard orthants.

\begin{lem}[Coarse intersection of standard flats]\label{lem:standard_flat_intersection}
 Let $F,F'$ be standard flats in $\cuco X$ with supports $\{U_i\}_{i=1}^\nu$ and $\{U_i'\}_{i=1}^\nu$ 
respectively.  Then $F\tilde\cap F'$ is well-defined, and coarsely coincides with $\gate_F(F')$.  Moreover, 
suppose that $\{U_i\}\cap\{U_i'\}=\{U\}$ for some $U\in\mathfrak S$.  Then $F\tilde\cap F'$ is either 
a bounded set or coarsely coincides with a standard $1$--orthant or standard $1$--flat with support $\{U\}$.

Similarly, if $\{U_i\}\cap\{U_i'\}=\{U,V\}$ for some (necessarily orthogonal) $U,V\in\mathfrak S$, then 
$F\tilde\cap F'$ coarsely decomposes as the product of two hierarchically quasiconvex subspaces $\alpha,\beta$, 
each of which is either bounded or coarsely coincides with a standard $1$--orthant or standard $1$--flat.
\end{lem}

\begin{proof}
The standard flats $F,F'$ are uniformly hierarchically quasiconvex by Remark~\ref{rem:flats_quasiconvex}.  
Lemma~\ref{lem:coarse_intersect_hq} implies that $F\tilde\cap F'$ is well-defined and coarsely coincides with 
$\gate_F(F')$.  So, we just need to show that $\gate_F(F')$ is a standard $1$--orthant or $1$--flat with 
support $\{U\}$, or $\gate_F(F')$ is bounded.  For each $i\leq\nu$, let $\gamma_i$ be the hierarchy path in 
$F_{U_i}$ which is the $i^{th}$ factor of $F$, and define $\gamma'_i$ analogously for $F'$.  Re-labeling if 
necessary, let $U=U_1=U_1'$.  Note that by Lemma~\ref{lem:hq_hierarchy_path}, each $\gamma_i,\gamma_i'$ is 
uniformly 
hierarchically quasiconvex.  Indeed, $\pi_V(\gamma_i)$ has uniformly bounded diameter unless $V\nest U_i$.  
But if $V,W\nest U_i$ are orthogonal, then $\{V,W\}\cup\{U_j\}_{j\neq i}$ is a pairwise-orthogonal set of 
$\nu+1$ elements, so by asymphoricity, $\fontact V$ (say) has diameter at most $E$, so the same is true of 
$\pi_V(\gamma_i)$.  Hence Lemma~\ref{lem:hq_hierarchy_path} applies.

Let $\alpha=\gate_{\gamma_1}(\gamma_1')$.  Arguing as in the proof of Lemma~\ref{lem:intersection_of_orthants} 
shows that $\alpha$, which coarsely coincides with $F\tilde\cap F'$, is either bounded or 
coarsely coincides with a $1$--orthant or $1$--flat.  This proves the first assertion.

The second assertion follows similarly.  Again, $F\tilde\cap F'$ coarsely coincides with $\gate_F(F')$ by 
Lemma~\ref{lem:coarse_intersect_hq}, so it suffices to show that $\gate_F(F')$ coarsely coincides with a 
product $\alpha\times\beta$ as in the statement.  Label the supports of $F,F'$ so that $U=U_1=U_1'$ and 
$V=U_2=U_2'$.  Let $\alpha=\gate_{\gamma_1}(\gamma_1')$ and let $\beta=\gate_{\gamma_2}(\gamma_2')$.  Then 
argue as in Lemma~\ref{lem:intersection_of_orthants} to see that $\alpha\times\beta$ coarsely coincides with 
$\gate_F(F')$.
\end{proof}

\subsection{Quasiflats theorem}\label{subsec:quasiflat_proof}
We are now ready to prove Theorem~\ref{thmi:main}, which we restate as:

\begin{thm}\label{thm:non_uniform}
Let $\cuco X$ be an asymphoric HHS of rank $\nu$ and let
$f\co\reals^\nu\to\cuco X$ be a quasi-isometric embedding.  Then there
exists a finite set of standard orthants $Q_i\subseteq \cuco X$ for 
$1\leq i\leq k$, for which:
$$\dist_{haus}(f(\reals^\nu),\cup_{i=1}^kQ_i)<\infty.$$
\end{thm}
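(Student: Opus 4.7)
My plan follows the strategy supported by the earlier apparatus. By Corollary~\ref{cor:fixed_flat_ball_hull} applied to $f$, I obtain constants $L,N$ and, for arbitrarily large $R$, a ball $B'_R\subseteq\reals^\nu$ of radius $R/2$ centered at $0$ together with a set $A_R\subset\cuco X$ of cardinality at most $N$ so that $f(B'_R)\subseteq\neb_L(H_\theta(A_R))$. I would fix a sequence $R_n\to\infty$ realizing this, and form the asymptotic cone $\seq{\cuco X}$ with scaling $R_n$ and basepoints $f(0)$. The ultralimit $\seq f\co\reals^\nu\to\seq{\cuco X}$ is a bilipschitz embedding of a $\nu$--flat. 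By Lemma~\ref{lem:squish} its image $\mathbf F=\seq f(\reals^\nu)$ is contained in $\seq H:=\lim_\omega H_\theta(A_{R_n})$, since $\seq H$ is the ultralimit of a uniformly hierarchically quasiconvex sequence (Proposition~\ref{prop:coarse_retract}).

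Next, I upgrade $\seq H$ to a rank-$\nu$ median space. Theorem~\ref{thm:cubulated_hulls} gives, for each $n$, a CAT(0) cube complex $\cuco Y_n$ of dimension at most $\nu$ (by Corollary~\ref{cor:rank_hull}, using asymphoricity) and a quasimedian $(C_1,C_1)$--quasi-isometry $\mathfrak p_n\co\cuco Y_n\to H_\theta(A_{R_n})$, with constants depending only on $N$. Taking ultralimits, $\seq H$ is bilipschitz equivalent to the ultralimit $\seq{\cuco Y}$ of the rescaled $\cuco Y_n$, a complete median metric space of rank at most $\nu$. So $\mathbf F$ is a bilipschitz $\nu$--flat inside a rank-$\nu$ median space.

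Using the block-decomposition structure theory of Bowditch (as invoked in the proof of Lemma~\ref{lem:squish}), any ball in $\mathbf F$ is covered by finitely many $\nu$--blocks meeting along common faces. I would argue that the unbounded ``orthant corners'' of this decomposition can be lifted back to $\cuco X$ as follows: for each maximal orthant pattern persisting to infinity in $\mathbf F$, one obtains, by composing with $\mathfrak p_n$ and extracting a diagonal subsequence, a quasimedian quasi-isometric embedding of a full $\nu$--dimensional cubical orthant into $\cuco X$ at the scale of $\cuco X$ itself. Lemma~\ref{lem:quasi_median_top_dimensional} then converts each such embedding to a standard orthant $Q_i\subset\cuco X$ within finite Hausdorff distance of that corner of $f(\reals^\nu)$.

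The main obstacle I expect is the passage from an asymptotic-cone statement to a \emph{uniform} bounded-distance statement in $\cuco X$, together with finiteness of the family $\{Q_i\}$. A priori different choices of scale $R_n$ could produce different lifted orthants, and the asymptotic cone only controls distance up to division by $R_n$. My plan is a contradiction argument: suppose there is a sequence of balls in $\reals^\nu$ on which $f$ escapes every finite union of standard orthants at any fixed distance; take a corresponding asymptotic cone, apply the preceding steps to produce a block-decomposition, and use Lemma~\ref{lem:intersection_of_orthants} together with the asymphoricity-induced bound $\dimension\cuco Y_n\leq\nu$ to show that the escaping portion contributes yet another top-dimensional orthant, contradicting a finite-combinatorial bound (on the number of $\nu$--tuples of pairwise-orthogonal $U_i\in\mathfrak S$ with $\pi_{U_i}(\cuco X)$ unbounded that are ``relevant'' for $f$, obtainable from the distance formula and Lemma~\ref{lem:passing_up}).
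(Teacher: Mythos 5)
There is a genuine gap at the decisive step. Your first two paragraphs are sound and match the paper's setup: Corollary~\ref{cor:fixed_flat_ball_hull} puts balls of the quasiflat uniformly close to hulls of uniformly finite sets, and Theorem~\ref{thm:cubulated_hulls} (with Corollary~\ref{cor:rank_hull}) approximates those hulls by CAT(0) cube complexes of dimension at most $\nu$. But from there you pass to a \emph{rescaled} asymptotic cone and obtain a block decomposition of the limit flat $\mathbf F$ \`a la Bowditch. That statement lives at scale $R_n$: it says nothing, even after a diagonal argument, about $f(\reals^\nu)$ lying at \emph{finite} Hausdorff distance from anything in $\cuco X$, because the cone forgets all bounded (indeed, all sublinear) information. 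Converting "every asymptotic cone of the quasiflat is a union of blocks" into "the quasiflat is within finite Hausdorff distance of finitely many orthants" is precisely the hard content of every quasiflats theorem (Kleiner--Leeb, Eskin--Farb, Huang), and your proposal does not supply it: the "maximal orthant pattern persisting to infinity" and the concluding contradiction argument are restatements of the desired conclusion rather than proofs of it. In particular, the blocks in the cone are compact, so extracting from them an unbounded quasimedian embedding of a full cubical orthant into $\cuco X$ itself requires an argument you have not given; and your proposed "finite combinatorial bound on relevant complete support sets" is not established anywhere (and the finiteness of $k$ in the theorem does not come from such a count --- the uniform bound on the number of orthants is obtained only later, in Theorem~\ref{thm:uniform}, by a volume-growth argument).

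The paper's proof circumvents exactly this difficulty by taking \emph{non-rescaled} ultralimits: the ultralimit $\hat f\co\reals^\nu\to\widehat{\cuco X}$ is still a $(K,K)$--quasi-isometric embedding at unit scale, its image lies in an $L$--neighborhood of the image of the ultralimit cube complex $\hat{\cuco Y}$, and then \cite[Theorem~1.1]{Huang:quasiflats} is invoked to produce finitely many cubical orthants in $\hat{\cuco Y}$ at finite Hausdorff distance from $\hat f(\reals^\nu)$; only then does Lemma~\ref{lem:quasi_median_top_dimensional} convert these into standard orthants. Huang's theorem is the ingredient doing the cone-to-space work that your sketch is missing. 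To repair your argument you would either need to import that theorem (at which point you should work at unit scale, as the paper does) or reprove its content for median spaces/HHS, which is a substantial undertaking.
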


\begin{proof}
 Let $L,N$ be as in Corollary~\ref{cor:fixed_flat_ball_hull}. Then there exists an increasing unbounded sequence 
$R_1<R_2<\ldots$ and sets $A_i\subseteq \cuco X$ of cardinality at 
most $N$ for which the following holds. Let $B_i$ be
 the ball in $\reals^\nu$ of radius $R_i$ centered at a fixed
 basepoint, and let $H_i=H_\theta(A_i)$.
 Then $f(B_i)\subseteq\neb_L(H_i)$.  Let $c_i\co\cuco Y_i\to H_i$ be
 the $C$--quasimedian $(C,C)$--quasi-isometry provided by
 Theorem~\ref{thm:cubulated_hull}, so $\cuco Y_i$ is a CAT(0) cube
 complex of dimension $\leq \nu$ and the constant $C$ depends on $N$. 
 
 Now we pass to (non-rescaled!) ultralimits\footnote{If $\cuco X$ is 
 proper, one can take Hausdorff limits instead. To avoid that 
 assumption, we use ultralimits instead. If $\cuco X$ is 
 not proper then $\widehat{\cuco X}$ is (much) bigger than $\cuco X$. }. 
 More specifically, $f$ has an ultralimit which is a 
 $(K,K)$--quasi-isometric embedding $\hat{f}\co\reals^\nu\to \widehat{\cuco X}$, for some ultralimit $\widehat{\cuco X}$ of $\cuco X$.  It is easily deduced from Corollary~\ref{cor:rank_coarse_median} that $\widehat{\cuco X}$ is a coarse median space and we have the following: there is a CAT(0) cube complex $\hat{\cuco Y}$, an ultralimit of the $\cuco Y_i$, endowed with a $C$--quasimedian $(C,C)$--quasi-isometry $\hat{c}:\hat{\cuco Y}\to \widehat{\cuco X}$ so that the image of $\hat{f}$ lies in the $L$--neighborhood of $\image(\hat{c})$.

 By a theorem of Huang --- Theorem~1.1 of~\cite{Huang:quasiflats} --- there exist
 $n$--dimensional cubical orthants $O_1,\ldots,O_k$ in $\hat{\cuco Y}$
 so that $\dist_{haus}(\hat
 f(\reals^\nu),\hat{c}(\cup_{j=1}^kO_j))<\infty$.  Moreover,
 $\hat{c}(O_j)$ lies within finite Hausdorff distance of
 $\hat{f}(O'_j)$ for some $O'_j\subseteq \reals^\nu$.  Hence,
 $Q_j=f(O'_j)$ is the image of a $C'$--quasimedian
 $(C',C')$--quasi-isometric embedding. Thus, by Lemma
 \ref{lem:quasi_median_top_dimensional}, it lies within finite
 Hausdorff distance of a standard orthant.  The $Q_i$ are as required.
\end{proof}

\subsection{Controlled number of orthants}

We now improve Theorem \ref{thm:non_uniform}, by showing that the number of standard orthants required can be bounded in terms of the quasi-isometry constants:

\begin{thm}[Bounding the number of orthants]\label{thm:uniform}
 Let $\cuco X$ be an asymphoric HHS of rank $\nu$.  For every $K$ there
 exists $N$ so that the following holds.  Let $f\co\reals^\nu\to \cuco X$
 be a $(K,K)$--quasi-isometric embedding.  Then there exist standard
 orthants $Q_i\subseteq \cuco X$, $i=1,\dots,N$, so that
 $\dist_{haus}(f(\reals^\nu),\cup_{i=1}^NQ_i)<\infty$.
\end{thm}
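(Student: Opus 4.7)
The plan is to refine the ultralimit argument in the proof of Theorem \ref{thm:non_uniform} by tracking the dependence of all constants on $K$, and then invoke a uniform version of Huang's quasiflats theorem for CAT(0) cube complexes, whose uniformity comes from a volume growth argument.

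First, I would observe that the constants $L, N$ from Corollary \ref{cor:fixed_flat_ball_hull} depend only on $K$, and hence so does the QI constant $C$ of the cubical approximation maps $c_i\co \cuco Y_i\to H_\theta(A_i)$ produced by Theorem \ref{thm:cubulated_hull}. Passing to ultralimits then yields an ultralimit $\widehat{\cuco X}$ of $\cuco X$, a CAT(0) cube complex $\hat{\cuco Y}$ of dimension $\leq \nu$ obtained as an ultralimit of the $\cuco Y_i$, a $(K,K)$--quasi-isometric embedding $\hat f\co\reals^\nu\to\widehat{\cuco X}$, and a $(C,C)$--quasi-isometric embedding $\hat c\co \hat{\cuco Y}\to \widehat{\cuco X}$, such that $\hat f(\reals^\nu)\subseteq\neb_L(\image\hat c)$. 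Composing $\hat f$ with a coarse inverse of $\hat c$ gives a quasi-isometric embedding $g\co\reals^\nu\to\hat{\cuco Y}$ whose quasi-isometry constants depend only on $K$.

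Next, I would apply Theorem~1.1 of \cite{Huang:quasiflats} to $g$. The crucial additional input (beyond what was used in the proof of Theorem~\ref{thm:non_uniform}) is that the number of cubical orthants needed to approximate $g(\reals^\nu)$ within finite Hausdorff distance is bounded by a function of the quasi-isometry constants of $g$ and the dimension $\nu$ alone. This uniformity is a consequence of a volume growth / packing argument: each cubical orthant $O_i$ pulls back under $g$ to a bilipschitz copy of $[0,\infty)^\nu$ in $\reals^\nu$ with corner at $g^{-1}(\text{corner of } O_i)$; after rescaling (so that corners collapse to a basepoint) the pullbacks become bilipschitz orthants with a common corner, and the number of pairwise coarsely non-comparable such orthants that can fit in $\reals^\nu$ is controlled by the $(\nu-1)$--volume of the unit sphere $S^{\nu-1}$ and the bilipschitz constant.

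Finally, each of the resulting $N=N(K)$ cubical orthants in $\hat{\cuco Y}$ gives, via $\hat c$ and Lemma~\ref{lem:quasi_median_top_dimensional}, a standard orthant $Q_i$ in $\cuco X$; hence at most $N(K)$ standard orthants suffice to approximate $f(\reals^\nu)$, as required. The main obstacle is establishing the uniform version of Huang's theorem, i.e., making the above packing argument precise; an alternative is a second ultralimit/contradiction argument in which a hypothetical sequence of $(K,K)$--quasiflats requiring unboundedly many orthants would yield, in the limit, a bilipschitz flat in a CAT(0) cube complex of dimension $\nu$ coarsely covered by infinitely many pairwise non-comparable cubical orthants, contradicting the geometry of $\reals^\nu$.
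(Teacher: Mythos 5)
Your core mechanism is the same as the paper's: the number of orthants is bounded by a volume-growth/packing argument depending only on $K$ and $\nu$. The paper, however, packages this more simply. It applies Theorem~\ref{thm:non_uniform} as a black box to get finitely many \emph{standard} orthants $O_1,\dots,O_k$ in $\cuco X$ (pairwise at infinite Hausdorff distance), and then proves directly (Proposition~\ref{prop:growth}) that $k\leq N(K)$ by counting lattice points of $\reals^\nu$: each $O_i$ forces at least $t'R^\nu$ points of $\mathbb Z^\nu\cap B_R$ into the disjoint preimage sets $A_i$, while the total is at most $tR^\nu$ with $t=t(K)$. There is no need to re-run the ultralimit/cubical-approximation machinery or to upgrade Huang's theorem to a uniform statement; the counting happens in $\cuco X$ itself.

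The part you flag as ``the main obstacle'' is exactly where the real work lies, and your sketch of it has two gaps. First, the Hausdorff distance $D$ between $f(\reals^\nu)$ and $\bigcup_iO_i$ is finite but \emph{not} controlled by $K$, so the preimages of the orthants are not automatically coarsely disjoint; the paper fixes this by passing to sub-orthants $O_i'\subset O_i$ at pairwise distance $>2D$ (possible by Lemma~\ref{lem:intersection_of_orthants} since the $O_i$ are pairwise at infinite Hausdorff distance). Second, and more importantly, your claim that each $O_i$ ``pulls back to a bilipschitz copy of $[0,\infty)^\nu$'' is not justified: what you need is a \emph{lower} bound on the volume growth of each preimage set $A_i$, and for that you must show that the composition of $f$ with the gate map onto $O_i'$ coarsely surjects onto a sub-orthant. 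The paper does this with a degree/Borsuk--Ulam argument (Lemma~\ref{lem:Borsuk_Ulam}: a proper coarsely Lipschitz map $\reals^\nu\to\reals^\nu$ is coarsely surjective with controlled constant). Without this surjectivity step, an orthant could a priori be shadowed by a ``thin'' subset of $\reals^\nu$ contributing negligible volume. Your fallback ultralimit-by-contradiction argument does not obviously repair this: if the number of orthants $k_m\to\infty$, the rescaled limit need not exhibit infinitely many pairwise non-comparable orthants, since most of them may coalesce or escape under rescaling. So the strategy is right, but the proposal as written does not yet constitute a proof of the counting bound.
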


The idea of the proof is as follows.  First, by the above we have that  $f(\reals^\nu)$ lies Hausdorff-close to a finite union 
of standard orthants $O_1,\ldots,O_k$.  Now, each $O_i$ makes a definite contribution to the volume growth in the quasiflat 
$f(\reals^\nu)$, and this growth is in turn bounded by the quasi-isometry constants.  So, $k$ must be bounded.  This is 
formalized in Proposition~\ref{prop:growth}.  

First, we need the following lemma, which is a slightly stronger version of the well-known fact that quasi-isometric 
embeddings of $\mathbb R^n$ into itself are coarsely surjective, see \cite[Corollary 2.6]{KapovichLeeb:haken}.

\begin{lem}\label{lem:Borsuk_Ulam}
 For every $K,n\geq1$ there exists $C$ so that the following holds.  Let
 $f\co\reals^n\to\reals^n$ be a $(K,K)$--coarsely Lipschitz proper map.
 Then $\dist_{haus}(f(\reals^n),\reals^n)\leq C$.
\end{lem}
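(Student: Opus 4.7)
The plan is to adapt the degree-theoretic argument used in \cite[Corollary~2.6]{KapovichLeeb:haken} for quasi-isometric embeddings.

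First I would replace $f$ by a continuous coarsely Lipschitz proper approximation $\tilde f$: sample $f$ on the integer lattice of $\reals^n$ and extend piecewise linearly over a compatible triangulation. This produces $\tilde f\co\reals^n\to\reals^n$ at Hausdorff distance at most $C_0=C_0(K,n)$ from $f$, still coarsely Lipschitz and proper with constants depending only on $K,n$. Since the conclusion is coarse, it suffices to prove it for $\tilde f$. Because $\tilde f$ is now continuous and proper, $|\tilde f(x)|\to\infty$ as $|x|\to\infty$, so $\tilde f$ extends to a continuous map $\bar f\co S^n\to S^n$ of one-point compactifications sending $\infty\mapsto\infty$. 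If $\deg(\bar f)\neq 0$ then $\bar f$ is surjective, whence $\tilde f(\reals^n)=\reals^n$ and $f$ is $C_0$-coarsely surjective, as required.

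I would then show $\deg(\bar f)\neq 0$ by rescaling to an asymptotic cone. After translating so that $\tilde f(0)=0$, set $f_R(x)=\tilde f(Rx)/R$. The coarse Lipschitz bound yields a uniform Lipschitz estimate on $f_R$ independent of $R$, while properness of $\tilde f$ forces $|f_R(x)|$ to stay uniformly bounded below on the unit sphere as $R\to\infty$. An Arzel\`{a}--Ascoli subsequential limit $f_\infty\co\reals^n\to\reals^n$ is therefore bilipschitz, and by Brouwer invariance of domain it is a homeomorphism of $\reals^n$. Its degree is $\pm 1$, and homotopy invariance of degree propagates this back to $\deg(\bar f)=\pm 1$ for all sufficiently large $R$.

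The main obstacle is establishing that the ultralimit $f_\infty$ is genuinely bilipschitz, rather than merely Lipschitz with an inverse only on its image. This is automatic for QI embeddings---the case handled by Kapovich--Leeb---but for a general coarsely Lipschitz proper map one must carefully verify that the properness modulus of $\tilde f$ rescales to a linear lower bound in the ultralimit. Once this is in place, invariance of domain together with homotopy invariance of degree finishes the argument, yielding a constant $C$ depending only on $K$ and $n$.
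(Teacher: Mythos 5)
There is a genuine gap at the crucial step, namely the claim that ``properness of $\tilde f$ forces $|f_R(x)|$ to stay uniformly bounded below on the unit sphere as $R\to\infty$.'' This is false for a coarsely Lipschitz \emph{proper} map: properness only says that $|\tilde f(y)|\to\infty$ as $|y|\to\infty$, with no rate. Take $\tilde f(x)=\log(1+|x|)\,x/|x|$ (and $\tilde f(0)=0$): this is Lipschitz and proper, yet $f_R(x)=\tilde f(Rx)/R\to 0$ uniformly on compact sets, so the blow-down limit $f_\infty$ is the constant map --- not bilipschitz, not a homeomorphism, and of no use for a degree computation. The obstacle you flag at the end is therefore not a technical verification to be carried out but an actual failure of the method: a linear lower bound on the properness modulus is exactly the extra hypothesis that makes a map a quasi-isometric embedding, which is the Kapovich--Leeb setting. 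The lemma is deliberately stated for the strictly weaker hypothesis of a coarsely Lipschitz proper map, because that is all the application (the gate-composed map $g_i$ in Proposition~\ref{prop:growth}) provides, so the asymptotic-cone/invariance-of-domain route cannot be rescued here. (A secondary issue: even granting a homeomorphic limit, passing from $\deg(f_\infty)=\pm1$ back to $\deg(\bar f)$ needs a \emph{proper} homotopy controlling behavior at infinity, which uniform-on-compacts convergence does not supply.)

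The paper avoids degree theory entirely, which is why the lemma carries the name it does: after reducing to a continuous proper map and compactifying to $\bar f\co S^n\to S^n$ with $\bar f(\infty)=\infty$, one uses coarse Lipschitzness (which bounds $|f|$ on balls) together with properness (which forces $|f|$ to be large outside large balls) to choose the identification of the compactified domain with $S^n$ so that no antipodal pair has the same image under $\bar f$. If $\bar f$ missed a point, it would factor through $S^n$ minus a point, i.e.\ through $\reals^n$, and the Borsuk--Ulam theorem would then produce an identified antipodal pair --- a contradiction. If you want to salvage your write-up, you would need to replace the blow-down step with an argument of this kind that uses only the upper (coarse Lipschitz) bound and the qualitative divergence of $|f|$ at infinity.
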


\begin{proof}
 We actually show that if $f\co\reals^n\to\reals^n$ is continuous and proper, then $f$ is surjective, and the lemma follows 
from the fact that $f$ can be approximated by a continuous map.
 
 Since $f$ is proper, it extends to a continuous map
 $\overline{f}:\overline{\reals^n}\to \overline{\reals^n}$ between two
 copies of the $1$--point compactification $\overline{\reals^n}$ of
 $\reals^n$, which is homeomorphic to the sphere $S^n$.  Also, it is
 easily seen that we can identify the domain 
 $\overline{\reals^n}$ with $S^n$ in such a way that, since $f$ is 
 coarsely Lipschitz, no pair of
 antipodal points have the same image. But then $\overline{f}$ must
 be surjective, for otherwise the Borsuk-Ulam theorem would force the
 existence of such pair of antipodal points.  Since $\overline{f}$ is
 surjective, then so is $f$, as required.
\end{proof}

\begin{prop}[Volume growth]\label{prop:growth}
 For every $K$ there exists $N$ so that the following holds.  Let $F\co\reals^\nu\to\cuco X$ be a $(K,K)$--quasi-isometric 
embedding whose image lies at finite Hausdorff distance from $\bigcup_{i=1}^kO_i$, where each $O_i$ is a standard orthant. If 
$\dist_{haus}(O_i,O_j)=\infty$ when $i\neq j$, then $ k\leq N$.
\end{prop}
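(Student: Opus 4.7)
The plan is a polynomial volume comparison. Since $\reals^\nu$ has polynomial volume growth of degree exactly $\nu$, it cannot contain too many pairwise essentially disjoint subsets that themselves have growth of degree $\nu$. Each standard orthant $O_i$ will contribute such a subset (via $F^{-1}$), and the hypothesis $\dist_{haus}(O_i,O_j)=\infty$ will be used, through Lemma~\ref{lem:intersection_of_orthants}, to force their mutual overlaps in $\reals^\nu$ to grow only like $R^{\nu-1}$.

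Concretely, first I would write $O_i=\phi_i([0,\infty)^\nu)$ for a $(D',D')$-quasi-isometric embedding $\phi_i$, with $D'$ uniform in $i$ (inherited from the hierarchy-path constant $D$). Set $Q_i(R)=\phi_i([0,R]^\nu)$; a $1$-separated net in $Q_i(R)$ has cardinality at least $c_0 R^\nu$, and $Q_i(R)$ lies in the ball of radius $D'R+D'$ about $\phi_i(0)$. Next, since $\dist_{haus}(O_i,O_j)=\infty$, the proof of Lemma~\ref{lem:intersection_of_orthants} forces the coarse intersection $O_i\tilde\cap O_j$ to be a standard $m_{ij}$-orthant with $m_{ij}<\nu$; otherwise all $\nu$ rays making up $O_i$ would pair with rays making up $O_j$ at bounded Hausdorff distance, putting $O_i$ and $O_j$ themselves at bounded Hausdorff distance. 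Consequently, for any fixed $D_0$, the set $Q_i(R)\cap\neb_{D_0}(O_j)$ lies in a bounded neighborhood of $O_i\tilde\cap O_j$ inside $Q_i(R)$, and so admits a $1$-net of cardinality at most $c_1 R^{\nu-1}$. I then define the pure part
$$
T_i(R):=\{\,p\in Q_i(R)\;:\;\dist(p,O_j)>D_0\text{ for all }j\neq i\,\}.
$$
Taking $R=R(k)$ large enough that $(k-1)c_1 R^{\nu-1}\leq (c_0/2)R^\nu$ yields $|T_i(R)|\geq (c_0/2)R^\nu$, and by definition the $T_i(R)$ are pairwise at $\cuco X$-distance greater than $D_0$.

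I would then pull back through $F$. Since $F(\reals^\nu)$ is at finite Hausdorff distance $L$ from $\bigcup_i O_i$, each $p\in T_i(R)$ admits $x_p\in\reals^\nu$ with $\dist(F(x_p),p)\leq L$; the $(K,K)$-quasi-isometry property bounds $|x_p|\leq K(D'R+D'+L+\dist(F(0),\phi_i(0)))+K^2$. Thinning each $T_i(R)$ to a subnet that is $CK$-separated in $\cuco X$ (with $C$ large enough in terms of $K,L$) makes the preimages $\tilde T_i(R)\subset\reals^\nu$ pairwise $1$-separated and of cardinality at least $c_2 R^\nu/K^\nu$; for $D_0>2L$, the sets $\tilde T_i(R)$ for distinct $i$ are disjoint (else some $x$ would map within $L$ of both $T_i(R)$ and $T_j(R)$, violating their $D_0$-separation). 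All $\tilde T_i(R)$ sit in a ball of radius $\leq C_3 K R$ around $0\in\reals^\nu$, which carries a $1$-net of size $\leq C_4 (KR)^\nu$; hence
$$
k\cdot\frac{c_2 R^\nu}{K^\nu}\;\leq\;C_4(KR)^\nu,
$$
giving $k\leq N:=(C_4/c_2)K^{2\nu}$, a bound depending only on $K$ and the HHS data.

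The main obstacle is careful bookkeeping of the constants $c_0,c_1,c_2,C_3,C_4,L,D',D_0$ to verify they depend only on $K$ and $(\cuco X,\mathfrak S)$. In particular, uniformity of $D'$ requires Lemma~\ref{lem:orthant_quasiconvex} (standard orthants are uniformly hierarchically quasiconvex, so uniformly quasi-isometric to $[0,\infty)^\nu$), while the strict inequality $m_{ij}<\nu$ must be extracted from the \emph{proof} of Lemma~\ref{lem:intersection_of_orthants} rather than merely its statement.
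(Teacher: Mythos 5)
Your proof is correct in outline and shares the paper's basic strategy -- the paper's own proof opens with exactly your volume-growth slogan: each orthant contributes $\gtrsim R^\nu$ to the growth of $F(\reals^\nu)$, which is itself $\lesssim_K R^\nu$. But the implementations genuinely differ. The paper chooses sub-orthants $O_i'\subset O_i$ pairwise $(2D+1)$-separated (possible by Lemma~\ref{lem:intersection_of_orthants}), sets $A_i=\{x\in\reals^\nu:\dist(F(x),O_i')\leq D\}$, and then must show each $A_i$ is large; it does this by composing $F$ with the gate to $O_i'$ and invoking the Borsuk--Ulam lemma (Lemma~\ref{lem:Borsuk_Ulam}) through a non-rescaled ultralimit to get coarse surjectivity of $g_i|_{A_i}$ onto a sub-orthant. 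You avoid Borsuk--Ulam entirely: you count net points directly in each orthant, excise the overlaps with the other orthants -- which have growth of degree $\leq\nu-1$ because $\dist_{haus}(O_i,O_j)=\infty$ forces the coarse intersection to be a proper sub-orthant, a point you correctly extract from the proof of Lemma~\ref{lem:intersection_of_orthants} -- and then pull back using only the coarse surjectivity of $F$ onto $\bigcup_iO_i$ that is built into the finite-Hausdorff-distance hypothesis. Your route is more elementary (no topology), at the cost of the codimension bookkeeping; the paper's route never needs to quantify the overlaps because the Borsuk--Ulam step localizes the count inside each $A_i$ separately.

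Two points of bookkeeping to tighten, both fixable inside your framework. First, your final inequality compares $\sum_i|\tilde T_i(R)|$ with the maximal size of a $1$-separated set in a ball, but you only arranged that each $\tilde T_i(R)$ is $1$-separated and that the $\tilde T_i(R)$ are pairwise \emph{disjoint}; disjointness alone lets up to $k$ points cluster in a unit ball and kills the count. You need the whole union to be uniformly separated, which you get for free from your own setup by taking $D_0\geq 2L+K+K^2$: then $p\in T_i(R)$, $p'\in T_j(R)$ satisfy $\dist(F(x_p),F(x_{p'}))\geq D_0-2L$, hence $|x_p-x_{p'}|\geq 1$. Second, since the orthant parametrizations $\phi_i$ are $(D',D')$-quasi-isometries with $D'\geq1$, ``$1$-nets'' should be replaced by $C(D')$-separated nets throughout (a $1$-separated set in $O_i$ need not pull back to a separated set in $[0,R]^\nu$); this only changes the constants $c_0,c_1,c_2$ and is covered by the constant-tracking you already flagged.
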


\begin{proof}
 The idea of the proof is that each of the $k$ orthants contributes at 
 least $\epsilon R^\nu$ volume growth to $F(\reals^\nu)$, but the volume growth
 of $F(\reals^\nu)$ is bounded above by $R^\nu$ times a (large) constant
 depending on $K$.

 Let $D=\dist_{haus}(F(\reals^\nu),\bigcup_{i=1}^k O_i)$.  By 
 Lemma~\ref{lem:intersection_of_orthants}, 
 since the $O_{i}$ are pairwise at infinite Hausdorff distance, for each $i$ 
 we can find a sub-orthant $O'_i\subset O_i$ so that for each $i,j$, 
 $\dist(O'_i,O'_j)\geq 2D+1$. We will identify $O'_i$ with 
 $[0,\infty)^\nu$.
 
 Let $A_i\subseteq \reals^\nu$ be the set of points whose image under  
 $F$ is at
 distance at most $D$ from $O'_i$.  Note that the $A_i$ are
 disjoint.  
  
  Let $g_i$ be the composition of $F$ and the gate map to $O'_i$; the 
  map $g_i$ is $(K',K')$--coarsely  Lipschitz for some $K'=K'(K,\cuco X)$, and it is a quasi-isometric embedding with constant depending on $K,\cuco X$, and $D$ (this dependence on $D$ is the reason why we need Lemma \ref{lem:Borsuk_Ulam} dealing with proper maps). Up to increasing $K'$, we can further assume that there is a $(K',K')$--quasi-isometry from $O'_i$ to an orthant in $\reals^\nu$ so that the composition of $g_i$ and the quasi-isometry is also $(K',K')$--coarsely Lipschitz.
  
  Notice that for each $R$ and $i$, there exists a
   sub-orthant $O^R_i \subset O'_i$ so that if $x\in A_i$ has $g_i(x)\in O^R_i$, then
   $B_R(x)\subseteq A_i$.

  Let $C$ be as in Lemma
  \ref{lem:Borsuk_Ulam} for $K'$, and set $C_1=K'C+(K')^2$.  Since the orthants $O^R_i$ are quasi-geodesic spaces with constant depending on $\cuco X$ only, up to increasing $C$ we can assume the following. Suppose that we have a subset $A\subseteq  O^R_i$, for some $R,i$, with the property that $O^R_i\nsubseteq N_{C_1}(A)$. Then there exists $x\in O^R_i$ so that $\dist(x,A)\leq 2C_1-1$ but $\dist(x,A)> C_1$.

%  For each $R$ and $i$, there exists a
%   sub-orthant $O^R_i \subset O'_i$ so that if $x\in A_i$ and $\dist(F(x),O^R_i)\leq D$, then
%   $B_R(x)\subseteq A_i$.
%   
%   
%   Let $g_i$ be the composition of $F$ and the gate map to $O'_i$; the 
%   map   $g_i$ is $(K',K')$--coarsely  Lipschitz for some $K'=K'(K,\cuco X)$.  Let $C$ be as in Lemma
%   \ref{lem:Borsuk_Ulam} for $K'$.  Since the orthants $O^R_i$ are quasi-geodesic spaces with constant depending on $\cuco X$ only, up to increasing $C$ we can assume the following. Suppose that we have a subset $A\subseteq  O^R_i$, for some $R,i$, with the property that $O^R_i\nsubseteq N_C(A)$. Then there exists $x\in O^R_i$ so that $\dist(x,A)\leq 2C-1$ but $\dist(x,A)> C$.
  
\textbf{A further sub-orthant:}  We claim that for each $i$, there is a sub-orthant  $O''_i\subset O'_i$ with the 
property that $O''_i\subset N_{C_1}(g_i(A_i))\cap O'_i$.

Let $n\in\naturals$.  Let $O^n_i$ be the sub-orthant of $O'_i$ defined above, which 
has the property that for all $x\in A_i$ with $g_i(x)\in O^n_i$, we have $B_n(x)\subset A_i$.  If the 
sub-orthant $O''_i$ with the claimed property does not exist, then, for each $n$, there exist $p_n\in A_i$ and $x_n\in O^n_i$ such that the following hold:
\begin{itemize}
\item $g_i(p_n)\in O^n_i$;

     \item $\dist(x_n,g_i(p_n))\leq 2C_1$;
     \item $\dist(x_n,g_i(A_i))> C_1$.
\end{itemize}

In fact, we can choose any $x_n\in O^n_i$ with $\dist(x_n,g_i(A_i))\leq 2C_1-1$ but $\dist(x_n,g_i(A_i))> C_1$, and then pick $p_n\in A_i$ ``nearly witnessing'' the first inequality, meaning $p_n$ so that $\dist(x_n,g_i(p_n))\leq 2C_1$.

  Now, consider the (non-rescaled!)  ultralimit $\cuco R$ of
  $\reals^\nu$ with observation point $(p_n)$, which is isometric to
  $\reals^\nu$.  The process of taking ultralimits induces a
  $(K',K')$--coarsely Lipschitz map $f$ from $\cuco R$ to an
  ultralimit of $O'_i$ with observation point $(x_n)$. Moreover, $f$ is proper since $g_i$ is a quasi-isometric embedding. Being an
  ultralimit of orthants, this ultralimit admits a quasi-isometry, $h$,
  to a subspace of $\reals^\nu$,  with
  constants depending only on $\cuco X$ 
  (actually, the ultralimit of the orthants is quasi-isometric to $\reals^\nu$, but we do not need
  this).  In fact, by the choice of $K'$, we can choose a $(K',K')$--quasi-isometry $h$ as above in such a way that $h\circ f$ 
  is $(K',K')$--coarsely
  Lipschitz,  and notice that $h\circ f$ is still proper.  However, by construction the map $f$ is not 
  $C_1$--coarsely surjective, and thus 
  $h\circ f$ is not 
  $C$--coarsely surjective, 
  contradicting Lemma \ref{lem:Borsuk_Ulam} and thus verifying the
  claim.
  
\textbf{Conclusion:} We now bound from below $\beta_R=|\{x\in\mathbb Z^\nu: F(x)\in
 B_R(F(0))\}|$.  There exists $t=t(K)$ so that
 $\beta_R\leq t R^\nu$.  Let $C'=C'(C,\nu,K)$ satisfy $O''_i\subset
 N_{C'}(g_i(A_i\cap \mathbb Z^\nu))\cap O'_i$.  Consider a maximal
 $(2C'+1)$--net $N_i$ in $O''_i$ and, for any point $p$ of the net, 
 choose some $q\in A_i\cap \mathbb Z^\nu$ with $\dist(p,F(q))\leq C'$.
 Distinct $p$ yield distinct $q$.  Moreover, $ |N_i\cap B_R(F(0))|\geq
 t' R^\nu$ for all sufficiently large $R$ and some $t'=t'(C',\cuco X)$.
 Since the $A_i$ are disjoint, we have $\beta_R\geq k t'R^\nu$ for all
 sufficiently large $R$.  Hence $k\leq t/t'$, and we are done.
\end{proof}

\begin{proof}[Proof of Theorem \ref{thm:uniform}]
 By Theorem \ref{thm:non_uniform}, the image of $F$ lies at finite Hausdorff distance from a union of orthants $\bigcup_{i=1}^k O_i$. We can assume that $\dist_{haus}(O_i,O_j)=\infty$ when $i\neq j$; indeed, if not, then we can drop $O_i$ or $O_j$ from the collection without affecting the conclusion. Hence, $k\leq N$, for $N$ as in Proposition \ref{prop:growth}.
\end{proof}

\subsection{Controlled distance}
As in the cubical case, it is not possible in general to give an effective bound on the Hausdorff distance between a quasiflat and the corresponding union of orthants. However, we have the following:

\begin{lem}\label{lem:flats_close_to_hulls}
For every $K,N$ there exists $L$ so that the following holds. Let 
$F\co\reals^\nu\to\cuco X$ be a $(K,K)$--quasi-isometric embedding whose image lies at finite Hausdorff distance from $\bigcup_{i=1}^NO_i$, where each $O_i$ is a standard orthant.  Then $F\subset\neb_L(H_\theta(\bigcup_{i=1}^NO_i))$.
\end{lem}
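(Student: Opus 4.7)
The plan is to argue by contradiction using asymptotic cones, reducing the statement to Lemma~\ref{lem:squish}. Roughly, if the Hausdorff distance from the quasiflat to the hull of the orthants could be arbitrarily large (for fixed $K,N$), then in a suitable asymptotic cone we would obtain a bilipschitz $\nu$--flat sitting in a $1$--neighborhood of, but not contained in, an ultralimit of uniformly hierarchically quasiconvex sets, violating Lemma~\ref{lem:squish}.

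Suppose the statement fails. Then there are $K,N$ and a sequence of $(K,K)$--quasi-isometric embeddings $F_n\co\reals^\nu\to\cuco X$, each with $F_n(\reals^\nu)$ at finite Hausdorff distance from a union $\bigcup_{i=1}^N O_i^n$ of standard orthants, yet with
$$d_n := \sup_{y\in\reals^\nu}\dist\bigl(F_n(y),\,H_\theta\bigl(\textstyle\bigcup_i O_i^n\bigr)\bigr)\longrightarrow\infty.$$
Choose $x_n\in\reals^\nu$ with $\dist(F_n(x_n),H_\theta(\bigcup_i O_i^n))\geq d_n/2$. Form the asymptotic cone $\widehat{\cuco X}=\lim_\omega(\cuco X,F_n(x_n),d_n)$ and the corresponding cone of $\reals^\nu$ (which is again $\reals^\nu$), based at $x_n$ with scaling $d_n$.

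In the limit, the additive error disappears and $F_n$ limits to a $K$--bilipschitz embedding $\widehat F\co\reals^\nu\to\widehat{\cuco X}$, i.e., a bilipschitz $\nu$--flat. By Proposition~\ref{prop:coarse_retract} together with our fixed $\theta\geq\theta_0$, the sets $H_\theta(\bigcup_i O_i^n)$ are uniformly hierarchically quasiconvex, so their ultralimit $\widehat H\subseteq\widehat{\cuco X}$ fits the hypothesis of Lemma~\ref{lem:squish}. From the definition of $d_n$ we have, after rescaling, $\widehat F(\reals^\nu)\subseteq\neb_1(\widehat H)$.

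Lemma~\ref{lem:squish} now forces $\widehat F(\reals^\nu)\subseteq\widehat H$. However $\widehat F(0)$ is the class of $(F_n(x_n))$ in $\widehat{\cuco X}$, and by the choice of $x_n$ its distance to $\widehat H$ is at least $\lim_\omega d_n/(2d_n)=1/2$, a contradiction. Hence $d_n$ is bounded by a constant $L=L(K,N)$, which is the conclusion. The main (mild) point to verify carefully is that the ultralimit of the $F_n$ is genuinely bilipschitz on all of $\reals^\nu$, which is immediate from $(K,K)$--quasi-isometry together with the rescaling; no other step presents a substantive obstacle.
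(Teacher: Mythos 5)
Your proof is correct, but it takes a genuinely different (and more direct) route than the paper. The paper's argument first observes that any bounded piece of $\bigcup_iO_i$ lies uniformly close to $H_\theta(A)$ for a set $A$ of at most $2N$ points (corner plus a diagonal point in each orthant), and then invokes Proposition~\ref{prop:thanks_Mauro} --- the ``ball version'' rigidity statement, whose constant $L$ depends only on $K$ and $2N$ --- on arbitrarily large balls, letting the a priori uncontrolled Hausdorff distance $D$ be absorbed into the $\epsilon R$ hypothesis for $R$ large. You instead run a single global compactness argument: rescale by the (finite, by the finite-Hausdorff-distance hypothesis) quantity $d_n=\sup_y\dist(F_n(y),H_\theta(\bigcup_iO_i^n))$ and apply Lemma~\ref{lem:squish} directly to the ultralimit of the hulls, which are uniformly hierarchically quasiconvex by Proposition~\ref{prop:coarse_retract} regardless of the cardinality of the defining set. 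This works because $F_n$ is defined on all of $\reals^\nu$ and the entire image lies within $d_n$ of the hull, so the delicate localization in Proposition~\ref{prop:thanks_Mauro} (needed there because the map is only defined on a ball and the error is allowed to grow linearly in the radius) is unnecessary; the basepoint $F_n(x_n)$ survives in the cone at distance at least $1/2$ from $\widehat H$ while the whole limit flat sits in its $1$--neighborhood, contradicting Lemma~\ref{lem:squish}. Both arguments ultimately rest on Lemma~\ref{lem:squish}; the paper's detour through Proposition~\ref{prop:thanks_Mauro} buys the reuse of machinery already needed for Corollary~\ref{cor:fixed_flat_ball_hull}, while yours is shorter and self-contained. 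The only points worth stating explicitly are that $d_n<\infty$ (immediate from $\bigcup_iO_i^n\subseteq H_\theta(\bigcup_iO_i^n)$ and the finite Hausdorff distance hypothesis) and that $\widehat H$ is nonempty in the cone based at $F_n(x_n)$ (it is, since that point lies within $d_n$ of the hull).
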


\begin{proof}
Let $F$ and $O_i$ be as in the statement.  Any bounded set in $O_i$
lies in a uniform neighborhood of the hull of the ``corner point'' of
$O_i$ and some point along the diagonal.  Hence, there exists $D$ so
that any ball $B$ in $\reals^n$ has the property that $F(B)$ is
contained in the $D$--neighborhood of $H_\theta(A)$ for some
$A\subseteq \bigcup_iO_i$ with $|A|\leq 2N$.  For $L$ as in
Proposition \ref{prop:thanks_Mauro}, there exist arbitrarily large
balls $B'$ in $\reals^\nu$ so that $F(B')\subseteq
\neb_L(H_\theta(A))\subseteq \neb_L(H_\theta(\bigcup_{i=1}^NO_i))$ for
some $A\subseteq \bigcup_iO_i$.  Hence, the same holds for $\reals^\nu$,
as required.
\end{proof}

\begin{cor}\label{cor:flats_close_to_hulls}
For each $K$ there exists $L,N$ so that the following holds.  Let 
$F\co\reals^\nu\to\cuco X$ be a $(K,K)$--quasi-isometric embedding.  Then there exist standard orthants $O_1,\ldots,O_N$ so that  $F\subset\neb_L(H_\theta(\bigcup_{i=1}^NO_i))$.
\end{cor}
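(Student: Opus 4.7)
The plan is to obtain this essentially as a one-line corollary by combining the two immediately preceding results. First I would apply Theorem \ref{thm:uniform} to the $(K,K)$--quasi-isometric embedding $F\co\reals^\nu\to\cuco X$. This yields a constant $N=N(K)$ (depending only on the quasi-isometry constant) and standard orthants $O_1,\ldots,O_N\subseteq\cuco X$ such that $\dist_{haus}(F(\reals^\nu),\bigcup_{i=1}^NO_i)<\infty$. Note in particular that the number $N$ of orthants is controlled purely in terms of $K$.

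Next, I would feed this output into Lemma \ref{lem:flats_close_to_hulls}, using the same pair $(K,N)$. Since $N=N(K)$ depends only on $K$, the constant $L=L(K,N)$ produced by Lemma \ref{lem:flats_close_to_hulls} also depends only on $K$. The hypothesis of Lemma \ref{lem:flats_close_to_hulls} is satisfied by construction (we have $F$ a $(K,K)$--quasi-isometric embedding whose image has finite Hausdorff distance from the union of the $N$ orthants produced in the first step), so the conclusion gives $F(\reals^\nu)\subseteq\neb_L(H_\theta(\bigcup_{i=1}^NO_i))$, which is exactly the required statement with the advertised dependencies $L=L(K)$ and $N=N(K)$.

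There is no real obstacle here; all the substantive work has already been done. Theorem \ref{thm:uniform} supplies the key uniform bound on the number of orthants needed (its proof, via the volume growth argument in Proposition \ref{prop:growth}, is where the dependence on only $K$ really comes from), while Lemma \ref{lem:flats_close_to_hulls} upgrades ``finite Hausdorff distance from orthants'' to ``finite distance from their hull,'' with a bound depending on both $K$ and the number of orthants. The corollary is precisely the composition of these two facts, made possible by the fact that the number of orthants from Theorem \ref{thm:uniform} is itself a function of $K$ alone.
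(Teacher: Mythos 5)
Your proof is correct and is exactly the paper's argument: the paper's own proof reads ``Follows immediately from Theorem~\ref{thm:uniform} and Lemma~\ref{lem:flats_close_to_hulls},'' which is precisely the composition you describe. Your additional remarks tracking the dependence of $N$ and $L$ on $K$ alone are accurate and make the (very short) official proof explicit.
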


\begin{proof}
Follows immediately from Theorem~\ref{thm:uniform} and Lemma~\ref{lem:flats_close_to_hulls}.
\end{proof}

\section{Induced maps on hinges: mapping class group rigidity}\label{sec:standard_flat}

Let $(\cuco X,\mathfrak S)$ be an HHS. We have in mind the case where $\cuco X$ is the Cayley graph of the 
mapping class group of a finite-type surface, equipped with the HHS structure from~\cite[Section 11]{hhs2}.

In this section, we provide a new proof of quasi-isometric rigidity of mapping class groups. More generally, we study intersection 
patterns of quasiflats in $\cuco X$ and, under favorable conditions, extract suitable ``combinatorial data'' from it.

In the rest of this section, we will abstract from the mapping class group to the greatest extent permitted by our 
methods.  We will need $(\cuco X,\mathfrak S)$ to be asymphoric, 
which, as previously noted, is a weak assumption.  We will also 
impose three additional, 
more restrictive,  assumptions on $(\cuco X,\mathfrak S)$, which are satisfied by the
standard HHS structure on the mapping class group. First, we introduce a few relevant definitions and state 
the additional assumptions.  Then, we discuss the generality in which these assumptions hold.  

The next definition describes those subsets of $\frak S$ which give 
rise to standard flats (as defined in 
Definition~\ref{defn:standard_orthant}).
\begin{defn}[Complete support set]\label{defn:support_set}
A \emph{complete support set} is a subset $\{U_i\}_{i=1}^\nu\subset\mathfrak S$ whose elements are pairwise orthogonal and 
satisfy $\diam(\fontact U_i)=\infty$ for all $i\leq \nu$. 
\end{defn}

For each $U\in\mathfrak S$, we let $\boundary\fontact U$ denote the Gromov boundary of $\fontact U$.

Note that a complete support set $\{U_{i}\}$ and a 
pair of distinct points $\{p_{i}^{\pm}\}\in\boundary \fontact U_{i}$ for each $i$, allows one to 
construct a standard flat, $\calF_{\{(U_{i},p_{i}^{\pm})\}}$ associated to some choice of bi-infinite
hierarchy paths in each $F_{U_{i}}$ whose projection to $\fontact U_i$ has limit points $\{p_{i}^{\pm}\}$ in $\fontact U_i$. 

Accordingly, it is easy to verify that a complete support set is the 
support set of some standard flat if and only if each $\boundary 
\fontact U_{i}$ contains at least two points.

\begin{defn}[Hinge, orthogonal hinges]\label{defn:hinge}
A \emph{hinge} is a pair $(U,p)$ with:
\begin{itemize}
 \item $U\in\mathfrak S$;
 \item $U$ belongs to some complete support
set; and,
\item $p\in\boundary\fontact U$.
\end{itemize}
Let $\hinges(\mathfrak S)$ be the set of hinges.  We say $(U,p),(V,q)\in\hinges(\mathfrak S)$ are \emph{orthogonal} if $U\orth V$.
\end{defn}

\begin{defn}[Ray associated to a hinge]\label{defn:hinge_ray}
For any $\mu\ge0$, a \emph{$\mu$--ray associated to} a hinge $\sigma=(U,p)$ is a $\mu$--hierarchy path $\mathfrak h_\sigma$ 
so that $\pi_U(\mathfrak h_\sigma)$ is a quasigeodesic ray representing $p$ and so that $\diam(\pi_V(\mathfrak 
h_\sigma))\leq\mu$ for $V\neq U$.
\end{defn}

\begin{rem}\label{rem:hinge_unique} Any two candidates for $\mathfrak
h_\sigma$ lie at finite Hausdorff distance, so for our purposes an
arbitrary choice is fine.  If $\sigma\neq\sigma'\in\hinges(\mathfrak S)$, then 
$\dist_{haus}(\mathfrak h_\sigma,\mathfrak h_{\sigma'})=\infty$.
\end{rem}

\begin{rem} Each hinge corresponds to a $0$--simplex in the HHS boundary $\boundary\cuco X$; see~\cite{HHS_boundary}.

\end{rem}

The first additional assumption holds, for example, in any hierarchically hyperbolic group for which the product regions 
$P_U$ can be taken to be subgroups. This is the case for all 
naturally occurring hierarchically hyperbolic structures on groups of 
which we are aware. However, there are some pathological structures, even 
on a free group, where the assumption fails.

\begin{assumption}\label{assumption:line}
For every $U\in\mathfrak S$, either $\diam(\fontact U)\leq E$ or $|\boundary\fontact U|\geq2$ has at least two points at infinity.
\end{assumption}

\begin{rem}In what follows, we could replace
Assumption~\ref{assumption:line} with: for each $U\in\mathfrak S$
which is the first coordinate of some hinge, $|\boundary\fontact
U|\geq2$.  Equivalently, each $U\in\mathfrak S$ which is the first
coordinate of some hinge is the first coordinate of at least two
hinges.\end{rem}

The second assumption roughly says that, if a standard 1--flat  
is contained in some standard flat, then it can be realized as the intersection of a pair of standard flats.

\begin{assumption}\label{assumption:intersect}
For every $U$ contained in a complete support set there exist complete support sets $\mathcal U_1,\mathcal U_2$ with $\{U\}=\mathcal U_1\cap \mathcal U_2$. 
\end{assumption}

The third assumption is a two-dimensional version of the second one; this 
assumption says that if a standard 2--flat is contained in a standard 
flat, then it can be obtained as the intersection of some 
pair of standard flats. 
 \begin{assumption}\label{assumption:extend}
  If $\nu>2$, then for every
 $U,V$, with each contained in a complete support 
 set and with $U\orth V$, there
 exist complete support sets $\mathcal U_1,\mathcal U_2$ with
 $\{U,V\}=\mathcal U_1\cap \mathcal U_2$.
 \end{assumption}
 
 \begin{remi}
The three preceding assumptions are, taken together, fairly
restrictive.  The first, as we said, is very general and 
holds for all ``interesting''
HHGs, including: mapping class groups, 
$3$--manifolds groups which are HHG, 
all groups acting geometrically on CAT(0) cube
complexes with \emph{factor systems} (see~\cite{hhs1,HagenSusse}) (a 
class which includes all compact special groups in the sense of
Haglund--Wise~\cite{HaglundWise}, and in particular all right-angled
Artin and Coxeter groups), etc. More generally, this first 
assumption also holds for a number of interesting HHSs as well.  These include Teichm\"uller spaces 
with the Weil-Petersson metric. On 
the other hand, this condition fails to hold for the HHS structure on a 
Teichm\"uller space endowed with the Teichm\"uller metric, since in such structure certain $\fontact U$ are (isometric to) horoballs in the hyperbolic plane, and thus have a single 
point as their boundary.

To see why the second condition is more restrictive, consider a 
right-angled Artin group $A_\Gamma$ presented by a finite simplicial
graph $\Gamma$.  There are two ``standard'' HHS structures
(see~\cite[Section 8]{hhs1} for more details), but for our purposes,
we take the one described in the introduction.  The second condition
implies that for each vertex $v\in\Gamma$ that is contained in a
maximal clique, there are two maximal cliques whose intersection is
$v$.  One can articulate a similar combinatorial condition on
right-angled Coxeter groups.  So, for example, the results in this
section do not immediately improve upon, or even recover, Huang's
results on quasi-isometric rigidity for right-angled Artin groups.

The third condition in the right angled Artin group 
case can similarly be interpreted as a combinatorial 
constraint on the intersection pattern of cliques in the presentation 
graph.
 \end{remi}

 In the following theorem we show that, under the additional assumptions stated above, quasi-isometries between HHSs naturally induce (orthogonality-preserving) bijections between corresponding sets of hinges. We think of such bijections as ``combinatorial data'' that we extract from the quasi-isometry.
 The proof relies on studying coarse-intersection patterns of orthants.

\begin{thm}\label{thm:clique_map}
Let $(\cuco X,\mathfrak S)$, $(\cuco Y,\mathfrak T)$ be asymphoric HHS satisfying 
assumptions (\ref{assumption:line}), (\ref{assumption:intersect}) and 
(\ref{assumption:extend}).  For any quasi-isometry $f\co\cuco X\to \cuco Y$,
there exists a bijection $f^\sharp\co\hinges(\mathfrak
S)\to\hinges(\mathfrak T)$ satisfying:
\begin{itemize}
\item $f^\sharp$ preserves orthogonality of hinges;
 \item for all $\sigma\in\hinges(\mathfrak S)$, we have $\dist_{haus}(\mathfrak h_{f^\sharp(\sigma)},f(\mathfrak h_\sigma))<\infty$.
\end{itemize}
\end{thm}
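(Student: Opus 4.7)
The plan is to transfer standard flats from $\cuco X$ to $\cuco Y$ via the Quasiflats Theorem (Theorem~\ref{thm:non_uniform}), and then recover individual hinges as coarse intersections of standard flats, exploiting Assumption~\ref{assumption:intersect}. The first step I would establish is the following flat-preservation claim: \emph{if $\calF \subset \cuco X$ is a standard flat with support $\mathcal U = \{U_1,\dots,U_\nu\}$, then $f(\calF)$ lies at finite Hausdorff distance from a standard flat $\calF'$ in $\cuco Y$ with a support $\mathcal V = \{V_1,\dots,V_\nu\}$.} To prove it, decompose $\calF$ into its $2^\nu$ standard orthants $O_\epsilon$ ($\epsilon\in\{\pm\}^\nu$), all with support $\mathcal U$. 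By Theorem~\ref{thm:non_uniform}, $f(\calF)$ is close to a finite union $\bigcup_j Q_j$ of standard orthants in $\cuco Y$; applying Theorem~\ref{thm:non_uniform} to a quasi-inverse of $f$ and using the volume-growth bound of Proposition~\ref{prop:growth}, one matches the $O_\epsilon$'s bijectively with the $Q_j$'s. The key observation is that two orthants $O_\epsilon, O_{\epsilon'}$ differing in exactly $k$ coordinates have coarse intersection which is a standard $(\nu{-}k)$-orthant with support equal to the $\nu{-}k$ shared directions; by Lemma~\ref{lem:intersection_of_orthants}, this pattern is preserved under the coarse intersections of the $Q_j$, which forces them to share a common support set $\mathcal V$ of size $\nu$ and to glue into a standard flat $\calF'$.

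Given this claim, I define $f^\sharp$ as follows. For $\sigma = (U,p)\in\hinges(\mathfrak S)$, Assumption~\ref{assumption:intersect} gives complete support sets $\mathcal U_1,\mathcal U_2$ with $\mathcal U_1\cap\mathcal U_2=\{U\}$. Using Assumption~\ref{assumption:line} to pick distinct boundary-point pairs in each factor (including $p$ in the $U$-coordinate), build standard flats $\calF_1,\calF_2$ supported on $\mathcal U_1,\mathcal U_2$ respectively, each containing $\mathfrak h_\sigma$ as a coordinate ray. By the flat-preservation claim, $f(\calF_j)$ lies close to a standard flat $\calF_j'$ with support $\mathcal V_j$. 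Since coarse intersections of hierarchically quasiconvex sets are preserved by quasi-isometries (Lemma~\ref{lem:coarse_intersect_hq}), and by Lemma~\ref{lem:intersection_of_orthants} the coarse intersection of $\calF_1',\calF_2'$ is a standard flat with support $\mathcal V_1\cap\mathcal V_2$, the fact that $\calF_1\tilde\cap\calF_2$ is a standard $1$-flat forces $\mathcal V_1\cap\mathcal V_2=\{V\}$ for a unique $V$, and the image $f(\mathfrak h_\sigma)$ to lie close to a coordinate ray of $\calF_1'$ in the $V$-direction, determining $q\in\boundary\fontact V$. Set $f^\sharp(\sigma) = (V,q)$.

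Well-definedness (independence of the choice of $\mathcal U_1,\mathcal U_2$) follows by comparing any two such choices through a third complete support set containing $U$, together with Remark~\ref{rem:hinge_unique}, which guarantees that hinge rays of distinct hinges are at infinite Hausdorff distance. Orthogonality: if $(U,p)\orth(V,q)$, then for $\nu>2$ Assumption~\ref{assumption:extend} (and for $\nu=2$ the definition) places $\{U,V\}$ in a complete support set, so both hinge rays appear as coordinate rays of a common standard flat, whose image under $f$ is close to a standard flat in $\cuco Y$ by Step~1; this exhibits $f^\sharp(U,p)$ and $f^\sharp(V,q)$ as orthogonal. Bijectivity follows by running the construction with a quasi-inverse $g\co\cuco Y\to\cuco X$ of $f$: the property that $\mathfrak h_{f^\sharp(\sigma)}$ lies at finite Hausdorff distance from $f(\mathfrak h_\sigma)$ characterizes $f^\sharp(\sigma)$ uniquely, so $g^\sharp\circ f^\sharp$ and $f^\sharp\circ g^\sharp$ act as the identity on hinge rays and hence on hinges. \textbf{The main obstacle} is the flat-preservation step: Theorem~\ref{thm:non_uniform} yields only closeness to a union of orthants, and showing that the $Q_j$ assemble into a single standard flat requires carefully propagating the ``adjacency pattern'' of the $2^\nu$ corner orthants of $\calF$ through both $f$ and $f^{-1}$, combining the intersection lemma with the volume-growth bound of Proposition~\ref{prop:growth} to rule out mismatches of support sets.
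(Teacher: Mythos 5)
Your overall architecture (push forward two standard flats whose supports meet in $\{U\}$, recover the hinge ray from $f(\calF_1)\tilde\cap f(\calF_2)$, get bijectivity from a quasi-inverse) matches the paper's, but the argument rests on a ``flat-preservation claim'' --- that $f$ sends a standard $\nu$--flat to within finite Hausdorff distance of a single standard $\nu$--flat --- which is false in the generality of the theorem. Take $\cuco X=\cuco Y=F_2\times F_2$, the RAAG on a $4$--cycle $a\!-\!b\!-\!c\!-\!d\!-\!a$ (rank $2$, no triangles, no leaves, so all three assumptions hold). An isometry of the tree $T_4\times\{\ast\}$ factor sending the axis of $a$ to a bi-infinite geodesic limiting to $a^{+\infty}$ and $c^{+\infty}$, crossed with the identity, carries the standard flat $\langle a\rangle\times\langle b\rangle$ to a union of four standard orthants supported on two \emph{different} complete support sets; this is not coarsely any standard flat. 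This is exactly why Theorem~\ref{thm:non_uniform} only gives a union of orthants and why the paper never upgrades it to a single flat. Your proposed proof of the claim (bijectively matching the $2^\nu$ corner orthants of $\calF$ with the orthants $Q_j$ via Proposition~\ref{prop:growth}) cannot work: the volume-growth bound controls only the number of $Q_j$, which need not equal $2^\nu$, and in the example above the adjacency pattern you want to propagate is simply not preserved.

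The definition of $f^\sharp$ survives this, because one can argue as the paper does: $f(\calF_1)\tilde\cap f(\calF_2)$ is a finite union of coarse intersections of standard orthants, hence (Lemma~\ref{lem:intersection_of_orthants}) a finite union of standard $k$--orthants that is coarsely $\reals$, so some piece is a standard $1$--orthant coarsely equal to $f(\mathfrak h_\sigma)$, yielding $(V,q)$. But your orthogonality step collapses without flat preservation: you cannot place $\mathfrak h_{f^\sharp(\sigma)}$ and $\mathfrak h_{f^\sharp(\sigma')}$ as coordinate rays of a common standard flat in $\cuco Y$. The paper instead uses Assumption~\ref{assumption:extend} to realize a standard $2$--flat containing both rays as $\calF_1\tilde\cap\calF_2$, observes that $f(\calF_1)\tilde\cap f(\calF_2)$ is a $2$--dimensional quasiflat equal to a union of possibly \emph{many} disjoint standard $2$--orthants $O_0,\ldots,O_{t-1}$, cyclically orders their coordinate rays, and shows the two image rays must be adjacent in that order --- the contradiction otherwise coming from pulling an intermediate ray back to a third hinge ray in the original $2$--flat and invoking Lemma~\ref{lem:intersection_of_orthants}. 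Some argument of this kind is genuinely needed, and it is the main content missing from your proposal.
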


\begin{rem}
Under suitable conditions, we expect that there exists an analogue of
Theorem~\ref{thm:clique_map} in which hinges are replaced by sets of
pairs $\{(U_i,p_i)\}$, where $\{U_i\}_i$ is a pairwise orthogonal set
and $p_i\in\boundary\fontact U_i$.  In particular, one should be able
to show in this way that isolated flats are taken close to isolated
flats.  More strongly, one could consider the situation where flats
coarsely intersect in subspaces of codimension $\geq2$, as
in~\cite{Alex:random_self_citation}.
\end{rem}

\begin{proof}[Proof of Theorem \ref{thm:clique_map}]
Let $\sigma=(U,p)\in\hinges(\mathfrak S)$. 

\textbf{How we will define $f^\sharp$:}  We will produce a hinge $\sigma'$ so that $\dist_{haus}(\mathfrak h_{\sigma'},f(\mathfrak h_\sigma))<\infty$.  Remark~\ref{rem:hinge_unique} implies that $\sigma'$ is uniquely determined by this property, so we can set $f^\sharp(\sigma)=\sigma'$.  To see that this is a bijection, let $\bar f:\cuco Y\to\cuco X$ be a quasi-inverse of $f$.  Then $\dist_{haus}(\bar f(h_{\sigma'}),h_\sigma)<\infty$, so we can define an inverse for $f^\sharp$ in the same way.

\textbf{Choosing $\sigma'$:} Since $(U,p)$ is a hinge, $U$ is in a complete support set.

Notice that, by Assumption~\ref{assumption:line}, for any complete support set $\{U_i\}_i$ we have $|\boundary\fontact U_{i}|\geq 2$ for each $i$, and hence there exists a standard flat $\mathcal F$ with support $\{U_i\}_i$. 

In view of this, Assumption~\ref{assumption:intersect} provides two standard flats $\calF_1,\calF_2$, the intersection of whose 
support sets is $\{U\}$. Furthermore, we claim that we can arrange that 
$\calF_1\tilde\cap\calF_2$ is coarsely a line and coarsely contains $\mathfrak h_\sigma$.  This can done as follows. Consider any hinge $(U,q)$ with $q\neq p$ (which exists by Assumption~\ref{assumption:line}).  Assumption~\ref{assumption:intersect} provides 
complete support sets $\{V_j\},\{V_j'\}$ whose intersection is $\{U\}$.  Label so that $V_1=V_1'=U$.  So, 
by choosing distinct points $a_j,b_j\in\boundary\fontact V_j$ and $a_j',b_j'\in\boundary\fontact V_j'$ in such 
a way that $a_1=a_1'=p$ and $b_1=b_1'=q$, we obtain standard flats $\mathcal F_1,\mathcal F_2$ with the 
given support sets and, by Lemma~\ref{lem:standard_flat_intersection}, coarse intersection which is either bounded, a standard $1$--orthant, or a standard $1$--flat supported on $U$. Both flats coarsely contain a standard $1$--flat (with ``limit points'' $p$ and $q$), so the last case must hold. Moreover, the aforementioned standard $1$--flat coarsely contains $\mathfrak h_\sigma$ by Remark \ref{rem:hinge_unique}.

By Theorem 
\ref{thm:non_uniform} (Quasiflats Theorem), $f(\calF_1)$ and $f(\calF_2)$ are coarsely equal to unions of finitely many standard 
orthants.  Hence $f(\calF_1)\tilde\cap f(\calF_2)$ has the following three properties:

\begin{itemize}
 \item $f(\calF_1)\tilde\cap f(\calF_2)$ is coarsely a finite union of coarse intersections of pairs of
standard orthants.  Indeed, $f(\calF_1)$ coarsely coincides with $\bigcup_sO_s$ and $f(\calF_2)$ coarsely 
coincides with $\bigcup_tO'_t$, where $O_s,O'_t$ are standard orthants provided by 
Theorem~\ref{thm:non_uniform}.  Hence $f(\calF_1)\tilde\cap f(\calF_2)$ coarsely coincides with 
$\bigcup_{s,t}O_s\tilde\cap O_t'$.
 \item $f(\calF_1)\tilde\cap f(\calF_2)$ is coarsely $\reals$, because $\calF_1\tilde\cap\calF_2$ was coarsely 
$\reals$.
 \item $f(\calF_1)\tilde\cap f(\calF_2)$ coarsely contains $f(\mathfrak h_\sigma)$, because 
$\calF_1\tilde\cap\calF_2$ coarsely contained $\mathfrak h_\sigma$. 
\end{itemize}

By Lemma \ref{lem:intersection_of_orthants} and the first of the above
properties, $f(\calF_1)\tilde\cap f(\calF_2)$ is coarsely the finite union of
standard $k$--orthants, which arise as coarse intersections of pairs of standard orthants.
Hence, one of these pairs gives a $1$--orthant (in particular,
a copy of $\reals_{+}$) which coarsely coincides with $f(\mathfrak
h_\sigma)$.

Let $\sigma'$ be the hinge $(V,q)$, where $V$ is the domain of the orthant just determined and $q$ is the unique point in $\boundary V$ determined by the fact that $f(\mathfrak h_\sigma)$ projects to a quasi-geodesic ray in $\fontact V$.  Then $\sigma'$ is the hinge uniquely determined by $f(\mathfrak h_\sigma)$, as required.

\textbf{Preservation of orthogonality:}  Let
$\sigma,\sigma'$ be orthogonal hinges.  Assumption~\ref{assumption:extend} and 
Lemma~\ref{lem:standard_flat_intersection} provide a standard $2$--flat,
$\calF$, coarsely containing $\mathfrak h_\sigma$ and $\mathfrak
h_{\sigma'}$.  Moreover, $\calF$ coarsely coincides with $\calF_1\tilde\cap\calF_2$, for standard flats $\calF_1,\calF_2$.

Hence $f(\calF_1)\tilde\cap f(\calF_2)$ is a $2$--dimensional quasiflat.  On the other hand, by 
Theorem~\ref{thm:non_uniform},  $f(\calF_1)\tilde\cap f(\calF_2)$ is coarsely the union of finitely many 
coarse intersections of pairs of standard orthants. Lemma~\ref{lem:intersection_of_orthants} shows that each 
of these intersections is coarsely a standard $k$--orthant for $k\geq 2$. Since 
$f(\calF_1)\tilde\cap f(\calF_2)$ is a $2$--dimensional quasiflat, we can discard any of the above 
intersections which is coarsely a $0$--orthant or $1$--orthant.  In other words, $f(\calF_1)\tilde\cap 
f(\calF_2)$ is coarsely the union of disjoint standard $2$--orthants $O_0,\ldots,O_{t-1}$.  Moreover, 
$\mathfrak h_{f^\sharp(\sigma)}$ and $\mathfrak h_{f^\sharp(\sigma')}$ coarsely coincide with coordinate rays 
of some $O_i,O_j$.

\begin{figure}[h]
\begin{overpic}[width=0.5\textwidth]{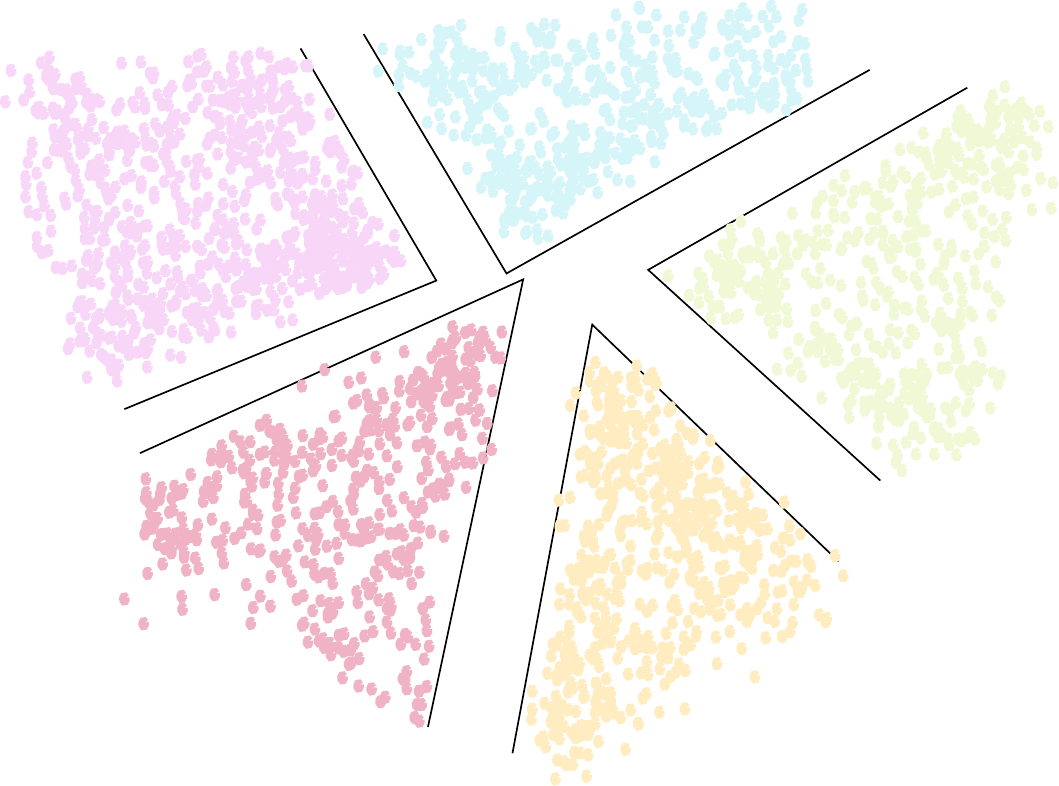}
\put(22,55){$O_0$}
\put(47,60){$O_1$}
\put(72,48){$O_2$}
\put(57,20){$O_3$}
\put(27,20){$O_4$}
\put(38,54){$r^+_0$}
\put(67,56){$r^+_1$}
\put(72,30){$r^+_2$}
\put(45,20){$r^+_3$}
\put(28,40){$r^+_4$}
\end{overpic}
\caption{The $2$--orthants $O_0,\ldots,O_t$ and the cyclic ordering of their coordinate rays (up to coarse coincidence).}\label{fig:ordering}
\end{figure}

Now, as shown in Figure~\ref{fig:ordering}, we can cyclically order the coordinate rays in $O_0,\ldots,O_{t-1}$.  First, label the orthants so that for each $s\in\integers_t$, the $2$--orthant $O_s$ has the property that one of its coordinate rays $r_s^-$ coarsely coincides with a coordinate ray in $O_{s-1}$ and the other, $r_s^+$, coarsely coincides with a coordinate ray in $O_{s+1}$.  Now cyclically order the coarse equivalence classes of rays: $r_0^+,r_1^+,\ldots,r^+_{t-1}$.

We claim that $\mathfrak h_{f^\sharp(\sigma)}$,
$\mathfrak h_{f^\sharp(\sigma')}$ must be adjacent in this order.  This will imply that they are coarsely contained in a common $2$--orthant, and hence $f^\sharp(\sigma)\orth f^\sharp(\sigma')$, as required.

Indeed, if there was a coordinate ray $r$ between $\mathfrak
h_{f^\sharp(\sigma)}$ and $\mathfrak h_{f^\sharp(\sigma')}$, then $r$
is coarsely $\mathfrak h_{f^\sharp(\sigma'')}$, so that by definition
$f^{-1}(r)$ is coarsely $\mathfrak h_{\sigma''}$. (Here we used Assumption \ref{assumption:intersect}, which guarantees that 
$r$ is the ray associated to some hinge, and bijectivity of $f^\sharp$.) But then $\mathfrak
h_{\sigma},\mathfrak h_{\sigma'}, \mathfrak h_{\sigma''}$ pairwise
have infinite Hausdorff distance, are contained in the same standard
$2$--orthant, and they each arise as the coarse intersection with some
other orthant, contradicting Lemma \ref{lem:intersection_of_orthants}.
\end{proof}

\subsection{Sharpening of $f^\sharp$}

The hinge $f^\sharp(\sigma)$ prescribes a hierarchy ray which lies within finite
distance of $f(\mathfrak h_\sigma)$, but it does not (and cannot)
provide a uniform bound on the distance; which is what one typically
needs to show that two given quasi-isometries coarsely coincide.
Under many circumstances, finiteness can actually be promoted to a
uniform bound, with little extra work.  As an illustration of this, we
give an example tailored to the mapping class group case in the
following lemma. The content of the lemma is that if a quasi-isometry matches the ``combinatorial data'' of a standard flat 
to the data of another standard flat,  then it maps the former flat within uniform distance of the latter.

\begin{lem}[Flats go to flats]\label{lem:flat_to_flat}
 Let $(\cuco X,\frak S), (\cuco Y, \mathfrak T)$ be asymphoric HHS satisfying 
 Assumptions (\ref{assumption:line}), (\ref{assumption:intersect}) and 
   (\ref{assumption:extend}). There exists $C$ with the following property.  Let $\{U_i\}_{i=1}^n\subseteq \frak S$
 be a complete support set, and let $p_i^\pm$ be distinct points in 
 $\boundary\fontact U_i$.
  Suppose that there exists a complete
 support set $\{V_i\}_{i=1}^n\subseteq \frak T$ and distinct points $q_i^\pm\in\partial\fontact V_i$ so that for each
 $k=1,\dots,n$ we have $f^\sharp(U_{k},p^{\pm}_k)=(V_k,q_k^\pm)$.  Then, 
 $\dist_{haus}(f(\calF_{\{(U_i,p_i^\pm\}\}}),\calF_{\{(V_j,q_j^\pm)\}})\leq C$.
\end{lem}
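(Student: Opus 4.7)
The strategy is to combine Corollary~\ref{cor:flats_close_to_hulls} with the hinge identification from Theorem~\ref{thm:clique_map}, exploiting the bi-infiniteness of $\calF$ to apply hyperbolic rigidity in each factor $\fontact{V_k}$.

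First, I would apply Corollary~\ref{cor:flats_close_to_hulls} to the quasi-isometric embedding $f|_\calF\co\reals^\nu\to\cuco Y$, whose QI constants depend only on $f$. This yields uniform $L,N$ and standard orthants $O_1,\ldots,O_N\subset\cuco Y$ with $f(\calF)\subseteq\neb_L(H_\theta(\bigcup_i O_i))$. Decomposing $\calF=\bigcup_{\epsilon\in\{\pm\}^\nu} O_\epsilon(\calF)$ into its $2^\nu$ standard sub-orthants and applying Lemma~\ref{lem:quasi_median_top_dimensional} to each $f(O_\epsilon(\calF))$, the hypothesis on $f^\sharp$ together with Remark~\ref{rem:hinge_unique} forces each approximating $O_i$ to have support $\{V_1,\ldots,V_\nu\}$ and boundary data $\{q_k^{\epsilon_k^i}\}_k$ for some sign vector $\epsilon^i$, with every sign vector realized (else some $f(O_\epsilon(\calF))$ could not fit in the hull).

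For the uniform bound, I would use that $\calF$ contains, in each of its $\nu$ coordinate directions, a bi-infinite hierarchy path $A_k$ with endpoints $p_k^\pm\in\boundary\fontact{U_k}$. The image $f(A_k)$ is a bi-infinite quasigeodesic whose projection to $\fontact{V_k}$ is, by the defining property of $f^\sharp$, a bi-infinite quasigeodesic with the same endpoints $q_k^\pm$ as $\pi_{V_k}(\calF')$. By hyperbolicity of $\fontact{V_k}$, these two bi-infinite quasigeodesics lie at uniformly bounded Hausdorff distance. Combined with the observation that both $f(\calF)$ and $\calF'$ project to uniformly bounded subsets of any $V\in\mathfrak T$ not nested in some $V_k$ (using Remark~\ref{rem:bounded_proj} and hierarchical quasiconvexity of the standard product region $F_{V_1}\times\cdots\times F_{V_\nu}$), the distance formula (Theorem~\ref{thm:distance_formula}) gives $H_\theta(\bigcup_i O_i)\subseteq\neb_{C'}(\calF')$ uniformly, so $f(\calF)\subseteq\neb_C(\calF')$. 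A symmetric argument applied to a quasi-inverse of $f$ yields the reverse inclusion.

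The main obstacle is promoting the per-hinge finite-distance conclusion of Theorem~\ref{thm:clique_map} into a uniform one for the whole flat. A priori, the corners of the approximating orthants $O_i$ could project far from $\pi_{V_k}(\calF')$, and pairs of quasigeodesic \emph{rays} in a hyperbolic space sharing an endpoint at infinity are only at bounded distance in a way that depends on their starting points. This is resolved by the bi-infiniteness of $\calF$ (and hence of $\calF'$): two bi-infinite quasigeodesics in a hyperbolic space with the \emph{same two} endpoints at infinity lie at uniformly bounded Hausdorff distance, so the rigidity afforded by a complete support set upgrades the non-uniform hinge-wise bound into a uniform global bound.
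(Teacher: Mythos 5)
Your overall architecture (forward containment, then a quasi-inverse for the reverse one) matches the paper's, and your identification of the supports and boundary data of the approximating orthants is fine. The gap is in the uniformization step. The defining property of $f^\sharp$ gives only that $\dist_{haus}(f(\mathfrak h_\sigma),\mathfrak h_{f^\sharp(\sigma)})$ is \emph{finite}, not uniformly bounded, and likewise Theorem~\ref{thm:uniform} places the orthants $O_i$ only at finite Hausdorff distance from $f(\reals^\nu)$. Consequently $\pi_{V_k}(f(A_k))$ is not a bi-infinite quasigeodesic with uniform constants: it is merely a set at finite Hausdorff distance from the union of two rays asymptotic to $q_k^{\pm}$, and for $W\neq V_k$ the diameter of $\pi_W(f(A_k))$ is finite but not uniformly bounded, so no uniform lower quasigeodesic bound on the projection is available. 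The Morse lemma for bi-infinite quasigeodesics with common endpoints yields a bound depending on the quasigeodesic constants, so it cannot convert ``finite'' into ``uniform'' here. For the same reason the corners of the $O_i$, and hence $H_\theta(\bigcup_iO_i)$, are only finitely (not uniformly) close to $\calF_{\{(V_j,q_j^\pm)\}}$, so your claimed containment $H_\theta(\bigcup_iO_i)\subseteq\neb_{C'}(\calF_{\{(V_j,q_j^\pm)\}})$ with uniform $C'$ is unjustified.

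The mechanism that actually performs the upgrade is Lemma~\ref{lem:flats_close_to_hulls} itself (i.e.\ the limiting argument of Proposition~\ref{prop:thanks_Mauro}), applied with the $2^\nu$ standard sub-orthants of the \emph{target} flat $\calF_{\{(V_j,q_j^\pm)\}}$ playing the role of the $O_i$, rather than with the orthants produced by Corollary~\ref{cor:flats_close_to_hulls}. Your hinge analysis does show that $f(\calF_{\{(U_i,p_i^\pm)\}})$ lies at finite Hausdorff distance from that union (two standard orthants with the same support and the same boundary data lie at finite Hausdorff distance, each $F_{V_k}$ being hyperbolic), and finiteness is all that Lemma~\ref{lem:flats_close_to_hulls} requires: its conclusion $f(\calF_{\{(U_i,p_i^\pm)\}})\subseteq\neb_L\bigl(H_\theta(\calF_{\{(V_j,q_j^\pm)\}})\bigr)$ then holds with $L=L(K,2^\nu)$ uniform. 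Since $\calF_{\{(V_j,q_j^\pm)\}}$ is hierarchically quasiconvex it coarsely coincides with its own hull, giving the forward containment; the reverse containment via a quasi-inverse of $f$ is as you say.
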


\begin{proof}
Hierarchical quasiconvexity of $\calF_{\{(V_j,q_j^\pm)\}}$ implies it 
uniformly coarsely coincides with
$H_\theta(\calF_{\{(V_j,q_j^\pm)\}})$.
Containment of $f(\calF_{\{(U_i,p_i^\pm)\}})$ in a uniform neighborhood of
$\calF_{\{(V_j,q_j^\pm)\}}$ then follows from 
Lemma~\ref{lem:flats_close_to_hulls}.
The other containment follows by 
applying the same argument to a quasi-inverse of $f$.
\end{proof}

\subsection{Mapping class groups}\label{subsec:mcgqi}

We now use Theorem \ref{thm:clique_map} to provide a new proof of
quasi-isometric rigidity of mapping class groups. 
Like all proofs of quasi-isometric rigidity for mapping class groups, 
the goal of our proof is to prove that any quasi-isometry of the 
mapping class group induces a simplicial automorphism of
$\fontact S$, at which point we can apply Ivanov's theorem 
\cite{Ivanov} to conclude that the automorphism 
is induced by an element of the mapping class group. 
Using our quasiflats theorem 
we can readily convert the geometric information 
of a quasi-isometry to combinatorial information about the structure 
of standard flats. Then, via 
Theorem~\ref{thm:clique_map}, from the combinatorial structure of 
quasiflats we can extract an induced map on certain coordinate directions in the 
standard flats. In the mapping class group setting, these directions 
correspond to Dehn twist directions, thus giving us the automorphism 
of the curve graph which is needed to apply Ivanov's theorem.

\begin{thm}\cite{BKMM}
 Let $\cuco X$ be the the mapping class group of a non-sporadic
 surface $S$.  Then for any $K$ there exists $L$ so that: for each $(K,K)$--quasi-isometry $f\co\cuco X\to\cuco X$ there 
exists a
 mapping class $g$ so that $f$ $L$--coarsely coincides with
 left-multiplication by $g$.
\end{thm}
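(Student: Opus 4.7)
The plan is to apply Theorem~\ref{thm:clique_map} to extract from $f$ an orthogonality-preserving bijection on hinges, promote it to a simplicial automorphism of the curve complex $\mathcal C(S)$, invoke Ivanov's theorem to produce a mapping class $g_{0}$ realizing that action, and then show that $g_{0}^{-1}\circ f$ is uniformly close to the identity using Lemma~\ref{lem:flat_to_flat}.

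First I would verify the hypotheses of Theorem~\ref{thm:clique_map} for the standard HHS structure on $\MCG(S)$ from \cite[Section~11]{hhs2}: here $\mathfrak S$ is the set of isotopy classes of essential subsurfaces of $S$ (including annuli); $\fontact U$ is the curve graph of $U$; nesting is containment and orthogonality is disjointness of non-nested subsurfaces. The rank is $\nu=3g-3+n$, and $(\MCG(S),\mathfrak S)$ is asymphoric because any pairwise-orthogonal family of cardinality $\nu$ must consist of annuli around the curves of a pants decomposition. Consequently, complete support sets correspond bijectively to pants decompositions; each annular curve graph is quasi-isometric to $\reals$ with a two-point Gromov boundary, so Assumption~\ref{assumption:line} holds, while Assumptions~\ref{assumption:intersect} and~\ref{assumption:extend} reduce to the elementary surface-topology fact that any collection of at most two pairwise disjoint essential simple closed curves on a non-sporadic $S$ extends to two distinct pants decompositions whose common curves are precisely those prescribed.

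Applying Theorem~\ref{thm:clique_map} I obtain a bijection $f^{\sharp}\colon\hinges(\mathfrak S)\to\hinges(\mathfrak S)$ preserving orthogonality and matching each hinge ray within finite Hausdorff distance. Two hinges share the same first coordinate if and only if they have the same set of orthogonal partners, so $f^{\sharp}$ descends to a bijection $\phi$ on the set of annular subsurfaces, i.e., on the vertex set of $\mathcal C(S)$; since orthogonality of annular hinges translates to disjointness of curves, $\phi$ preserves disjointness in both directions and is therefore a simplicial automorphism of $\mathcal C(S)$. By Ivanov's theorem (with the extensions of Korkmaz and Luo in low complexity), there exists $g_{0}\in\MCG^{\pm}(S)$ with $\phi(c)=g_{0}\cdot c$ for every simple closed curve $c$. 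I replace $f$ by $f':=g_{0}^{-1}\circ f$, a $(K',K')$--quasi-isometry with $K'=K'(K)$, so that $(f')^{\sharp}$ now fixes the underlying subsurface of every hinge; a further composition with an orientation-reversing involution, if needed, arranges $(f')^{\sharp}$ to be the identity on $\hinges(\mathfrak S)$.

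To show $f'$ is uniformly close to the identity, I invoke Lemma~\ref{lem:flat_to_flat}: for every pants decomposition $P$ and every standard flat $\calF$ supported on $\{A_{c}:c\in P\}$, the image $f'(\calF)$ lies within Hausdorff distance $C=C(K')$ of a standard flat with the same support and endpoints at infinity. For each $x\in\MCG(S)$ and each pants decomposition $P$ there is a standard flat $\calF_{P,x}$ passing uniformly close to $x$, and $f'(x)$ lies within $C'$ of the corresponding image flat. Choosing two pants decompositions $P_{1},P_{2}$ with $P_{1}\cap P_{2}=\emptyset$ (which exist for any non-sporadic $S$), Lemma~\ref{lem:intersection_of_orthants} shows that the coarse intersection of the two image flats is bounded by $L=L(K)$, and this bounded set contains both $x$ and $f'(x)$, so $\dist(x,f'(x))\leq L$.

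The main obstacle is controlling the ambiguity in Lemma~\ref{lem:flat_to_flat}, which only matches $f'(\calF)$ to \emph{some} standard flat with the right hinges at infinity rather than to $\calF$ itself, since \emph{a priori} $f'$ could drift each flat by an unbounded parallel translation. I would handle this by first running the coarse-intersection argument at $x=e$ to bound $\dist(e,f'(e))$ by some $L_{0}=L_{0}(K)$, thereby rigidifying the parallel translation at the basepoint, and then propagating the bound pointwise by intersecting two flats through an arbitrary $x$ with disjoint pants decompositions and using the uniform boundedness of their coarse intersection around $x$.
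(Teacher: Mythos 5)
Your overall architecture matches the paper's: verify the hypotheses of Theorem~\ref{thm:clique_map}, extract $f^\sharp$, descend to a simplicial automorphism of $\fontact S$, invoke Ivanov, and then pin down $f(x)$ pointwise using Lemma~\ref{lem:flat_to_flat} and the bounded coarse intersection of two Dehn twist flats through $x$. However, there is a genuine gap at the step where you pass from $f^\sharp$ to an automorphism of the curve graph, and it stems from a false premise: complete support sets in $\MCG(S)$ do \emph{not} correspond bijectively to pants decompositions. A subsurface lies in a complete support set if and only if it is an annulus, a once-punctured torus, or a four-holed sphere; for instance, a once-punctured torus $T$ together with the annulus about $\partial T$ and annuli about the remaining curves of a pants decomposition containing $\partial T$ is a complete support set of cardinality $\nu=3g-3+n$ that is not a family of annuli. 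Consequently the set of first coordinates of hinges is strictly larger than the vertex set of $\fontact S$, and \emph{a priori} $f^\sharp$ could send an annular hinge to a hinge supported on a once-punctured torus. Your claim that $f^\sharp$ ``descends to a bijection on the set of annular subsurfaces'' is therefore unjustified as written; the statement ``two hinges share the same first coordinate iff they have the same orthogonal partners'' does not address which first coordinates are annuli.

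The missing idea — and the main combinatorial content of the paper's proof — is a characterization of annular hinges purely in terms of the orthogonality relation, so that it is automatically preserved by $f^\sharp$: a hinge $\sigma$ is annular if and only if it belongs to some maximal pairwise-orthogonal collection $\mathcal H$ of hinges for which there is a \emph{unique} hinge $\sigma'$ with $(\mathcal H-\{\sigma\})\cup\{\sigma'\}$ again maximal (take $\mathcal H$ so that the core curve of $U$ is non-replaceable, e.g.\ the annulus about the boundary of a once-punctured torus in the collection; then $\sigma'=(U,p^-)$ is forced). The same observation shows that the two hinges over a fixed annulus $U$ are sent to the two hinges over a single annulus $V$, which is what makes $f^*\co U\mapsto V$ (and hence $\phi$ on vertices of $\fontact S$) well defined. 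Once this is supplied, the remainder of your argument goes through essentially as in the paper; your worry about ``parallel drift'' is not actually an issue, since Lemma~\ref{lem:flat_to_flat} already identifies the target flat by its hinges and a Dehn twist flat is coarsely determined by its support and boundary data.
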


\begin{proof}
 Consider the standard HHS structure on $\cuco X$, so that $\mathfrak
 S$ is the collection of all essential subsurfaces, and the $\fontact
 Y$ are curve complexes. (For details on the structure, see 
 \cite[Section~11]{hhs2}.)

A subsurface $Y$ lies in a complete support set if and only if it is
an annulus, a once-punctured torus or a 4-holed sphere.  The
assumptions of Theorem~\ref{thm:clique_map} are clearly satisfied.

Consider any quasi-isometry $f\co\cuco X\to\cuco X$. A 
hinge $(U,p)$ is \emph{annular} if $U$ is an annulus. We now show 
that if a 
hinge $\sigma$ is annular, then so is $f^\sharp(\sigma)$.  Indeed, a hinge $\sigma$ being annular is characterized by the following property: $\sigma$ is contained in a maximal collection $\mathcal H$ of pairwise orthogonal hinges, and there exists a unique hinge $\sigma'$ so that $(\mathcal H-\{\sigma\})\cup\{\sigma'\}$ is a maximal pairwise orthogonal set of hinges.  This property is illustrated in Figure~\ref{fig:nonreplaceable}, where, if $\sigma$ is $(U,p^+)$, then $\sigma'$ is $(U,p^-)$, where $\boundary\fontact U=\{p^\pm\}$.

Since the bijection $f^\sharp$ preserves orthogonality and non-orthogonality, it preserves the above property, so $f^\sharp$ preserves being annular.  
\begin{figure}[h]
    \begin{overpic}[width=0.6\textwidth]{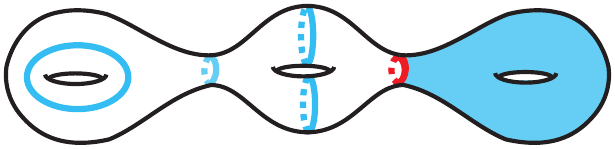}
    \put(64,17){$U$}
    \end{overpic}
 \caption{This figure shows 
 a complete support set, consisting of five annuli and one 
 once-punctured torus. This is the only complete support set 
 containing all the subsurfaces except the annulus about the boundary 
 of the once-punctured torus, denoted $U$ in the figure. In this sense, $U$ is non-replaceable.}\label{fig:nonreplaceable}
\end{figure}

Note that for any annulus $U$, the set $\boundary \fontact U$ has
exactly two points.  We now claim that for each annulus $U$ there
exists an annulus $V$ so that, denoting $\{p^\pm\}=\boundary\fontact
U$, we have $f^\sharp(U,p^\pm)=(V,q^\pm)$ for $q^\pm\in\boundary
\fontact V$.  This holds as above, since for some maximal set
$\mathcal H$ of pairwise orthogonal hinges containing $(U,p^+)$, the
hinge $(U,p^-)$ is the only hinge such that $\mathcal
H-\{(U,p^+)\}\cup\{(U,p^-)\}$ is a maximal set of pairwise orthogonal
hinges.  In this sense, the annulus $U$ is ``non-replaceable''.  We
write $V=f^*(U)$.  Notice that Lemma \ref{lem:flat_to_flat} now
applies to show that any Dehn twist flat of $\cuco X$ is mapped within
uniformly bounded distance of a Dehn twist flat.

Moreover, we have a well defined simplicial automorphism $\phi$ of the
curve graph $\fontact S$, where $\phi(\alpha)=\beta$ if $B=f^*(A)$,
where the annuli $A,B$ have core curves $\alpha,\beta$ respectively.
By a theorem of Ivanov \cite{Ivanov}, any simplicial automorphism of
$\fontact S$ is induced by an element of the mapping class group; we
denote by $g$ the element corresponding to $\phi$.

Suppose we are given a Dehn twist flat $\calF$ with complete support 
set $\mathcal U$. Then, as noted above, $f(\calF)$ is coarsely a Dehn twist  
flat with complete support set 
$\{f^*(U)\}_{U\in\mathcal U}=\{gU\}_{U\in\mathcal U}$.

We can now conclude that for any Dehn twist flat $\calF$,  we have 
that $f(\calF)$ and $g\calF$ are  
within bounded Hausdorff distance.
For any point $x\in\cuco X$, we can find Dehn twist flats $\calF^x_1,\calF^x_2$ that have neighborhoods of uniformly bounded radius whose intersection contains $x$ and has uniformly bounded diameter. Since $g\calF^x_i,f(\calF^x_i)$ coarsely coincide for $i=1,2$, we see that $gx$ and $f(x)$ must coarsely coincide. Hence we get that the automorphism of $\cuco X$ given by 
left-multiplication by $g$  is 
uniformly close to the quasi-isometry $f$, as desired.
\end{proof}

\section{Factored spaces}\label{sec:factored}
In this section we show that, under certain circumstances,
quasi-isometries between HHSs descend to quasi-isometries between some
of their ``factored'' spaces, which are spaces obtained by coning off a
collection of standard product regions. These factored spaces 
are HHSs themselves and their
complexity is lower than the complexity of the original HHS. Hence, 
studying 
induced quasi-isometries on factored spaces can be part of an
inductive procedure for studying quasi-isometries of the original 
space (see also the
introduction).

\begin{notation}\label{not:factored}
 Given $\mathfrak U\subseteq \mathfrak S$, let $\mathfrak U^\nest$ be
the collection of all $V\in\mathfrak S$ so that there exists
$U\in\mathfrak U$ with $V\nest U$.
We let $\mathfrak U=\mathfrak U_{\cuco X}\subset\mathfrak S$ denote the union of all cardinality--$\nu$
pairwise-orthogonal subsets of $\mathfrak S$.  Let $\widehat{\cuco X}$
be the factored space associated to $\mathfrak U^\nest$, which is the
space obtained from $\cuco X$ by coning off all $F_U$ for
$U\in\mathfrak U^\nest$ (as described in~\cite[Definition 2.1]{hhs3}). There exists a Lipschitz factor
map $q=q_{\cuco X}\co \cuco X\to\widehat{\cuco X}$ by 
\cite[Proposition 2.2]{hhs3}.
\end{notation}

By \cite[Proposition 2.4]{hhs3}, $\widehat{\cuco X}$ has a natural HHS structure with
index set $\mathfrak S- \mathfrak U^\nest$.

\begin{thm}[Quasiflats collapse in factored spaces]\label{thm:cone_bound}
Let $\cuco X$ be an asymphoric HHS of rank $\nu$.  For any $K$, there
exists $\Delta$ so that for all $(K,K)$--quasi-isometric embeddings
$f\co\reals^\nu\to\cuco X$, we have $\diam(q\circ
f(\reals^\nu))\leq\Delta$.
\end{thm}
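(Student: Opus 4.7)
The plan is to combine the approximation of quasiflats by finitely many standard orthants, given by Corollary~\ref{cor:flats_close_to_hulls}, with a direct analysis of how standard orthants sit in $\widehat{\cuco X}$.  Applying Corollary~\ref{cor:flats_close_to_hulls} to $f$ produces constants $L=L(K)$ and $N=N(K)$, together with standard $\nu$--orthants $O_1,\dots,O_N\subseteq\cuco X$ so that $f(\reals^\nu)\subseteq\neb_L\big(\bigcup_{i=1}^N O_i\big)$.  Since $q$ is Lipschitz, it will suffice to bound the $\widehat{\cuco X}$--diameter of each $q(O_i)$ and to bound the pairwise $\widehat{\cuco X}$--distances between the $q(O_i)$.

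For the first bound, each $O_i$ has a complete support set $\{U_i^{(1)},\dots,U_i^{(\nu)}\}$, a $\nu$--tuple of pairwise-orthogonal elements, so each $U_i^{(j)}$ lies in $\mathfrak U\subseteq\mathfrak U^\nest$.  Writing $O_i$ as the image under the standard embedding of a product $\gamma_i^{(1)}\times\cdots\times\gamma_i^{(\nu)}$ of hierarchy rays, any two points of $O_i$ are joined by a concatenation of $\nu$ axial segments, each of which lies inside a parallel copy of some $F_{U_i^{(j)}}$.  Since each such $F_U$ is coned off in $\widehat{\cuco X}$, each axial segment has $\widehat{\cuco X}$--diameter at most $1$, giving $\diam_{\widehat{\cuco X}}(q(O_i))\leq 2\nu$.

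To bound the pairwise distances I use connectedness of $\reals^\nu$.  Given $x,y\in\reals^\nu$, I subdivide a geodesic from $x$ to $y$ into unit steps and apply $f$ to obtain a sequence of consecutive points in $f(\reals^\nu)$ at pairwise $\cuco X$--distance $\leq 2K$, each lying within $L$ of some $O_{\iota}$.  Reading off the sequence of orthant indices yields a walk in the \emph{orthant-adjacency graph} on $\{O_1,\dots,O_N\}$, where two orthants are declared adjacent when witnessed by a pair of consecutive points lying near different orthants; any such witness gives a pair of points at $\cuco X$--distance $\leq 2L+2K$, hence at $\widehat{\cuco X}$--distance $\leq 2L+2K$.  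Since this walk can be shortened to one of length at most $N$, combining the transition bound with the per-orthant bound yields an explicit $\Delta=\Delta(K)$ with $\dist_{\widehat{\cuco X}}(q(f(x)),q(f(y)))\leq\Delta$.

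The main obstacle I anticipate is making precise the phrase ``each parallel copy of $F_U$ is coned off,'' since the axial segments of $O_i$ generally lie in translated rather than canonical copies of $F_{U_i^{(j)}}$.  This should follow directly from the definition in \cite[Definition~2.1]{hhs3} together with the naturality of the standard product region construction, but it is the one structural input that must be verified carefully; the remainder of the argument is essentially combinatorial bookkeeping.
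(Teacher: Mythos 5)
The central step of your argument rests on a misreading of Corollary~\ref{cor:flats_close_to_hulls}: that corollary places $f(\reals^\nu)$ in $\neb_L\bigl(H_\theta(\bigcup_{i=1}^N O_i)\bigr)$, a uniform neighborhood of the \emph{hull} of the union of orthants, not of the union $\bigcup_i O_i$ itself. The distinction is essential: as the paper notes at the start of the subsection on controlled distance, the Hausdorff distance between a quasiflat and its approximating union of orthants is finite but admits no bound in terms of $K$ alone. Consequently your orthant-adjacency walk breaks down: a point $f(p)$ need not lie within any uniform distance of some $O_\iota$, so the ``witness pairs'' at transitions need not be at uniformly bounded $\cuco X$--distance, and the resulting $\Delta$ would depend on the particular quasiflat rather than only on $K$. (Your bound $\diam_{\widehat{\cuco X}}(q(O_i))\leq 2\nu$ for a single orthant is fine modulo the parallel-copy point you flag, and matches the corresponding step in the paper.)

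Two ingredients are needed to repair this, and they are exactly what the paper supplies. First, one must bound $\diam_{\widehat{\cuco X}}\bigl(q(\bigcup_i O_i)\bigr)$ working only with the orthants themselves: the paper chains any two orthants through a sequence whose consecutive coarse intersections are codimension-$1$ sub-orthants (Proposition~\ref{prop:orthant_chain}, proved by a Mayer--Vietoris argument in an asymptotic cone), and then shows via Lemma~\ref{lem:shared} and the uniqueness axiom that any such adjacent pair has uniformly bounded image diameter under $q$ (Lemma~\ref{lem:ass_bump}). Your walk in $\reals^\nu$ is in the right spirit but cannot substitute for this without the uniform closeness discussed above. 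Second, one must pass from the union to its hull: the paper observes that $q(H_\theta(A))$ lies at uniformly bounded Hausdorff distance from $H_\theta(q(A))$, so a diameter bound on $q(\bigcup_i O_i)$ yields one on $q(H_\theta(\bigcup_i O_i))$, and hence on $q(f(\reals^\nu))$ via Corollary~\ref{cor:flats_close_to_hulls}. This second step is entirely absent from your proposal.
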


\begin{proof}
Observe that if $A\subset\cuco X$ is an arbitrary subset, then $q(H_\theta(A))$ lies at
uniformly bounded Hausdorff distance from $H_\theta(q(A))$ (where we
take hulls in $\widehat{\cuco X}$ in the second expression).  In
particular, if $\diam_{\widehat{\cuco X}}(q(A))\leq C$ for some $C$,
then there exists $C'=C'(C,E,\theta)$ so that 
for any $B\subset H_\theta(A)$ we have $\diam_{\widehat{\cuco
X}}(q(B))\leq C'$.

Hence, by Corollary~\ref{cor:flats_close_to_hulls}, it suffices to
prove that $\diam_{\widehat{\cuco X}}q(\bigcup_{i=1}^NO_i)\leq C$,
where the orthants $O_i$ are as in the Corollary and
$C=C(N,E,K,\mu_0)$.  By the construction of $q$, it follows 
easily that there exists
$C'=C'(\mu_0,E)$ such that $\diam_{\widehat{\cuco X}}(q(O_i))\leq C'$
for each $i$.  By Proposition~\ref{prop:orthant_chain}, it suffices to
bound the diameter of $q(O_i\cup O_j)$ in the case where
$O_i\tilde\cap O_j$ is a codimension-$1$ sub-orthant; this is done
in Lemma~\ref{lem:ass_bump}.
\end{proof}

Before proceeding to the technical Lemmas and Propositions we needed 
to prove the above theorem, we state 
the following corollary which we consider the main result of this 
section.

\begin{cor}[Quasi-isometries descend to factored spaces]\label{cor:induced_qi}
 Let $\cuco X,\cuco Y$ be asymphoric HHSs. Suppose that there exists $D$ so that for each $U\in\mathfrak U_{\cuco X}$ or 
$U\in\mathfrak U_{\cuco Y}$, for any $x,y\in F_U$ there exists a bi-infinite $(D,D)$--quasi-geodesic containing $x,y$. Then 
for every quasi-isometry $f:\cuco X\to \cuco Y$ there exists a quasi-isometry $\hat f:\widehat{\cuco X}\to\widehat{\cuco Y}$ 
so that the diagram
 
 \begin{center}
  $
  \begin{diagram}
   \node{\cuco X}\arrow{e,t}{f}\arrow{s,l}{q_{\cuco X}}\node{\cuco Y}\arrow{s,r}{q_{\cuco Y}}\\
   \node{\widehat{\cuco X}}\arrow{e,b}{\hat f}\node{\widehat{\cuco Y}}
  \end{diagram}
  $
 \end{center}
commutes. 
\end{cor}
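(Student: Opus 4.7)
The plan is to define $\hat f$ by the formula $\hat f(q_{\cuco X}(x))=q_{\cuco Y}(f(x))$, so that commutativity of the diagram is built in; the work is then to show this rule descends to a coarsely well-defined, coarsely Lipschitz map on $\widehat{\cuco X}$ and has a coarse inverse. Since $q_{\cuco X}$ is coarsely surjective, it is enough to define $\hat f$ on $q_{\cuco X}(\cuco X)$ and extend (by, e.g., a nearest-point choice) to $\widehat{\cuco X}$ with only a bounded additive error.

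The heart of the proof is to verify that if $x,y\in\cuco X$ with $\dist_{\widehat{\cuco X}}(q_{\cuco X}(x),q_{\cuco X}(y))\leq R$, then $\dist_{\widehat{\cuco Y}}(q_{\cuco Y}(f(x)),q_{\cuco Y}(f(y)))\leq C(R)$ for a function depending only on $R,f,\cuco X,\cuco Y$. Unpacking the cone-off construction of~\cite[Definition~2.1]{hhs3}, any such path of length $\leq R$ in $\widehat{\cuco X}$ can be approximated by a chain $x=x_0,x_1,\dots,x_n=y$ in $\cuco X$ with $n$ bounded in terms of $R$, where for each $i$ either $\dist_{\cuco X}(x_i,x_{i+1})$ is uniformly bounded or $x_i,x_{i+1}$ lie in a common $F_U$ with $U\in\mathfrak U^\nest_{\cuco X}$. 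The first case is handled by the fact that $q_{\cuco Y}\circ f$ is coarsely Lipschitz, so it suffices to handle the second case with a uniform bound. In other words, the key reduction is to prove:
\begin{equation*}
\text{for every }U\in\mathfrak U^\nest_{\cuco X}\text{ and }x,y\in F_U,\quad\dist_{\widehat{\cuco Y}}(q_{\cuco Y}(f(x)),q_{\cuco Y}(f(y)))\leq C.
\end{equation*}

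For this reduction, if $U\in\mathfrak U^\nest_{\cuco X}-\mathfrak U_{\cuco X}$, pick $V\in\mathfrak U_{\cuco X}$ with $U\propnest V$ and replace $U$ by $V$, using that $F_U$ embeds in $F_V$ up to uniformly bounded error. So we may assume $U\in\mathfrak U_{\cuco X}$, and let $\{U=U_1,U_2,\dots,U_\nu\}$ be a complete orthogonal set. By the hypothesis on bi-infinite quasigeodesics, there is a bi-infinite $(D,D)$--hierarchy path $\gamma_1\subseteq F_U$ through $x$ and $y$; by Assumption~\ref{assumption:line}-type reasoning (or simply choosing basepoints and applying the hypothesis again to produce a bi-infinite quasi-geodesic in each $F_{U_i}$), we obtain bi-infinite hierarchy paths $\gamma_i$ in $F_{U_i}$ for $i\geq 2$. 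The product $\gamma_1\times\cdots\times\gamma_\nu$ embeds as a standard $\nu$--flat $\calF\subseteq\cuco X$ (this is a standard $\nu$--flat in the sense of Definition~\ref{defn:standard_flat}), and $\calF$ passes within uniformly bounded distance of both $x$ and $y$. Since $f|_\calF$ is then a $(K,K)$--quasi-isometric embedding $\reals^\nu\to\cuco Y$ with $K$ depending only on $f$ and on the hierarchy-path/embedding constants, Theorem~\ref{thm:cone_bound} applied to $f|_\calF$ (composed with the standard parametrization $\reals^\nu\to\calF$) yields $\diam_{\widehat{\cuco Y}}q_{\cuco Y}(f(\calF))\leq\Delta$, and this $\Delta$ is the uniform constant we need.

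Once $\hat f$ is shown to be coarsely well-defined and coarsely Lipschitz, we apply the same construction to a quasi-inverse $g\co\cuco Y\to\cuco X$ of $f$, using the symmetric hypothesis on $\mathfrak U_{\cuco Y}$, to produce a coarsely Lipschitz $\hat g\co\widehat{\cuco Y}\to\widehat{\cuco X}$ commuting with the factor maps. The commuting diagrams and the fact that $f\circ g$ and $g\circ f$ are coarsely the identity on $\cuco Y,\cuco X$ respectively then force $\hat f\circ\hat g$ and $\hat g\circ\hat f$ to be coarsely the identity, so $\hat f$ is a quasi-isometry. The main obstacle is the technical reduction to standard flats inside $F_U$ for $U\in\mathfrak U^\nest$: one must carefully use that a \emph{pair} of points in $F_U$ genuinely fits inside a top-dimensional standard flat (not just a standard orthant), which is precisely what the bi-infinite quasi-geodesic hypothesis is designed to give, and this is exactly the hypothesis under which Theorem~\ref{thm:cone_bound} can be invoked to control the entire coned-off image.
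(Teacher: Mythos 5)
Your proposal is correct and follows essentially the same route as the paper: take $\hat f=f$ on underlying points (so the diagram commutes by fiat), reduce coarse Lipschitzness to the single statement that pairs $x,y\in F_U$ with $U\in\mathfrak U^{\nest}_{\cuco X}$ have uniformly close images in $\widehat{\cuco Y}$, use the bi-infinite quasi-geodesic hypothesis to place such a pair on a standard $\nu$--flat and invoke Theorem~\ref{thm:cone_bound}, and handle the inverse by symmetry via a quasi-inverse of $f$. Your write-up is in fact more detailed than the paper's (notably the explicit passage from $\mathfrak U^{\nest}$ to $\mathfrak U$ and the explicit assembly of the standard flat), but the underlying argument is the same.
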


\begin{proof}
 Since $\widehat{\cuco X}, \widehat{\cuco Y}$ are just re-metrized copies of $\cuco X$, $\cuco Y$ (see~\cite{hhs3}), we can 
take $\hat f=f$.
 
 We now show that $\hat f$ is coarsely Lipschitz, and observe that the corresponding map for a quasi-inverse of $f$ gives a coarsely Lipschitz inverse of $\hat f$.

 By the definition of the metric on $\widehat{\cuco X}, \widehat{\cuco Y}$ (\cite[Definition 2.1]{hhs3}), we just have to verify that if $x,y$ lie in some $F_U$ for $U\in\mathfrak U^\nest_{\cuco X}$, then their images are uniformly close in $\widehat{\cuco Y}$. By assumption, $x,y$ lie close to a quasiflat with uniform constant, so that the conclusion follows from Theorem \ref{thm:cone_bound}. 
\end{proof}

We can also now prove Theorem~\ref{thmi:MCG} from the introduction.

\begin{proof}[Proof of Theorem~\ref{thmi:MCG}]
In the case where $\cuco X$ is a mapping class group, we have seen that the standard HHS structure $(\cuco X,\mathfrak S)$ 
is asymphoric of finite rank $\nu$ equal to the complexity.  Let $\mathfrak U$ be as in Notation~\ref{not:factored} and let 
$q:\cuco X\to\widehat{\cuco X}$ be the factor map described above.  Then any $\nu$--dimensional quasiflat in $\cuco X$ has 
uniformly bounded image in $\widehat{\cuco X}$ by Theorem~\ref{thm:cone_bound}.  Now, letting $S\in\mathfrak S$ be the 
unique $\nest$--maximal element, we have that $\pi_U:\widehat{\cuco X}\to\fontact S$ is a Lipschitz map, and the theorem 
follows.
\end{proof}

Now we turn to the lemmas. 

The following lemma identifies possible distance formula terms for pairs of points each in a given orthant. Roughly, they can be of two types, each corresponding to one of the factors of the bridge as in Lemma~\ref{lem:parallel_gates} between the orthants.

\begin{lemma}\label{lem:shared}
 There exists $\tau$ with the following property.  Let $O,O'$ be
 standard orthants in $\cuco X$ with supports $\mathcal U_1,\mathcal
 U_2$.  Suppose that $O\tilde{\cap}O'$ is a $k$--orthant whose support
 is $\mathcal U$.  Then for each $x,y\in O\cup O'$ we have that any
 $U\in \mathfrak S$ with $\dist_U(x,y)\geq \tau$ is either nested into some $U'\in\mathcal
 U_1\cap \mathcal U_2$ 
 or orthogonal to all $U'\in\mathcal U$.
\end{lemma}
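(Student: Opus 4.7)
The plan is to reduce to the situation in which both points lie coarsely inside one of the two orthants, via a coarse intersection point, and then to extract the desired alternative by combining Remark~\ref{rem:bounded_proj} with the pairwise-orthogonality of the support sets $\mathcal U_1$ and $\mathcal U_2$. The key structural input, already provided by Lemma~\ref{lem:intersection_of_orthants}, is that $\mathcal U \subseteq \mathcal U_1\cap\mathcal U_2$ and each $\mathcal U_i$ is pairwise orthogonal.

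First I would fix a point $z$ uniformly close to $O\tilde\cap O'$, chosen so that $z$ coarsely lies in both $O$ and $O'$; by Lemma~\ref{lem:intersection_of_orthants} and Lemma~\ref{lem:coarse_intersect_hq}, one can take $z$ within bounded distance of $\gate_O(y)$ when $y\in O'$, and of $\gate_{O'}(x)$ when $x\in O$ (and $z=x$ or $z=y$ suffices in the special subcases $x,y\in O$ or $x,y\in O'$). The triangle inequality in $\fontact U$ gives a uniform constant $C_0$ with
\[\dist_U(x,y)\leq \dist_U(x,z)+\dist_U(z,y)+C_0,\]
so for $\tau$ large the hypothesis $\dist_U(x,y)\geq\tau$ forces $\dist_U(x,z)\geq (\tau-C_0)/2$ or $\dist_U(z,y)\geq(\tau-C_0)/2$.

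Next, in the first alternative, both $x$ and $z$ lie within uniformly bounded distance of $O$, so for $\tau$ sufficiently large Remark~\ref{rem:bounded_proj} gives $U\nest U_i$ for some $U_i\in\mathcal U_1$. Two cases arise. If $U_i\in\mathcal U_2$, then $U_i\in\mathcal U_1\cap\mathcal U_2$ and the first alternative of the lemma holds with $U'=U_i$. Otherwise $U_i\notin\mathcal U_2$, and since $\mathcal U\subseteq\mathcal U_1\cap\mathcal U_2$ we also have $U_i\notin\mathcal U$; every $V\in\mathcal U$ is then an element of $\mathcal U_1$ distinct from $U_i$, so pairwise-orthogonality of $\mathcal U_1$ gives $V\orth U_i$, and hence $U\orth V$ since $U\nest U_i$. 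This is the second alternative. The case $\dist_U(z,y)\geq(\tau-C_0)/2$ is handled symmetrically, with $\mathcal U_2$ in the role of $\mathcal U_1$ and $O'$ in the role of $O$.

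The only real obstacle is bookkeeping: $\tau$ must be chosen large enough to absorb $C_0$, the bounded Hausdorff error between $z$ and its approximations in each of $O$ and $O'$ (from Lemma~\ref{lem:parallel_gates} and hierarchical quasiconvexity of orthants via Lemma~\ref{lem:orthant_quasiconvex}), and the implicit threshold in Remark~\ref{rem:bounded_proj}. All of these are uniform in $(\cuco X,\mathfrak S)$ and independent of the specific orthants $O,O'$, so $\tau$ may indeed be taken universal; no additional HHS-theoretic machinery appears to be required beyond the lemmas already in place.
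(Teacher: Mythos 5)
There is a genuine gap in the reduction to a single intermediate point $z$. Your argument needs $z$ to lie within \emph{uniformly} bounded distance of both $O$ and $O'$, but no such point exists in general: if $z$ is (coarsely) in $\gate_O(O')\subseteq O$, then $\dist(z,O')\geq\dist(O,O')$, and $\dist(O,O')$ is finite but not bounded independently of the pair $O,O'$ (the constant $R_0$ in the definition of coarse intersection depends on the pair, and two standard orthants can coarsely intersect in a $k$--orthant while being arbitrarily far apart --- e.g.\ two Dehn twist flats sharing one annular domain but translated a long way in a domain orthogonal to it). Consequently, in your ``second alternative'' the pair $z,y$ does not lie in $O'$ up to uniform error, and Remark~\ref{rem:bounded_proj} applied to $O'$ does not yield $U\nest U_i$ with $U_i\in\mathcal U_2$. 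Concretely, the domains you miss are exactly the ``bridge'' domains $U$ for which $\pi_U(O)$ and $\pi_U(O')$ are both uniformly bounded but $\dist_U(O,O')$ is large: for such $U$ one can have $\dist_U(x,y)\geq\tau$ while $U$ is nested into no element of $\mathcal U_1\cup\mathcal U_2$, so the required conclusion is the orthogonality alternative, and nothing in your argument produces it.

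The paper handles precisely this case by splitting the trip into \emph{three} legs, $x\to\gate_O(y)\to\gate_{O'}(\gate_O(y))\to y$: the outer legs each lie in a single orthant (where your Remark~\ref{rem:bounded_proj} argument, including the case analysis on whether $U_i\in\mathcal U_1\cap\mathcal U_2$, is correct and matches the paper's ``by definition of standard product regions''), while the middle leg crosses the bridge and is controlled by Lemma~\ref{lem:parallel_gates}\eqref{item:df_bridge}: domains relevant for a pair of points differing only in the bridge direction are orthogonal to every domain in which $\gate_O(O')$ --- hence the $k$--orthant supported on $\mathcal U$ --- has large diameter, which gives orthogonality to all of $\mathcal U$. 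To repair your proof you would need to add this third leg and invoke Lemma~\ref{lem:parallel_gates}\eqref{item:df_bridge} (or Lemma~\ref{lem:four_points}) for it; the rest of your bookkeeping is fine.
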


\begin{proof}
Recall that $O\tilde{\cap}O'$ coarsely coincides with $\gate_O(O')$ by
Lemma \ref{lem:intersection_of_orthants} (and also with a 
standard orthant whose support is contained in 
$\mathcal U_1\cap \mathcal U_2$, thereby describing $\mathcal U$).

Let $x,y,U$ be as in the statement.  If $x,y\in O$, then by the definition of a standard orthant, either 
$\dist_U(x,y)$ is uniformly bounded or $U\nest U'$ for some $U'\in\mathcal U_1$.  If $U'\in\mathcal 
U_1\cap\mathcal U_2$, we are done; otherwise, $U'\orth V$ for all $V\in\mathcal U$ by the 
definition of a standard orthant.  An identical argument works if $x,y\in O'$.

So, assume that $x\in O',y\in O$. If $U$ is nested in some element of $\mathcal U_i$, for some $i\in\{1,2\}$, then either $U$ is nested into some element of $\mathcal U_1\cap\mathcal U_2$, or $U$ is orthogonal to every element of $\mathcal U$ by the definition of a 
standard orthant. Hence, suppose that $U$ is not nested into any element of 
$\mathcal U_1$ or $\mathcal U_2$. In particular, $\pi_U(O)$ and $\pi_U(O')$ have uniformly bounded diameter, so $\dist_U(x,y)\asymp\dist_U(O,O')$. 
 Therefore, provided $\tau$ is sufficiently large, $\dist_U(x,y)>\tau$ implies that $\dist_U(O,O')$ is large compared to the constant $K_2$ from 
Lemma~\ref{lem:parallel_gates}, so that conclusion \eqref{item:df_bridge} of the same lemma (applied with any $p\in\gate_O(O')$, $t_1=a$, and $t_2=b$) shows that $U$ is orthogonal to each element of 
$\mathcal U$.  
\end{proof}

If the coarse intersection $O\tilde\cap O'$ is a codimension-$1$ sub-orthant, then $q(O\cup O')$ is \emph{uniformly} bounded:

\begin{lem}\label{lem:ass_bump}
There exists $C=C(E,\mu_0)$ so that the following holds.  Let $O,O'$ be standard orthants with $O\tilde\cap O'$ a codimension-$1$ sub-orthant.  Then $\diam_{\widehat{\cuco X}}(q(O\cup O'))\leq C$.
\end{lem}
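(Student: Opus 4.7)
The plan is to reduce the lemma to two independent bounds: a uniform bound $\diam_{\widehat{\cuco X}}(q(O)),\diam_{\widehat{\cuco X}}(q(O'))\leq C_0$, and a uniform bound on the $\cuco X$-distance between some $p\in O$ and $p'\in O'$ provided by the fact that $O\tilde{\cap}O'$ is a nonempty codimension-$1$ sub-orthant.

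To bound $\diam_{\widehat{\cuco X}}(q(O))$, I would use that the support set $\mathcal U_1=\{U_1,\dots,U_\nu\}$ of $O$ is a cardinality-$\nu$ pairwise-orthogonal subset of $\mathfrak S$, so that each $U_i$ lies in $\mathfrak U\subseteq \mathfrak U^\nest$. Hence every parallel copy of $F_{U_i}$ inside $\cuco X$ is coned off in $\widehat{\cuco X}$ (by the construction of the factored space in \cite[Definition 2.1]{hhs3}), and therefore has $\widehat{\cuco X}$-diameter bounded by a universal constant. Writing $O$ as the image of $\gamma_1\times\cdots\times\gamma_\nu$ under the standard product embedding $\phi$, let $c=\phi(\gamma_1(0),\dots,\gamma_\nu(0))$ be the corner. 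For any $p=\phi(\gamma_1(t_1),\dots,\gamma_\nu(t_\nu))\in O$, define intermediate points $p_0=c$ and $p_i=\phi(\gamma_1(t_1),\dots,\gamma_i(t_i),\gamma_{i+1}(0),\dots,\gamma_\nu(0))$, so that $p_\nu=p$. Consecutive points $p_{i-1}$ and $p_i$ differ only in their $i^{th}$ coordinate and thus lie in a common parallel copy of $F_{U_i}$, which is coned off. Telescoping gives $\dist_{\widehat{\cuco X}}(q(p),q(c))\leq O(\nu)$, so $\diam_{\widehat{\cuco X}}(q(O))\leq C_0$, and the same bound holds for $q(O')$.

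To link $O$ to $O'$, I would use the codimension-$1$ hypothesis as follows. Since $O\tilde{\cap}O'$ exists and contains a full sub-orthant (in particular it is nonempty), the definition of coarse intersection gives $R_0=R_0(\cuco X,\mathfrak S)$ and some $y\in\cuco X$ with $\dist(y,O),\dist(y,O')\leq R_0$. Choosing $p\in O,p'\in O'$ within $R_0+1$ of $y$ yields $\dist_{\cuco X}(p,p')\leq 2R_0+2$, and since $q$ is $1$-Lipschitz, $\dist_{\widehat{\cuco X}}(q(p),q(p'))\leq 2R_0+2$. Combining with the diameter bound from the previous step gives $\diam_{\widehat{\cuco X}}(q(O\cup O'))\leq 2C_0+2R_0+2$, a constant depending only on the HHS parameters $E$ and $\mu_0$ (since $\nu$ and $R_0$ are determined by the HHS structure).

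The main subtlety—more of a bookkeeping point than a genuine obstacle—is verifying that the particular parallel copies of $F_{U_i}$ that appear as ``slices'' of the standard product region containing $O$ are indeed among the subspaces coned off in $\widehat{\cuco X}$. This should follow directly from unpacking \cite[Definition 2.1]{hhs3}, which cones off all standard images of each $F_U$ with $U\in\mathfrak U^\nest$, but it is the one place where the precise form of the factoring construction—as opposed to only its output—needs to be invoked.
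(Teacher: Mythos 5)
Your first step (bounding $\diam_{\widehat{\cuco X}}(q(O))$ for a single standard orthant by moving one coordinate at a time through coned-off parallel copies of the $F_{U_i}$) is fine, and is essentially what the paper does for single orthants inside the proof of Theorem~\ref{thm:cone_bound}. The gap is in your second step. You assert that the definition of coarse intersection supplies a constant $R_0=R_0(\cuco X,\mathfrak S)$ and hence points $p\in O$, $p'\in O'$ with $\dist_{\cuco X}(p,p')\leq 2R_0+2$. But the $R_0$ in Definition of $\tilde\cap$ depends on the pair $A,B$, not just on the HHS, and no uniform bound on $\dist_{\cuco X}(O,O')$ is available: two standard orthants can have a perfectly well-defined codimension-$1$ coarse intersection while being arbitrarily far apart in $\cuco X$. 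Already in $\reals^2$ (rank $2$), take $O=[0,\infty)^2$ and $O'=(-\infty,-N]\times[0,\infty)$; then $O\tilde\cap O'$ is the $1$--orthant $\{0\}\times[0,\infty)$ (equivalently $\gate_O(O')$), yet $\dist(O,O')=N$ is unbounded. More generally, $\gate_O(O')$ lies in $O$ but at distance roughly $\dist(O,O')$ from $O'$ (this is exactly the ``bridge'' picture of Lemma~\ref{lem:parallel_gates}\eqref{item:bridge}), so Lipschitzness of $q$ alone cannot close the gap. A related warning sign: your argument uses only that $O\tilde\cap O'$ is nonempty, never that it has codimension $1$; but the statement genuinely needs codimension $1$ (for instance, two Dehn twist flats in a mapping class group whose coarse intersection is a single point can have images in $\widehat{\cuco X}$ that are far apart, since $\widehat{\cuco X}$ is there quasi-isometric to the curve graph).

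What is needed, and what the paper does, is to bound the separation between $O$ and $O'$ directly in the factored metric rather than in $\cuco X$. For $x\in O$ and $y\in O'$, Lemma~\ref{lem:shared} shows that every domain $U$ with $\dist_U(x,y)\geq\tau$ is either nested into a domain of $\mathcal U_1\cap\mathcal U_2$ (hence lies in $\mathfrak U^{\nest}$ because each $\mathcal U_i$ is a complete support set), or is orthogonal to all $\nu-1$ domains supporting $O\tilde\cap O'$ --- and here codimension $1$ is used crucially, since it makes $\{U\}\cup\mathcal U$ a pairwise-orthogonal family of cardinality $\nu$, forcing $U\in\mathfrak U\subseteq\mathfrak U^{\nest}$. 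Thus all domains in the index set $\mathfrak S-\mathfrak U^{\nest}$ of $\widehat{\cuco X}$ see $x$ and $y$ at distance less than $\tau$, and the uniqueness axiom in $\widehat{\cuco X}$ bounds $\dist_{\widehat{\cuco X}}(q(x),q(y))$. In other words, $O$ and $O'$ may be far apart in $\cuco X$, but only in directions that get coned off; your proposal misses this mechanism entirely.
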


\begin{proof}
Let $x\in O,y\in O'$.  Let $\mathcal M=\{U\in\mathfrak S : \dist_U(x,y)\geq\tau\}$. 
By
Lemma~\ref{lem:shared}, 
each $U\in\mathcal M$ belongs to a 
set of pairwise-orthogonal elements of size $\nu$ (note that in the 
case that $U$ is orthogonal to the intersection, this has maximal 
rank because of the fact that we are assuming the intersection has 
co-dimension-1).
Hence
$\dist_U(q(x),q(y))\leq\tau$ for all $U\in\mathfrak S-\mathfrak U$, so
$q(x)$ is uniformly close to $q(y)$ by the uniqueness axiom.
\end{proof}

\begin{prop}\label{prop:orthant_chain}
 Suppose that the quasiflat $\calF$ lies within finite Hausdorff distance of $\bigcup_{i=1}^m O_i$, where the $O_i$ are standard orthants with $\dist_{haus}(O_i,O_j)=\infty$ for $i\neq j$. Then for each pair of distinct orthants $O_j,O_k$ there exists a sequence $j=j_0,\dots,j_l=k$ so that the coarse intersection of $O_{j_i}$ and $O_{j_{i+1}}$ is an $(\nu-1)$--orthant.
\end{prop}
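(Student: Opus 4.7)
My plan is to argue by contradiction in an asymptotic cone $\seq{\cuco X}$ of $\cuco X$ with observation point in $\calF$ and any scaling sequence $s_n\to\infty$. Write $\mathbf F$ for the ultralimit of $\calF$ and $\mathbf O_i$ for the ultralimit of $O_i$. Since $\calF$ is the image of a quasi-isometric embedding $\reals^\nu\to\cuco X$, the space $\mathbf F$ is bilipschitz equivalent to $\reals^\nu$; each $\mathbf O_i$ is a closed median-convex subset of $\mathbf F$ bilipschitz equivalent to $[0,\infty)^\nu$. Finiteness of the Hausdorff distance between $\calF$ and $\bigcup_{i=1}^m O_i$ gives $\mathbf F=\bigcup_i\mathbf O_i$.

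Suppose no chain of the required form exists from $O_j$ to $O_k$. I partition $\{1,\ldots,m\}$ into the codimension-one-adjacency component $A$ containing $j$ and its complement $B\ni k$, and set $\mathbf{\calF}_A=\bigcup_{i\in A}\mathbf O_i$ and $\mathbf{\calF}_B=\bigcup_{i'\in B}\mathbf O_{i'}$. By the definition of $A,B$ together with Lemma \ref{lem:intersection_of_orthants}, for every $i\in A$ and $i'\in B$ the coarse intersection $O_i\tilde\cap O_{i'}$ is a standard orthant of dimension at most $\nu-2$. A direct identification (discussed below) shows that $\mathbf O_i\cap\mathbf O_{i'}$ equals the ultralimit of this coarse intersection and hence has topological dimension at most $\nu-2$, so $\mathbf{\calF}_A\cap\mathbf{\calF}_B$ (a finite union of such sets) has topological dimension at most $\nu-2$.

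The topological heart of the argument is then a connectedness contradiction. Since $\mathbf F$ is bilipschitz to $\reals^\nu$, removing a subset of topological dimension at most $\nu-2$ leaves a connected set; but
$$\mathbf F\setminus(\mathbf{\calF}_A\cap\mathbf{\calF}_B)=(\mathbf F\setminus\mathbf{\calF}_A)\sqcup(\mathbf F\setminus\mathbf{\calF}_B)$$
is a disjoint union of two open subsets of $\mathbf F$, so one must be empty. Without loss of generality $\mathbf F=\mathbf{\calF}_A$, whence $\mathbf O_k\subseteq\bigcup_{i\in A}(\mathbf O_i\cap\mathbf O_k)$ is a finite closed cover of the complete $\nu$-dimensional metric space $\mathbf O_k$. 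The Baire category theorem then provides some $i\in A$ for which $\mathbf O_i\cap\mathbf O_k$ has nonempty interior in $\mathbf O_k$, forcing its topological dimension to be $\nu$. By the dimension identification, $O_i\tilde\cap O_k$ is then a standard $\nu$-orthant, so that $O_i$ and $O_k$ both lie within finite Hausdorff distance of this common sub-orthant, contradicting $\dist_{haus}(O_i,O_k)=\infty$.

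The step I expect to require the most care is verifying that $\mathbf O_i\cap\mathbf O_{i'}$ coincides with the ultralimit of $O_i\tilde\cap O_{i'}$, and that the topological dimension of this ultralimit equals the dimension of the coarse intersection. The former should follow by representing a point of $\mathbf O_i\cap\mathbf O_{i'}$ by sequences in $O_i$ and in $O_{i'}$ whose scaled mutual distance tends to $0$, combined with the stability of the coarse intersection of hierarchically quasiconvex sets under thickening (cf.\ Lemma \ref{lem:coarse_intersect_hq}); the latter reduces to the fact that a standard $k$-orthant is quasi-isometric to $[0,\infty)^k$, whose ultralimits have topological dimension $k$. Once these identifications are in place, the remaining ingredients --- connectedness of $\reals^\nu$ minus a codimension-$\geq 2$ subset and the Baire category theorem --- are standard.
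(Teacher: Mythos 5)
Your argument is correct, but it takes a genuinely different topological route from the paper's. Both proofs pass to the cone and both rest on the same delicate identification, namely that $\mathbf O_i\cap\mathbf O_{i'}$ is the ultralimit of the coarse intersection (equivalently of $\gate_{O_i}(O_{i'})$); the paper establishes this using Lemma~\ref{lem:parallel_gates}\eqref{item:bridge} and \eqref{item:take_the_bridge}, which is exactly the tool you should invoke to make your deferred step precise. From there the paper argues directly: it removes the union $Y$ of all pairwise intersections of dimension $<\nu-1$, proves $\seq\calF-Y$ is path-connected by an inductive Mayer--Vietoris computation (using that $H_*(\reals^\nu-A)\cong H_*(\reals^\nu-B)$ for homeomorphic closed subsets), and then reads the chain off a path from $\mathbf O_j$ to $\mathbf O_k$. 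You instead argue by contradiction, partitioning the orthants into the codimension-one adjacency component of $j$ and its complement, and appeal to the general dimension-theoretic non-separation theorem (a subset of $\reals^\nu$ of dimension $\le\nu-2$ does not disconnect it, Hurewicz--Wallman) together with the sum theorem for dimension and a Baire argument. Your version is shorter and avoids the homological induction, at the cost of importing these classical dimension-theory facts; the paper's version is constructive (it produces the chain) and self-contained modulo the duality reference. Two small points to tighten: in asserting that cross-component coarse intersections have dimension at most $\nu-2$ you also need to exclude $t=\nu$, which follows from $\dist_{haus}(O_i,O_{i'})=\infty$ exactly as in your final step (for $t=\nu$ the supports and rays coincide up to bounded Hausdorff distance by the proof of Lemma~\ref{lem:intersection_of_orthants}); and the Baire step can be replaced by the observation that a finite union of closed sets of dimension $\le\nu-1$ has dimension $\le\nu-1$ and so cannot exhaust $\mathbf O_k$.
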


\begin{proof}
 Passing to an asymptotic cone, we get a bilipschitz copy $\seq \calF$ of $\reals^\nu$ filled by bilipschitz copies $\seq O_i$ of $[0,\infty)^\nu$. The intersections of the $\seq O_i$ have some basic properties:
 
 \begin{lemma}
 \begin{enumerate}
 \item[]
  \item The intersection of $\seq O_i$ and $\seq O_j$ is bilipschitz equivalent to $[0,\infty)^t$ for some $t=t(i,j)$.
  \item $t(i,j)=\nu-1$ if and only if $\seq O_i$ and $\seq O_j$ coarsely intersect in an $(\nu-1)$--orthant.
 \end{enumerate}
 \end{lemma}
 
 \begin{proof}
 Recall that the coarse intersection of two standard orthants coarsely
 coincides with a standard $k$--orthant, as well as with the gate of
 one in the other (Lemma \ref{lem:intersection_of_orthants}).  We now
 show the following, which implies both statements: if the ultralimits $\seq A$, $\seq B$ of uniformly
 hierarchically quasiconvex sets have non-empty intersection, then
 their intersection is the ultralimit $\gate_{\subseq A}(\seq B)$ of
 the gates. By Lemma
 \ref{lem:parallel_gates}.\eqref{item:bridge}, $\gate_{\subseq A}(\seq
 B)$ is contained in $\seq A\cap \seq B$ (this uses $\dist(\seq A,\seq
 B)=0$).  Lemma \ref{lem:parallel_gates}.\eqref{item:take_the_bridge}
 implies that the other containment holds.
 \end{proof}

Now, consider the subspace $X\subset \seq \calF$ consisting of the union of all $\seq O_i\cap\seq O_j$ for $i,j$ with $t(i,j)=\nu-1$.  Let $\mathcal Y$ be the set of all $\seq O_i\cap\seq O_j$ with $i\neq j$ and $t(i,j)<\nu-1$.  Let $Y=\bigcup_{O\in\mathcal Y}O$.

 \begin{lem}\label{lem:mayer_vietoris}
$\seq \calF-Y$ is path-connected.
\end{lem}

\begin{proof}
In this proof, when referring to homology, we always mean singular homology with rational coefficients. The goal is to show $H_0(\seq \calF-Y)=\rationals$.

If $\dimension\seq \calF\leq 2$, then $\mathcal Y$ is a finite set (which is empty when $\dimension\seq \calF\leq 1$) and the claim is clear.  Hence suppose that $\dimension\seq \calF\geq 3$.  We argue by induction on $|\mathcal Y|$.  

We first claim that for any $O\in\mathcal Y$ and any closed $O'\subset O$, $\seq \calF-O'$ is path-connected and $H_1(\seq 
\calF-O')=0$.  We use the fact that, for $A,B$ closed homeomorphic subsets of $\reals^\nu$, we have 
$H_*(\reals^\nu-A)=H_*(\reals^\nu-B)$, see e.g.~\cite{Dold}. Hence, we can regard $O$ as a coordinate orthant in 
$\reals^\nu\cong \calF$.  Hence the claim holds for $O'=O$.  The fact that $H_1(\seq \calF-O')=0$ follows from the fact that 
$H_1(\seq \calF-O)=0$, since a $1$--cycle in $\seq \calF-O'$ is homologous to one in $\seq \calF-O$ by, for example, a 
transversality argument.  The same holds for $H_0(\seq \calF-O')$.  

For the inductive step, let $A$ be the union of all but one element of $\mathcal Y$, and let $B$ be the remaining one.  We have a Mayer-Vietoris sequence:
$$H_1(\seq \calF-(A\cap B))\to H_0(\seq \calF-(A\cup B))\to H_0(\seq \calF-A)\oplus H_0(\seq \calF-B)\to H_0(\seq \calF-(A\cap B))\to 0.$$
By the claim above, the first term is $0$, the last term is $\rationals$, and $H_0(\seq \calF-B)=\rationals$.  By induction, $ H_0(\seq \calF-A)=\rationals$.  Hence $\seq \calF-(A\cup B)$ is connected.
\end{proof}

We now finish the proof of Proposition~\ref{prop:orthant_chain}.

Let $\seq O_j,\seq O_k$ be orthants.  We will now produce a sequence  
$\seq O_j=\seq O_{j_0},\ldots,\seq O_{j_l}=\seq O_k$ of orthants so
that $t(j_i,j_{i+1})=\nu-1$ for $0\leq i\leq l-1$.  Choose $\seq
x\in\interior{\seq O_i},\seq y\in\interior{\seq O_j}$ and let
$\sigma\co [0,1]\to \seq \calF-Y$ be a path joining them, which is
provided by Lemma~\ref{lem:mayer_vietoris}.  Let $t_0$ be the maximal
$t$ so that $\sigma(t)\in\seq O_j$.  If $t_0=1$, then we take $l=0$.
Otherwise, there exists $\seq O_{j_1}\neq\seq O_j$ so that $\seq
O_j\cap\seq O_{j_1}$ has dimension $\nu-1$ and contains $\sigma(t_0)$.
Now apply the same argument to $\sigma|_{[t_0,1]}$ and induct.

The sequence in the cone yields a sequence of orthants in the space 
with the desired property.
\end{proof}

\bibliographystyle{alpha}
\bibliography{quasiflats}
\end{document}